\numberwithin{equation}{section}
\newtheorem{Theorem}{Theorem}[section]
\newtheorem{Corollary}[Theorem]{Corollary}
\newtheorem{Lemma}[Theorem]{Lemma}
\newtheorem{Proposition}[Theorem]{Proposition}
 { \theoremstyle{definition}
\newtheorem{Definition}[Theorem]{Definition}

\newtheorem{Remark}[Theorem]{Remark} }
\newcommand{\parderv}[2] {\frac{\partial#1}{\partial#2}}
\newcommand{\mbf} [1]{\mathbf{#1}}
\newcommand{\mbb} [1]{\mathbb{#1}}
\newcommand{\mc} [1]{\mathcal{#1}}
\newcommand{\sk}{\scriptscriptstyle}
\newcommand{\Drm} {{\mathrm{D}}}
\newcommand{\Cl} {{\mathcal{C}\ell}}
\newcommand{\dd} {\mathrm{d}}
\newcommand{\ii} {\mathrm{i}}
\newcommand{\ee} {\mathrm{e}}
\newcommand{\mr}[1] {\mathring{#1}}
\newcommand{\ind} {\indices}
\newcommand{\Rho} {\mathrm{P}}
\newcommand{\End} {\mathrm{End}}
\newcommand{\Spin} {\mathrm{Spin}}
\newcommand{\SO} {\mathrm{SO}}
\newcommand{\Tgt} {\mathrm{T}}
\newcommand{\Nrm} {\mathrm{N}}
\newcommand{\so} {\mathfrak{so}}
\newcommand{\glie} {\mathfrak{gl}}
\newcommand{\g} {\mathfrak{g}}
\newcommand{\mfp} {\mathfrak{p}}
\newcommand{\mfq} {\mathfrak{q}}
\newcommand{\mfr} {\mathfrak{r}}
\newcommand{\mfz} {\mathfrak{z}}
\newcommand{\PT} {\mathbb{PT}}
\newcommand{\T} {\mathbb{T}}
\newcommand{\Ss} {\mathbb{S}}
\newcommand{\Pp} {\mathbb{P}}
\newcommand{\E} {\mathbb{E}}
\newcommand{\M} {\mathbb{M}}
\newcommand{\CP} {\mathbb{CP}}
\newcommand{\C} {\mathbb{C}}
\newcommand{\F} {\mathbb{F}}
\newcommand{\V} {\mathbb{V}}
\begin{document}

%\allowdisplaybreaks

\newcommand{\arXivNumber}{1505.06938}

\renewcommand{\PaperNumber}{005}

\FirstPageHeading

\ShortArticleName{Twistor Geometry of Null Foliations in Complex Euclidean Space}

\ArticleName{Twistor Geometry of Null Foliations\\ in Complex Euclidean Space}

\Author{Arman TAGHAVI-CHABERT}
\AuthorNameForHeading{A.~Taghavi-Chabert}
\Address{Universit\`{a} di Torino, Dipartimento di Matematica ``G.~Peano'',\\ Via Carlo Alberto, 10 - 10123, Torino, Italy}
\Email{\href{mailto:ataghavi@unito.it}{ataghavi@unito.it}}

\ArticleDates{Received April 01, 2016, in f\/inal form January 14, 2017; Published online January 23, 2017}

\Abstract{We give a detailed account of the geometric correspondence between a smooth complex projective quadric hypersurface $\mathcal{Q}^n$ of dimension $n \geq 3$, and its twistor space $\mathbb{PT}$, def\/ined to be the space of all linear subspaces of maximal dimension of $\mathcal{Q}^n$. Viewing complex Euclidean space $\mathbb{CE}^n$ as a dense open subset of $\mathcal{Q}^n$, we show how local foliations tangent to certain integrable holomorphic totally null distributions of maximal rank on $\mathbb{CE}^n$ can be constructed in terms of complex submanifolds of $\mathbb{PT}$. The construction is illustrated by means of two examples, one involving conformal Killing spinors, the other, conformal Killing--Yano $2$-forms. We focus on the odd-dimensional case, and we treat the even-dimensional case only tangentially for comparison.}

\Keywords{twistor geometry; complex variables; foliations; spinors}

\Classification{32L25; 53C28; 53C12}

\section{Introduction}
The \emph{twistor space $\PT$} of a smooth complex projective quadric hypersurface $\mc{Q}^n$ of dimension $n=2m+1 \geq 3$, is def\/ined to be the space of all \emph{$\gamma$-planes}, i.e., $m$-dimensional linear subspaces of~$\mc{Q}^n$. This is a complex projective variety of dimension $\frac{1}{2}(m+1)(m+2)$ equipped with a canonical holomorphic distribution~$\mathrm{D}$ of rank~$m+1$, and maximally non-integrable, i.e., $\Tgt \PT = [ \mathrm{D} , \mathrm{D} ] + \mathrm{D}$. Here, $\Tgt \PT$ denotes the holomorphic tangent bundle of~$\PT$. Noting that a smooth quadric can be identif\/ied with a complexif\/ied $n$-sphere and is naturally equipped with a holomorphic conformal structure, we shall view complex Euclidean space $\C \E^n$ as a dense open subset of $\mc{Q}^n$. In this context, we shall prove the following new results holding locally:
\begin{itemize}\itemsep=0pt
\item totally geodetic integrable holomorphic $\gamma$-plane distributions on $\C \E^n$ arise from $(m+1)$-dimensional complex submanifolds of~$\PT$~-- Theorem~\ref{thm-odd-Kerr-theorem-I};
\item totally geodetic integrable holomorphic $\gamma$-plane distributions on $\C \E^n$ with integrable orthogonal complements arise from $(m+1)$-dimensional complex submanifolds of $\PT$ foliated by holomorphic curves tangent to~$\Drm$~-- Theorem~\ref{thm-odd-Kerr-theorem-II};
\item totally geodetic integrable holomorphic $\gamma$-plane distributions on $\C \E^n$ with totally geodetic integrable orthogonal complements arise from $m$-dimensional complex submanifolds of a~$1$-dimensional reduction of a subset of $\PT$ known as \emph{mini-twistor space $\M\T$}~-- Theorem~\ref{thm-odd-Kerr-theorem-NC}.
\end{itemize}
Conversely, any such distributions arise in the ways thus described. These f\/indings may be viewed as odd-dimensional counterparts of the work of \cite{Hughston1988}, where it is shown that local foliations of a $2m$-dimensional smooth quadric $\mc{Q}^{2m}$ by \emph{$\alpha$-planes}, i.e., totally null self-dual $m$-planes, are in one-to-one correspondence with certain $m$-dimensional complex submanifolds of twistor space, here def\/ined as the space of all $\alpha$-planes in $\mc{Q}^{2m}$.

The f\/irst two of the above results are \emph{conformally invariant}, and to arrive at them, we shall f\/irst describe the geometrical correspondence between $\mc{Q}^n$ and $\PT$ in a manifestly conformally invariant manner, by exploiting the vector and spinor representations of the complex conformal group $\SO(n+2,\C)$ and of its double-covering $\Spin(n+2,\C)$. Such a \emph{tractor} or \emph{twistor calculus}, as it is known, builds on Penrose's twistor calculus in four dimensions \cite{Penrose1967}. The more `standard', local and Poincar\'{e}-invariant approach to twistor geometry will also be introduced to describe non-conformally invariant mini-twistor space $\M\T$. In fact, a fairly detailed description of twistor geometry in odd dimensions will make up the bulk of this article, and should, we hope, have a wider range of applications than the one presented here. Once our calculus is all set up, our main results will follow almost immediately. The ef\/fectiveness of the tractor calculus will be exemplif\/ied by the construction of algebraic subvarieties of $\PT$, which describe the null foliations of $\mc{Q}^n$ arising from certain solutions of conformally invariant dif\/ferential operators.

Another aim of the present article is to distil the \emph{complex} geometry contained in a number of geometrical results on \emph{real} Euclidean space and Minkowski space in dimensions three and four. In fact, our work is motivated by the f\/indings of~\cite{Nurowski2010} and~\cite{Baird2013}. In the former reference, the author recasts the problem of f\/inding pairs of analytic conjugate functions on~$\E^n$ as a problem of f\/inding closed null complex-valued $1$-forms, and arrives at a description of the solutions in terms of real hypersurfaces of~$\C^{n-1}$. The case $n=3$ is of particular interest, and is the focus of the article~\cite{Baird2013}: the kernel of a null complex $1$-form on $\E^3$ consists of a complex line distribution $\Tgt^{(1,0)} \E^3$ and the span of a real unit vector~$\bm{u}$. This complex $2$-plane distribution is in fact the orthogonal complement $\big(\Tgt^{(1,0)} \E^3\big)^\perp$ of $\Tgt^{(1,0)} \E^3$, and we can think of $\Tgt^{(1,0)} \E^3 $ as a CR-structure compatible with the conformal structure on $\E^3$ viewed as an open dense subset of $S^3$. The condition that $\big(\Tgt^{(1,0)} \E^3\big)^\perp$ be integrable is equivalent to $\bm{u}$ being tangent to a \emph{conformal foliation}, otherwise known as a \emph{shearfree congruence} of curves. To f\/ind such congruences, the authors construct the $S^2$-bundle of unit vectors over $S^3$, which turns out to be a CR hypersurface in $\CP^3$. A section of this $S^2$-bundle def\/ines a congruence of curves, and this congruence is shearfree if and only if the section is a $3$-dimensional CR submanifold.

There are three antecedents for this result:
\begin{enumerate}\itemsep=0pt
\item[1)] %\label{item1}
there is a one-to-one correspondence between local self-dual Hermitian structures on \mbox{$\E^4 {\subset} S^4$} and holomorphic sections of the $S^2$-bundle $\CP^3 \rightarrow S^4$ known as the \emph{twistor bundle} -- this is a well-known result, see, e.g., \cite{Baird2013,Baird2003, Eells1985,Hughston1988,Salamon2009};
\item[2)] %\label{item2}
there is a one-to-one correspondence between local analytic \emph{shearfree congruences of null geodesics} in Minkowski space $\M$ and certain complex hypersurfaces of its twistor space, an auxilliary space isomorphic to $\CP^3$ -- this is known as the \emph{Kerr theorem} \cite{Cox1976,Penrose1967,Penrose1986};
\item[3)] %\label{item3}
there is a one-to-one correspondence between local \emph{shearfree congruences of geodesics} in~$\E^3$ and certain holomorphic curves in its mini-twistor space, the holomorphic tangent bundle of $\CP^1 \cong S^2$~-- such congruences can also be equivalently described by \emph{harmonic morphisms} \cite{Baird1988,Tod1995,Tod1995a}.
\end{enumerate}
Statements (1) and (2) are essentially the same result once they are cast in the complexif\/ication of~$\E^4$ and~$\M$.

The analogy between statement (1) and the result of \cite{Baird2013} can be understood in the following terms: in the former case, the integrable complex null $2$-plane distribution $\Tgt^{(1,0)} \E^4$ def\/ining the Hermitian structure is \emph{totally geodetic}, i.e., $\nabla_{\bm{X}} \bm{Y} \in \Gamma \big(\Tgt^{(1,0)} \E^4\big)$ for all $\bm{X},\bm{Y} \in \Gamma \big(\Tgt^{(1,0)} \E^4\big)$. In the latter case, the condition that $\bm{u}$ be tangent to a shearfree congruence is also equivalent to the complex null line distribution $\Tgt^{(1,0)} \E^3$ being (totally) geodetic. One could also think of the integrability of both $\Tgt^{(1,0)} \E^3$ (trivially) and $\big(\Tgt^{(1,0)} \E^3\big)^\perp$ as an analogue of the integrability of~$\Tgt^{(1,0)} \E^4$.

\looseness=-1 Finally, statement (3), unlike (1) and (2), breaks conformal invariance, and the additional data f\/ixing a metric on $\E^3$ induces a reduction of the $S^2$-bundle constructed in~\cite{Baird2013} to mini-twistor space~$\Tgt S^2$ of~(3). Correspondingly, for $\bm{u}$ to be tangent to a shearfree congruence of null geodesics, both~$\Tgt^{(1,0)} \E^3$ and $\big(\Tgt^{(1,0)} \E^3\big)^\perp$ must be totally geodetic, which is not a conformally invariant condition.

The structure of the paper is as follows. Section~\ref{sec-geo-bkgd} deals with the twistor geometry of a~smooth quadric $\mc{Q}^n$ focussing mostly on the case $n=2m+1$. In particular, we give an algebraic description of the canonical distribution on its twistor space. The geometric correspondence bet\-ween~$\mc{Q}^n$ and~$\PT$ is made explicit. Propositions~\ref{prop-XiZ} and~\ref{prop-geometric-T}, and Corollary~\ref{cor-D-geom} give a~twistorial articulation of incidence relations between $\gamma$-planes in~$\mc{Q}^n$. The mini-twistor space~$\M\T$ of complex Euclidean space~$\C \E^n$ is introduced in Section~\ref{sec-co-gam}. Points in~$\C \E^n$ correspond to embedded complex submanifolds of~$\PT$ and~$\M\T$, and their normal bundles are described in Section~\ref{sec-normal}. The main results, Theorems~\ref{thm-odd-Kerr-theorem-I}, \ref{thm-odd-Kerr-theorem-II} and~\ref{thm-odd-Kerr-theorem-NC}, as outlined above, are given in Section~\ref{sec-null-foliations}. In each case, a~purely geometrical explanation precedes a computational proof. In Section~\ref{sec-exa}, we give two examples on how to relate null foliations in~$\mc{Q}^n$ to complex varieties in~$\PT$, based on certain solutions to the twistor equation, in Propositions \ref{prop-Robinson-congruence-odd} and \ref{prop-Robinson-congruence-even}, and the conformal Killing--Yano equation, in Proposition~\ref{prop-Kerr-variety-O}. We wrap up the article with Appendix~\ref{app-cover}, which contains a~description of standard open covers of twistor space and correspondence space.

\section{Twistor geometry}\label{sec-geo-bkgd}
We describe each of the three main protagonists involved in this article in turn: a smooth quadric hypersurface in projective space, its twistor space and a~correspondence space f\/ibered over them. The projective variety approach is very much along the line of \cite{Hughston1983, Penrose1986}, while the reader should consult~\cite{Baston1989,vCap2009} for the corresponding homogeneous space description.

\looseness=-1 Throughout $\V$ will denote an $(n+2)$-dimensional complex vector space. We shall make use of the following abstract index notation: elements of $\V$ and its dual $\V^*$ will carry upstairs and downstairs calligraphic upper case Roman indices respectively, i.e., $V^\mc{A} \in \V$ and \mbox{$\alpha_\mc{A} \in \V^*$}. Symmetrisation and skew-symmetrisation will be denoted by round and square brackets respectively, i.e., $\alpha_{(\mc{A} \mc{B})} = \frac{1}{2} (\alpha_{\mc{A} \mc{B}} + \alpha_{\mc{B} \mc{A}})$ and $\alpha_{[\mc{A} \mc{B}]} = \frac{1}{2} (\alpha_{\mc{A} \mc{B}} - \alpha_{\mc{B} \mc{A}})$. These conventions will apply to other types of indices used throughout this article. We shall also use Einstein's summation convention, e.g., $V^\mc{A} \alpha_\mc{A}$ will denote the natural pairing of elements of $\V$ and $\V^*$. We equip~$\V$ with a~non-degenerate symmetric bilinear form $h_{\mc{A} \mc{B}}$, by means of which $\V \cong \V^*$: indices will be lowered and raised by~$h_{\mc{A} \mc{B}}$ and its inverse~$h^{\mc{A} \mc{B}}$ respectively. We also choose a complex orientation on~$\V$, i.e., a complex volume element $\varepsilon_{\mc{A}_1 \ldots \mc{A}_{n+2}}$ in $\wedge^{n+2} \V$. We shall denote by $G$ the complex spin group $\Spin(n+2,\C)$, the two-fold cover of the complex Lie group $\SO(n+2,\C)$ preserving $h_{\mc{A} \mc{B}}$ and $\varepsilon_{\mc{A}_1 \ldots \mc{A}_{n+2}}$.

Turning now to the spinor representations of $G$, we distinguish the odd- and even-dimensional cases:
\begin{itemize}\itemsep=0pt
\item $n=2m+1$: denote by $\Ss$ the $2^{m+1}$-dimensional irreducible spinor representation of $G$. Elements of $\Ss$ will carry upstairs bold lower case Greek indices, e.g., $S^{\bm{\upalpha}} \in \Ss$, and dual elements, downstairs indices. The Clif\/ford algebra $\Cl(\V,h_{\mc{A} \mc{B}})$ is linearly isomorphic to the exterior algebra $\wedge^\bullet \V$, and, identifying $\wedge^k \V$ with $\wedge^{2m+3-k} \V$ by Hodge duality for $k=0, \ldots, m+1$, it is also isomorphic, as a matrix algebra, to the space $\End(\Ss)$ of endomorphisms of $\Ss$. It is generated by matrices, denoted $\Gamma \ind{_{\mc{A}}_{\bm{\upalpha}}^{\bm{\upgamma}}}$, which satisfy the \emph{Clifford identity}
\begin{gather}\label{eq-Clifford_twistor_odd}
 \Gamma \ind{_{(\mc{A}}_{\bm{\upalpha}}^{\bm{\upgamma}}} \Gamma \ind{_{\mc{B})}_{\bm{\upgamma}}^{\bm{\upbeta}}} = - h \ind{_{\mc{A} \mc{B}}} \delta \ind*{_{\bm{\upalpha}}^{\bm{\upbeta}}} .
\end{gather}
Here $\delta \ind*{_{\bm{\upalpha}}^{\bm{\upbeta}}}$ is the identity element on $\Ss$. There is a spin-invariant inner product on $\Ss$ denoted $\Gamma^{(0)}_{\bm{\updelta \upbeta}}\colon \Ss \times \Ss \rightarrow \C$, yielding the isomorphism $\End(\Ss) \cong \Ss \otimes \Ss$. The resulting isomorphisms $\Cl(\V,h_{\mc{A}\mc{B}}) \cong \wedge^\bullet \V \cong \Ss \otimes \Ss$ will be realised by means of the bilinear forms on $\Ss$ with values in $\wedge^k \V^*$, for $k=1, \ldots , n+2$:
\begin{gather}\label{eq-spin-bilinear-form-odd}
\Gamma \ind*{^{(k)}_{\mc{A}_1 \ldots \mc{A}_k}_{\bm{\upalpha \upbeta}}} := \Gamma \ind{_{[\mc{A}_1}_{\bm{\upalpha}}^{\bm{\upgamma_1}}} \cdots \Gamma \ind{_{\mc{A}_k]}_{\bm{\upgamma_{k-1}}}^{\bm{\updelta_k}}} \Gamma^{(0)}_{\bm{\updelta_k \upbeta}} .
\end{gather}
These are symmetric in their spinor indices when $k \equiv m+1, m+2 \pmod{4}$ and skew-symmetric otherwise.
\item $n=2m$: $G$ has two $2^m$-dimensional irreducible chiral spinor representations, which we shall denote $\Ss$ and $\Ss'$. Elements of $\Ss$ and $\Ss'$ will carry upstairs unprimed and primed lower case bold Greek indices respectively, i.e., $A^{\bm{\upalpha}} \in \Ss$ and $B^{\bm{\upalpha'}} \in \Ss'$. Dual elements will carry downstairs indices. The Clif\/ford algebra $\Cl(\V,h_{\mc{A} \mc{B}})$ is isomorphic to $\End(\Ss \oplus \Ss')$ as a matrix algebra, and, linearly, to $\wedge^\bullet \V$. We can write its generators in terms of matrices~$\Gamma \ind{_{\mc{A}}_{\bm{\upalpha}}^{\bm{\upgamma'}}}$ and~$\Gamma \ind{_{\mc{A}}_{\bm{\upalpha'}}^{\bm{\upgamma}}}$ satisfying
\begin{gather*}
 \Gamma \ind{_{(\mc{A}}_{\bm{\upalpha}}^{\bm{\upgamma'}}} \Gamma \ind{_{\mc{B})}_{\bm{\upgamma'}}^{\bm{\upbeta}}} = - h \ind{_{\mc{A} \mc{B}}} \delta \ind*{_{\bm{\upalpha}}^{\bm{\upbeta}}} , \qquad \Gamma \ind{_{(\mc{A}}_{\bm{\upalpha'}}^{\bm{\upgamma}}} \Gamma \ind{_{\mc{B})}_{\bm{\upgamma}}^{\bm{\upbeta'}}} = - h \ind{_{\mc{A} \mc{B}}} \delta \ind*{_{\bm{\upalpha'}}^{\bm{\upbeta'}}} ,
\end{gather*}
where $\delta \ind*{_{\bm{\upalpha}}^{\bm{\upbeta}}}$ and $\delta \ind*{_{\bm{\upalpha'}}^{\bm{\upbeta'}}}$ are the identity elements on $\Ss$ and $\Ss'$ respectively. There are spin-invariant bilinear forms on $\Ss\oplus\Ss'$ inducing isomorphisms $\Ss^* \cong \Ss'$, $(\Ss')^* \cong \Ss$ when $m$ is even, and $\Ss^* \cong \Ss$ and $(\Ss')^* \cong \Ss'$ when $m$ is odd, and denoted $\Gamma \ind*{^{(0)}_{\bm{\upalpha \upbeta'}}}$, $\Gamma \ind*{^{(0)}_{\bm{\upalpha' \upbeta}}}$, and $\Gamma \ind*{^{(0)}_{\bm{\upalpha \upbeta}}}$, $\Gamma \ind*{^{(0)}_{\bm{\upalpha' \upbeta'}}}$ respectively.
The resulting isomorphisms $\Cl(\V,h_{\mc{A} \mc{B}}) \cong \wedge^\bullet \V \cong (\Ss \oplus \Ss' ) \otimes (\Ss \oplus \Ss')$ are realised by $\wedge^k \V$-valued bilinear forms $\Gamma \ind*{^{(k)}_{\bm{\upalpha}\bm{\upbeta}}}$, for $k \equiv m+1 \pmod 2$, and $\Gamma \ind*{^{(k)}_{\bm{\upalpha}\bm{\upbeta'}}}$, for $k \equiv m \pmod 2$ and so on.
\end{itemize}

We work in the holomorphic category throughout.

\subsection{Smooth quadric hypersurface}\label{sec-smooth-Q}
Let us denote by $X^\mc{A}$ the position vector in $\V$, which can be viewed as standard Cartesian coordinates on $\C^{n+2}$. The equivalence class of non-zero vectors in $\V$ that projects down to the same point in the projective space $\Pp \V \cong \CP^{n+1}$ will be denoted $[ \cdot ]$, and thus $[X^\mc{A}]$ will represent homogeneous coordinates on $\Pp \V$.

The zero set of the quadratic form associated to $h_{\mc{A} \mc{B}}$ on $\V$ def\/ines a null cone $\mc{C}$ in $\V$, and the projectivisation of $\mc{C}$ def\/ines a smooth quadric hypersurface $\mc{Q}^n$ in $\Pp \V$, i.e.,
\begin{gather*}
\mc{Q}^n = \big\{ \big[X^\mc{A}\big] \in \Pp \V \colon h \ind{_{\mc{A} \mc{B}}} X^\mc{A} X^\mc{B} = 0 \big\} .
\end{gather*}
By taking a suitable cross-section of $\mc{C}$, one can identify $\mc{Q}^n$ with the complexif\/ication $\C S^n$ of the standard $n$-sphere $S^n$ in Euclidean space $\E^{n+1}$. Using the af\/f\/ine structure on $\V$, $h_{\mc{A} \mc{B}}$ can be viewed as a f\/ield of bilinear forms on $\V$ and thus on $\mc{C}$. We can then pull back $h_{\mc{A} \mc{B}}$ to $\mc{Q}^n$ along any section of $\mc{C} \rightarrow \mc{Q}^n$ to a (holomorphic) metric on $\mc{Q}^n$. Dif\/ferent sections yield conformally related metrics on $\mc{Q}^n$, i.e., a (holomorphic) conformal structure on $\mc{Q}^n$. The \emph{projective tangent space} at a point $p$ of $\mc{Q}^n$ with homogeneous coordinate $\big[P^\mc{A}\big]$ is the linear subspace
\begin{gather*}
\mbf{T}_p \mc{Q}^n := \big\{ \big[X^\mc{A}\big] \in \mc{Q}^n \colon h_{\mc{A} \mc{B}} X^\mc{A} P^\mc{B} = 0\big \} ,
\end{gather*}
which can be seen to be the closure of the (holomorphic) tangent space $\Tgt_p \mc{Q}^n$ at $p \in \mc{Q}^n$ in the usual sense. The intersection of $\mbf{T}_p \mc{Q}^n$ and $\mc{Q}^n$ is a cone through $p$, and any point lying in this cone is connected to its vertex by a line that is null with respect to the conformal structure.

To obtain the Kleinian model of $\mc{Q}^n$, we f\/ix a null vector $\mr{X}^\mc{A}$ in $\V$, and denote by $P$ the stabiliser of the line spanned by $\mr{X}^\mc{A}$ in $G$. The transitive action of $G$ on $\V$ descends to a~transitive action on $\mc{Q}^n$, and since $P$ stabilises a point in $\mc{Q}^n$, we obtain the identif\/ication $G/P \cong \mc{Q}^n$. The subgroup $P$ is a parabolic subgroup of~$G$, and its Lie algebra $\mfp$ admits a Levi decomposition, that is, a splitting $\mfp = \mfp_0 \oplus \mfp_1$, where $\mfp_0$ is the reductive Lie algebra $\so(n,\C) \oplus \C$, and $\mfp_1$ is a nilpotent part, here isomorphic to~$(\C^n)^*$. We choose a complement $\mfp_{-1}$ of $\mfp$ in~$\g$, dual to~$\mfp_1$ via the Killing form on~$\g$, so that $\g = \mfp_{-1} \oplus \mfp$. There is a unique element spanning the centre $\mfz(\mfp_0) \cong \C$ of $\mfp_0$, which acts diagonally on $\mfp_0$, $\mfp_1$ and~$\mfp_{-1}$ with eigenvalues $0$, $1$ and~$-1$ respectively. For this reason, we refer to this element as the \emph{grading element} of the splitting $\g = \mfp_{-1} \oplus \mfp_0 \oplus \mfp_1$. This splitting is compatible with the Lie bracket $[ \cdot , \cdot ] \colon \g \times \g \rightarrow \g$ on $\g$ in the sense that $[ \mfp_i , \mfp_j ] \subset \mfp_{i+j}$, with the convention that $\mfp_i = \{ 0 \}$ for $|i|>1$. In particular, it is invariant under $\mfp_0$, but not under $\mfp$. However, the f\/iltration $\mfp^1 \subset \mfp^0 \subset \mfp^{-1} := \g$, where $\mfp^1 := \mfp_1$ and $\mfp^0 := \mfp_0 \oplus \mfp_1$, \emph{is} a f\/iltration of $\mfp$-modules on $\g$, and each of the $\mfp$-modules $\mfp^{-1}/\mfp^0$, $\mfp^0/\mfp^1$ and $\mfp^1$ is linearly isomorphic to the $\mfp_0$-modules $\mfp_{-1}$, $\mfp_0$ and $\mfp_1$ respectively. These properties are most easily verif\/ied by realising $\g$ in matrix form, i.e.,
\begin{gather*}
%\left(
% \begin{BMAT}[2pt,3cm,3cm]{c.c.c.c.c}{c.c.c.c.c}
%\mfp_0 & \mfp_1 & \mfp_1 & \mfp_1 & 0 \\
%\mfp_{-1} & \mfp_0 & \mfp_0 & \mfp_0 & \mfp_1 \\
%\mfp_{-1} & \mfp_0 & 0 & \mfp_0 & \mfp_1 \\
%\mfp_{-1} & \mfp_0 & \mfp_0 & \mfp_0 & \mfp_0 \\
%0 & \mfp_{-1} & \mfp_{-1} & \mfp_0 & \mfp_0
%\end{BMAT} \right)
%\begin{BMAT}[2pt,0pt,3cm]{l}{ccccc}
%\} \scriptscriptstyle{1} \\ \} \scriptscriptstyle{m} \\ \} \scriptscriptstyle{1} \\ \} \scriptscriptstyle{m} \\ \} \scriptscriptstyle{1} \\
%\end{BMAT}
%\qquad
%\left(
% \begin{BMAT}[2pt,3cm,3cm]{c.c.c.c}{c.c.c.c}
%\mfp_0 & \mfp_1 & \mfp_1 & 0 \\
%\mfp_{-1} & \mfp_0 & \mfp_0 & \mfp_1 \\
%\mfp_{-1} & \mfp_0 & \mfp_0 & \mfp_0 \\
%0 & \mfp_{-1} & \mfp_0 & \mfp_0
%\end{BMAT}
%\right)
% \begin{BMAT}[2pt,0pt,3cm]{l}{cccc}
%\} \scriptscriptstyle{1} \\ \} \scriptscriptstyle{m} \\ \} \scriptscriptstyle{m} \\ \} \scriptscriptstyle{1} \\
%\end{BMAT}
\includegraphics{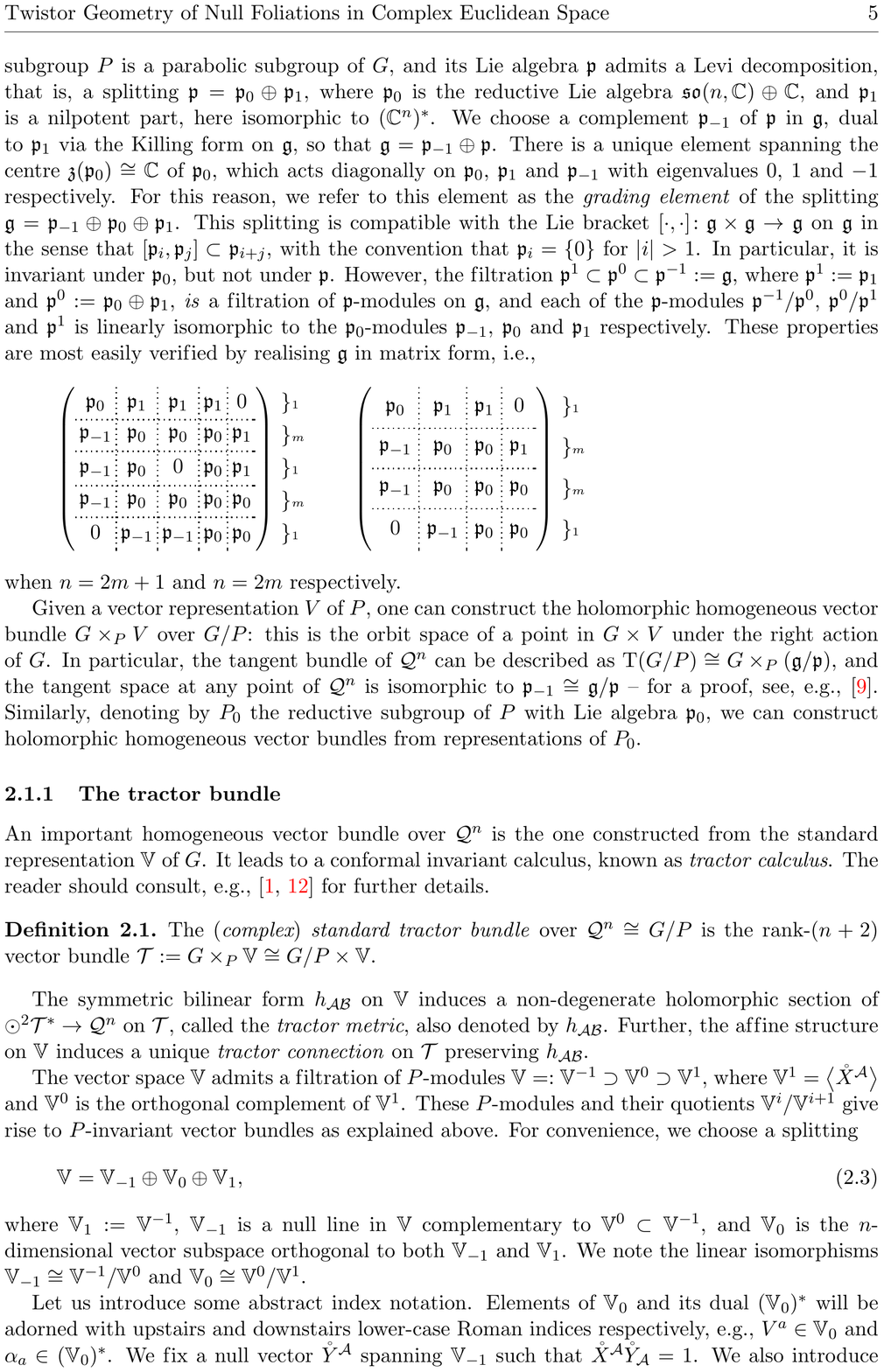}
\end{gather*}
when $n=2m+1$ and $n=2m$ respectively.

Given a vector representation $V$ of $P$, one can construct the holomorphic homogeneous vector bundle $G \times_P V$ over $G/P$: this is the orbit space of a point in $G \times V$ under the right action of $G$. In particular, the tangent bundle of $\mc{Q}^n$ can be described as $\Tgt (G/P) \cong G \times_P \left( \g/\mfp \right)$, and the tangent space at any point of $\mc{Q}^n$ is isomorphic to $\mfp_{-1} \cong \g/\mfp$~-- for a proof, see, e.g., \cite{vCap2009}. Similarly, denoting by $P_0$ the reductive subgroup of $P$ with Lie algebra $\mfp_0$, we can construct holomorphic homogeneous vector bundles from representations of $P_0$.

\subsubsection{The tractor bundle}\label{sec-tractor}
An important homogeneous vector bundle over $\mc{Q}^n$ is the one constructed from the standard representation~$\V$ of~$G$. It leads to a conformal invariant calculus, known as \emph{tractor calculus}. The reader should consult, e.g.,~\cite{Bailey1994,Curry2014} for further details.

\begin{Definition}
The \emph{$($complex$)$ standard tractor bundle} over $\mc{Q}^n \cong G/P$ is the rank-$(n+2)$ vector bundle $\mc{T} := G \times_P \V \cong G/P \times \V$.
\end{Definition}
The symmetric bilinear form $h_{\mc{A} \mc{B}}$ on $\V$ induces a non-degenerate holomorphic section of $\odot^2 \mc{T}^* \rightarrow \mc{Q}^n$ on $\mc{T}$, called the \emph{tractor metric}, also denoted by $h_{\mc{A} \mc{B}}$. Further, the af\/f\/ine structure on $\V$ induces a unique \emph{tractor connection} on $\mc{T}$ preserving $h_{\mc{A} \mc{B}}$.

The vector space $\V$ admits a f\/iltration of $P$-modules $\V =: \V^{-1} \supset \V^0 \supset \V^1$, where $\V^1 = \big\langle \mr{X}^\mc{A} \big\rangle$ and~$\V^0$ is the orthogonal complement of $\V^1$. These $P$-modules and their quotients $\V^i/\V^{i+1}$ give rise to $P$-invariant vector bundles as explained above. For convenience, we choose a splitting
\begin{align}\label{eq-split-V}
\V & = \V_{-1} \oplus \V_0 \oplus \V_1 ,
\end{align}
where $\V_1 :=\V^{-1}$, $\V_{-1}$ is a null line in $\V$ complementary to $\V^0 \subset \V^{-1}$, and $\V_0$ is the $n$-dimensional vector subspace orthogonal to both $\V_{-1}$ and $\V_1$. We note the linear isomorphisms $\V_{-1} \cong \V^{-1}/\V^0$ and $\V_0 \cong \V^0/\V^1$.

\looseness=-1 Let us introduce some abstract index notation. Elements of $\V_0$ and its dual $(\V_0)^*$ will be adorned with upstairs and downstairs lower-case Roman indices respectively, e.g., $V^a \in \V_0$ and $\alpha_a \in (\V_0)^*$. We f\/ix a null vector $\mr{Y}^\mc{A}$ spanning $\V_{-1}$ such that $\mr{X}^\mc{A} \mr{Y}_\mc{A} =1$. We also introduce the injector $\mr{Z}^\mc{A}_a \colon \V_0 \rightarrow \V$. Then, $h_{\mc{A} \mc{B}}$ restricts to a non-degenerate symmetric bilinear form $g_{ab} := \mr{Z}^\mc{A}_a \mr{Z}^\mc{B}_b h_{\mc{A} \mc{B}}$ on $\V_0$. Indices can be raised or lowered by means of $h_{\mc{A} \mc{B}}$, $g_{ab}$ and their inverses.

A geometric interpretation of $\mc{T} \rightarrow G/P$ can be found in~\cite{Curry2014} in a real setting. Here, we note that the line subbundle $G \times_P \V^1$ of $\mc{T}$ can be identif\/ied with the pull-back $\mc{O}[-1]$ of the tautological line bundle $\mc{O}(-1)$ on~$\Pp\V$ to $\mc{Q}^n$. The bundle $G \times_P \left( \V^{-1}/\V^0 \right)$ is isomorphic to the dual of $\mc{O}[-1]$, i.e., to the pullback $\mc{O}[1]$ of the hyperplane bundle $\mc{O}(1)$ on $\Pp \V$. Finally, since $\mfp_{-1} \otimes \V_1 \cong \V_0$, we have the identif\/ication $G \times_P \left( \V^0/ \V^1 \right) \cong \Tgt \mc{Q}^n \otimes \mc{O}[-1]$.

The structure sheaf of $\mc{Q}^n$ will be denoted $\mc{O}$, and the sheaf of germs of holomorphic functions on $\mc{Q}^n$ homogeneous of degree $w$ by $\mc{O}[w]$. We shall write $\mc{O}^a$ for the sheaf of germs of holomorphic sections of $\Tgt \mc{Q}^n$, and extend this notation in the obvious way to tensor products, e.g., $\mc{O}^{\mc{A}}_{ab}[w] := \mc{O}^{\mc{A}} \otimes \mc{O}_{ab} \otimes \mc{O}[w]$, and so on. In particular, the sheaf of germs of holomorphic sections of the tractor bundle $\mc{T}$ reads
\begin{gather}\label{eq-composition-series-V}
\mc{O}^\mc{A} = \mc{O}[1] + \mc{O}^a[-1] + \mc{O}[-1] .
\end{gather}
The line bundle $\mc{O}[1]$ has the geometric interpretation of the bundle of \emph{conformal scales}, and the conformal structure on $\mc{Q}^n$ can be equivalently encoded in terms of a distinguished global section $\mathbf{g}_{ab}$ of $\mc{O}_{(ab)}[2]$ called the \emph{conformal metric}. For any non-vanishing local section~$\sigma$ of~$\mc{O}[1]$, $g_{ab}=\sigma^{-2} \mathbf{g}_{ab}$ is a metric in the conformal class. A choice of metric in the conformal class is essentially equivalently to a splitting of~\eqref{eq-composition-series-V}, i.e., a choice of section $Y^\mc{A}$ of $\mc{O}^\mc{A} [-1]$ such that $Y^\mc{A} Y_\mc{A} =0$ and $X^\mc{A} Y_\mc{A} = 1$, where we view $X^\mc{A} \in \mc{O}^\mc{A} [1]$ as the Euler vector f\/ield on $\mc{C} \subset \V$. We can then choose a section $Z_a^\mc{A}$ of $\mc{O}^\mc{A}_a[1]$ satisfying $Z_a^\mc{A} Z_{b \mc{A}} = \mathbf{g}_{ab}$ and $Z_a^\mc{A} X_\mc{A} = Z_a^\mc{A} Y_\mc{A} = 0$, so that the tractor metric takes the form $h_{\mc{A} \mc{B}} = 2 X_{(\mc{A}} Y_{\mc{B})} + Z_\mc{A}^a Z_\mc{B}^b \mathbf{g}_{ab}$~-- see, e.g.,~\cite{Gover2008}. A~sec\-tion~$\Sigma^\mc{A}$ of~$\mc{O}^\mc{A}$ can be expressed as
\begin{gather}\label{eq-split-tractor}
\Sigma^\mc{A} = \sigma Y^\mc{A} + \varphi^a Z_a^\mc{A} + \rho X^\mc{A} , \qquad \text{where} \quad (\sigma,\varphi^a,\rho) \in \mc{O}[1] \oplus \mc{O}^a[-1] \oplus \mc{O}[-1].
\end{gather}
We shall denote both the tractor connection and the Levi-Civita connection of a metric in the conformal class by $\nabla_a$. The explicit formula for the tractor connection on a section~\eqref{eq-split-tractor} of~$\mc{O}^\mc{A}$ in terms of a splitting of~\eqref{eq-composition-series-V} can then be recovered from the Leibniz rule and the formulae
\begin{gather}\label{eq-progXYZ}
\nabla_a X^\mc{A} = Z^\mc{A}_a , \qquad
\nabla_a Z^\mc{A}_b = - \Rho_{ab} X^\mc{A} - \mathbf{g}_{ab} Y^\mc{A} , \qquad
\nabla_a Y^\mc{A} = \Rho \ind{_a^b} Z^\mc{A}_b ,
\end{gather}
where $\Rho_{ab}$ is the Schouten tensor of $\nabla_a$ def\/ined by the relation $2 \nabla_{[a} \nabla_{b]} V^c = 2 \Rho \ind{^c_{[a}} V \ind{_{b]}} - 2 V^d \Rho \ind{_{d[a}} \delta \ind*{_{b]}^c} $.

{\bf Complex Euclidean space.} %\label{sec-Poincare}
Most of this paper will be concerned with the geometry on $n$-dimensional complex Euclidean space $\C \E^n$ viewed as a dense open subset of $\mc{Q}^n$, i.e., $\C \E^n = \mc{Q}^n \setminus \{\infty\}$ where $\infty$ is a point at `inf\/inity' on $\C S^n \cong \mc{Q}^n$. We choose a conformal scale $\sigma \in \mc{O}[1]$ so that $g_{ab}$ is the f\/lat metric, i.e., $\Rho_{ab}=0$. To realise $\sigma$ geometrically, we use the splitting~\eqref{eq-split-V}. Then, $\C \E^n$ arises as the intersection of the af\/f\/ine hyperplane $\mc{H} := \{ X^\mc{A} \in \V \colon X^\mc{A} \mr{Y}_\mc{A} = 1 \}$ with $\mc{C}$: $\V_1 = \langle \mr{X}^\mc{A} \rangle$ descends to the origin on $\C \E^n$, and $\V_{-1} = \langle \mr{Y}^\mc{A} \rangle$ represents~$\infty$ on~$\mc{Q}^n$. The f\/lat metric $g_{ab}$ is obtained by pulling back $h_{\mc{A} \mc{B}}$ along the local section $\mc{C} \cap \mc{H}$ of $\mc{C} \rightarrow \mc{Q}^n $. Let\-ting~$\{ x^a \}$ be f\/lat coordinates on $\C \E^n$ so that $\nabla_a = \parderv{}{x^a}$, we can integrate \eqref{eq-progXYZ} explicitly to get
\begin{gather}\label{eq-Minkowski-emb}
Y^\mc{A} = \mr{Y}^\mc{A} , \qquad Z^\mc{A}_a = \mr{Z}^\mc{A}_a - g_{ab} x^b \mr{Y}^\mc{A} , \qquad X^\mc{A} = \mr{X}^\mc{A} + x^a \mr{Z}^\mc{A}_a - \tfrac{1}{2} g_{ab} x^a x^b \mr{Y}^\mc{A} .
\end{gather}
This description is also consistent with the identif\/ication of $\C\E^n$ with the tangent space at the `origin' of $\mc{Q}^n$. In this case, the coordinates $\{ x^a \}$ arise from $\mfp_{-1} \cong \V_{-1} \otimes \V_0$ via the exponential map, which provides an embedding of $\C\E^n$ into $\mc{Q}^n$, $x^a \mapsto [ X^\mc{A}]$ where $X^\mc{A}$ is given by~\eqref{eq-Minkowski-emb}. The embedding can in fact be extended to a~\emph{conformal} embedding
\begin{gather*}
\begin{split}&
\C \E^n \rightarrow \mc{C} \rightarrow \mc{Q}^n , \\
& x^a \mapsto \Omega X^\mc{A} = \Omega \mr{X}^\mc{A} + x^a \Omega \mr{Z} _a^\mc{A} - \tfrac{1}{2} \big( \Omega^2 g_{ab} x^a x^b \big) \Omega^{-1} \mr{Y}^\mc{A} \mapsto \big[X^\mc{A}\big],
\end{split}
\end{gather*}
obtained by intersecting $\mc{C}$ with the af\/f\/ine hypersurface $\mc{H}_{\Omega} := \{ X^\mc{A} \in \V \colon X^\mc{A} \mr{Y}_\mc{A} = \Omega \}$, where~$\Omega$ is a non-vanishing holomorphic function on~$\V$.

\subsubsection{The tractor spinor bundle}
We can play the same game by considering bundles over $\mc{Q}^n$ arising from the spinor representations of $G=\Spin(n+2,\C)$. Again, we distinguish the odd- and even-dimensional cases.

{\bf Odd dimensions.} Assume $n=2m+1$.
\begin{Definition}The \emph{tractor spinor bundle} and \emph{dual tractor spinor bundle} over $\mc{Q}^n \cong G/P$ are the holomorphic homogeneous vector bundles $\mc{S} := G \times_{P} \Ss$ and $\mc{S}^* := G \times_{P} \Ss^*$ respectively.
\end{Definition}
The generators $\Gamma \ind{_{\mc{A}}_{\bm{\upalpha}}^{\bm{\upbeta}}}$ of the Clif\/ford algebra $(\V, h_{\mc{A} \mc{B}})$ induce holomorphic sections of $\mc{T}^* \otimes \mc{S}^* \otimes \mc{S}$ on $\mc{Q}^n$, which we shall also denote by $\Gamma \ind{_{\mc{A}}_{\bm{\upalpha}}^{\bm{\upbeta}}}$. The tractor connection on $\mc{Q}^n$ extends to a~\emph{tractor spinor connection} on~$\mc{S}$ preserving $\Gamma \ind{_{\mc{A}}_{\bm{\upalpha}}^{\bm{\upbeta}}}$, and thus $h_{\mc{A} \mc{B}}$.

There is a f\/iltration of $P$-submodules $\Ss =: \Ss^{-\frac{1}{2}} \supset \Ss^{\frac{1}{2}}$. These $P$-modules and their quotients give rise to $P$-invariant vector bundles on $\mc{Q}^n$ in the standard way. The splitting \eqref{eq-split-V} of $\V$ induces a splitting
\begin{gather}\label{eq-S->S1/2-odd}
\Ss \cong \Ss_{-\frac{1}{2}} \oplus \Ss_{\frac{1}{2}} ,
\end{gather}
where $\Ss_{\frac{1}{2}} \cong \V_1 \otimes \Ss_{-\frac{1}{2}}$, and we can identify $\Ss_{-\frac{1}{2}}$, and thus $\Ss_{\frac{1}{2}}$, as the spinor representation for $(\V_0,g_{ab})$. Similar considerations apply to $\Ss^*$. See, e.g., \cite{Harnad1992,Harnad1995} for details.

Elements of $\Ss_{\pm\frac{1}{2}}$ will carry bold upper case Roman indices, e.g., $\xi^\mbf{A} \in \Ss_{\pm\frac{1}{2}}$. The Clif\/ford algebra generators $\gamma \ind{_a_{\mbf{A}}^{\mbf{B}}}$ satisfy $\gamma \ind{_{(a}_{\mbf{A}}^{\mbf{C}}} \gamma \ind{_{b)}_{\mbf{C}}^{\mbf{B}}} = - g_{ab} \delta \ind*{_{\mbf{A}}^{\mbf{B}}}$, where $\delta \ind*{_{\mbf{A}}^{\mbf{B}}}$ is the identity on $\Ss_{\pm\frac{1}{2}}$. There is a spin-invariant bilinear form $\gamma \ind*{^{(0)}_{\mbf{AB}}}$ on $\Ss_{\pm\frac{1}{2}}$, by means of which we can def\/ine bilinear forms
\begin{gather*}
\gamma \ind*{^{(k)}_{a_1 \ldots a_k}_{\mbf{A} \mbf{B}}} := \gamma \ind{_{[a_1}_{\mbf{A}}^{\mbf{C_1}}} \cdots \gamma \ind{_{a_k]}_{\mbf{C_{k-1}}}^{\mbf{C_k}}} \gamma \ind*{^{(0)}_{\mbf{C_k} \mbf{B}}} ,
\end{gather*}
from $\Ss_{\pm\frac{1}{2}} \times \Ss_{\pm\frac{1}{2}}$ to $\wedge^k \V_0$ for $k=1, \ldots, n$. We introduce projectors $\mr{O}_{\bm{\upalpha}}^{\mbf{A}}\colon \Ss \rightarrow \Ss_{-\frac{1}{2}}$ and \mbox{$\mr{I}_{\bm{\upalpha}}^{\mbf{A}} \colon \Ss \rightarrow \Ss_{\frac{1}{2}}$}, and injectors $\mr{I}^{\bm{\upalpha}}_{\mbf{A}}\colon \Ss_{-\frac{1}{2}} \rightarrow \Ss$ and $\mr{O}^{\bm{\upalpha}}_{\mbf{A}}\colon \Ss_{\frac{1}{2}} \rightarrow \Ss$, which satisfy $\mr{O}_{\bm{\upalpha}}^{\mbf{B}} \mr{I}^{\bm{\upalpha}}_{\mbf{A}}= \delta_\mbf{A}^\mbf{B}$ and $\mr{O}_{\bm{\upalpha}}^{\mbf{A}} \mr{I}^{\bm{\upbeta}}_{\mbf{A}} + \mr{I}_{\bm{\upalpha}}^{\mbf{A}} \mr{O}^{\bm{\upbeta}}_{\mbf{A}} = \delta_{\bm{\upalpha}}^{\bm{\upbeta}}$. Then one can check that the relation between $\Gamma \ind{_{\mc{A}}_{\bm{\upalpha}}^{\bm{\upbeta}}} $ and $\gamma \ind{_a_{\mbf{A}}^{\mbf{B}}}$ is given by
\begin{gather}\label{eq-Gam2gam-odd}
\Gamma \ind{_{\mc{A}}_{\bm{\upalpha}}^{\bm{\upbeta}}} = \mr{Z}_\mc{A}^a \big( \mr{O}_{\bm{\upalpha}}^{\mbf{A}} \mr{I}^{\bm{\upbeta}}_{\mbf{B}} \gamma \ind{_a_{\mbf{A}}^{\mbf{B}}} - \mr{I}_{\bm{\upalpha}}^{\mbf{A}} \mr{O}^{\bm{\upbeta}}_{\mbf{B}} \gamma \ind{_a_{\mbf{A}}^{\mbf{B}}} \big) + \sqrt{2} \mr{Y}_\mc{A} \mr{O}_{\bm{\upalpha}}^{\mbf{A}} \mr{O}^{\bm{\upbeta}}_{\mbf{A}} - \sqrt{2} \mr{X}_\mc{A} \mr{I}_{\bm{\upalpha}}^{\mbf{A}} \mr{I}^{\bm{\upbeta}}_{\mbf{A}} .
\end{gather}

Sheaves of germs of holomorphic sections of $G \times_P \big( \Ss^{-\frac{1}{2}} / \Ss^{-\frac{1}{2}}\big)$ will be denoted $\mc{O}^{\mbf{A}}$, and we shall write $\mc{O}^{\mbf{A}} [-1]:= \mc{O}^{\mbf{A}} \otimes \mc{O} [-1]$, and similarly for dual bundles in the obvious way. In particular, the sheaves of germs of holomorphic sections of $\mc{S}$ and its dual are given by
\begin{gather}\label{eq-tractor-spin-bundle-odd}
\mc{O}^{\bm{\upalpha}} = \mc{O}^{\mbf{A}} + \mc{O}^{\mbf{A}} [-1] , \qquad
\mc{O}_{\bm{\upalpha}} = \mc{O}_{\mbf{A}} [1] + \mc{O}_{\mbf{A}} ,
\end{gather}
respectively. The splitting of \eqref{eq-tractor-spin-bundle-odd} can be realised by means of injectors/projectors $O_{\bm{\upalpha}}^{\mbf{A}} \in \mc{O}_{\bm{\upalpha}}^{\mbf{A}}$, $I_{\bm{\upalpha}}^{\mbf{A}} \in \mc{O}_{\bm{\upalpha}}^{\mbf{A}} [-1]$, $O^{\bm{\upalpha}}_{\mbf{A}} \in \mc{O}^{\bm{\upalpha}}_{\mbf{A}} [1]$ and $I^{\bm{\upalpha}}_{\mbf{A}} \in \mc{O}^{\bm{\upalpha}}_{\mbf{A}}$, such that $O_{\bm{\upalpha}}^{\mbf{A}} I^{\bm{\upalpha}}_{\mbf{B}} = \delta_{\mbf{B}}^{\mbf{A}}$, $I_{\bm{\upalpha}}^{\mbf{A}} O^{\bm{\upalpha}}_{\mbf{B}} = \delta_{\mbf{B}}^{\mbf{A}}$, and $O_{\bm{\upalpha}}^{\mbf{A}} I_{\mbf{A}}^{\bm{\upbeta}} + I_{\bm{\upalpha}}^{\mbf{A}} O_{\mbf{A}}^{\bm{\upbeta}} = \delta_{\bm{\upalpha}}^{\bm{\upbeta}}$, while all the other pairings are zero. In particular, we shall express a~section of $\mc{O}^{\bm{\upalpha}}$ as
\begin{gather*}
\Xi^{\bm{\upalpha}} = I^{\bm{\upalpha}}_{\mbf{A}} \xi^{\mbf{A}} + O^{\bm{\upalpha}}_{\mbf{A}} \zeta^{\mbf{A}} , \qquad \text{where} \quad \big(\xi^{\mbf{A}} , \zeta^{\mbf{A}}\big) \in \mc{O}^{\mbf{A}} + \mc{O}^{\mbf{A}} [-1],
\end{gather*}
and similarly for dual tractor spinors.

By abuse of notation, the connection on $\mc{S}$ and the spin connection associated to a metric in the conformal class will both be denoted $\nabla_a$. They satisfy
\begin{alignat}{3}
& \nabla_a O_{\bm{\upalpha}}^{\mbf{A}} = - \frac{1}{\sqrt{2}} \bm{\upgamma} \ind{_{a {\mbf{B}}} ^{\mbf{A}}} I_{\bm{\upalpha}}^{\mbf{B}} , \qquad &&
\nabla_a I_{\bm{\upalpha}}^{\mbf{A}} = - \frac{1}{\sqrt{2}} \Rho \ind{_{a b}} \bm{\upgamma} \ind{^b _{\mbf{B}} ^{\mbf{A}}} O_{\bm{\upalpha}}^{\mbf{B}} , & \nonumber\\
& \nabla_a O^{\bm{\upalpha}}_{\mbf{A}} = \frac{1}{\sqrt{2}} \bm{\upgamma} \ind{_{a {\mbf{A}}} ^{\mbf{B}}} I^{\bm{\upalpha}}_{\mbf{B}} , \qquad &&
\nabla_a I^{\bm{\upalpha}}_{\mbf{A}} = \frac{1}{\sqrt{2}} \Rho \ind{_{a b}} \bm{\upgamma} \ind{^b _{\mbf{A}} ^{\mbf{B}}} O^{\bm{\upalpha}}_{\mbf{B}}, &\label{eq-prop-OI-odd}
\end{alignat}
where $\bm{\upgamma} \ind{_a_{\mbf{A}}^{\mbf{B}}} \in \mc{O} \ind{_a_{\mbf{A}}^{\mbf{B}}} [1]$ satisfy $\bm{\upgamma}\ind{_{(a}_{\mbf{A}}^{\mbf{C}}} \bm{\upgamma} \ind{_{b)}_{\mbf{C}}^{\mbf{B}}} = - \mbf{g}_{ab} \delta \ind*{_{\mbf{A}}^{\mbf{B}}}$. The bundle analogue of~\eqref{eq-Gam2gam-odd} is
\begin{gather*}
\Gamma \ind{_{\mc{A}}_{\bm{\upalpha}}^{\bm{\upbeta}}} = Z_\mc{A}^a \big( O_{\bm{\upalpha}}^{\mbf{A}} I^{\bm{\upbeta}}_{\mbf{B}} \bm{\upgamma} \ind{_a_{\mbf{A}}^{\mbf{B}}} - I_{\bm{\upalpha}}^{\mbf{A}} O^{\bm{\upbeta}}_{\mbf{B}} \bm{\upgamma} \ind{_a_{\mbf{A}}^{\mbf{B}}} \big) + \sqrt{2} Y_\mc{A} O_{\bm{\upalpha}}^{\mbf{A}} O^{\bm{\upbeta}}_{\mbf{A}} - \sqrt{2} X_\mc{A} I_{\bm{\upalpha}}^{\mbf{A}} I^{\bm{\upbeta}}_{\mbf{A}} .
\end{gather*}

With a choice of conformal scale $\sigma \in \mc{O}[1]$ for which $g_{ab}=\sigma^{-2} \mbf{g}_{ab}$ is f\/lat, i.e., $\Rho_{ab}=0$, equations~\eqref{eq-prop-OI-odd} can be integrated explicitly to give
\begin{gather*}
I^{\bm{\upalpha}}_{\mbf{A}} = \mr{I}^{\bm{\upalpha}}_{\mbf{A}} , \qquad O^{\bm{\upalpha}}_{\mbf{A}} = \mr{O}^{\bm{\upalpha}}_{\mbf{A}} + \frac{1}{\sqrt{2}} x^a \gamma \ind{_{a {\mbf{A}}} ^{\mbf{B}}} \mr{I}^{\bm{\upalpha}}_{\mbf{B}} ,\qquad
I_{\bm{\upalpha}}^{\mbf{A}} = \mr{I}_{\bm{\upalpha}}^{\mbf{A}} , \qquad O_{\bm{\upalpha}}^{\mbf{A}} = \mr{O}_{\bm{\upalpha}}^{\mbf{A}} - \frac{1}{\sqrt{2}} x^a \gamma \ind{_{a {\mbf{A}}} ^{\mbf{B}}} \mr{I}_{\bm{\upalpha}}^{\mbf{B}} ,
\end{gather*}
where $\gamma \ind{_{a {\mbf{A}}} ^{\mbf{B}}} = \sigma^{-1} \bm{\upgamma} \ind{_{a {\mbf{A}}} ^{\mbf{B}}}$.

{\bf Even dimensions.} When $n=2m$, the story is similar, except that, by virtue of the two chiral spinor representations, we have an unprimed tractor spinor bundle and a primed tractor spinor bundle, def\/ined as $\mc{S} := G \times_{P} \Ss$ and $\mc{S}' := G \times_{P} \Ss'$ respectively. We shall view the genera\-tors~$\Gamma \ind{_{\mc{A}}_{\bm{\upalpha}}^{\bm{\upbeta'}}}$ and $\Gamma \ind{_{\mc{A}}_{\bm{\upalpha'}}^{\bm{\upbeta}}}$ as holomorphic sections of $\mc{T}^* \otimes \mc{S}^* \otimes \mc{S}'$ and $\mc{T}^* \otimes (\mc{S}')^* \otimes \mc{S}$ respectively on~$\mc{Q}^n$, both of which are preserved by the extension of the tractor connection to~$\mc{S}\oplus \mc{S}'$.

The spinor spaces $\Ss$ and $\Ss'$ admit f\/iltrations of $P$-submodules $\Ss =: \Ss^{-\frac{1}{2}} \supset \Ss^{\frac{1}{2}}$ and $\Ss' =: {\Ss'}^{-\frac{1}{2}} \supset {\Ss'}^{\frac{1}{2}}$. These $P$-modules and their quotients give rise to $P$-invariant vector bundles on $\mc{Q}^n$ in the standard way. The splitting \eqref{eq-split-V} on $\V$ induces a splitting of these f\/iltrations
\begin{gather}\label{eq-S->S1/2-even}
\Ss \cong \Ss_{-\frac{1}{2}} \oplus \Ss_{\frac{1}{2}} , \qquad \Ss' \cong \Ss_{-\frac{1}{2}}' \oplus \Ss_{\frac{1}{2}}' ,
\end{gather}
where $\Ss'_{\frac{1}{2}} \cong \V_1 \otimes \Ss_{-\frac{1}{2}}$ and $\Ss_{\frac{1}{2}} \cong \V_1 \otimes \Ss'_{-\frac{1}{2}}$, and we can identify $\Ss_{-\frac{1}{2}}$ and $\Ss'_{-\frac{1}{2}}$, and thus $\Ss'_{\frac{1}{2}}$ and $\Ss_{\frac{1}{2}}$, as the chiral spinor representations of $(\V_0,g_{ab})$. Elements of $\Ss_{-\frac{1}{2}}$ and $\Ss_{\frac{1}{2}}$ will carry unprimed and primed upper case Roman indices respectively, e.g., $\eta^{\mbf{A}}\in \Ss_{\frac{1}{2}}$ and $\xi^{\mbf{A'}} \in \Ss_{-\frac{1}{2}}$. The generators of the Clif\/ford algebra are matrices denoted $\gamma \ind{_a_{\mbf{A}}^{\mbf{B'}}}$ and $\gamma \ind{_a_{\mbf{B'}}^{\mbf{A}}}$, satisfying the Clif\/ford identities
$\gamma \ind{_{(a}_{\mbf{A}}^{\mbf{C'}}} \gamma \ind{_{b)}_{\mbf{C'}}^{\mbf{B}}} = - g_{ab} \delta \ind*{_{\mbf{A}}^{\mbf{B}}}$ and $\gamma \ind{_{(a}_{\mbf{A'}}^{\mbf{C}}} \gamma \ind{_{b)}_{\mbf{C}}^{\mbf{B'}}} = - g_{ab} \delta \ind*{_{\mbf{A'}}^{\mbf{B'}}}$, where $\delta \ind*{_{\mbf{A'}}^{\mbf{B'}}}$ and $\delta \ind*{_{\mbf{A}}^{\mbf{B}}}$ are the identity elements on $\Ss_{-\frac{1}{2}}$ and $\Ss_{\frac{1}{2}}$ respectively. We also obtain spin invariant bilinear forms $\gamma \ind*{^{(k)}_{\mbf{A'B'}}}$, $\gamma \ind*{^{(k)}_{\mbf{AB}}}$ and $\gamma \ind*{^{(k)}_{\mbf{AB'}}}$. The story for $\Ss'$ is similar.

We introduce projectors $\mr{O}_{\bm{\upalpha}}^{\mbf{A}}$, $\mr{I}_{\bm{\upalpha}}^{\mbf{A'}}$ and injectors $\mr{I}^{\bm{\upalpha}}_{\mbf{A}}$ and $\mr{O}^{\bm{\upalpha}}_{\mbf{A'}}$ for the splitting \eqref{eq-S->S1/2-even}, normalised in the obvious way.
The relation between the generators of the Clif\/ford algebra $\Cl(\V,h_{\mc{A}\mc{B}})$ and those of $\Cl(\V_0,g_{ab})$ is then given by
\begin{gather*}
\Gamma \ind{_{\mc{A}}_{\bm{\upalpha}}^{\bm{\upbeta'}}} = \mr{Z}_\mc{A}^a \big( \mr{O}_{\bm{\upalpha}}^{\mbf{A}} \mr{I}^{\bm{\upbeta'}}_{\mbf{B'}} \gamma \ind{_a_{\mbf{A}}^{\mbf{B'}}} - \mr{I}_{\bm{\upalpha}}^{\mbf{A'}} \mr{O}^{\bm{\upbeta'}}_{\mbf{B}} \gamma \ind{_a_{\mbf{A'}}^{\mbf{B}}} \big) + \sqrt{2} \mr{Y}_\mc{A} \mr{O}_{\bm{\upalpha}}^{\mbf{A}} \mr{O}^{\bm{\upbeta'}}_{\mbf{A}} - \sqrt{2} \mr{X}_\mc{A} \mr{I}_{\bm{\upalpha}}^{\mbf{A'}} \mr{I}^{\bm{\upbeta'}}_{\mbf{A'}} ,
\end{gather*}
and similar for $\Gamma \ind{_{\mc{A}}_{\bm{\upalpha'}}^{\bm{\upbeta}}}$ by interchanging primed and unprimed indices.

These algebraic objects extend to weighted tensor or spinor f\/ields just as in odd dimensions in the obvious way and notation. In particular, we have composition series of the unprimed and primed tractor spinor bundles:
\begin{alignat*}{3}
& \mc{O}^{\bm{\upalpha}} = \mc{O}^{\mbf{A}} + \mc{O}^{{\mbf{A'}}} [-1], \qquad && \mc{O}^{{\bm{\upalpha'}}} = \mc{O}^{{\mbf{A'}}} + \mc{O}^{\mbf{A}} [-1], &\\
& \mc{O}_{\bm{\upalpha}} = \mc{O}_{{\mbf{A'}}} [1] + \mc{O}_{\mbf{A}}, \qquad && \mc{O}_{{\bm{\upalpha'}}} = \mc{O}_{\mbf{A}} [1] + \mc{O}_{{\mbf{A'}}} .&
\end{alignat*}

\subsection{Twistor space}\label{sec-tw-sp}
The linear subspaces of $\mc{Q}^n$ can be described in terms of representations of $G=\Spin(n+2,\C)$. We shall be interested in those of maximal dimension, arising from maximal totally null vector subspaces of $(\V,h_{\mc{A} \mc{B}})$. In even dimensions, the complex orientation on $\V$ determines the duality of the corresponding linear subspaces, via Hodge duality, which are then described as either self-dual or anti-self-dual.

\begin{Definition}
An $m$-dimensional linear subspace of $\mc{Q}^{2m+1}$ is called a \emph{$\gamma$-plane}. A self-dual, respectively, anti-self-dual, $m$-dimensional linear subspace of $\mc{Q}^{2m}$ is called an \emph{$\alpha$-plane}, respectively, a \emph{$\beta$-plane}.

We call the space of all $\gamma$-planes in $\mc{Q}^{2m+1}$ the \emph{twistor space} of $\mc{Q}^{2m+1}$, and denote it by~$\PT_{(2m+1)}$. The space of all $\alpha$-planes, respectively, $\beta$-planes in $\mc{Q}^{2m}$ will be called the \emph{twistor space} $\PT_{(2m)}$, respectively, the \emph{primed twistor space} $\PT'_{(2m)}$.

A point in $\PT$ will be referred to as a \emph{twistor}.
\end{Definition}

We shall often write $\PT$ and $\PT'$ for $\PT_{(2m+1)}$ or $\PT_{(2m)}$, and $\PT'_{(2m)}$ respectively. We now distinguish the odd- and even-dimensional cases.

\subsubsection{Odd dimensions}
Assume $n=2m+1$. Let $ Z^{\bm{\upalpha}}$ be a non-zero spinor in $\Ss$, and def\/ine the linear map
\begin{gather}\label{eq-spinor-map}
Z_\mc{A}^{\bm{\upalpha}} := \Gamma \ind{_{\mc{A}}_{\bm{\upbeta}}^{\bm{\upalpha}}} Z^{\bm{\upbeta}} \colon \ \V \rightarrow \Ss .
\end{gather}
By \eqref{eq-Clifford_twistor_odd}, the kernel of \eqref{eq-spinor-map} is a~totally null vector subspace of~$\V$, and if it is non-trivial, descends to a linear subspace of $\mc{Q}^n$.

\begin{Definition}We say that a non-zero spinor $Z^{\bm{\upalpha}}$ in $\Ss$ is \emph{pure} if the kernel of $Z_\mc{A}^{\bm{\upalpha}} := \Gamma \ind{_{\mc{A}}_{\bm{\upbeta}}^{\bm{\upalpha}}} Z^{\bm{\upbeta}}$ has maximal dimension $m+1$.
\end{Definition}

The $(m+1)$-dimensional totally null subspace of $\V$ associated in this way to a pure spinor descends to a $\gamma$-plane in $\mc{Q}^n$. Clearly, any two pure spinors dif\/fering by a factor give rise to the same $\gamma$-plane. Further, one can show that any $\gamma$-plane in $\mc{Q}^n$ arises from a pure spinor up to scale. Hence,
\begin{Proposition}[\cite{Cartan1981}]
The twistor space $\PT$ of $\mc{Q}^{2m+1}$ is isomorphic to the projectivisation of the space of all pure spinors in $\Ss$.
\end{Proposition}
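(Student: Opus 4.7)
The plan is to exhibit the map $[Z^{\bm{\upalpha}}] \mapsto [\ker Z_{\mc{A}}^{\bm{\upalpha}}]$ as an isomorphism from the projectivisation of the variety of pure spinors to $\PT$, in three stages.

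First I would verify that for every non-zero $Z^{\bm{\upalpha}} \in \Ss$, the kernel of $Z_{\mc{A}}^{\bm{\upalpha}} = \Gamma\ind{_\mc{A}_{\bm{\upbeta}}^{\bm{\upalpha}}} Z^{\bm{\upbeta}}$ is totally null in $(\V, h_{\mc{A}\mc{B}})$. Contracting \eqref{eq-Clifford_twistor_odd} with $V^{\mc{A}} W^{\mc{B}} Z^{\bm{\upalpha}}$ for arbitrary $V, W \in \V$ yields
\begin{gather*}
V^{\mc{A}} W^{\mc{B}} \Gamma\ind{_{(\mc{A}}_{\bm{\upalpha}}^{\bm{\upgamma}}} \Gamma\ind{_{\mc{B})}_{\bm{\upgamma}}^{\bm{\upbeta}}} Z^{\bm{\upalpha}} = - h_{\mc{A}\mc{B}} V^{\mc{A}} W^{\mc{B}} Z^{\bm{\upbeta}},
\end{gather*}
and whenever $V, W \in \ker Z_{\mc{A}}^{\bm{\upalpha}}$ the left-hand side vanishes, forcing $h_{\mc{A}\mc{B}} V^{\mc{A}} W^{\mc{B}} = 0$ since $Z^{\bm{\upbeta}} \neq 0$. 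Because the maximal totally null subspaces of $\V$ have dimension $m+1$, the kernel of a pure spinor realises this bound and projectivises to a $\gamma$-plane in $\mc{Q}^n$. The map is plainly invariant under rescaling of $Z^{\bm{\upalpha}}$, so it is well-defined on $\Pp$(pure spinors).

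For surjectivity, given an $(m+1)$-dimensional totally null $N \subset \V$, pick a complementary maximal totally null subspace $N'$ and the one-dimensional non-degenerate complement $L = (N \oplus N')^{\perp}$, so $\V = N \oplus N' \oplus L$. Under the Chevalley--Cartan realisation of $\Ss$ as $\wedge^\bullet N$ as a $\Cl(\V)$-module---with $v \in N$ acting by exterior multiplication, $v' \in N'$ by the contraction induced from $h_{\mc{A}\mc{B}}|_{N \otimes N'}$, and a unit vector in $L$ by the ensuing chirality operator---the line $\wedge^{m+1} N \subset \Ss$ is annihilated by Clifford multiplication by every $v \in N$. Hence $N \subseteq \ker Z_{\mc{A}}^{\bm{\upalpha}}$ for $Z^{\bm{\upalpha}}$ a generator of this line, and by the first step together with the maximality of $N$ among totally null subspaces, equality must hold, so $Z^{\bm{\upalpha}}$ is pure with kernel $N$.

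Injectivity follows from the same model: the annihilator of $N$ in $\Ss$, namely $\{Z^{\bm{\upalpha}} \in \Ss : V^{\mc{A}} \Gamma\ind{_\mc{A}_{\bm{\upbeta}}^{\bm{\upalpha}}} Z^{\bm{\upbeta}} = 0 \text{ for all } V \in N\}$, coincides with $\wedge^{m+1} N$, which is one-dimensional. Two pure spinors with the same kernel must therefore be proportional, so the map descends to a bijection on projective spaces. The construction is manifestly holomorphic and $\Spin(n+2,\C)$-equivariant, yielding the stated isomorphism of projective varieties. The main technical obstacle is the Chevalley--Cartan identification of $\Ss$ with $\wedge^\bullet N$ in odd dimensions, where the extra summand $L$ must be handled with care to ensure the one-dimensionality of the $N$-annihilator; this is a standard, but non-automatic, piece of Clifford algebra bookkeeping, and one could alternatively bypass it by comparing stabilisers, using transitivity of $\Spin(n+2,\C)$ on both maximal totally null subspaces of $\V$ and non-zero pure spinors up to scale.
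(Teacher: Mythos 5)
Your proof is correct and follows the same route the paper sketches before citing \cite{Cartan1981}: the kernel of a pure spinor is a maximal totally null $(m+1)$-plane by the Clifford identity, scalar multiples give the same plane, and conversely every maximal totally null plane arises from a unique pure spinor up to scale. Where the paper merely asserts the surjectivity and injectivity and defers to Cartan, you supply the standard Chevalley--Cartan Fock-space argument --- the same model the paper itself invokes shortly afterwards to parametrise $\PT$ --- so there is no genuine divergence of method.
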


Every non-zero spinor in $\Ss$ is pure when $m=1$. Let us recall that the $\Gamma \ind*{^{(k)}_{\bm{\upalpha \upbeta}}}$ in the next theorem denote the spin bilinear forms def\/ined by \eqref{eq-spin-bilinear-form-odd}.
\begin{Theorem}[\cite{Cartan1981}]
When $m>1$, a non-zero spinor $Z^{\bm{\upalpha}}$ in $\Ss$ is pure if and only if it satisfies
\begin{gather}\label{eq-pure-spinor-odd}
\Gamma \ind*{^{(k)}_{\bm{\upalpha \upbeta}}} Z^{\bm{\upalpha}} Z^{\bm{\upbeta}} = 0 , \qquad \text{for all} \quad k < m+1 , \quad k \equiv m+2,m+1 \pmod{4},
\end{gather}
and $\Gamma \ind*{^{(m+1)}_{\bm{\upalpha \upbeta}}} Z^{\bm{\upalpha}} Z^{\bm{\upbeta}} \neq 0$.
\end{Theorem}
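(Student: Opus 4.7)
The strategy is to analyse the squared spinor $Z^{\bm{\upalpha}} Z^{\bm{\upbeta}}$ via the isomorphism $\Ss \otimes \Ss \cong \wedge^\bullet \V^*$ realised by the bilinear forms of~\eqref{eq-spin-bilinear-form-odd}. Since $Z \otimes Z$ is symmetric in its spinor indices, only those degrees $k$ for which $\Gamma \ind*{^{(k)}_{\mc{A}_1 \ldots \mc{A}_k}_{\bm{\upalpha \upbeta}}}$ is spinor-symmetric, i.e., $k \equiv m+1, m+2 \pmod 4$, can contribute. Setting $\omega_k(Z) := \Gamma \ind*{^{(k)}_{\mc{A}_1 \ldots \mc{A}_k}_{\bm{\upalpha \upbeta}}} Z^{\bm{\upalpha}} Z^{\bm{\upbeta}}$ for these degrees, the content of the theorem is that $Z$ is pure if and only if $\omega_k(Z) = 0$ for all $k < m+1$ and $\omega_{m+1}(Z) \neq 0$.

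For the forward implication I would invoke the classical fact that the non-zero pure spinors in $\Ss$ form a single $\Spin(n+2,\C)$-orbit, whose projectivisation is the highest-weight orbit in $\Pp \Ss$. Since the squaring map and each $\omega_k$ are $\Spin$-covariant, it suffices to verify the vanishing conditions at one conveniently chosen pure spinor. I would fix a maximal totally null $(m+1)$-plane $W \subset \V$ with complementary null $W^*$, realise $\Ss \cong \wedge^\bullet W$ with Clifford action given by interior product by elements of $W$ and exterior product by elements of $W^*$, and take $Z$ to be the vacuum $1 \in \wedge^0 W$; its null space is then precisely $W$. A direct computation of the $k$-fold antisymmetrised Clifford action on $Z$ shows that the spin-invariant pairing $\Gamma^{(0)}$ vanishes against any element of $\wedge^{<m+1} W$ and is non-zero on $\wedge^{m+1} W$; hence $\omega_k(Z) = 0$ for $k \leq m$ while $\omega_{m+1}(Z)$ is proportional to the volume form of $W^*$.

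For the converse, let $N(Z) := \{V^{\mc{A}} \in \V \colon V^{\mc{A}} \Gamma \ind{_{\mc{A}}_{\bm{\upbeta}}^{\bm{\upalpha}}} Z^{\bm{\upbeta}} = 0\}$ denote the null space of $Z$. The Clifford identity~\eqref{eq-Clifford_twistor_odd} immediately implies $N(Z)$ is totally null, so $\dim N(Z) \leq m+1$, with equality characterising purity. The link to the $\omega_k$ comes from Clifford-multiplying $Z \otimes Z$ by a vector $V^{\mc{A}}$ and re-expanding via~\eqref{eq-Clifford_twistor_odd}: each $\iota_V \omega_k(Z)$ is a bilinear combination of $\Gamma^{(k \pm 1)}(V \cdot Z, Z)$, so $V \in N(Z)$ forces $\iota_V \omega_k(Z) = 0$ for every $k$. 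Under the hypotheses of the theorem this reduces to $\iota_V \omega_{m+1}(Z) = 0$, whence purity is equivalent to the non-zero $(m+1)$-form $\omega_{m+1}(Z)$ being \emph{simple} (decomposable), since then its annihilator subspace has exactly dimension $m+1$.

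The main obstacle is establishing the simplicity of $\omega_{m+1}(Z)$ from the vanishing of the lower-degree $\omega_k$. I would proceed by induction on $m$, the base case $m=1$ being trivial since every non-zero spinor of $\Spin(5,\C)$ is pure. For the inductive step, the non-vanishing of $\omega_{m+1}(Z)$ combined with the kernel description above produces at least one non-zero $V \in N(Z)$; passing to the reduced Clifford algebra on the quotient $V^\perp / \langle V \rangle$ of dimension $n-2$ and tracking how $Z$ descends to a spinor $\tilde Z$ and how each $\Gamma^{(k)}$ restricts, one verifies that $\tilde Z$ inherits the analogous vanishing conditions one level lower. By induction $\tilde Z$ is pure with an $m$-dimensional null space in $V^\perp / \langle V \rangle$, and lifting back yields the required $(m+1)$-dimensional null space for $Z$ in $\V$. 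Alternatively, one can work throughout in the Fock realisation of the forward direction and check directly, via a Fierz-type expansion of $Z^{\bm\upalpha} Z^{\bm\upbeta} Z^{\bm\upgamma} Z^{\bm\updelta}$, that the quadratic equations $\omega_k(Z) = 0$ with $k < m+1$ cut out precisely the pure spinor variety in $\Pp \Ss$, as originally done in~\cite{Cartan1981}.
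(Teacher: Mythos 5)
The paper gives no proof of this result: it is quoted from Cartan's book (\cite{Cartan1981}) as a classical fact, and only restated later in Fock components as the relations~\eqref{eq-purity-cond-twistor}, again with a citation rather than an argument. So there is no paper proof to compare against, and your proposal must stand on its own.

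Your forward direction is fine: the observation that only the spinor-symmetric $\Gamma^{(k)}$ (those with $k\equiv m+1,m+2\pmod 4$) can see $Z\otimes Z$, combined with transitivity of the $\Spin$ action on pure spinors and the explicit Fock computation at the vacuum, is a complete and standard argument that pure spinors satisfy the quadratic relations and that $\omega_{m+1}(Z)\neq 0$.

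The converse, which is where the real content lies, has a genuine gap. You assert that ``purity is equivalent to the non-zero $(m+1)$-form $\omega_{m+1}(Z)$ being simple,'' but you have only established one inclusion, namely $N(Z)\subseteq\{V\colon \iota_V\omega_{m+1}(Z)=0\}$; this gives the already-known bound $\dim N(Z)\leq m+1$ when $\omega_{m+1}$ is simple, not the equality $\dim N(Z)=m+1$ that you need. The converse inclusion $\{V\colon \iota_V\omega_{m+1}(Z)=0\}\subseteq N(Z)$ is neither stated nor proved, and it is precisely there that the vanishing of the lower $\omega_k$ must be used. Your fallback induction has the same gap at its starting point: you claim ``the non-vanishing of $\omega_{m+1}(Z)$ combined with the kernel description above produces at least one non-zero $V\in N(Z)$,'' but nothing you have written shows $N(Z)\neq\{0\}$; a priori a spinor satisfying~\eqref{eq-pure-spinor-odd} could have trivial null space, and ruling this out is the heart of Cartan's theorem. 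The closing appeal to a ``Fierz-type expansion, as originally done in Cartan'' is a pointer to the reference, not an argument. To close the gap you would need either (i) a direct computation showing that $\omega_{m+1}(Z)\neq 0$ together with the lower-degree vanishing forces $Z\cdot\omega_{m+1}(Z)$, viewed via the Clifford action, to reproduce a non-zero multiple of $Z$ itself, thereby exhibiting $N(Z)$ as the annihilator of the simple form $\omega_{m+1}(Z)$; or (ii) the representation-theoretic argument that the quadratic equations cut out the closure of the highest-weight orbit in $\Pp\Ss$, which is irreducible of the same dimension as the pure spinor variety.
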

Alternatively, the quadratic relations \eqref{eq-pure-spinor-odd} can be expressed more succinctly by~\cite{Taghavi-Chabert2013}
\begin{gather}\label{eq-twistor-pure-cond_odd}
 Z^{\mc{A} \bm{\upalpha}} Z_\mc{A}^{\bm{\upbeta}} + Z^{\bm{\upalpha}} Z^{\bm{\upbeta}} = 0 .
\end{gather}

In analogy with the description of the quadric, we shall view $Z^{\bm{\upalpha}}$ as a position vector or coordinates on $\Ss$. The twistor space of $\mc{Q}^n$ can then be described as a complex projective variety of the projectivisation $\Pp \Ss$ of $\Ss$ with homogeneous coordinates $[Z^{\bm{\upalpha}}]$ satisfying \eqref{eq-pure-spinor-odd} or \eqref{eq-twistor-pure-cond_odd} when $m>1$. For $\mc{Q}^3$, we have $\PT_{(3)} \cong \CP^3$.

We shall adopt the following notation: if $Z$ is a point in $\PT$, with homogeneous coordina\-tes~$[Z^{\bm{\upalpha}}]$, then the corresponding $\gamma$-plane in $\mc{Q}^n$ will be denoted $\check{Z}$, i.e.,
\begin{gather*}
\check{Z} := \big\{ \big[X^\mc{A}\big] \in \mc{Q}^n \colon X^\mc{A} Z_\mc{A}^{\bm{\upalpha}} = 0 \big\} .
\end{gather*}
Let $\Xi$ be a twistor with homogeneous coordinates $[\Xi^{\bm{\upalpha}}]$ and associated $\gamma$-plane $\check{\Xi}$ in $\mc{Q}^n$. The projective tangent space of $\PT$ at $\Xi$ is the linear subspace of $\Pp\Ss$ def\/ined by
\begin{gather}\label{eq-proj-tgt-PT}
\mbf{T}_\Xi \PT := \big\{ [Z \ind{^{\bm{\upalpha}}}] \in \Pp\Ss \colon \Gamma \ind*{^{(k)}_{\bm{\upalpha}}_{\bm{\upbeta}}} Z \ind{^{\bm{\upalpha}}} \Xi \ind{^{\bm{\upbeta}}} = 0 , \, \text{for all $k<m-1$} \big\} .
\end{gather}
This is the closure of the holomorphic tangent space $\Tgt_\Xi \PT$ at $\Xi$, and contains the linear subspace
\begin{gather}\label{eq-lin-subsp}
\mbf{D}_\Xi := \big\{ [Z \ind{^{\bm{\upalpha}}}] \in \Pp\Ss \colon \Gamma \ind*{^{(k)}_{\bm{\upalpha}}_{\bm{\upbeta}}} Z \ind{^{\bm{\upalpha}}} \Xi \ind{^{\bm{\upbeta}}} = 0 ,\, \text{for all $k<m$} \big\} .
\end{gather}
This is the closure of a subspace $\mathrm{D}_\Xi$ of $\Tgt_\Xi \PT$. The smooth assignment of every point $\Xi$ of $\PT$ of $\mathrm{D}_\Xi$ yields a distribution that we shall denote $\mathrm{D}$. Another convenient way of expressing the locus in~\eqref{eq-lin-subsp} is~\cite{Taghavi-Chabert2013}
\begin{gather}\label{eq-lin-subsp-v2}
0 = Z \ind{^{\mc{A}}^{\bm{\upalpha}}} \Xi \ind*{_{\mc{A}}^{{\bm{\upbeta}}}} + 2 Z^{\bm{\upbeta}} \Xi^{\bm{\upalpha}} - Z^{\bm{\upalpha}} \Xi^{\bm{\upbeta}} ,
\end{gather}
where $Z_\mc{A}^{\bm{\upalpha}} := \Gamma \ind{_{\mc{A}}_{\bm{\upbeta}}^{\bm{\upalpha}}} Z^{\bm{\upbeta}}$ and $\Xi_\mc{A}^{\bm{\upalpha}} := \Gamma \ind{_{\mc{A}}_{\bm{\upbeta}}^{\bm{\upalpha}}} \Xi^{\bm{\upbeta}}$.

To understand $\PT$ more fully, we realise it as a Kleinian geometry. Let us f\/ix a pure spinor~$\Xi^{\bm{\upalpha}}$, and denote by $R$ the stabiliser of its span in $G$. This is a parabolic subgroup of $G$. Then, $\PT$ is isomorphic to $G/R$. One could equivalently realise~$\PT$ as the quotient of $\SO(n+2,\C)$ by the stabiliser of the corresponding $\gamma$-plane $\check{\Xi}$ in $\mc{Q}^n$. The Lie algebra $\mfr$ of $R$ induces a $|2|$-grading on $\g$, i.e., $\g = \mfr_{-2} \oplus \mfr_{-1} \oplus \mfr_0 \oplus \mfr_1 \oplus \mfr_2$, where $\mfr = \mfr_0 \oplus \mfr_1 \oplus \mfr_2$, with $\mfr_0 \cong \glie(m+1,\C)$, $\mfr_{-1} \cong \C^{m+1}$ and $\mfr_{-2} \cong \wedge^2 \C^{m+1}$, and $\mfr_{-1} \cong (\mfr_1)^*$, $\mfr_{-2} \cong (\mfr_2)^*$. In matrix form, this reads as
\begin{gather*}
%\left(
% \begin{BMAT}[2pt,3cm,3cm]{c.c.c.c.c}{c.c.c.c.c}
%\mfr_0 & \mfr_0 & \mfr_1 & \mfr_2 & 0 \\
%\mfr_0 & \mfr_0 & \mfr_1 & \mfr_2 & \mfr_2 \\
%\mfr_{-1} & \mfr_{-1} & 0 & \mfr_1 & \mfr_1 \\
%\mfr_{-2} & \mfr_{-2} & \mfr_{-1} & \mfr_0 & \mfr_0 \\
%0 & \mfr_{-2} & \mfr_{-1} & \mfr_0 & \mfr_0
%\end{BMAT} \right)
%\begin{BMAT}[2pt,0pt,3cm]{l}{ccccc}
%\} \scriptscriptstyle{1} \\ \} \scriptscriptstyle{m} \\ \} \scriptscriptstyle{1} \\ \} \scriptscriptstyle{m} \\ \} \scriptscriptstyle{1} \\
%\end{BMAT}
\includegraphics[scale=0.93]{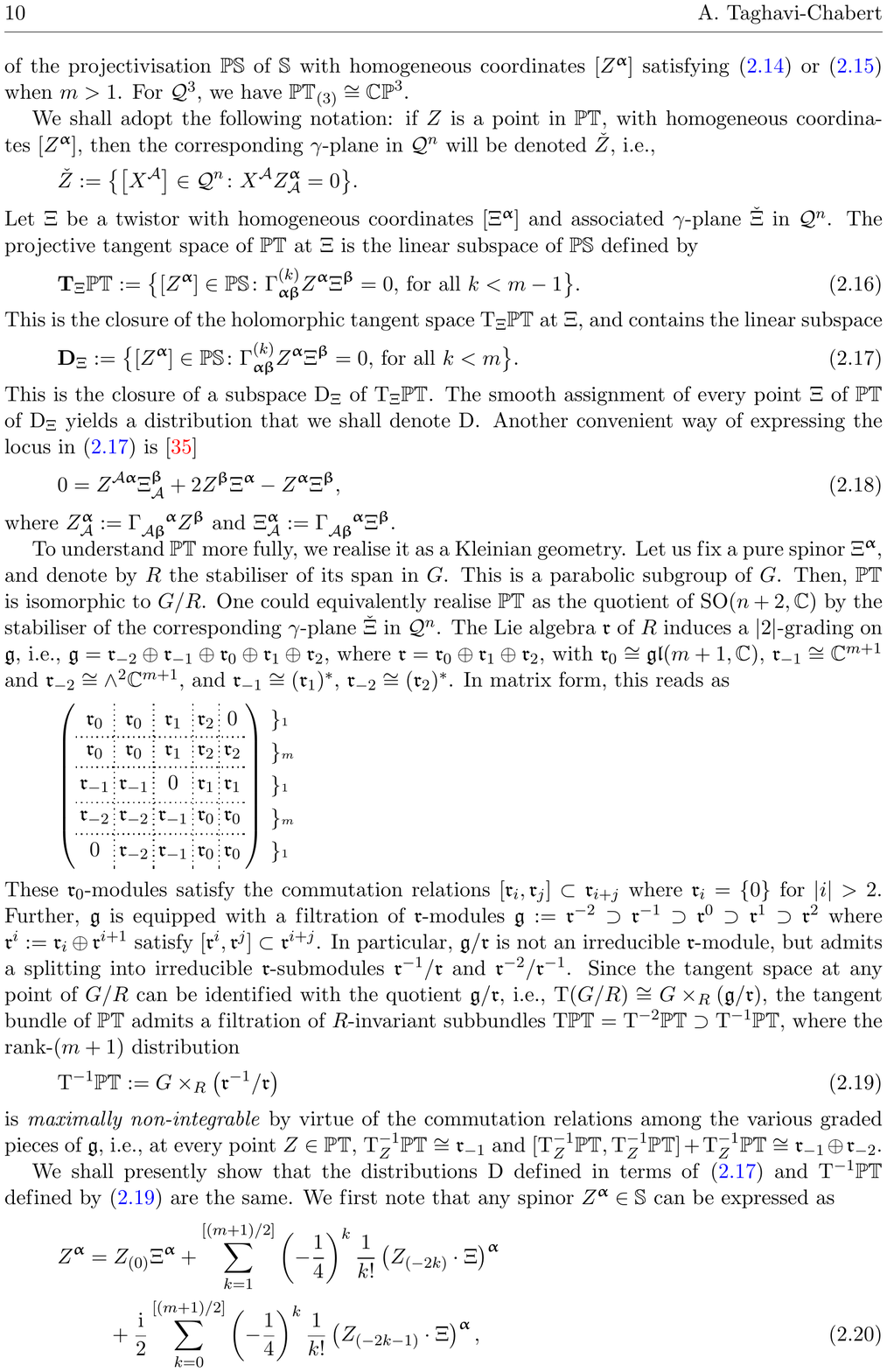}
\end{gather*}
These $\mfr_0$-modules satisfy the commutation relations $[\mfr_i,\mfr_j]\subset \mfr_{i+j}$ where $\mfr_i=\{ 0 \}$ for $|i|>2$. Further, $\g$ is equipped with a f\/iltration of $\mfr$-modules $\g := \mfr^{-2} \supset \mfr^{-1} \supset \mfr^0 \supset \mfr^1 \supset \mfr^2$ where $\mfr^i := \mfr_i \oplus \mfr^{i+1}$ satisfy $[\mfr^i , \mfr^j] \subset \mfr^{i+j}$. In particular, $\g/\mfr$ is not an irreducible $\mfr$-module, but admits a splitting into irreducible $\mfr$-submodules $\mfr^{-1} /\mfr$ and $\mfr^{-2} /\mfr^{-1}$. Since the tangent space at any point of $G/R$ can be identif\/ied with the quotient $\g/\mfr$, i.e., $\Tgt ( G/R ) \cong G \times_R \left( \g/\mfr \right)$, the tangent bundle of $\PT$ admits a f\/iltration of $R$-invariant subbundles $\Tgt \PT = \Tgt^{-2} \PT \supset \Tgt^{-1} \PT$, where the rank-$(m+1)$ distribution
\begin{gather}\label{eq-D}
 \Tgt^{-1} \PT := G \times_R \big( \mfr^{-1}/\mfr \big)
\end{gather}
is \emph{maximally non-integrable} by virtue of the commutation relations among the various graded pieces of $\g$, i.e., at every point $Z \in \PT$, $ \Tgt^{-1}_Z \PT \cong \mfr_{-1}$ and $[ \Tgt^{-1}_Z \PT , \Tgt^{-1}_Z \PT ] + \Tgt^{-1}_Z \PT \cong \mfr_{-1} \oplus \mfr_{-2}$.

We shall presently show that the distributions $\mathrm{D}$ def\/ined in terms of \eqref{eq-lin-subsp} and $\Tgt^{-1} \PT$ def\/ined by \eqref{eq-D} are the same. We f\/irst note that any spinor $Z^{\bm{\upalpha}} \in \Ss$ can be expressed as
\begin{gather}
Z^{\bm{\upalpha}} = Z_{(0)} \Xi^{\bm{\upalpha}}
+ \sum_{k=1}^{[(m+1)/2]}\left(- \frac{1}{4}\right)^k \frac{1}{k!} \left( Z_{(-2k)} \cdot \Xi \right) ^{\bm{\upalpha}}\nonumber\\
\hphantom{Z^{\bm{\upalpha}} =}{}
+ \frac{\ii}{2} \sum_{k=0}^{[(m+1)/2]}\left(- \frac{1}{4}\right)^k \frac{1}{k!} \left( Z_{(-2k-1)} \cdot \Xi \right) ^{\bm{\upalpha}} ,\label{eq-fock-spinor}
\end{gather}
where $Z_{(-i)} \in \wedge^i \mfr_{-1} \cong \wedge^i \C^{m+1}$, and $\left[\frac{m+1}{2}\right]$ is $\frac{m+1}{2}$ when $m+1$ is even, $\frac{m}{2}$ when $m+1$ is odd. Here, the $\cdot$ denotes the Clif\/ford action, i.e., $\left(\Phi \cdot \Xi \right)^{\bm{\upalpha}} = \Phi^\mc{A} \Gamma \ind{_{\mc{A}}_{\bm{\upbeta}}^{\bm{\upalpha}}} \Xi ^{\bm{\upbeta}}$, and so on as extended to the action of $\wedge^\bullet \V$ on $\Ss$. The factors have been chosen for convenience. The representation \eqref{eq-fock-spinor} is sometimes referred to as the \emph{Fock representation} \cite{Budinich1989}, and is already used implicitly in Cartan's work \cite{Cartan1981}, where the $Z_{(-i)}$ are viewed as the components of a spinor.

Now, using \eqref{eq-Clifford_twistor_odd} and \eqref{eq-spin-bilinear-form-odd}, together with \eqref{eq-pure-spinor-odd} applied to $\Xi^{\bm{\upalpha}}$, we compute $\Gamma \ind*{^{(m+2)}_{\bm{\upalpha \upbeta}}} Z^{\bm{\upalpha}} \Xi^{\bm{\upalpha}} = Z_{(0)} \Gamma \ind*{^{(m+2)}_{\bm{\upalpha \upbeta}}} \Xi^{\bm{\upalpha}} \Xi^{\bm{\upalpha}}$ and, for $k \geq0$,
\begin{gather}
\Gamma \ind*{^{(m-2k+1)}_{\mc{A}_1 \ldots \mc{A}_{m-2k+1}}_{\bm{\upalpha \upbeta}}} Z^{\bm{\upalpha}} \Xi^{\bm{\upalpha}} = \left( -\frac{1}{4} \right)^k \frac{1}{k!} Z \ind*{_{(-2k)}^{\mc{B}_{1} \ldots \mc{B}_{2k}}} \Gamma \ind*{^{(m+1)}_{\mc{B}_{1} \ldots \mc{B}_{2k} \mc{A}_1 \ldots \mc{A}_{m-2k+1}}_{\bm{\upalpha \upbeta}}} \Xi^{\bm{\upalpha}} \Xi^{\bm{\upalpha}} \nonumber\\
\hphantom{\Gamma \ind*{^{(m-2k+1)}_{\mc{A}_1 \ldots \mc{A}_{m-2k+1}}_{\bm{\upalpha \upbeta}}} Z^{\bm{\upalpha}} \Xi^{\bm{\upalpha}} =}{}
 + \frac{\ii}{2} \left( -\frac{1}{4} \right)^k \frac{1}{k!} Z \ind*{_{(-2k-1)}^{\mc{B}_{1} \ldots \mc{B}_{2k+1}}} \Gamma \ind*{^{(m+2)}_{\mc{B}_{1} \ldots \mc{B}_{2k+1} \mc{A}_1 \ldots \mc{A}_{m-2k+1}}_{\bm{\upalpha \upbeta}}} \Xi^{\bm{\upalpha}} \Xi^{\bm{\upalpha}} , \nonumber\\
\Gamma \ind*{^{(m-2k)}_{\mc{A}_1 \ldots \mc{A}_{m-2k}}_{\bm{\upalpha \upbeta}}} Z^{\bm{\upalpha}} \Xi^{\bm{\upalpha}} = \frac{\ii}{2} \left( -\frac{1}{4} \right)^k \frac{1}{k!} Z \ind*{_{(-2k-1)}^{\mc{B}_{1} \ldots \mc{B}_{2k+1}}} \Gamma \ind*{^{(m+1)}_{\mc{B}_{1} \ldots \mc{B}_{2k+1} \mc{A}_1 \ldots \mc{A}_{m-2k}}_{\bm{\upalpha \upbeta}}} \Xi^{\bm{\upalpha}} \Xi^{\bm{\upalpha}}\nonumber \\
\hphantom{\Gamma \ind*{^{(m-2k)}_{\mc{A}_1 \ldots \mc{A}_{m-2k}}_{\bm{\upalpha \upbeta}}} Z^{\bm{\upalpha}} \Xi^{\bm{\upalpha}} =}{}
 + \left( -\frac{1}{4} \right)^{k+1} \frac{1}{(k+1)!} Z \ind*{_{(-2k-2)}^{\mc{B}_{1} \ldots \mc{B}_{2k+2}}} \Gamma \ind*{^{(m+2)}_{\mc{B}_{1} \ldots \mc{B}_{2k+2} \mc{A}_1 \ldots \mc{A}_{m-2k}}_{\bm{\upalpha \upbeta}}} \Xi^{\bm{\upalpha}} \Xi^{\bm{\upalpha}} .\label{eq-GZXi}
\end{gather}
Here, we have added tractor indices to the $Z_{(-i)}$. We can immediately conclude
\begin{Lemma}\label{lem-Z-orbit-trc}
The conditions that $[Z^{\bm{\upalpha}}] \in \Pp \Ss$ lies in $\mbf{T}_\Xi \PT$ and $\mbf{D}_\Xi$ respectively are equivalent to
\begin{gather}
Z^{\bm{\upalpha}} = Z_{(0)} \Xi^{\bm{\upalpha}} + \tfrac{\ii}{2} \big( Z_{(-1)} \cdot \Xi \big)^{\bm{\upalpha}} - \tfrac{1}{4} \big( Z_{(-2)} \cdot \Xi\big)^{\bm{\upalpha}} , \label{eq-Z-orbit-trc-2} \\
Z^{\bm{\upalpha}} = Z_{(0)} \Xi^{\bm{\upalpha}} + \tfrac{\ii}{2} \big( Z_{(-1)} \cdot \Xi \big)^{\bm{\upalpha}} , \label{eq-Z-orbit-trc-1}
\end{gather}
respectively, up to overall factors. When $Z_{(0)}$ is non-zero, $[Z^{\bm{\upalpha}}]$ given by \eqref{eq-Z-orbit-trc-2} and \eqref{eq-Z-orbit-trc-1} lies in $\Tgt_\Xi \PT$ and $\mathrm{D}_\Xi$ respectively. In particular, $\mathrm{D} \cong \Tgt^{-1} \PT$.
\end{Lemma}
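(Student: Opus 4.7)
The plan is to impose the linear conditions defining $\mbf{T}_\Xi\PT$ and $\mbf{D}_\Xi$ on the Fock expansion \eqref{eq-fock-spinor} of $Z^{\bm{\upalpha}}$, then read off which Fock components must vanish by direct inspection of the identities \eqref{eq-GZXi}. A point $[Z^{\bm{\upalpha}}]$ lies in $\mbf{T}_\Xi\PT$ or $\mbf{D}_\Xi$ iff every bilinear $\Gamma^{(\ell)}_{\bm{\upalpha\upbeta}}Z^{\bm{\upalpha}}\Xi^{\bm{\upbeta}}$ vanishes for $\ell<m-1$ or $\ell<m$ respectively, so the task is to translate these range-conditions on $\ell$ into conditions on the $Z_{(-i)}$.

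Reading off \eqref{eq-GZXi}, each $\Gamma^{(\ell)}Z\Xi$ couples at most two consecutive Fock levels: $\Gamma^{(m-2k+1)}Z\Xi$ mixes $Z_{(-2k)}$ (via the tractor object $\Gamma^{(m+1)}\Xi\Xi$) with $Z_{(-2k-1)}$ (via $\Gamma^{(m+2)}\Xi\Xi$), and $\Gamma^{(m-2k)}Z\Xi$ analogously mixes $Z_{(-2k-1)}$ with $Z_{(-2k-2)}$. The key ingredient is that the partial contractions $V^{\mc{B}_1\ldots\mc{B}_i}\Gamma^{(m+1)}_{\mc{B}_1\ldots\mc{B}_i\mc{A}_1\ldots\mc{A}_{m-i+1}\bm{\upalpha\upbeta}}\Xi^{\bm{\upalpha}}\Xi^{\bm{\upbeta}}$, and their $\Gamma^{(m+2)}$ analogues, give non-degenerate pairings on $\wedge^i\mfr_{-1}$ for each $i$; equivalently, the Clifford action of $\wedge^\bullet\mfr_{-1}$ on the pure spinor $\Xi$ is faithful, which is the standard Fock-model statement underlying \eqref{eq-fock-spinor}. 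Hence vanishing of a partially contracted bilinear becomes vanishing of the corresponding Fock component.

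With this in place the result follows by a descending induction on $\ell$. For $\mbf{D}_\Xi$ the deepest condition in the range $\ell<m$ involves only one surviving Fock term (the other is automatically absent because $Z_{(-i)}=0$ for $i>m+1$) and pins it down; stepping up one level at a time eliminates all $Z_{(-i)}$ with $i\geq 2$, leaving exactly \eqref{eq-Z-orbit-trc-1}. For $\mbf{T}_\Xi\PT$ the range $\ell<m-1$ omits precisely the level $\ell=m-1$, which is the unique condition that would have constrained $Z_{(-2)}$; the same induction then kills $Z_{(-i)}$ only for $i\geq 3$, so $Z_{(-2)}$ remains free, giving \eqref{eq-Z-orbit-trc-2}. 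Projectively $Z_{(0)}$ plays the role of the overall factor; when $Z_{(0)}\neq 0$ one normalises it to unity, and the residual Fock data $(Z_{(-1)},Z_{(-2)})\in\mfr_{-1}\oplus\wedge^2\mfr_{-1}$, respectively $Z_{(-1)}\in\mfr_{-1}$, parameterise affine neighbourhoods of $\Xi$ in $\Tgt_\Xi\PT$ and $\mathrm{D}_\Xi$. Matching with $\g/\mfr\cong\mfr_{-1}\oplus\mfr_{-2}\cong\mfr_{-1}\oplus\wedge^2\mfr_{-1}$ and the definition \eqref{eq-D} of $\Tgt^{-1}\PT\cong G\times_R\mfr_{-1}$ yields $\mathrm{D}\cong\Tgt^{-1}\PT$.

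The main obstacle is making the inductive step watertight: at each level one must verify that the contraction map $Z_{(-i)}\mapsto Z_{(-i)}^{\mc{B}_1\ldots\mc{B}_i}\Gamma^{(m+1)}_{\mc{B}_1\ldots\mc{B}_i\mc{A}_1\ldots\mc{A}_{m-i+1}\bm{\upalpha\upbeta}}\Xi^{\bm{\upalpha}}\Xi^{\bm{\upbeta}}$ is injective on $\wedge^i\mfr_{-1}$, and similarly for its $\Gamma^{(m+2)}$ variant, given the induction hypothesis. This reduces to standard linear algebra once the pairing is correctly set up, but the bookkeeping is non-trivial because the induction alternates between the two bilinears $\Gamma^{(m+1)}\Xi\Xi$ and $\Gamma^{(m+2)}\Xi\Xi$, which are Hodge-dual yet play inequivalent roles, and one needs to track carefully where the residual Fock components sit in the symmetric or skew sectors of the relevant spin bilinears.
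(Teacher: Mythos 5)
Your argument is correct and is, in essence, the same argument the paper gives in its terse two-sentence proof: substitute the Fock expansion~\eqref{eq-fock-spinor} into the defining loci~\eqref{eq-proj-tgt-PT} and~\eqref{eq-lin-subsp}, read off the vanishing of Fock components from the computed identities~\eqref{eq-GZXi}, and then match the surviving data $(Z_{(-1)},Z_{(-2)})$ and $Z_{(-1)}$ with the graded pieces $\mfr_{-1}\oplus\mfr_{-2}$ and $\mfr_{-1}$ of $\g/\mfr$ (the paper phrases this last step via the exponential map). Where the paper simply says ``follows from the definitions using~\eqref{eq-GZXi},'' you correctly identify and flag the one ingredient that makes the step work: the contraction maps $Z_{(-i)}\mapsto Z_{(-i)}\lrcorner\,\Gamma^{(m+1)}\Xi\Xi$ and their $\Gamma^{(m+2)}$ analogues must be injective on $\wedge^i\mfr_{-1}$. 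This is indeed the content behind the Fock-model faithfulness that underlies~\eqref{eq-fock-spinor}, and your observation that the two bilinears are Hodge-dual but play offset roles in the staircase of equations~\eqref{eq-GZXi} is exactly the bookkeeping the paper suppresses. So your write-up fills in details the paper elides but does not depart from its route.
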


\begin{proof}
Equations \eqref{eq-Z-orbit-trc-2} and \eqref{eq-Z-orbit-trc-1} follow from def\/initions \eqref{eq-proj-tgt-PT} and \eqref{eq-lin-subsp} using \eqref{eq-GZXi}. Equation \eqref{eq-Z-orbit-trc-1} with $Z_{(0)} =1$ coincides with the exponential of an element of $\mfr_{-1}$ and thus describes a point in $\Tgt^{-1}_\Xi \PT$. The story for \eqref{eq-Z-orbit-trc-2} is similar.
\end{proof}

On the other hand, using \eqref{eq-pure-spinor-odd} or referring to \cite{Cartan1981}, the condition that $Z^{\bm{\upalpha}}$ be pure is that
\begin{gather}
Z_{(0)} Z_{(-2k-1)} = Z_{(-1)} \wedge Z_{(-2k)} , \nonumber\\
Z_{(0)} Z_{(-2k)} = Z_{(-2)} \wedge Z_{(-2k+2)} ,\qquad k=1, \ldots , [(m+1)/2] .\label{eq-purity-cond-twistor}
\end{gather}
A dense open subset of $\PT$ containing $[\Xi^{\bm{\upalpha}}]$ can be obtained by intersecting the locus~\eqref{eq-purity-cond-twistor} with the af\/f\/ine subspace $Z_{(0)} =1$ in $\Ss$. Summarising,

\begin{Proposition}\label{prop-canonical-distribution}
The twistor space $\PT$ of a $(2m+1)$-dimensional smooth quadric $\mc{Q}^{2m+1}$ has dimension $\frac{1}{2}(m+1)(m+2)$, and is equipped with a maximally non-integrable distribution $\mathrm{D}$ of rank $m+1$, i.e., $\Tgt \PT = \mathrm{D} + [ \mathrm{D} , \mathrm{D} ]$, where, for any $\Xi \in \PT$, $\mathrm{D}_\Xi$ is a dense open subset of $\mbf{D}_\Xi$ as defined by~\eqref{eq-lin-subsp}.

Further, for any $\Xi \in \PT$, the projective tangent space $\mbf{T}_\Xi \PT$ intersects $\PT$ in a $(2m+1)$-dimensional linear subspace of $\PT$, and $\mbf{D}_\Xi$ is an $(m+1)$-dimensional linear subspace of~$\PT$.
\end{Proposition}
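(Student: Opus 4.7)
All four assertions follow from combining the Kleinian realization $\PT \cong G/R$ set up just before the statement with the Fock description of Lemma~\ref{lem-Z-orbit-trc} and the purity conditions~\eqref{eq-purity-cond-twistor}. I organize the verification into three short steps.

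\textbf{Dimension and the distribution $\mathrm{D}$.} The tangent space at any point of $G/R$ is $\g/\mfr \cong \mfr_{-1} \oplus \mfr_{-2}$, of complex dimension $(m+1) + \binom{m+1}{2} = \tfrac{1}{2}(m+1)(m+2)$, which is the dimension of $\PT$. Lemma~\ref{lem-Z-orbit-trc} identifies $\mathrm{D}$ with the subbundle $\Tgt^{-1}\PT = G \times_R(\mfr^{-1}/\mfr)$ of rank $\dim\mfr_{-1} = m+1$. The bracket-generating property $\Tgt\PT = \mathrm{D} + [\mathrm{D}, \mathrm{D}]$ reduces to the Lie-algebraic identity $[\mfr_{-1}, \mfr_{-1}] = \mfr_{-2}$; in the block-matrix realization of the $|2|$-grading on $\so(n+2, \C)$, the $\mfr_0$-equivariant bracket $\mfr_{-1}\otimes\mfr_{-1} \to \mfr_{-2}$ is precisely the surjective wedge product $\C^{m+1}\otimes\C^{m+1} \to \wedge^2\C^{m+1}$.

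\textbf{The linear subspace $\mbf{D}_\Xi$.} By~\eqref{eq-Z-orbit-trc-1}, $\mbf{D}_\Xi$ consists of those $[Z] \in \Pp\Ss$ whose Fock expansion~\eqref{eq-fock-spinor} has only the $Z_{(0)}$ and $Z_{(-1)}$ components non-vanishing, so it is a projective linear subspace of $\Pp\Ss$ of dimension $m+1$. Every non-trivial purity relation in~\eqref{eq-purity-cond-twistor} involves some $Z_{(-i)}$ with $i \geq 2$, so all of them hold trivially on $\mbf{D}_\Xi$; hence $\mbf{D}_\Xi \subset \PT$, as claimed.

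\textbf{The intersection $\mbf{T}_\Xi\PT \cap \PT$.} By~\eqref{eq-Z-orbit-trc-2}, $\mbf{T}_\Xi\PT$ is the projective linear subspace of $\Pp\Ss$ defined by $Z_{(-i)} = 0$ for $i \geq 3$, and on this subspace the purity conditions~\eqref{eq-purity-cond-twistor} collapse to $Z_{(-1)}\wedge Z_{(-2)} = 0$ in $\wedge^3\mfr_{-1}$ and $Z_{(-2)}\wedge Z_{(-2)} = 0$ in $\wedge^4\mfr_{-1}$. On the Zariski-open stratum $\{Z_{(-1)}\neq 0\}$, the first relation forces $Z_{(-2)} = Z_{(-1)}\wedge \eta$ for some $\eta \in \mfr_{-1}$ determined modulo $\langle Z_{(-1)}\rangle$, and the second is then automatic because $(Z_{(-1)}\wedge \eta)\wedge(Z_{(-1)}\wedge\eta) = 0$; a parameter count yields affine dimension $1 + (m+1) + m = 2m+2$, i.e., projective dimension $2m+1$. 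The main obstacle is to verify that no other stratum contributes a component of equal or greater dimension: on the complementary locus $\{Z_{(-1)} = 0\}$ the first relation is vacuous while the second cuts out the affine cone over the Grassmannian $G(2, m+1) \subset \Pp\wedge^2\mfr_{-1}$, yielding projective dimension $2m - 1 < 2m+1$ once $Z_{(0)}$ is accounted for. Hence the top-dimensional component of the intersection has the asserted dimension $2m+1$.
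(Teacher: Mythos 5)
Your proposal is correct and follows essentially the same route as the paper: the Kleinian identification $\PT\cong G/R$ for the dimension count and the bracket-generating property, then Lemma~\ref{lem-Z-orbit-trc} and the purity conditions~\eqref{eq-purity-cond-twistor} in the Fock picture for the two intersection statements. For $\mbf{D}_\Xi$, both you and the paper observe that every effective constraint in~\eqref{eq-purity-cond-twistor} involves some $Z_{(-i)}$ with $i\geq 2$, so the locus~\eqref{eq-Z-orbit-trc-1} lies entirely in $\PT$. For $\mbf{T}_\Xi\PT\cap\PT$, both reduce the purity conditions to $Z_{(-1)}\wedge Z_{(-2)}=0$ and $Z_{(-2)}\wedge Z_{(-2)}=0$ and then count parameters. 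Where you genuinely add something is the last step: the paper disposes of the dimension count with the single word ``generically'' ($Z_{(-1)}\neq 0$), whereas you separately bound the complementary stratum $\{Z_{(-1)}=0\}$ by identifying it with (the cone over) $G(2,m+1)$ of projective dimension $2m-1<2m+1$, confirming that the generic stratum is the top-dimensional one. This closes a small gap that the paper leaves implicit, at no real extra cost. Your unpacking of the first part (explicitly identifying the bracket $[\mfr_{-1},\mfr_{-1}]=\mfr_{-2}$ with the surjective wedge $\C^{m+1}\otimes\C^{m+1}\to\wedge^2\C^{m+1}$) is likewise a welcome expansion of the paper's appeal to the general theory of parabolic geometries.
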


\begin{proof}The f\/irst part has already been explained and stems from the general theory of~\cite{vCap2009}.

For the second part, we f\/ix a pure spinor $\Xi^{\bm{\upalpha}}$, and let $[Z^{\bm{\upalpha}}]$ be an element of the projective tangent space $\mbf{T}_\Xi \PT$ so that $Z^{\bm{\upalpha}}$ takes the form \eqref{eq-Z-orbit-trc-2}. If $[Z^{\bm{\upalpha}}]$ also lies in $\PT$, then, with reference to~\eqref{eq-purity-cond-twistor}, $Z_{(-1)} \wedge Z_{(-2)} = 0$ and $Z_{(-2)} \wedge Z_{(-2)} = 0$. Generically, $Z_{(-1)}$ is non-zero, so $Z_{(-2)} = Z_{(-1)} \wedge \Phi_{(-1)}$ for some $\Phi_{(-1)} \in \mfr_{-1}$. The form of $Z_{(-2)}$ remains invariant under the transformation $\Phi_{(-1)} \mapsto \Phi_{(-1)} + a Z_{(-1)}$ for any $a \in \C$. The choice of $Z_{(0)}$ is cancelled out by the freedom in the choice of scale of~\eqref{eq-Z-orbit-trc-2}. Thus, $\dim\left(\mbf{T}_\Xi \PT \cap \PT \right)= 2 \times (m+1) - 1 = 2m+1$. If~$[Z^{\bm{\upalpha}}]$ lies in $\mbf{D}_\Xi \PT$ , then it takes the form \eqref{eq-Z-orbit-trc-1}. In this case, the purity conditions~\eqref{eq-purity-cond-twistor} do not yield any further constraints, and thus $[Z^{\bm{\upalpha}}]$ must also lie in~$\PT$.
\end{proof}

\begin{Definition}\label{defn-canonical-distribution}The rank-$(m+1)$ distribution $\mathrm{D}$ will be referred to as the \emph{canonical distribution} of~$\PT$.
\end{Definition}

When $m=1$, the twistor space of $\mc{Q}^3$ is simply $\CP^3$ and the canonical distribution $\Drm$ is the rank-$2$ contact distribution annihilated by the contact $1$-form $\bm{\alpha} := \Gamma^{(0)}_{\bm{\upalpha}\bm{\upbeta}} Z^{\bm{\upalpha}} \dd Z^{\bm{\upbeta}}$.
The appropriate generalisation of this contact $1$-form to dimension $2m+1$ is then the set of $1$-forms
\begin{gather}\label{eq-hi-contact}
\bm{\alpha}^{\bm{\upalpha}\bm{\upbeta}} := Z \ind{^{\mc{A}}^{\bm{\upalpha}}} \dd Z \ind*{_{\mc{A}}^{{\bm{\upbeta}}}} + 2 Z^{\bm{\upbeta}} \dd Z ^{\bm{\upalpha}} - Z^{\bm{\upalpha}} \dd Z ^{\bm{\upbeta}} ,
\end{gather}
annihilating the canonical distribution $\Drm$. Here, the homogeneous coordinates $[Z^{\bm{\upalpha}}]$ are assumed to satisfy~\eqref{eq-pure-spinor-odd} or \eqref{eq-twistor-pure-cond_odd}.

The following lemma follows directly from the exponential map from a given complement of~$\mfr$ in~$\g$ to a dense open subset of $\PT$.

\begin{Lemma}\label{lem-fol-D}
Let $\Xi$ be a point in $\PT$, and let $\mfr$ be its stabiliser in $\g$. Then $\mbf{D}_\Xi$ is foliated by a~family of distinguished curves passing through~$\Xi$ parametrised by the points of the $(m+1)$-dimensional module~$\mfr_{-1}$, for any decomposition $\mfr = \mfr_{-2} \oplus \mfr_{-1} \oplus \mfr_0 \oplus \mfr_1 \oplus \mfr_2$.
\end{Lemma}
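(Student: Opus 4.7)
The plan is to combine the Fock-type parametrisation of $\mathbf{D}_\Xi$ established in Lemma~\ref{lem-Z-orbit-trc} with the exponential map $\exp\colon \mfr_{-1} \to G$. Fix the grading $\g = \mfr_{-2} \oplus \mfr_{-1} \oplus \mfr_0 \oplus \mfr_1 \oplus \mfr_2$ associated with $\Xi$, and recall from equation~\eqref{eq-Z-orbit-trc-1} that any point of $\mathbf{D}_\Xi$ with $Z_{(0)} \neq 0$ can be written, up to rescaling, as $\Xi^{\bm{\upalpha}} + \tfrac{\ii}{2}(\Phi \cdot \Xi)^{\bm{\upalpha}}$ for a unique $\Phi \in \mfr_{-1}$. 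This identifies the affine chart $\{Z_{(0)} \neq 0\}$ of $\mathbf{D}_\Xi$ with the vector space $\mfr_{-1}$, the zero vector corresponding to $\Xi$ itself.

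Next, for each $\Phi \in \mfr_{-1}$, consider the one-parameter subgroup $t \mapsto \exp(t\Phi) \in G$ and the induced curve $\gamma_\Phi(t) := [\exp(t\Phi) \cdot \Xi]$ in $\PT$. Since $\mfr_{-1}$ is a complement of $\mfr$ in $\g$, the map $\Phi \mapsto [\exp(\Phi)\cdot\Xi]$ is a local biholomorphism onto a dense open subset of $\mathbf{D}_\Xi$; in the above affine coordinates it corresponds precisely to the linear inclusion $\mfr_{-1} \hookrightarrow \C \oplus \mfr_{-1}$, $\Phi \mapsto (1,\Phi)$. In particular, each $\gamma_\Phi$ is a holomorphic curve lying entirely in $\mathbf{D}_\Xi$ and passing through $\Xi$ at $t=0$, and distinct lines through the origin of $\mfr_{-1}$ (i.e., $\C^*$-orbits of non-zero $\Phi$) yield distinct curves.

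Finally, any point $Z \in \mathbf{D}_\Xi$ with $Z_{(0)} \neq 0$ lies on a unique such curve, namely $\gamma_\Phi$ where $\Phi \in \mfr_{-1}$ is the parameter associated to $Z$ via Lemma~\ref{lem-Z-orbit-trc}; hence the family $\{\gamma_\Phi : \Phi \in \mfr_{-1}\}$ sweeps out a dense open subset of $\mathbf{D}_\Xi$ in the sense required by the statement. The only mild subtlety is verifying that, for $\Phi$ of the form $t\Phi_0$, the higher-order terms in the power-series expansion of $\exp(t\Phi_0)\cdot\Xi$ do not introduce components in the $Z_{(-2k)}$ or $Z_{(-2k-1)}$ slots for $k \geq 1$ of the Fock expansion~\eqref{eq-fock-spinor}; this is automatic from the equality of the image of $\exp\colon \mfr_{-1} \to G/R$ with the dense affine piece of $\mathbf{D}_\Xi$ cut out by~\eqref{eq-Z-orbit-trc-1}, which was the content of Lemma~\ref{lem-Z-orbit-trc}. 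I do not expect any serious obstacle: the statement is essentially a reinterpretation of the previously established Fock parametrisation in terms of one-parameter subgroups of $\exp(\mfr_{-1})$.
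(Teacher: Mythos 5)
Your proof is correct and fills in the details of the paper's one-sentence remark that the lemma ``follows directly from the exponential map from a given complement of $\mfr$ in $\g$ to a dense open subset of $\PT$'', so the route taken is essentially the same. One small correction to the reasoning: you write that $\mfr_{-1}$ is a complement of $\mfr$ in $\g$; in fact the complement is $\mfr_{-2}\oplus\mfr_{-1}$, and the exponential of that full complement hits a dense open subset of $\PT$, not of $\mbf{D}_\Xi$. The fact that the \emph{restriction} of the exponential to $\mfr_{-1}$ lands inside $\mbf{D}_\Xi$ and biholomorphically covers its affine chart $\{Z_{(0)}\neq 0\}$ is exactly what Lemma~\ref{lem-Z-orbit-trc} establishes, and you do cite it; the causal link should just be rerouted through that lemma rather than the erroneous ``$\mfr_{-1}$ complements $\mfr$'' claim. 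With that adjustment, the identification of the distinguished curves $\gamma_\Phi(t)=[\exp(t\Phi)\cdot\Xi]$ with lines through the origin in the Fock coordinates, and your observation that the vanishing of the higher-order Fock slots is already absorbed in Lemma~\ref{lem-Z-orbit-trc}, are sound and match the paper's intent.
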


{\bf Geometric correspondences.} The bilinear forms \eqref{eq-spin-bilinear-form-odd} can also be used to characterise the intersections of $\gamma$-planes in terms of their corresponding pure spinors.
\begin{Theorem}[\cite{Cartan1981,Harnad1995}]\label{thm-gam-inters}
Let $Z$ and $W$ be two twistors with homogeneous coordinates~$[Z^{\bm{\upalpha}}]$ and~$[W^{\bm{\upalpha}}]$, and corresponding $\gamma$-planes $\check{Z}$ and~$\check{W}$ in $\mc{Q}^n$ respectively. Then
\begin{gather*}
\dim \big( \check{Z} \cap \check{W} \big) \geq k \quad \Longleftrightarrow \quad \Gamma \ind*{^{(\ell)}_{\bm{\upalpha \upbeta}}} Z^{\bm{\upalpha}} W^{\bm{\upbeta}} = 0 , \qquad \mbox{for all} \quad \ell \leq k.
\end{gather*}
Further, $\dim ( \check{Z} \cap \check{W} ) = k$ if and only if in addition $\Gamma \ind*{^{(k+1)}_{\bm{\upalpha \upbeta}}} Z^{\bm{\upalpha}} W^{\bm{\upbeta}} \neq 0$.
\end{Theorem}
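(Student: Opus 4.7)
The argument splits into a reduction, a Fock-space calculation, and a geometric step. Since the $\gamma$-plane $\check{Z}$ equals $\Pp(N_Z)$ for $N_Z := \ker Z_{\mc{A}}^{\bm{\upalpha}}$, we have $\check{Z} \cap \check{W} = \Pp(N_Z \cap N_W)$ and hence $\dim(\check{Z} \cap \check{W}) = \dim_\C(N_Z \cap N_W) - 1$. Both sides of the theorem are $G$-equivariant in the ordered pair $(Z,W)$, and by Witt's extension theorem the $G$-orbits of pairs of maximal totally null subspaces are classified by their intersection dimension. I may therefore fix $Z = \Xi$ at the reference pure spinor from Section~\ref{sec-tw-sp} and let $W$ be an arbitrary pure spinor.

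Expand $W^{\bm{\upalpha}}$ in the Fock representation \eqref{eq-fock-spinor} with basepoint $\Xi$, in components $W_{(-j)} \in \wedge^j \mfr_{-1}$ for $j = 0, \ldots, m+1$. Substituting into \eqref{eq-GZXi} (with $Z$ there replaced by $W$), each form $\Gamma^{(\ell)}_{\bm{\upalpha\upbeta}} W^{\bm{\upalpha}} \Xi^{\bm{\upbeta}}$ is a linear combination of $W_{(-(m+1-\ell))}$ and $W_{(-(m+2-\ell))}$ contracted with the reference bilinear forms $\Gamma^{(m+1)}(\Xi,\Xi)$ and $\Gamma^{(m+2)}(\Xi,\Xi)$. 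Both of these are nonzero---the first by the purity condition \eqref{eq-pure-spinor-odd} on $\Xi$ and the second by the symmetry conventions for $\Gamma^{(k)}_{\bm{\upalpha\upbeta}}$---and induce injective contraction maps out of $\wedge^\bullet \mfr_{-1}$. Cascading inductively from $\ell = 0$, with $W_{(-(m+2))} = 0$ as the starting boundary condition, this gives
\begin{gather*}
\Gamma^{(\ell)}_{\bm{\upalpha\upbeta}} W^{\bm{\upalpha}} \Xi^{\bm{\upbeta}} = 0 \ \text{for all } \ell \leq k \quad \Longleftrightarrow \quad W_{(-j)} = 0 \ \text{for all } j \geq m+1-k,
\end{gather*}
while $\Gamma^{(k+1)}(W,\Xi) \neq 0$ pins down the leading surviving piece $W_{(-(m-k))}$.

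It remains to identify the lowest nonzero Fock index $j^\star$ of a pure $W$ with the codimension of $N_W \cap N_\Xi$ inside $N_\Xi$, so that $\dim(N_Z \cap N_W) = (m+1) - j^\star$ and $\dim(\check Z \cap \check W) = m - j^\star$. The Plücker-type purity relations \eqref{eq-purity-cond-twistor} force $W_{(-j^\star)}$ to be a decomposable $j^\star$-form $\phi_1 \wedge \cdots \wedge \phi_{j^\star}$ with $\phi_i \in \mfr_{-1} \cong N_\Xi^*$; its common annihilator $\bigcap_i \ker \phi_i \subset N_\Xi$ has codimension $j^\star$, and a short Clifford-algebra check using \eqref{eq-Clifford_twistor_odd} identifies this subspace with $N_W \cap N_\Xi$. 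Combining this with the previous step yields both equivalences in the theorem. The \emph{main obstacle} is extracting decomposability from the nonlinear relations \eqref{eq-purity-cond-twistor} in the regime $j^\star \geq 1$ (where $W_{(0)} = 0$ and the relations do not factor directly), together with the extreme cases $k = m$ (where only $W_{(0)}$ survives and one must conclude $W \propto \Xi$) and $k = -1$ (generic transverse pair, where the parity of $\Gamma^{(0)}_{\bm{\upalpha\upbeta}}$ must be tracked).
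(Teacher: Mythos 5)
The paper states this theorem as a result of Cartan and Harnad--Shnider and does not itself supply a proof, so your proposal has to be assessed on its own terms. The Witt reduction to $Z=\Xi$ and the Fock cascade via \eqref{eq-GZXi} are both sound. One small observation on the cascade: once $W_{(-(m+2-\ell))}$ has been killed inductively, the only surviving term in $\Gamma^{(\ell)}(W,\Xi)$ is the contraction of $W_{(-(m+1-\ell))}$ against $\Gamma^{(m+1)}(\Xi,\Xi)$, so the non-vanishing of $\Gamma^{(m+2)}(\Xi,\Xi)$ is never actually used -- and in any case invoking ``symmetry conventions'' for it would only show that that form is not forced to vanish by antisymmetry, not that it is nonzero.

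The gap you flag at the end is genuine and carries the whole content of the theorem. You need $j^\star$, the \emph{largest} index with $W_{(-j^\star)}\neq 0$ (your phrase ``lowest nonzero Fock index'' is at best ambiguous -- presumably you mean the most negative index $-j^\star$), to equal the codimension of $N_W\cap N_\Xi$ in $N_\Xi$. Your route is to extract decomposability of $W_{(-j^\star)}$ from~\eqref{eq-purity-cond-twistor} and then perform ``a short Clifford-algebra check'' to identify the common kernel with $N_W\cap N_\Xi$, but both halves are exactly where the work lives: the relations~\eqref{eq-purity-cond-twistor} as written only parametrise the chart $W_{(0)}\neq 0$ of the pure spinor variety and do not obviously factor when $W_{(0)}=0$, and the kernel identification is the very statement one is proving. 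A cleaner repair is consistent with your own opening move: having invoked Witt's theorem to fix $Z=\Xi$, invoke it once more for the stabiliser $R$ of $\Xi$, whose orbits on pure spinors are classified by $d:=\dim(N_W\cap N_\Xi)$, and replace $W$ by the Clifford-product normal form $W\propto f_{d+1}\cdots f_{m+1}\cdot\Xi$, where $e_1,\ldots,e_d$ span $N_W\cap N_\Xi\subset N_\Xi$ and $f_{d+1},\ldots,f_{m+1}\in N_W$ are null vectors pairing dually with $e_{d+1},\ldots,e_{m+1}\in N_\Xi$ under $h_{\mc{A}\mc{B}}$. In the Fock picture based at $\Xi$ this $W$ has a \emph{single} nonzero component, $W_{(-(m+1-d))}=\bar{f}_{d+1}\wedge\cdots\wedge\bar{f}_{m+1}$, manifestly decomposable, and feeding this into \eqref{eq-GZXi} gives both directions of the theorem in one stroke, removing the decomposability obstruction entirely.
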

A direct application leads to
\begin{Proposition}\label{prop-XiZ}
Let $\Xi$ and $Z$ be two twistors with corresponding $\gamma$-planes $\check{\Xi}$ and $\check{Z}$ respectively. Then
\begin{enumerate}\itemsep=0pt
\item[$1.$] $\dim(\check{\Xi} \cap \check{Z}) \geq m-3$ if and only if there exists $W \in \PT$ such that $W \in \mbf{D}_\Xi \cap \mbf{T}_Z \PT$ or $W \in \mbf{D}_Z \cap \mbf{T}_\Xi \PT$.
\item[$2.$] $\dim(\check{\Xi} \cap \check{Z}) \geq m-2$ if and only if $\Xi \in \mbf{T}_Z \PT$ if and only if $Z \in \mbf{T}_\Xi \PT$ if and only if there exists $W \in \PT$ such that $Z, \Xi \in \mbf{D}_W$, or equivalenly $W \in \mbf{D}_Z \cap \mbf{D}_\Xi$.
\item[$3.$] $\dim(\check{\Xi} \cap \check{Z}) \geq m-1$ if and only if $Z \in \mbf{D}_\Xi$ if and only if $\Xi \in \mbf{D}_Z$.
\end{enumerate}
\end{Proposition}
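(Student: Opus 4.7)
The plan is to treat each statement as a direct translation of Theorem~\ref{thm-gam-inters} via the algebraic descriptions~\eqref{eq-proj-tgt-PT} of $\mbf{T}_\Xi\PT$ and~\eqref{eq-lin-subsp} of $\mbf{D}_\Xi$, and then to produce the auxiliary twistor $W$ by reducing to a lower-dimensional quadric. The first step is the straightforward half. Unpacking~\eqref{eq-lin-subsp}, the condition $Z\in\mbf{D}_\Xi$ reads $\Gamma \ind*{^{(k)}_{\bm{\upalpha}\bm{\upbeta}}}Z^{\bm{\upalpha}}\Xi^{\bm{\upbeta}}=0$ for all $k<m$, which by Theorem~\ref{thm-gam-inters} is equivalent to $\dim(\check Z\cap\check\Xi)\geq m-1$. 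Since each $\Gamma^{(k)}_{\bm{\upalpha}\bm{\upbeta}}$ is either symmetric or skew-symmetric in its spinor indices, this condition is invariant under $Z\leftrightarrow\Xi$, yielding part~3. An identical argument applied to~\eqref{eq-proj-tgt-PT} gives $\Xi\in\mbf{T}_Z\PT\Leftrightarrow Z\in\mbf{T}_\Xi\PT\Leftrightarrow\dim(\check\Xi\cap\check Z)\geq m-2$, which is the non-$W$ portion of part~2.

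Next I would produce $W$ for part~2. The easy direction is a codimension count: if $W\in\mbf{D}_\Xi\cap\mbf{D}_Z$, then by part~3 the $\gamma$-plane $\check W$ meets each of $\check\Xi$ and $\check Z$ in codimension at most one inside $\check W$, and the intersection of these two codimension-one subspaces has dimension at least $m-2$ and is contained in $\check\Xi\cap\check Z$. For the converse, let $V\subset\V$ be the linear span of $\check\Xi\cap\check Z$, an $(m-1)$-dimensional totally null subspace, and pass to the $5$-dimensional quadratic space $V^\perp/V$, whose projectivisation is a~$\mc{Q}^3$. The images of $\Xi_{\V}$ and $Z_{\V}$ descend to skew $\gamma$-planes, i.e.\ distinct points in $\PT^{(3)}\cong\CP^3$, whose associated projective $2$-planes $\mbf{D}_{\Xi'}$ and $\mbf{D}_{Z'}$ must meet in at least a projective line; any point of this line lifts to a pure spinor $W\in\mbf{D}_\Xi\cap\mbf{D}_Z\subset\PT$.

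Part~1 follows the same template. For $\Leftarrow$, if $W\in\mbf{D}_\Xi\cap\mbf{T}_Z\PT$ then parts~2 and~3 give $\dim(\check W\cap\check\Xi)\geq m-1$ and $\dim(\check W\cap\check Z)\geq m-2$, whose intersection inside $\check W$ has dimension at least $m-3$ and is contained in $\check\Xi\cap\check Z$; the alternative condition on $W$ is handled symmetrically. For $\Rightarrow$, the reduction now passes through the quadric $V^\perp/V\cong\mc{Q}^5$ coming from the $(m-2)$-dimensional linear span $V$, whose twistor space $\PT^{(5)}$ has dimension $6$ inside $\Pp\Ss_{(5)}$ of dimension $7$. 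There $\mbf{D}_{\Xi'}$ is a $3$-plane contained in $\PT^{(5)}$ by Proposition~\ref{prop-canonical-distribution}, while $\mbf{T}_{Z'}\PT^{(5)}$ is a projective $6$-plane in the ambient space, so their intersection has projective dimension at least $3+6-7=2$, automatically lies in $\PT^{(5)}$, and furnishes the required $W$ upon lifting. The main subtlety I foresee is checking that the quotient construction correctly matches $\gamma$-planes of $\mc{Q}^n$ containing $V$ with $\gamma$-planes of the reduced quadric, together with the induced subspaces $\mbf{D}$ and $\mbf{T}\PT$; this is the structural verification on which both reductions rest.
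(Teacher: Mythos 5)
Your proof is correct but proceeds by a genuinely different route from the paper. The paper's proof works directly in the Fock expansion~\eqref{eq-fock-spinor}: it fixes $\Xi$, uses Theorem~\ref{thm-gam-inters} together with~\eqref{eq-GZXi} and the purity relations~\eqref{eq-purity-cond-twistor} to constrain the components $Z_{(-i)}$, and then \emph{explicitly writes down} $W$ in Fock coordinates (e.g.\ $W^{\bm{\upalpha}} = \Xi^{\bm{\upalpha}} + \tfrac{\ii}{2}(Z_{(-1)}\cdot\Xi)^{\bm{\upalpha}}$), verifying membership in $\mbf{D}_\Xi$ and $\mbf{T}_Z\PT$ via Lemma~\ref{lem-Z-orbit-trc}. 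Your proof instead produces $W$ non-constructively, by passing to the quotient quadratic space $V^\perp/V$ where $V$ is the common totally null kernel, reducing the problem to a projective dimension count in $\CP^3$ or $\CP^7$, and lifting back; the ``easy'' $\Leftarrow$ directions are handled by a clean codimension count inside $\check W$ that the paper leaves implicit. The trade-off is essentially geometry versus computation: your approach makes the mechanism transparent and scales uniformly to Proposition~\ref{prop-geometric-T}, while the paper's approach leverages machinery already set up (Lemma~\ref{lem-Z-orbit-trc}, the $Z_{(-i)}$ formalism) and produces $W$ explicitly, which is useful downstream (e.g.\ in Lemma~\ref{lem-non-null-twistor}).

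Two points deserve attention. First, in the hard directions you implicitly take $V$ to have the critical dimension ($m-1$ for part~2, $m-2$ for part~1), but the hypothesis is $\dim(\check\Xi\cap\check Z)\geq m-2$ (resp.\ $\geq m-3$); when the intersection is strictly larger you should either invoke part~3 to take $W=Z$, or replace ``the linear span of $\check\Xi\cap\check Z$'' by any totally null subspace of the required dimension inside it. Second, the compatibility you flag at the end --- that the bijection between $\gamma$-planes of $\mc{Q}^n$ through $\Pp V$ and $\gamma$-planes of the reduced quadric identifies $\mbf{D}_{\Xi'}$ and $\mbf{T}_{Z'}$ with the intersections~\eqref{eq-lin-subsp} and~\eqref{eq-proj-tgt-PT} restricted appropriately --- does need to be verified, though it follows from part~3 and part~2 (already proved at that stage) together with the elementary dimension shift $\dim_{\Pp}(\check W'\cap\check\Xi') = \dim_{\Pp}(\check W\cap\check\Xi) - \dim V$ for $\gamma$-planes through $V$; your bootstrap order (3, then 2, then 1) is consistent with this.
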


\begin{proof} We f\/ix $\Xi^{\bm{\upalpha}}$ and we assume that $Z^{\bm{\upalpha}}$ is given by~\eqref{eq-fock-spinor} with components $Z_{(-i)}$ satis\-fying~\eqref{eq-purity-cond-twistor}. In each case, we apply Theorem~\ref{thm-gam-inters} and compute $\Gamma \ind*{^{(\ell)}_{\bm{\upalpha \upbeta}}} Z^{\bm{\upalpha}} W^{\bm{\upbeta}} = 0$ to derive conditions on~$Z_{(-i)}$. With no loss of generality, we may assume $Z_{(0)} = 1$.

1.~We have $Z_{(-i)}$ for all $i\geq4$ and $Z_{(-2)} \wedge Z_{(-2)} = 0$, i.e., $Z_{(-2)} = \Phi_{(-1)} \wedge \Psi_{(-1)}$ for some $\Phi_{(-1)} , \Psi_{(-1)} \in \mfr_{-1}$, and $Z_{(-3)} = Z_{(-1)} \wedge Z_{(-2)} = Z_{(-1)} \wedge \Phi_{(-1)} \wedge \Psi_{(-1)}$. A suitable $W \in \mbf{D}_\Xi \cap \mbf{T}_Z \PT$ is given by $W^{\bm{\upalpha}} = \Xi^{\bm{\upalpha}} + \frac{\ii}{2} ( Z_{(-1)} \cdot \Xi)^{\bm{\upalpha}}$ and $W^{\bm{\upalpha}} = Z^{\bm{\upalpha}} + \frac{1}{4} (( \Phi_{(-1)} \wedge \Psi_{(-1)} ) \cdot Z)^{\bm{\upalpha}}$, and similarly for a suitable $W \in \mbf{D}_Z \cap \mbf{T}_\Xi \PT$.

2.~The f\/irst two equivalences follow immediately from Proposition~\ref{prop-canonical-distribution} and Theorem~\ref{thm-gam-inters}. For the last equivalence, we have $Z_{(-i)}$ for all $i\geq3$, so that $Z_{(-2)} \wedge Z_{(-2)} = 0$ and $Z_{(-1)} \wedge Z_{(-2)} = 0$, i.e., $Z_{(-2)} = Z_{(-1)} \wedge \Phi_{(-1)}$ for some $\Phi_{(-1)} \in \mfr_{-1}$. A suitable $W \in \mbf{D}_Z \cap \mbf{D}_\Xi$ is given by $W^{\bm{\upalpha}} = \Xi^{\bm{\upalpha}} + \frac{\ii}{2} ( Z_{(-1)} \cdot \Xi )^{\bm{\upalpha}}$ and $W^{\bm{\upalpha}} = Z^{\bm{\upalpha}} - \frac{\ii}{2} ( \Phi_{(-1)} \cdot Z )^{\bm{\upalpha}}$.

3.~This follows immediately from Proposition \ref{prop-canonical-distribution} and Theorem \ref{thm-gam-inters}.
\end{proof}

In a similar vein, we obtain
\begin{Proposition}\label{prop-geometric-T}
Fix a twistor $\Xi$ in $\PT$ and let $\check{\Xi }$ be its corresponding $\gamma$-plane in $\mc{Q}^n$. Let $Z$ and $W$ be two twistors in $\mbf{T}_\Xi \PT$, corresponding to $\gamma$-planes $\check{Z}$ and $\check{W}$. Then $\dim ( \check{Z} \cap \check{W} ) \geq m-4$.

Further, if $Z$ and $W$ take the respective forms
\begin{gather*}%\label{eq-W-orbit-trc-2}
Z^{\bm{\upalpha}} = Z_{(0)} \Xi^{\bm{\upalpha}} + \tfrac{\ii}{2} \big( Z_{(-1)} \cdot \Xi \big)^{\bm{\upalpha}} - \tfrac{1}{4} \big( Z_{(-2)} \cdot \Xi\big)^{\bm{\upalpha}} , \\
W^{\bm{\upalpha}} = W_{(0)} \Xi^{\bm{\upalpha}} + \tfrac{\ii}{2} \big( W_{(-1)} \cdot \Xi \big)^{\bm{\upalpha}} - \tfrac{1}{4} \big( W_{(-2)} \cdot \Xi\big)^{\bm{\upalpha}} ,
\end{gather*}
where $Z_{(0)} Z_{(-2)} = Z_{(-1)} \wedge Z_{(-1)}$, $Z_{(-2)} \wedge Z_{(-2)} = 0$, $W_{(0)} W_{(-2)} = W_{(-1)} \wedge W_{(-1)}$ and $W_{(-2)} \wedge W_{(-2)} = 0$, then
\begin{subequations}
\begin{alignat}{3}
& \dim \big( \check{Z} \cap \check{W} \big) \geq m-3 \quad \Longleftrightarrow \quad & & Z_{(-2)} \wedge W_{(-2)} = 0 ,& \label{eq-inter3} \\
& \dim \big( \check{Z} \cap \check{W} \big) \geq m-2 \quad \Longleftrightarrow \quad & & Z_{(-1)} \wedge W_{(-2)} + W_{(-1)} \wedge Z_{(-2)} = 0 , & \label{eq-inter2} \\
& \dim \big( \check{Z} \cap \check{W} \big) \geq m-1\quad \Longleftrightarrow \quad & & W_{(-2)} - Z_{(-2)} - W_{(-1)} \wedge Z_{(-1)} = 0 . & \label{eq-inter1}
\end{alignat}
\end{subequations}
\end{Proposition}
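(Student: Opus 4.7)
\emph{Plan.} The strategy is to apply Theorem~\ref{thm-gam-inters}, which reduces each assertion to determining when the spinor bilinears $\Gamma \ind*{^{(\ell)}_{\bm{\upalpha \upbeta}}} Z^{\bm{\upalpha}} W^{\bm{\upbeta}}$ vanish for every $\ell \leq k$.

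For the universal bound $\dim(\check{Z} \cap \check{W}) \geq m-4$, I would argue geometrically rather than computationally. Since $Z, W \in \mbf{T}_\Xi \PT$, Proposition~\ref{prop-XiZ}(2) gives $\dim(\check{Z} \cap \check{\Xi}) \geq m-2$ and $\dim(\check{W} \cap \check{\Xi}) \geq m-2$. These are two projective linear subspaces of the $m$-dimensional $\gamma$-plane $\check{\Xi}$, so the standard dimension formula yields
\[
\dim\bigl(\check{Z}\cap\check{\Xi}\cap\check{W}\bigr) \;\geq\; (m-2) + (m-2) - m \;=\; m-4,
\]
and the bound follows because this intersection sits inside $\check{Z}\cap\check{W}$.

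For the three equivalences, I would proceed by direct computation. Substituting the given expressions for $Z^{\bm{\upalpha}}$ and $W^{\bm{\upalpha}}$ into $\Gamma \ind*{^{(\ell)}_{\bm{\upalpha \upbeta}}} Z^{\bm{\upalpha}} W^{\bm{\upbeta}}$ for $\ell = m-3, m-2, m-1$ in turn, every contribution is of the form $\Gamma \ind*{^{(\ell)}_{\bm{\upalpha \upbeta}}} (A\cdot\Xi)^{\bm{\upalpha}} (B\cdot\Xi)^{\bm{\upbeta}}$ with $A \in \wedge^i \mfr_{-1}$, $B \in \wedge^j \mfr_{-1}$ and $i,j \in \{0,1,2\}$. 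Using the Clifford identity \eqref{eq-Clifford_twistor_odd} together with the definition \eqref{eq-spin-bilinear-form-odd}, each such term rewrites as a contraction of $\Gamma \ind*{^{(m+1)}_{\bm{\upalpha \upbeta}}} \Xi^{\bm{\upalpha}} \Xi^{\bm{\upbeta}}$ (the only non-vanishing scalar bilinear of $\Xi$, by purity) against components of the $Z_{(-i)}$ and $W_{(-j)}$. Grade-counting then identifies, at each $\ell$, the surviving top-degree wedge combination, which matches precisely the right-hand side of the corresponding relation in \eqref{eq-inter3}--\eqref{eq-inter1}. For the successive equivalences one assumes the previous ones, and invokes the stated purity relations on $Z$ and $W$ to absorb lower-grade terms; in particular, the mixed-grade combination appearing in \eqref{eq-inter1} arises from using $Z_{(0)} Z_{(-2)} = Z_{(-1)} \wedge Z_{(-1)}$ and its analogue for $W$ to trade a grade-two contribution against a grade-one-times-grade-one contribution.

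The main obstacle will be the Clifford bookkeeping. Processing $(A\cdot\Xi)^{\bm{\upalpha}} (B\cdot\Xi)^{\bm{\upbeta}}$ inside $\Gamma \ind*{^{(\ell)}_{\bm{\upalpha \upbeta}}}$ generates many terms coupled by $h_{\mc{A}\mc{B}}$-trace contractions carrying combinatorial coefficients, all of which must be tracked with care. The single-spinor reductions already displayed in \eqref{eq-GZXi} provide the template, but the two-spinor case requires one further round of Clifford commutation at each $\ell$ and repeated use of the purity of $\Xi$ to collapse intermediate bilinears of lower grade.
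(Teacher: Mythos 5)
Your geometric derivation of the $m-4$ bound is correct and is a clean alternative to the paper's, which absorbs that bound into the Clifford computation rather than arguing by dimension counting. Since $Z,W\in\mbf{T}_\Xi\PT$, Proposition~\ref{prop-XiZ}(2) gives $\dim(\check{Z}\cap\check{\Xi})\geq m-2$ and $\dim(\check{W}\cap\check{\Xi})\geq m-2$; both sit as projective linear subspaces of the $m$-dimensional $\check{\Xi}$, so $\dim(\check{Z}\cap\check{\Xi}\cap\check{W})\geq m-4$, and this intersection lies in $\check{Z}\cap\check{W}$.

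For the three equivalences you have the right overall strategy (Theorem~\ref{thm-gam-inters} plus Clifford computation), but you miss the device that makes it tractable. You propose to expand both $Z$ and $W$ in the $\Xi$-centered Fock expansion and handle the resulting two-spinor bilinears by repeated Clifford commutation, conceding that the bookkeeping is the main obstacle. The paper circumvents this by re-centering the Fock expansion of $W$ on $Z$ rather than on $\Xi$: setting $\Phi_{(-1)}:=W_{(-1)}-Z_{(-1)}$ and $\Phi_{(-2)}:=W_{(-2)}-Z_{(-2)}-W_{(-1)}\wedge Z_{(-1)}$, one has
\begin{gather*}
W^{\bm{\upalpha}} = Z^{\bm{\upalpha}}+\tfrac{\ii}{2}\big(\Phi_{(-1)}\cdot Z\big)^{\bm{\upalpha}}-\tfrac{1}{4}\big(\Phi_{(-2)}\cdot Z\big)^{\bm{\upalpha}}
-\tfrac{\ii}{8}\big(\big(\Phi_{(-1)}\wedge\Phi_{(-2)}\big)\cdot Z\big)^{\bm{\upalpha}}+\tfrac{1}{32}\big(\big(\Phi_{(-2)}\wedge\Phi_{(-2)}\big)\cdot Z\big)^{\bm{\upalpha}} ,
\end{gather*}
which is exactly the Fock series \eqref{eq-fock-spinor} with $\Xi$ replaced by $Z$. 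Then \eqref{eq-GZXi} applies verbatim with $\Xi\to Z$, each bilinear $\Gamma \ind*{^{(m-k)}_{\bm{\upalpha \upbeta}}}Z^{\bm{\upalpha}}W^{\bm{\upbeta}}$ depends only on $\Phi_{(-k)}$ and $\Phi_{(-k-1)}$, and the three conditions reduce directly to $\Phi_{(-2)}\wedge\Phi_{(-2)}=0$, $\Phi_{(-1)}\wedge\Phi_{(-2)}=0$ and $\Phi_{(-2)}=0$; unwinding these with the purity of $Z$ and $W$ (e.g.\ $W_{(-1)}\wedge W_{(-2)}=0$, $Z_{(-1)}\wedge Z_{(-2)}=0$) yields \eqref{eq-inter3}--\eqref{eq-inter1}.

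Finally, your account of where the cross-term $W_{(-1)}\wedge Z_{(-1)}$ in \eqref{eq-inter1} comes from is mistaken. It is not obtained by trading grades via the purity relations of $Z$ or $W$ separately (each of which involves only one spinor's own components); it is the Clifford cross-term produced when the $\Xi$-centered degree-one piece of $W$ is passed through that of $Z$, and it is absorbed once and for all into the definition of $\Phi_{(-2)}$. Without identifying the re-centering substitution you would not be led to the closed forms \eqref{eq-inter3}--\eqref{eq-inter1}.
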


\begin{proof}Let us rewrite
\begin{gather*}
W^{\bm{\upalpha}} = Z^{\bm{\upalpha}} + \tfrac{\ii}{2} \big( \Phi_{(-1)} \cdot Z \big)^{\bm{\upalpha}} - \tfrac{1}{4} \big( \Phi_{(-2)} \cdot Z \big)^{\bm{\upalpha}} \\
\hphantom{W^{\bm{\upalpha}} =}{} - \tfrac{\ii}{8} \big( \big( \Phi_{(-1)} \wedge \Phi_{(-2)} \big) \cdot Z \big)^{\bm{\upalpha}} + \tfrac{1}{32} \big( \big( \Phi_{(-2)} \wedge \Phi_{(-2)} \big) \cdot Z \big)^{\bm{\upalpha}} ,
\end{gather*}
where $\Phi_{-1} := W_{-1} - Z_{-1}$ and $\Phi_{-2} := W_{-2} - Z_{-2} - W_{-1} \wedge Z_{-1}$. It suf\/f\/ices to compute $\Gamma \ind*{^{(m-k)}_{\bm{\upalpha \upbeta}}} Z^{\bm{\upalpha}} W^{\bm{\upbeta}} = 0$ for all $k \geq 4$, and
\begin{enumerate}\itemsep=0pt
\item[1)] $\Gamma \ind*{^{(m-k)}_{\bm{\upalpha \upbeta}}} Z^{\bm{\upalpha}} W^{\bm{\upbeta}} = 0$ for all $k \geq 3$ if and only if $\Phi_{(-2)} \wedge \Phi_{(-2)} = 0$;
\item[2)] $\Gamma \ind*{^{(m-k)}_{\bm{\upalpha \upbeta}}} Z^{\bm{\upalpha}} W^{\bm{\upbeta}} = 0$ for all $k \geq 2$ if and only if $\Phi_{(-1)} \wedge \Phi_{(-2)} = 0$;
\item[3)] $\Gamma \ind*{^{(m-k)}_{\bm{\upalpha \upbeta}}} Z^{\bm{\upalpha}} W^{\bm{\upbeta}} = 0$ for all $k \geq 1$ if and only if $\Phi_{(-2)}= 0$.
\end{enumerate}
Equivalences \eqref{eq-inter3}, \eqref{eq-inter2} and \eqref{eq-inter1} now follow from the def\/initions of $\Phi_{(-1)}$ and $\Phi_{(-2)}$.
\end{proof}

A special case of this proposition is given below.
\begin{Corollary}\label{cor-D-geom}
Fix a twistor $\Xi$ in $\PT$ and let $\check{\Xi }$ be its corresponding $\gamma$-plane in~$\mc{Q}^n$. Let~$Z$ and~$W$ be two twistors in~$\mbf{D}_\Xi$, corresponding to $\gamma$-planes~$\check{Z}$ and~$\check{W}$. Then $\dim ( \check{Z} \cap \check{W} ) \geq m-2$.

Further, $Z$ and $W$ belong to the same distinguished curve in $\mbf{D}_\Xi$, as defined in Lemma~{\rm \ref{lem-fol-D}}, if and only if $\dim(\check{Z} \cap \check{W}) \geq m-1$.
\end{Corollary}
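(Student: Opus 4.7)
The plan is to read this corollary as a direct specialisation of Proposition~\ref{prop-geometric-T} to the situation where the Fock components of weight $-2$ vanish, and then to match the surviving first-order Fock datum with the parametrisation of the distinguished curves supplied by Lemma~\ref{lem-fol-D}.

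First, since $Z,W\in\mbf{D}_\Xi$, Lemma~\ref{lem-Z-orbit-trc} lets us take
\[
Z^{\bm{\upalpha}} = Z_{(0)}\Xi^{\bm{\upalpha}} + \tfrac{\ii}{2}\bigl(Z_{(-1)}\cdot\Xi\bigr)^{\bm{\upalpha}},\qquad
W^{\bm{\upalpha}} = W_{(0)}\Xi^{\bm{\upalpha}} + \tfrac{\ii}{2}\bigl(W_{(-1)}\cdot\Xi\bigr)^{\bm{\upalpha}},
\]
i.e.\ $Z_{(-2)} = W_{(-2)} = 0$ in the notation of Proposition~\ref{prop-geometric-T}. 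Since $\mbf{D}_\Xi\subset\mbf{T}_\Xi\PT$, that proposition applies. The purity constraints $Z_{(0)}Z_{(-2)} = Z_{(-1)}\wedge Z_{(-1)}$ force $Z_{(-1)}\wedge Z_{(-1)}=0$, which is automatic for a $1$-vector, and similarly for $W$, so there is nothing to verify on the hypotheses side.

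Second, conditions \eqref{eq-inter3} and \eqref{eq-inter2} of Proposition~\ref{prop-geometric-T} read $0\wedge 0=0$ and $Z_{(-1)}\wedge 0+W_{(-1)}\wedge 0=0$, both trivially true. Hence $\dim(\check{Z}\cap\check{W})\geq m-2$ unconditionally, which gives the first assertion. The remaining condition \eqref{eq-inter1} collapses to
\[
W_{(-1)}\wedge Z_{(-1)} = 0,
\]
and Proposition~\ref{prop-geometric-T} tells us this is equivalent to $\dim(\check{Z}\cap\check{W})\geq m-1$.

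Third, it remains to identify $W_{(-1)}\wedge Z_{(-1)}=0$ with the condition that $Z$ and $W$ lie on the same distinguished curve of $\mbf{D}_\Xi$. By Lemma~\ref{lem-fol-D}, these curves come from the exponential map $\mfr_{-1}\to \mbf{D}_\Xi$, and under the Fock parametrisation this exponential sends $\Phi_{(-1)}\in\mfr_{-1}$ to the twistor with $Z_{(0)}=1$ and $Z_{(-1)}=\Phi_{(-1)}$; the curves are precisely the images of the complex lines through the origin in $\mfr_{-1}\cong\mbb{C}^{m+1}$. Two such points share a curve iff their $Z_{(-1)}$, $W_{(-1)}$ are proportional, i.e.\ iff $W_{(-1)}\wedge Z_{(-1)}=0$, completing the equivalence.

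The one slightly delicate step is the last one, matching the algebraic condition $W_{(-1)}\wedge Z_{(-1)}=0$ with the distinguished curves of Lemma~\ref{lem-fol-D}: one must check that the Fock coordinate $Z_{(-1)}$ is indeed the linear parameter on $\mfr_{-1}$ coming from the exponential map, so that lines through the origin in $\mfr_{-1}$ correspond to lines in $Z_{(-1)}$. Everything else is a mechanical specialisation of Proposition~\ref{prop-geometric-T} to $Z_{(-2)}=W_{(-2)}=0$.
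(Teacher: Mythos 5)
Your proof is correct and follows exactly the same route the paper takes: specialize Proposition~\ref{prop-geometric-T} to $Z_{(-2)} = W_{(-2)} = 0$ so that only condition~\eqref{eq-inter1} is nontrivial, and then invoke Lemma~\ref{lem-fol-D} to identify $W_{(-1)}\wedge Z_{(-1)}=0$ with collinearity along a distinguished curve. You merely spell out explicitly the two steps that the paper's one-line proof leaves to the reader.
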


\begin{proof}
This is a direct consequence of Proposition \ref{prop-geometric-T} with $Z_{(-2)} = W_{(-2)} = 0$, and Lem\-ma~\ref{lem-fol-D}.
\end{proof}

\subsubsection{Even dimensions}

Assume $n=2m$. Any non-zero chiral spinor $ Z^{\bm{\upalpha}}$ def\/ines a linear map $Z_\mc{A}^{\bm{\upalpha'}} := \Gamma \ind{_{\mc{A}}_{\bm{\upbeta}}^{\bm{\upalpha'}}} Z^{\bm{\upbeta}} \colon \V \rightarrow \Ss$, and similarly for primed spinors. Again, any non-trivial kernel of this map descends to a linear subspace of $\mc{Q}^n$. A non-zero chiral spinor $Z^{\bm{\upalpha}}$ is \emph{pure} if the kernel of $Z_\mc{A}^{\bm{\upalpha}}$ has maximal dimension $m+1$, and similarly for primed spinors.

\begin{Proposition}[\cite{Cartan1981}]
The twistor space $\PT$ and the primed twistor space $\PT'$ of $\mc{Q}^{2m}$ are isomorphic to the projectivations of the spaces of all pure spinors in $\Ss$ and $\Ss'$ respectively.
\end{Proposition}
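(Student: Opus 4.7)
The plan is to transpose the argument sketched for the odd-dimensional twistor space in the previous subsection to the chiral setting, invoking the classical construction of Cartan.

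\textbf{Step 1: kernels are totally null.} For any non-zero $Z^{\bm{\upalpha}} \in \Ss$, I would apply the even-dimensional Clifford identity $\Gamma \ind{_{(\mc{A}}_{\bm{\upalpha}}^{\bm{\upgamma'}}} \Gamma \ind{_{\mc{B})}_{\bm{\upgamma'}}^{\bm{\upbeta}}} = -h_{\mc{A}\mc{B}} \delta \ind*{_{\bm{\upalpha}}^{\bm{\upbeta}}}$ to the map $Z_{\mc{A}}^{\bm{\upalpha'}} := \Gamma \ind{_{\mc{A}}_{\bm{\upbeta}}^{\bm{\upalpha'}}} Z^{\bm{\upbeta}}$. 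Contracting with two vectors $X^{\mc{A}}, Y^{\mc{B}}$ in $\ker Z_{\mc{A}}^{\bm{\upalpha'}}$ yields $h_{\mc{A}\mc{B}} X^{\mc{A}} Y^{\mc{B}} Z^{\bm{\upalpha}} = 0$, so the kernel is a totally null subspace of $\V$. The analogous argument with primed indices handles the $\Ss'$ case.

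\textbf{Step 2: maximality and descent to $\mc{Q}^{2m}$.} A non-trivial totally null subspace of $(\V, h_{\mc{A}\mc{B}})$ of dimension $k+1$ descends, under projectivisation, to a $k$-dimensional linear subspace of $\mc{Q}^{2m}$, because every line through the origin in such a subspace lies on the null cone $\mc{C}$. The maximal totally null subspaces of $(\V, h_{\mc{A}\mc{B}})$ have dimension $m+1$, so purity of $Z^{\bm{\upalpha}}$ is precisely the condition that its associated $m$-plane in $\mc{Q}^{2m}$ has maximal dimension.

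\textbf{Step 3: self-duality versus anti-self-duality.} This is the subtle step and is the main obstacle: I must show that pure spinors in $\Ss$ yield self-dual $m$-planes (i.e., $\alpha$-planes), while pure spinors in $\Ss'$ yield anti-self-dual $m$-planes ($\beta$-planes). This relies on the fact that the chirality projectors on $\Ss \oplus \Ss'$ commute with the image of the Hodge star on $\wedge^{m+1} \V$ under the Clifford map, with opposite eigenvalues on $\Ss$ and $\Ss'$. Concretely, one shows that the $(m+1)$-form $\Gamma \ind*{^{(m+1)}}_{\bm{\upalpha}\bm{\upbeta}} Z^{\bm{\upalpha}} Z^{\bm{\upbeta}}$ (or the primed analogue) has definite duality type determined by the chirality of $Z$, and this form generates the maximal exterior power of the null subspace. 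The sign convention here is what distinguishes $\alpha$-planes from $\beta$-planes, and fixes the assignment $\Ss \leftrightarrow \PT_{(2m)}$, $\Ss' \leftrightarrow \PT'_{(2m)}$.

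\textbf{Step 4: bijectivity and projectivity.} Proportional spinors yield identical kernels, hence the same $m$-plane, so the correspondence descends to $\Pp \Ss$ and $\Pp\Ss'$. For surjectivity, I would note that $G = \Spin(2m+2,\C)$ acts transitively on the set of $\alpha$-planes (respectively $\beta$-planes) of $\mc{Q}^{2m}$, so each of these families is a single $G$-orbit $G/Q$ for a parabolic $Q$ stabilising a chosen $\alpha$-plane (respectively $\beta$-plane). The same stabiliser arises from the $G$-action on $\Pp\Ss$ at a pure spinor, and a dimension count $\dim G/Q = \tfrac{1}{2}m(m+1)$ matches the dimension of the projective variety of pure spinors in $\Ss$ cut out by the Cartan purity relations, giving the required isomorphism. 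The primed case is identical, mutatis mutandis.
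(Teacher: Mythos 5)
Your proposal is correct, and it follows the route the paper implicitly relies on: the paper itself gives no proof here, citing Cartan, and the only supporting discussion is the brief development preceding the odd-dimensional version (kernels of $Z_{\mc{A}}^{\bm{\upalpha'}}$ are totally null by the symmetrised Clifford identity, purity equals maximality of the kernel, proportional spinors give the same kernel, transitivity of $G$ gives surjectivity). Your Steps~1, 2 and~4 reproduce this scheme faithfully. Step~3 is the genuine new ingredient in even dimensions — the split into $\alpha$- and $\beta$-families and its matching with chirality via the (anti-)self-duality of $\Gamma^{(m+1)}_{\bm{\upalpha}\bm{\upbeta}} Z^{\bm{\upalpha}} Z^{\bm{\upbeta}}$ — and you correctly isolate it as the delicate point, even though you leave it at the level of a sketch; that is defensible given that the paper itself defers entirely to Cartan for this fact.
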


When $m=2$, all spinors in $\Ss$ and $\Ss'$ are pure. When $m>2$, the analogue of the purity condition \eqref{eq-pure-spinor-odd} is now \cite{Cartan1981}
\begin{gather}\label{eq-pure-spinor-even}
\Gamma \ind*{^{(k)}_{\bm{\upalpha}\bm{\upbeta}}} Z^{\bm{\upalpha}} Z^{\bm{\upbeta}} = 0 , \qquad \text{for all} \quad k < m+1 ,\quad k \equiv m+1 \pmod{4},
\end{gather}
or alternatively, \cite{Hughston1988,Taghavi-Chabert2016}, $ Z^{\mc{A} \bm{\upalpha'}} Z_\mc{A}^{\bm{\upbeta'}} = 0$. Again, we will think of $\PT$ and $\PT'$ as complex projective varieties of~$\Pp \Ss$ and~$\Pp \Ss'$ respectively, when $m>2$, while for $\mc{Q}^4$, we have $\PT_{(4)} \cong \CP^3$.

The Kleinian model is again a homogeneous space $G/R$, where $R$ is parabolic. But its parabolic Lie algebra~$\mfr$ this time induces a $|1|$-grading $\g = \mfr_{-1} \oplus \mfr_0 \oplus \mfr_1$ on $\g$, where $\mfr_0 \cong \glie(m+1,\C)$, $\mfr_{-1} \cong \wedge^2 \C^{m+1}$ and $\mfr_1 \cong \wedge^2 (\C^{m+1})^*$, and $\mfr = \mfr_0 \oplus \mfr_1$, as given in matrix form by
\begin{gather*}
%\left(
% \begin{BMAT}[2pt,3cm,3cm]{c.c.c.c}{c.c.c.c}
%\mfr_0 & \mfr_0 & \mfr_1 & 0 \\
%\mfr_0 & \mfr_0 & \mfr_1 & \mfr_1 \\
%\mfr_{-1} & \mfr_{-1} & \mfr_0 & \mfr_0 \\
%0 & \mfr_{-1} & \mfr_0 & \mfr_0
%\end{BMAT} \right)
%\begin{BMAT}[2pt,0pt,3cm]{l}{cccc}
%\} \scriptscriptstyle{1} \\ \} \scriptscriptstyle{m} \\ \} \scriptscriptstyle{m} \\ \} \scriptscriptstyle{1} \\
%\end{BMAT}
\includegraphics{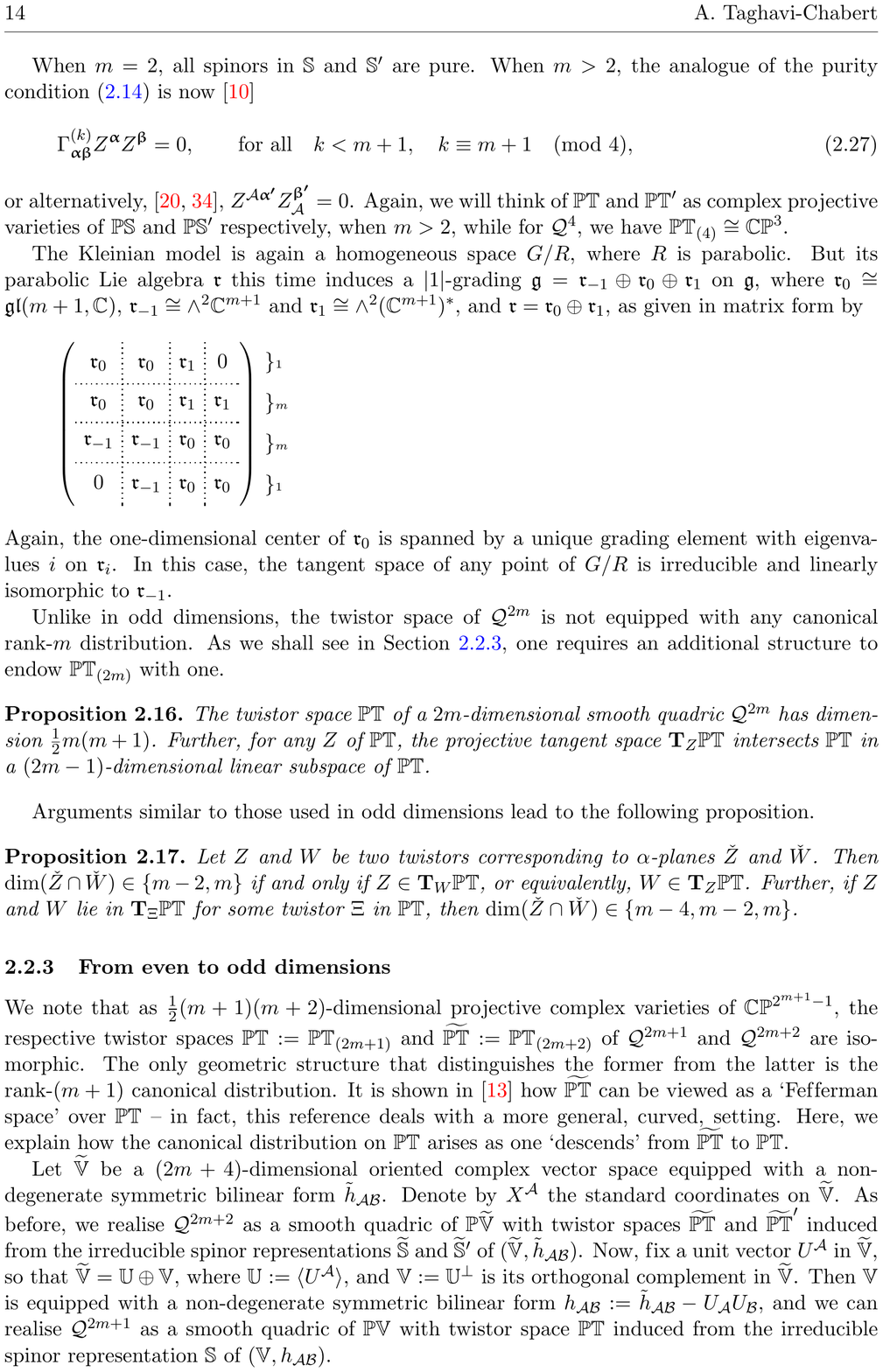}
\end{gather*}
Again, the one-dimensional center of $\mfr_0$ is spanned by a unique grading element with eigenva\-lues~$i$ on~$\mfr_i$. In this case, the tangent space of any point of~$G/R$ is irreducible and linearly isomorphic to~$\mfr_{-1}$.

Unlike in odd dimensions, the twistor space of $\mc{Q}^{2m}$ is not equipped with any canonical \mbox{rank-$m$} distribution. As we shall see in Section~\ref{sec-even-odd}, one requires an additional structure to endow~$\PT_{(2m)}$ with one.

\begin{Proposition}The twistor space $\PT$ of a $2m$-dimensional smooth quadric $\mc{Q}^{2m}$ has dimension $\frac{1}{2}m(m+1)$. Further, for any~$Z$ of~$\PT$, the projective tangent space $\mbf{T}_Z \PT$ intersects $\PT$ in a~$(2m-1)$-dimensional linear subspace of~$\PT$.
\end{Proposition}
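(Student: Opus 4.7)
The plan is to mirror the proof of Proposition~\ref{prop-canonical-distribution}, with the odd-dimensional Fock expansion replaced by its even, chirality-preserving analogue. The dimension of $\PT$ will follow at once from the Kleinian model: since $\PT \cong G/R$ and the $|1|$-grading gives $\Tgt_Z \PT \cong \mfr_{-1} \cong \wedge^2 \C^{m+1}$ at every point $Z$, one reads off $\dim \PT = \binom{m+1}{2} = \tfrac{1}{2}m(m+1)$.

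For the second claim, I would fix a pure spinor $\Xi^{\bm{\upalpha}}$ representing $Z$, with associated $(m+1)$-dimensional maximal isotropic $W_\Xi \subset \V$ and complementary maximal isotropic $W^*_\Xi$ identifiable with $W_\Xi^*$ via $h_{\mc{A}\mc{B}}$. Chiral spinors of the same chirality as $\Xi$ then admit an even-parity Fock expansion
\[
\Psi^{\bm{\upalpha}} = \sum_{k} c_k \big(\Psi_{(-2k)} \cdot \Xi\big)^{\bm{\upalpha}}, \qquad \Psi_{(-2k)} \in \wedge^{2k} W^*_\Xi,
\]
which is the even-dimensional analogue of~\eqref{eq-fock-spinor} and furnishes a linear isomorphism $\Ss \cong \bigoplus_{k} \wedge^{2k} W^*_\Xi$ of total dimension $\sum_k \binom{m+1}{2k} = 2^m$. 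By the same argument as in Lemma~\ref{lem-Z-orbit-trc}, the projective tangent space $\mbf{T}_Z \PT \subset \Pp\Ss$ is cut out linearly by $\Psi_{(-2k)} = 0$ for $k \geq 2$, so it is parametrised by $(\Psi_{(0)}, \Psi_{(-2)}) \in \C \oplus \wedge^2 W^*_\Xi$ and has projective dimension $\binom{m+1}{2} = \dim \PT$, as expected.

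Next, $[\Psi^{\bm{\upalpha}}] \in \mbf{T}_Z \PT$ lies in $\PT$ precisely when $\Psi$ is pure. Feeding the restricted form $\Psi = \Psi_{(0)} \Xi + c_1 (\Psi_{(-2)} \cdot \Xi)$ into the Cartan purity conditions~\eqref{eq-pure-spinor-even}, equivalently $\Psi^{\mc{A} \bm{\upalpha'}} \Psi_\mc{A}^{\bm{\upbeta'}} = 0$, the higher-order Fock relations $\Psi_{(0)}^{k-1} \Psi_{(-2k)} \propto \Psi_{(-2)}^{\wedge k}$ for $k \geq 2$ collapse to $\Psi_{(-2)}^{\wedge k} = 0$, whose only independent content is $\Psi_{(-2)} \wedge \Psi_{(-2)} = 0$: that is, $\Psi_{(-2)}$ must be a decomposable bivector. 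The projectivisation of the affine cone of decomposable bivectors in $\wedge^2 W^*_\Xi$ is the Plücker image of the Grassmannian $\mathrm{Gr}(2, m+1)$, of projective dimension $2(m-1)$; including the $\Psi_{(0)}$ coordinate and the overall projective rescaling, this yields $\dim (\mbf{T}_Z \PT \cap \PT) = 2(m-1) + 1 = 2m-1$.

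The main technical obstacle will be establishing the even-dimensional versions of~\eqref{eq-fock-spinor}, \eqref{eq-GZXi}, and~\eqref{eq-purity-cond-twistor}, and then verifying precisely that, once the higher Fock components vanish, the full list of Cartan purity constraints reduces to $\Psi_{(-2)} \wedge \Psi_{(-2)} = 0$. Concretely, this amounts to expanding $\Psi^{\mc{A} \bm{\upalpha'}} \Psi_\mc{A}^{\bm{\upbeta'}}$ in the Fock basis following the pattern of~\eqref{eq-GZXi}, with due attention to chirality-swapping Clifford actions and to the parity of $m+1$ governing which $\Gamma \ind*{^{(k)}_{\bm{\upalpha}\bm{\upbeta}}}$ are non-vanishing.
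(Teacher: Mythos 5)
The paper gives no explicit proof for this proposition, stating only that ``arguments similar to those used in odd dimensions'' yield it; your proposal carries out precisely that programme, mirroring the proof of Proposition~\ref{prop-canonical-distribution} with the Fock expansion restricted to even degrees. The dimension count via $\mfr_{-1} \cong \wedge^2 \C^{m+1}$, the identification of $\mbf{T}_Z\PT$ with $\C \oplus \wedge^2 W^*_\Xi$ by killing $\Psi_{(-2k)}$ for $k\geq 2$, and the reduction of the purity constraints to $\Psi_{(-2)} \wedge \Psi_{(-2)} = 0$ giving the projective cone over $\mathrm{Gr}(2,m+1)$ of dimension $2(m-1)+1 = 2m-1$ are all correct and match the intended argument.
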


Arguments similar to those used in odd dimensions lead to the following proposition.
\begin{Proposition}Let $Z$ and $W$ be two twistors corresponding to $\alpha$-planes $\check{Z}$ and $\check{W}$. Then $\dim ( \check{Z} \cap \check{W} ) \in \{m-2, m \}$ if and only if $Z \in \mbf{T}_W \PT$, or equivalently, $W \in \mbf{T}_Z \PT$. Further, if~$Z$ and~$W$ lie in~$\mbf{T}_\Xi \PT$ for some twistor $\Xi$ in $\PT$, then $\dim ( \check{Z} \cap \check{W}) \in \{m-4, m-2, m \}$.
\end{Proposition}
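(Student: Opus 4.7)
The approach parallels Propositions \ref{prop-XiZ} and \ref{prop-geometric-T}, with the key new ingredient being the parity constraint on $\alpha$-plane intersections in even dimensions: two same-duality maximal totally null planes in $\mc{Q}^{2m}$ always meet in a subspace whose codimension inside each is even, so $\dim(\check{Z} \cap \check{W}) \in \{m, m-2, m-4, \ldots\}$. I would exploit this together with the analogue of Cartan's intersection theorem (Theorem~\ref{thm-gam-inters}), which in even dimensions states that $\dim(\check{Z} \cap \check{W}) \geq k$ is equivalent to $\Gamma^{(\ell)}_{\bm{\upalpha}\bm{\upbeta}} Z^{\bm{\upalpha}} W^{\bm{\upbeta}} = 0$ for all admissible $\ell \leq k$ with $\ell \equiv m+1 \pmod{2}$, the other forms vanishing identically on $\Ss \otimes \Ss$ by the parity rules stated in Section~\ref{sec-geo-bkgd}. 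The projective tangent space $\mbf{T}_W \PT$ is then the analogue of~\eqref{eq-proj-tgt-PT}, namely the locus cut out by $\Gamma^{(k)}_{\bm{\upalpha}\bm{\upbeta}} Z^{\bm{\upalpha}} W^{\bm{\upbeta}} = 0$ for all admissible $k$ strictly less than the top index $m+1$.

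For the first statement, the parity constraint makes $\dim(\check{Z}\cap\check{W})\in\{m-2,m\}$ equivalent to $\dim(\check{Z}\cap\check{W})\geq m-2$. By Cartan's theorem applied in the chiral setting, this is equivalent to $\Gamma^{(\ell)}_{\bm{\upalpha}\bm{\upbeta}} Z^{\bm{\upalpha}}W^{\bm{\upbeta}} = 0$ for all admissible $\ell \leq m-2$. Together with the purity conditions~\eqref{eq-pure-spinor-even} satisfied by $Z$ and $W$, the only non-trivial such constraint is the single smallest surviving one, which is precisely the incidence relation $Z \in \mbf{T}_W \PT$ (or equivalently $W \in \mbf{T}_Z \PT$, by the definite symmetry/skew-symmetry type of $\Gamma^{(k)}_{\bm{\upalpha}\bm{\upbeta}}$).

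For the second statement, I would fix $\Xi \in \PT$ and use the even-dimensional Fock representation adapted to $\Xi$: any spinor in $\Ss$ expands as $Z^{\bm{\upalpha}} = \sum_k c_k (Z_{(-2k)} \cdot \Xi)^{\bm{\upalpha}}$ with only even-degree components (since $Z$ and $\Xi$ have the same chirality), and the condition $Z\in\mbf{T}_\Xi\PT$ forces the leading non-trivial term to be $Z_{(-2)}$ (higher components being further constrained). Writing the corresponding expansion for $W$ and substituting into $\Gamma^{(\ell)}_{\bm{\upalpha}\bm{\upbeta}} Z^{\bm{\upalpha}} W^{\bm{\upbeta}}$ using the Clifford identity~\eqref{eq-Clifford_twistor_odd} (and the analogous even-dimensional identities), I expect to show that the lowest-weight form that could conceivably not vanish has $\ell = m-4$, forcing $\dim(\check{Z}\cap\check{W}) \geq m-4$ and hence, by parity, $\dim(\check{Z}\cap\check{W}) \in \{m-4, m-2, m\}$.

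The main obstacle is the bookkeeping of which spin bilinear forms $\Gamma^{(k)}_{\bm{\upalpha}\bm{\upbeta}}$ are non-trivial on $\Ss \otimes \Ss$ as a function of $m \pmod 4$, since this controls exactly which Cartan vanishing conditions genuinely constrain the spinors and which are automatic; half-integer shifts in the index pattern relative to the odd-dimensional case \eqref{eq-purity-cond-twistor}--\eqref{eq-GZXi} must be tracked carefully. Once this parity dictionary is fixed, the computation reduces to a direct Clifford-algebra check essentially identical in form to the proof of Proposition~\ref{prop-geometric-T}.
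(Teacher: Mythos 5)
The paper supplies no proof of this Proposition; the surrounding text merely asserts that ``arguments similar to those used in odd dimensions lead to the following proposition''. Your proposal fills in precisely such an argument, and its skeleton is the intended one: the parity constraint on intersections of same-chirality maximal totally null planes, the chiral form of Cartan's intersection criterion (the even-dimensional analogue of Theorem~\ref{thm-gam-inters}), and a truncated Fock expansion about $\Xi$ in the spirit of Lemma~\ref{lem-Z-orbit-trc}, equations~\eqref{eq-GZXi} and Proposition~\ref{prop-geometric-T}.

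Two index slips should be fixed, both of exactly the bookkeeping type your final paragraph anticipates. First, the even-dimensional analogue of the projective tangent space~\eqref{eq-proj-tgt-PT} is cut out by $\Gamma^{(k)}_{\bm{\upalpha}\bm{\upbeta}}Z^{\bm{\upalpha}}W^{\bm{\upbeta}}=0$ for admissible $k<m-1$, not $k<m+1$: with the range $k<m+1$ (the purity threshold) Theorem~\ref{thm-gam-inters} would yield $\dim(\check{Z}\cap\check{W})\geq m$, i.e.~$\check{Z}=\check{W}$, rather than the tangent-space condition. The next step of your argument actually uses the correct range: by parity, $\dim\geq m-2$ is equivalent to $\Gamma^{(\ell)}_{\bm{\upalpha}\bm{\upbeta}}Z^{\bm{\upalpha}}W^{\bm{\upbeta}}=0$ for all admissible $\ell\leq m-3$, and once the tangent space is stated with $k<m-1$ these two sets of equations are literally identical, so no appeal to purity or to a ``smallest surviving constraint'' is needed for the first assertion. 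Second, in the second assertion the lowest-degree bilinear that can fail to vanish on a pair truncated at $\Phi_{(-2)}$ is $\Gamma^{(m-3)}$, not $\Gamma^{(m-4)}$ (degree $m-4$ is not admissible on $\Ss\otimes\Ss$); its possible non-vanishing is what realises $\dim(\check{Z}\cap\check{W})=m-4$ via the sharp part of Theorem~\ref{thm-gam-inters}, while $\Gamma^{(\ell)}=0$ for all admissible $\ell\leq m-5$ follows from $\Phi_{(-2)}\wedge\Phi_{(-2)}=0$ exactly as in Proposition~\ref{prop-geometric-T}. With these corrections the argument is complete.
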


\subsubsection{From even to odd dimensions}\label{sec-even-odd}
We note that as $\frac{1}{2}(m+1)(m+2)$-dimensional projective complex varieties of $\CP^{2^{m+1}-1}$, the respective twistor spaces $\PT:=\PT_{(2m+1)}$ and $\widetilde{\PT}:=\PT_{(2m+2)}$ of $\mc{Q}^{2m+1}$ and $\mc{Q}^{2m+2}$ are isomorphic. The only geometric structure that distinguishes the former from the latter is the rank-$(m+1)$ canonical distribution. It is shown in~\cite{Doubrov2010} how $\widetilde{\PT}$ can be viewed as a `Fef\/ferman space' over $\PT$ -- in fact, this reference deals with a more general, curved, setting. Here, we explain how the canonical distribution on $\PT$ arises as one `descends' from $\widetilde{\PT}$ to $\PT$.

Let $\widetilde{\V}$ be a $(2m+4)$-dimensional oriented complex vector space equipped with a non-degenerate symmetric bilinear form $\tilde{h}_{\mc{A} \mc{B}}$. Denote by $X^\mc{A}$ the standard coordinates on $\widetilde{\V}$. As before, we realise $\mc{Q}^{2m+2}$ as a smooth quadric of $\Pp \widetilde{\V}$ with twistor spaces $\widetilde{\PT}$ and $\widetilde{\PT}'$ induced from the irreducible spinor representations $\widetilde{\Ss}$ and $\widetilde{\Ss}'$ of $(\widetilde{\V}, \tilde{h}_{\mc{A} \mc{B}})$. Now, f\/ix a~unit vector~$U^\mc{A}$ in~$\widetilde{\V}$, so that $\widetilde{\V} = \mathbb{U} \oplus \V$, where $\mathbb{U} := \langle U^\mc{A} \rangle$, and $\V:=\mathbb{U}^\perp$ is its orthogonal complement in~$\widetilde{\V}$. Then~$\V$ is equipped with a non-degenerate symmetric bilinear form $h_{\mc{A} \mc{B}} := \tilde{h}_{\mc{A} \mc{B}} - U_\mc{A} U_\mc{B}$, and we can realise $\mc{Q}^{2m+1}$ as a smooth quadric of~$\Pp \V$ with twistor space~$\PT$ induced from the irreducible spinor representation~$\Ss$ of~$(\V, h_{\mc{A} \mc{B}})$.

Observe that $U^\mc{A}$ def\/ines two invertible linear maps,
\begin{gather*}
U_{\bm{\upalpha'}}^{\bm{\upbeta}} := U^\mc{A} \widetilde{\Gamma} \ind{_{\mc{A} \bm{\upalpha'}}^{\bm{\upbeta}}} \colon \ \widetilde{\Ss}' \rightarrow \widetilde{\Ss} , \qquad
U_{\bm{\upalpha}}^{\bm{\upbeta'}} := U^\mc{A} \widetilde{\Gamma} \ind{_{\mc{A} \bm{\upalpha}}^{\bm{\upbeta'}}} \colon \ \widetilde{\Ss} \rightarrow \widetilde{\Ss}' ,
\end{gather*}
where $\widetilde{\Gamma} \ind{_{\mc{A} \bm{\upalpha'}}^{\bm{\upbeta}}}$ and $\widetilde{\Gamma} \ind{_{\mc{A} \bm{\upalpha}}^{\bm{\upbeta'}}}$ generate the Clif\/ford algebra $\Cl(\widetilde{\V},\tilde{h}_{\mc{A} \mc{B}})$. These maps allow us to identi\-fy~$\widetilde{\Ss}$ with~$\widetilde{\Ss}'$, and thus $\widetilde{\PT}$ with $\widetilde{\PT}'$. Further, using the Clif\/ford property, it is straightforward to check that $\Gamma \ind{_{\mc{A} \bm{\upalpha}}^{\bm{\upbeta}}} := h_\mc{A}^\mc{B} \widetilde{\Gamma} \ind{_{\mc{B} \bm{\upalpha}}^{\bm{\upgamma'}}} U_{\bm{\gamma'}}^{\bm{\upbeta}} = - h_\mc{A}^\mc{B} U_{\bm{\upalpha}}^{\bm{\upgamma'}} \widetilde{\Gamma} \ind{_{\mc{B} \bm{\upgamma'}}^{\bm{\upbeta}}} = U^\mc{B} \widetilde{\Gamma} \ind{_{\mc{A} \mc{B} \bm{\upalpha}}^{\bm{\upbeta}}}$ generate the Clif\/ford algebra $\Cl(\V, h_{\mc{A} \mc{B}})$. More generally, the relation between the spanning elements of $\Cl(\V, h_{\mc{A} \mc{B}})$ and those of $\Cl(\widetilde{\V}, \widetilde{h}_{\mc{A} \mc{B}})$ is given by
\begin{gather}\label{eq-Cl-even-odd}
\Gamma \ind*{^{(k)}_{\mc{A}_1 \ldots \mc{A}_k} _{\bm{\upalpha \upbeta}}} = h_{\mc{A}_1}^{\mc{B}_1} \cdots h_{\mc{A}_k}^{\mc{B}_k} \widetilde{\Gamma} \ind*{^{(k)}_{\mc{B}_1 \ldots \mc{B}_k} _{\bm{\upalpha \upbeta}}} , \qquad k \equiv m+2 \pmod{2} , \\
\Gamma \ind*{^{(k)}_{\mc{A}_1 \ldots \mc{A}_k} _{\bm{\upalpha \upbeta}}} = U^\mc{B} \widetilde{\Gamma} \ind*{^{(k)}_{\mc{A}_1 \cdots \mc{A}_k \mc{B}} _{\bm{\upalpha \upbeta}}} = (-1)^k h_{\mc{A}_1}^{\mc{B}_1} \cdots h_{\mc{A}_k}^{\mc{B}_k} U _{\bm{\upalpha}}^{\bm{\upgamma'}} \widetilde{\Gamma} \ind*{^{(k)}_{\mc{B}_1 \cdots \mc{B}_k} _{\bm{\upgamma' \upbeta}}} , \qquad k \equiv m+1 \pmod{2} .\nonumber
\end{gather}
If we now introduce homogeneous coordinates $[Z^{\bm{\upalpha}}]$ on $\Pp \widetilde{\Ss}$, we can identify the twistor space $\PT$ equipped with its canonical distribution with the twistor space $\widetilde{\PT}$, as can be seen by inspection of~\eqref{eq-pure-spinor-odd} and~\eqref{eq-pure-spinor-even}. Note that we could have played the same game with~$\widetilde{\PT}'$.

Let us interpret this more geometrically. Clearly, the embedding of $\mc{Q}^{2m+1}$ into $\mc{Q}^{2m+2}$ arises as the intersection of the hyperplane $U_\mc{A} X^\mc{A} = 0$ in $\Pp \widetilde{\V}$ with the cone over $\mc{Q}^{2m+2}$. A $\gamma$-plane of~$\mc{Q}^{2m+1}$ then arises as the intersection of an $\alpha$-plane of $\mc{Q}^{2m+2}$ with $\mc{Q}^{2m+1}$, and similarly for $\beta$-planes. An $\alpha$-plane $\check{Z}$ and a $\beta$-plane $\check{W}$ def\/ine the same $\gamma$-plane if and only if their corresponding twistors satisfy $Z^{\bm{\upalpha}} = U_{\bm{\upbeta'}}^{\bm{\upalpha}} W^{\bm{\upbeta'}}$. In particular, such a pair must intersect maximally, i.e., in an $m$-plane in $\mc{Q}^{2m+2}$. This much is already outlined in the appendix of~\cite{Penrose1986}.

Finally, we can see how the canonical distribution $\mathrm{D}$ on $\PT$ arises geometrically from~$\widetilde{\PT}$ and~$\widetilde{\PT}'$. Fix a point $[\Xi^{\bm{\upalpha}}]$ in $\widetilde{\PT}$. This represents an $\alpha$-plane $\check{\Xi}$ in~$\mc{Q}^{2m+2}$, and so a $\gamma$-plane in~$\mc{Q}^{2m+1}$, which also corresponds to the unique $\beta$-plane with associated primed twistor $[U_{\bm{\upbeta}}^{\bm{\upalpha'}} \Xi^{\bm{\upbeta}}]$ in~$\widetilde{\PT}'$. We claim that the $\beta$-planes intersecting $\check{\Xi}$ maximally are in one-to-one correspondence with the points of $\mbf{D}_\Xi$. To see this, let $[Z^{\bm{\upalpha}}]$ be a point in $\mathbf{T}_\Xi \widetilde{\PT} \subset \Pp \widetilde{\Ss}$ so that
\begin{gather*}
\widetilde{\Gamma} \ind*{^{(k)}_{\bm{\upalpha}}_{\bm{\upbeta}}} Z \ind{^{\bm{\upalpha}}} \Xi \ind{^{\bm{\upbeta}}} = 0 , \qquad \text{for all}\quad k<m , \quad k \equiv m \pmod{2}.
\end{gather*}
We can then conclude $[Z^{\bm{\upalpha}}] \in \mbf{T}_\Xi \PT$ by virtue of~\eqref{eq-proj-tgt-PT} and~\eqref{eq-Cl-even-odd} as expected. Now, consider the set of all $\beta$-planes intersecting $\check{\Xi}$ maximally: these correspond to all primed twistors $[W^{\bm{\upalpha'}}] \in \widetilde{\PT}'$ satisfying
\begin{gather*}
\widetilde{\Gamma} \ind*{^{(k)}_{\bm{\upalpha'}}_{\bm{\upbeta}}} W \ind{^{\bm{\upalpha'}}} \Xi \ind{^{\bm{\upbeta}}} = 0 , \qquad \text{for all} \quad k<m+1 , \quad k \equiv m+1 \pmod{2}.
\end{gather*}
Identifying $\beta$-planes and $\alpha$-planes on $\mc{Q}^{2m+1}$, i.e., setting $Z^{\bm{\upalpha}} = U_{\bm{\upbeta'}}^{\bm{\upalpha}} W^{\bm{\upbeta'}}$, and using~\eqref{eq-Cl-even-odd} again precisely yield that $[Z^{\bm{\upalpha}}] \in \mbf{D}_\Xi$ by virtue of~\eqref{eq-lin-subsp}.

\subsection{Correspondence space} \label{sec-correspondence-space}
We now formalise the correspondence between $\mc{Q}^n$ and $\PT$.
\subsubsection{Odd dimensions}
Assume $n=2m+1$.
\begin{Definition} The \emph{correspondence space $\F$} of $\mc{Q}^n$ and $\PT$ is the projective complex subvariety of $\mc{Q}^n \times \PT$ def\/ined as the set of points $([X^\mc{A}] , [Z^{\bm{\upalpha}}])$ satisfying \emph{the incidence relation}
\begin{gather}\label{eq-incidence_relation-twistor}
X ^\mc{A} Z \ind*{_{\mc{A}}^{\bm{\upbeta}}} = 0 ,
\end{gather}
where $Z \ind*{_{\mc{A}}^{\bm{\upbeta}}} := \Gamma \ind{_{\mc{A}}_{\bm{\upalpha}}^{\bm{\upbeta}}} Z^{\bm{\upalpha}}$.
\end{Definition}
The usual way of understanding the twistor correspondence is by means of the double f\/ibration
\begin{gather*}
\xymatrix{& \F \ar[dr]^\mu \ar[dl]_\nu & \\
\mc{Q}^n & & \PT, }
\end{gather*}
where $\mu$ and $\nu$ denote the usual projections of maximal rank.

Clearly, since, by def\/inition, a twistor $[Z^{\bm{\upalpha}}]$ in $\PT$ corresponds to a~$\gamma$-plane of $\mc{Q}^n$, namely the set of points $[X^\mc{A}]$ in $\mc{Q}^n$ satisfying~\eqref{eq-incidence_relation-twistor}, we see that each f\/iber of~$\mu$ is isomorphic to~$\CP^m$.

Now, a point $x$ of $\mc{Q}^n$ is sent to a compact complex submanifold $\hat{x}$ of $\PT$ isomorphic to the f\/iber $\F_x$ of $\F$ over $x$, and similarly, a subset $\mc{U}$ of $\mc{Q}^n$ will correspond to a subset $\widehat{\mc{U}}$ of $\PT$ swept out by those complex submanifolds $\{ \hat{x} \}$ parametrised by the points $x \in \mc{U}$, i.e.,
\begin{gather*}
x \in \mc{Q}^n \mapsto \F_x := \nu^{-1}(x) \mapsto \hat{x} := \mu(\F_x) , \qquad
\mc{U} \subset \mc{Q}^n \mapsto \F_\mc{U} := \!\bigcup_{x \in \mc{U}}\! \nu^{-1}(x) \mapsto \widehat{\mc{U}} := \!\bigcup_{x \in \mc{U}} \! \mu(\F_x) .
\end{gather*}

To describe $\hat{x}$, it is enough to describe the f\/iber $\F_x$. By def\/inition, this is the set of all $\gamma$-planes incident on $x$. If $\check{Z}$ is a $\gamma$-plane incident on $x$, the intersection~$\check{Z} \cap \Tgt_x \mc{Q}^n$ is an $m$-dimensional subspace totally null with respect to the bilinear form on $\Tgt_x \mc{Q}^n \cong \C \E^n$, which we shall also refer to a~$\gamma$-plane. This descends to a $\gamma$-plane in $\mc{Q}^{2m-1}$ viewed as the projectivisation of the null cone through~$x$. Thus, $\hat{x} \cong \F_x$ is isomorphic to the $\frac{1}{2}m(m+1)$-dimensional twistor space~$\PT_{(2m-1)}$ of~$\mc{Q}^{2m-1}$.

We can get a little more information about $\F$ by viewing it as the homogeneous space $G/Q$ where $Q:=P \cap R$ is the intersection of $P$, the stabiliser of a null line in $\V$, and $R$ the stabiliser of a totally null $(m+1)$-plane containing that line. The Lie algebra $\mfq$ of $Q$ induces a~$|3|$-grading on~$\g$, i.e., $\g = \mfq_{-3} \oplus \mfq_{-2} \oplus \mfq_{-1} \oplus \mfq_0 \oplus \mfq_1 \oplus \mfq_2 \oplus \mfq_3$, where $\mfq = \mfq_0 \oplus \mfq_1 \oplus \mfq_2 \oplus \mfq_3$. For convenience, we split $\mfq_{\pm1}$ and $\mfq_{\pm2}$ further as $\mfq_{\pm1} = \mfq_{\pm1}^E \oplus \mfq_{\pm1}^F$ and $\mfq_{\pm2} = \mfq_{\pm2}^E \oplus \mfq_{\pm2}^F$. Also, $\mfq_0 \cong \glie(m,\C) \oplus \C$, $\mfq_{-1}^E \cong \C^m$, $\mfq_{-1}^F \cong (\C^m)^*$, $\mfq_{-2}^E \cong \C$, $\mfq_{-2}^F \cong \wedge^2 \C^m$ and $\mfq_{-3} \cong (\C^m)^*$ with $(\mfq_i)^* \cong \mfq_{-i}$. In matrix form, $\g$ reads as
\begin{gather*}
%\left(
% \begin{BMAT}[2pt,3cm,3cm]{c.c.c.c.c}{c.c.c.c.c}
%\mfq_0 & \mfq_1^E & \mfq_2^E & \mfq_3 & 0 \\
%\mfq_{-1}^E & \mfq_0 & \mfq_1^F & \mfq_2^F & \mfq_3 \\
%\mfq_{-2}^E & \mfq_{-1}^F & 0 & \mfq_1^F & \mfq_2^E \\
%\mfq_{-3} & \mfq_{-2}^F & \mfq_{-1}^F & \mfq_0 & \mfq_1^E \\
%0 & \mfq_{-3} & \mfq_{-2}^E & \mfq_{-1}^E & \mfq_0
%\end{BMAT} \right)
%\begin{BMAT}[2pt,0pt,3cm]{l}{ccccc}
%\} \scriptscriptstyle{1} \\ \} \scriptscriptstyle{m} \\ \} \scriptscriptstyle{1} \\ \} \scriptscriptstyle{m} \\ \} \scriptscriptstyle{1} \\
%\end{BMAT}
\includegraphics{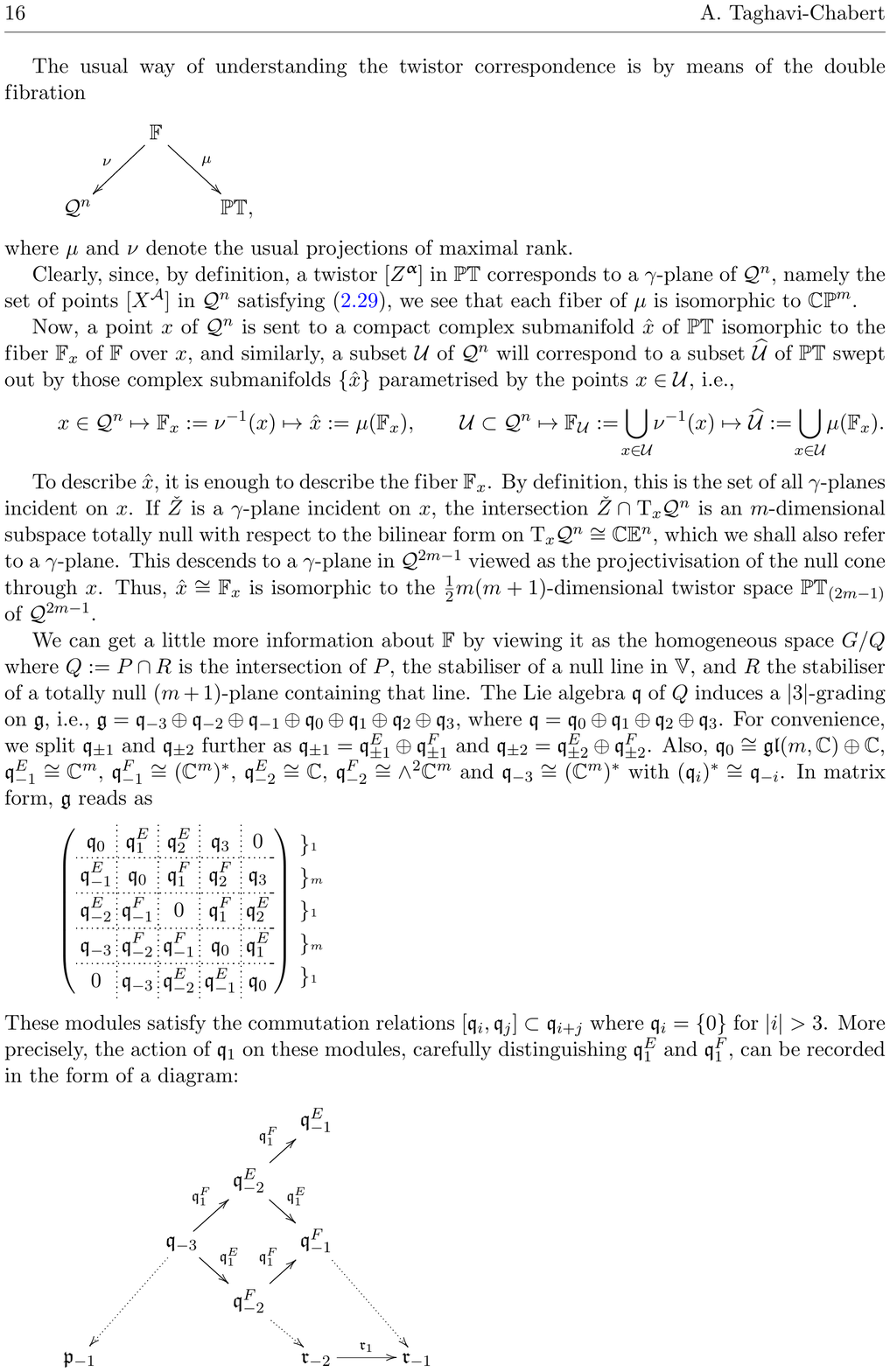}
\end{gather*}
These modules satisfy the commutation relations $[\mfq_i,\mfq_j] \subset \mfq_{i+j}$ where $\mfq_i=\{ 0 \}$ for $|i|>3$. More precisely, the action of $\mfq_1$ on these modules, carefully distinguishing $\mfq_1^E$ and $\mfq_1^F$, can be recorded in the form of a diagram:
\begin{gather*}
\xymatrix @C=1pc @R=1pc{ & & & & \mfq_{-1}^E & & \\
& & & \mfq_{-2}^E \ar[dr]^{\mfq_1^E} \ar[ur]^{\mfq_1^F} & & & \\
& & \mfq_{-3} \ar[dr]^{\mfq_1^E} \ar[ur]^{\mfq_1^F} & & \mfq_{-1}^F & & \\
& & & \mfq_{-2}^F \ar[ur]^{\mfq_1^F} & & & \\
\mfp_{-1} \ar@{<.}[uurr] & & & & \mfr_{-2} \ar[rr]^{\mfr_1} \ar@{<.}[ul] & & \mfr_{-1} \ar@{<.}[uull]
}
\end{gather*}
where the dotted arrows give the relations between $\mfq_0$-modules, and $\mfp_0$- and $\mfr_0$-modules. Invariance follows from the inclusions $\mfq_1^E \subset \mfr_0$, $\mfq_1^F \subset \mfp_0$, $\mfq_1^E \subset \mfp_1$ and $\mfq_1^F \subset \mfr_1$.

Beside the f\/iltration of vector subbundles of $\Tgt \F$ determined by the grading on $\g$, we distinguish three $Q$-invariant distributions of interest on $\F$:
\begin{itemize}\itemsep=0pt
\item the rank-$\frac{1}{2}m(m+1)$ distribution $\Tgt^{-2}_F \F$ corresponding to $\mfq_{-2}^F \oplus \mfq_{-1}^F$. It is integrable and tangent to the f\/ibers of $\nu:G/Q \rightarrow G/P$, each isomorphic to the homogeneous space $P/Q$. This follows from the relations $[ \mfq_{-1}^F , \mfq_{-1}^F ] \subset \mfq_{-2}^F$, $[ \mfq_{-1}^F , \mfq_{-2}^F ] = 0$, and $[ \mfq_{-2}^F , \mfq_{-2}^F ] = 0$, and the fact that the kernel of the projection $\g/\mfq \rightarrow \g/\mfp$ is precisely $\mfq_{-2}^F \oplus \mfq_{-1}^F \cong \mfp/\mfq$. In fact, since $[ \mfq_{-1}^F , \mfq_{-1}^F ] \subset \mfq_{-2}^F$, each f\/iber is itself equipped with the canonical distribution on $\PT_{(2m-1)}$.

\item the rank-$m$ distribution $\Tgt^{-1}_E \F$ corresponding to $\mfq_{-1}^E$. It is integrable and tangent to the f\/ibers of $\mu:G/Q \rightarrow G/R$, each isomorphic to the homogeneous space $R/Q$. This follows from the relations $[ \mfq_{-1}^E , \mfq_{-1}^E ] = 0$ and the fact that the kernel of the projection $\g/\mfq \rightarrow \g/\mfr$ is precisely $\mfq_{-1}^E \cong \mfr/\mfq$.

\item the rank-$(2m+1)$ distribution $\Tgt^{-2}_E \F$ corresponding to $\mfq_{-2}^E \oplus \mfq_{-1}^F \oplus \mfq_{-1}^E$. It is non-integrable and bracket generates $\Tgt \F$ since we have $[ \mfq_{-1}^E , \mfq_{-1}^F ] \subset \mfq_{-2}^E$, $[ \mfq_{-1}^E , \mfq_{-2}^E ] = 0$, $[ \mfq_{-1}^E , \mfq_{-2}^F ] \subset \mfq_{-3}$, $[ \mfq_{-1}^F , \mfq_{-2}^E ] \subset \mfq_{-3}$. Further, the quotient $\Tgt^{-2}_E \F / \Tgt^{-1}_E \F$ descends to the canonical distribution $\Tgt^{-1} \PT$.
\end{itemize}

{\bf The twistor space and correspondence space of $\C \E^{2m+1}$.}
At this stage, we introduce a~splitting \eqref{eq-split-V} of $\V$, and as before denote by $\mr{X}^\mc{A}$, $\mr{Y}^\mc{A}$ and $\mr{Z}^\mc{A}_a$ vectors in $\V_1$, $\V_{-1}$ and $\V_0$ respectively. There is an induced splitting~\eqref{eq-S->S1/2-odd} of~$\Ss$, and we shall accordingly split the homogeneous twistor coordinates as $Z^{\bm{\upalpha}}=(\omega^{\mbf{A}} , \pi^{\mbf{A}})$, or, using the injectors, as
\begin{gather}\label{eq-Zompi}
Z^{\bm{\upalpha}} = \mr{I}^{\bm{\upalpha}}_{\mbf{A}} \omega^\mbf{A} + \mr{O}^{\bm{\upalpha}}_{\mbf{A}} \pi^\mbf{A} .
\end{gather}
Needless to say that Cartan's theory of spinors applies to $\Ss_{-\frac{1}{2}}$ and $\Ss_{\frac{1}{2}}$ in the obvious way and notation, as we have done in Section~\ref{sec-tw-sp}. In particular, a spinor $\pi^{\mbf{A}}$ is pure if and only if the kernel of the map $\pi _a^{\mbf{A}} := \pi^{\mbf{B}} \gamma \ind{_a_{\mbf{B}}^{\mbf{A}}}$ is of maximal dimension $m$, and so on. The purity condition on $Z^{\bm{\upalpha}}$ can then be re-expressed as follows.

\begin{Lemma}\label{lem-pure-ompi}
Let $Z^{\bm{\upalpha}}=(\omega^{\mbf{A}},\pi^\mbf{A})$ be a non-zero spinor in $\Ss \cong \Ss_{-\frac{1}{2}} \oplus \Ss_{\frac{1}{2}}$. Then $Z^{\bm{\upalpha}}$ is pure, i.e., satisfies~\eqref{eq-pure-spinor-odd}, if and only if $\omega^{\mbf{A}}$ and $\pi^{\mbf{A}}$ satisfy
\begin{subequations}\label{eq-Z-pure}
\begin{gather}
\gamma \ind*{^{(k)}_{\mbf{AB}}} \pi^{\mbf{A}} \pi^{\mbf{B}} = 0, \qquad \text{for all} \quad k < m, \quad k \equiv m+1, m \pmod{4}, \label{eq-pi-pure} \\
\gamma \ind*{^{(k)}_{\mbf{AB}}} \omega^{\mbf{A}} \omega^{\mbf{B}} = 0, \qquad \text{for all} \quad k < m, \quad k \equiv m+1, m \pmod{4},\label{eq-om-pure} \\
\gamma \ind*{^{(k)}_{\mbf{AB}}} \omega^{\mbf{A}} \pi^{\mbf{B}} = 0, \qquad \text{for all} \quad k < m-1. \label{eq-T-1PT}
\end{gather}
\end{subequations}
\end{Lemma}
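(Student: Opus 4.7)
The plan is to reduce the purity condition on $Z^{\bm{\upalpha}}$, as formulated in \eqref{eq-pure-spinor-odd} (or equivalently \eqref{eq-twistor-pure-cond_odd}), to a set of conditions on the pair $(\omega^{\mbf{A}},\pi^{\mbf{A}})$ by plugging in the decomposition \eqref{eq-Zompi} and using the explicit relation \eqref{eq-Gam2gam-odd} between the generators $\Gamma \ind{_{\mc{A}}_{\bm{\upalpha}}^{\bm{\upbeta}}}$ of $\Cl(\V,h_{\mc{A}\mc{B}})$ and the generators $\gamma \ind{_a_{\mbf{A}}^{\mbf{B}}}$ of $\Cl(\V_0,g_{ab})$. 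I would first use \eqref{eq-Gam2gam-odd} together with the normalisations of the injectors/projectors to compute the image $Z_\mc{A}^{\bm{\upbeta}}= \Gamma \ind{_{\mc{A}}_{\bm{\upalpha}}^{\bm{\upbeta}}} Z^{\bm{\upalpha}}$ explicitly; its three components along $\mr{Y}_\mc{A}$, $\mr{Z}_\mc{A}^a$, and $\mr{X}_\mc{A}$ are each linear Clifford products of $\omega^{\mbf{A}}$ or $\pi^{\mbf{A}}$, so that iterating the product in the definition \eqref{eq-spin-bilinear-form-odd} becomes a routine bookkeeping exercise.

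Next I would expand $\Gamma \ind*{^{(k)}_{\mc{A}_1 \ldots \mc{A}_k \bm{\upalpha\upbeta}}} Z^{\bm{\upalpha}} Z^{\bm{\upbeta}}$ according to how many tractor indices $\mc{A}_1,\ldots,\mc{A}_k$ lie along $\mr{X}$, $\mr{Y}$, or $\mr{Z}_a$. Four types of term arise: an all-$\mr{Z}_a$ part, which is proportional to $\gamma \ind*{^{(k)}_{\mbf{AB}}} \pi^{\mbf{A}}\pi^{\mbf{B}}$; a single-$\mr{Y}$ part and a single-$\mr{X}$ part, both of the form $\gamma \ind*{^{(k-1)}_{\mbf{AB}}}\omega^{\mbf{A}}\pi^{\mbf{B}}$ (up to a symmetrisation on spinor indices inherited from the symmetry type of $\Gamma^{(k)}$); and a mixed $\mr{X}\wedge\mr{Y}$ part proportional to $\gamma \ind*{^{(k-2)}_{\mbf{AB}}} \omega^{\mbf{A}}\omega^{\mbf{B}}$. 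The four pieces are linearly independent because they land in distinct $\mfp_0$-isotypic summands of $\wedge^k\V^*$, so the vanishing of $\Gamma \ind*{^{(k)}_{\bm{\upalpha\upbeta}}} Z^{\bm{\upalpha}}Z^{\bm{\upbeta}}$ is equivalent to the separate vanishing of each piece.

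Running through the allowed range $k<m+1$, $k\equiv m+1,m+2\pmod 4$, of \eqref{eq-pure-spinor-odd}, the $\gamma^{(k)}$-pieces collect into conditions on $\pi^{\mbf{A}}\pi^{\mbf{B}}$, while the $\gamma^{(k-2)}$-pieces collect into conditions on $\omega^{\mbf{A}}\omega^{\mbf{B}}$: matching congruences through the index shift gives in both cases the range $k<m$, $k\equiv m,m+1 \pmod 4$ of \eqref{eq-pi-pure} and \eqref{eq-om-pure}. The $\gamma^{(k-1)}$-pieces collect into the mixed conditions on $\omega^{\mbf{A}}\pi^{\mbf{B}}$: here the two different symmetry types (from the $\mr{X}$- and the $\mr{Y}$-components) together cover all residues modulo $4$, which is why \eqref{eq-T-1PT} carries no congruence restriction, and the range is shortened by one to $k<m-1$ reflecting the $(k-1)$-shift. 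Combining with the converse (reading the four vanishings as sufficient to recover \eqref{eq-pure-spinor-odd}) gives the equivalence.

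The main obstacle is the congruence and parity bookkeeping in going from $\Gamma^{(k)}$ to $\gamma^{(k)}$, $\gamma^{(k-1)}$, $\gamma^{(k-2)}$: one has to keep careful track of the symmetry type of $\Gamma^{(k)}_{\bm{\upalpha\upbeta}}$ as prescribed after \eqref{eq-spin-bilinear-form-odd}, and check that each $\gamma^{(j)}$-piece that arises is indeed symmetric in its spinor indices (so that the corresponding pairing with $\pi\pi$, $\omega\omega$, or $\omega\pi$ survives rather than vanishing identically). Once this is done the three families of constraints read off directly.
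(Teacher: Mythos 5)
Your overall strategy -- expand $\Gamma^{(k)}_{\bm{\upalpha\upbeta}}Z^{\bm\upalpha}Z^{\bm\upbeta}$ according to how the $k$ antisymmetrised tractor indices distribute among $\mr{X}_\mc{A}$, $\mr{Y}_\mc{A}$, $\mr{Z}^a_\mc{A}$, note the four pieces land in linearly independent $\mfp_0$-isotypic summands of $\wedge^k\V^*$, and then book-keep the congruences -- is a sound alternative to the paper's route, which instead feeds \eqref{eq-Zompi} and \eqref{eq-Gam2gam-odd} into the \emph{compact} purity condition \eqref{eq-twistor-pure-cond_odd} and reads off the three conditions from the $II$, $OI$ and $OO$ injector blocks directly, deferring the translation to the $\gamma^{(k)}$-form to a citation. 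However, your assignment of the four pieces to $\gamma$-bilinears is systematically wrong, and the error is fatal.

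The spin-invariant form $\Gamma^{(0)}_{\bm{\upalpha\upbeta}}$ on $\Ss$ pairs $\Ss_{-\frac{1}{2}}$ with $\Ss_{\frac{1}{2}}$, i.e.\ $\omega$ with $\pi$, \emph{not} each with itself; equivalently, it is the cross-blocks that are nonzero. Since $\mr{Z}^a_\mc{A}\Gamma_\mc{A}$ is block-diagonal while $\mr{Y}_\mc{A}\Gamma_\mc{A}$ sends $\Ss_{-\frac{1}{2}}\to\Ss_{\frac{1}{2}}$ and $\mr{X}_\mc{A}\Gamma_\mc{A}$ sends $\Ss_{\frac{1}{2}}\to\Ss_{-\frac{1}{2}}$, the correct identifications are: all-$\mr{Z}$ part $\propto \gamma^{(k)}_{\mbf{AB}}\omega^{\mbf{A}}\pi^{\mbf{B}}$, single-$\mr{X}$ part $\propto \gamma^{(k-1)}_{\mbf{AB}}\pi^{\mbf{A}}\pi^{\mbf{B}}$, single-$\mr{Y}$ part $\propto \gamma^{(k-1)}_{\mbf{AB}}\omega^{\mbf{A}}\omega^{\mbf{B}}$, $\mr{X}\wedge\mr{Y}$ part $\propto \gamma^{(k-2)}_{\mbf{AB}}\omega^{\mbf{A}}\pi^{\mbf{B}}$ -- exactly the opposite of what you claim. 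The base case $m=2$ gives a clean sanity check: \eqref{eq-pure-spinor-odd} reduces to $\Gamma^{(0)}_{\bm{\upalpha\upbeta}}Z^{\bm\upalpha}Z^{\bm\upbeta}=0$, which by the Lemma must be $\gamma^{(0)}_{\mbf{AB}}\omega^{\mbf{A}}\pi^{\mbf{B}}=0$, whereas your ``all-$\mr{Z}$'' identification would give $\gamma^{(0)}_{\mbf{AB}}\pi^{\mbf{A}}\pi^{\mbf{B}}$, which vanishes identically since $\gamma^{(0)}$ is skew for $\Spin(5,\C)$, falsely asserting that every $Z$ is pure. Your error also changes the bookkeeping in the last paragraph: the ``no congruence restriction'' in \eqref{eq-T-1PT} actually arises from the all-$\mr{Z}$ piece contributing residues $m+1,m+2\pmod 4$ and the $\mr{X}\wedge\mr{Y}$ piece contributing residues $m-1,m\pmod 4$ through a $(k-2)$-shift, not from the two single-index pieces through a $(k-1)$-shift; and the $k<m-1$ bound then needs the observation that the largest $j\equiv m+1,m+2\pmod 4$ with $j<m+1$ is already $\leq m-2$. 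With these corrections the argument goes through, but in its present form it is not a proof.
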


\begin{proof}
This is a direct computation using \eqref{eq-twistor-pure-cond_odd}, \eqref{eq-Gam2gam-odd} and \eqref{eq-Zompi}. Writing $\pi _a^{\mbf{A}} := \pi^{\mbf{B}} \gamma \ind{_a_{\mbf{B}}^{\mbf{A}}}$ and $\omega _a^{\mbf{A}} := \omega^{\mbf{B}} \gamma \ind{_a_{\mbf{B}}^{\mbf{A}}}$, we f\/ind
\begin{gather*}
\pi \ind{^a^{\mbf{A}}} \pi \ind*{_a^{\mbf{B}}} + \pi^{\mbf{A}} \pi^{\mbf{B}} = 0 , \qquad \omega \ind{^a^{\mbf{A}}} \omega \ind*{_a^{\mbf{B}}} + \omega^{\mbf{A}} \omega^{\mbf{B}} = 0 , \qquad \pi \ind{^a^{\mbf{A}}} \omega \ind*{_a^{\mbf{B}}} - \pi^{\mbf{A}} \omega^{\mbf{B}} + 2 \omega^{\mbf{A}} \pi^{\mbf{B}} = 0 ,
\end{gather*}
which are equivalent to \eqref{eq-pi-pure}, \eqref{eq-om-pure} and \eqref{eq-T-1PT} respectively \cite{Taghavi-Chabert2013}.
\end{proof}

By Cartan's theory of spinors, condition \eqref{eq-pi-pure} is equivalent to $\pi^{\mbf{A}}$ being pure provided it is non-zero, and similarly for condition \eqref{eq-om-pure} and $\omega^{\mbf{A}}$. Condition \eqref{eq-T-1PT} is equivalent to the $\gamma$-planes of $\pi^{\mbf{A}}$ and $\omega^{\mbf{A}}$ intersecting in an $m$- or $(m-1)$-plane in $\V_0$ provided these are non-zero.

\begin{Remark}
The annihilator \eqref{eq-hi-contact} of the canonical distribution of $\PT$ can be re-expressed as
\begin{gather}\label{eq-hi-contact-ompi}
\begin{split}
& \bm{\alpha}^{\mbf{AB}}_{(\omega,\omega)} := \omega^{a\mbf{A}} \dd \omega_a^{\mbf{B}} + 2 \omega^{\mbf{B}} \dd \omega^{\mbf{A}} - \omega^{\mbf{A}} \dd \omega^{\mbf{B}} , \\
& \bm{\alpha}^{\mbf{AB}}_{(\pi,\pi)} := \pi^{a\mbf{A}} \dd \pi_a^{\mbf{B}} + 2 \pi^{\mbf{B}} \dd \pi^{\mbf{A}} - \pi^{\mbf{A}} \dd \pi^{\mbf{B}} , \\
& \bm{\alpha}^{\mbf{AB}}_{(\omega,\pi)} := \omega^{a\mbf{A}} \dd \pi_a^{\mbf{B}} + \omega^{\mbf{A}} \dd \pi^{\mbf{B}} + 4 \pi^{[\mbf{A}} \dd \omega^{\mbf{B}]} , \\
& \bm{\alpha}^{\mbf{AB}}_{(\pi,\omega)} := \pi^{a\mbf{A}} \dd \omega_a^{\mbf{B}} + \pi^{\mbf{A}} \dd \omega^{\mbf{B}} + 4 \omega^{[\mbf{A}} \dd \pi^{\mbf{B}]} ,
\end{split}
\end{gather}
where we have used \eqref{eq-Zompi} and \eqref{eq-Gam2gam-odd}, and it is understood that $\omega^{\mbf{A}}$ and $\pi^\mbf{A}$ satisfy~\eqref{eq-Z-pure}.
\end{Remark}

The twistor correspondence associates to the point $\infty$ in $\mc{Q}^n$, with coordinates $[\mr{Y}^\mc{A}]$, a complex submanifold $\widehat{\infty}$ of $\PT$ def\/ined by the locus $\mr{Y}^\mc{A} Z_\mc{A}^{\bm{\upalpha}} = 0$ in $\PT$, i.e.,
\begin{gather*}
\infty \in \mc{Q}^n \mapsto \F_\infty := \nu^{-1}(\infty) \mapsto \widehat{\infty} := \mu(\F_\infty) = \mu \circ \nu^{-1} (\infty).
\end{gather*}
Points of $\widehat{\infty}$ are parametrised by $[\omega^{\mbf{A}},0]$. Since removing $\infty$ from $\mc{Q}^n$ yields complex Euclidean space~$\C \E^n$, we accordingly remove $\widehat{\infty}$ to obtain the twistor space $\PT \setminus \{ \widehat{\infty} \} = \mu \circ \nu^{-1} ( \C \E^n )$ of~$\C \E^n$. This will be denoted by $\PT_{\setminus \widehat{\infty}}$. This region of twistor space is parametrised by $\{[\omega^{\mbf{A}},\pi^{\mbf{A}}] \colon$ \mbox{$\pi^{\mbf{A}} \neq 0\}$}.

The correspondence space of $\C \E^n$ will be denoted $\F_{\C\E^n}$, and is parametrised by the coordinates $(x^a , [ \pi^{\mbf{A}}])$, where $\{ x^a \}$ are the f\/lat standard coordinates on $\C\E^n$ and $[ \pi^{\mbf{A}}]$ are homogeneous pure spinor coordinates on the f\/ibers of $\F$. These parametrise the $\gamma$-planes of the tangent space $\Tgt_x \C \E^n$ at a point $x$ in $\C \E^n$, and are related to $[\omega^\mbf{A},\pi^\mbf{B}]$ by means of the incidence relation \eqref{eq-incidence_relation-twistor}
\begin{gather}
 \omega^{\mbf{A}} = \tfrac{1}{\sqrt{2}} x^a \pi_a^{\mbf{A}} , \label{eq-incidence_relation-spinor_odd}
\end{gather}
which can be obtained from \eqref{eq-Minkowski-emb}, \eqref{eq-Gam2gam-odd} and \eqref{eq-Zompi}. Indeed, the $\gamma$-plane def\/ined by $[\pi^{\mbf{A}}]$ through the origin is given by the locus $\frac{1}{\sqrt{2}} x^a \pi_a^{\mbf{A}}$, so that the $\gamma$-plane def\/ined by $[\pi^{\mbf{A}}]$ through any other point $\mr{x}^a$ is given by \eqref{eq-incidence_relation-spinor_odd} with $\omega^{\mbf{A}} = \frac{1}{\sqrt{2}} \mr{x}^a \pi_a^{\mbf{A}}$.

\begin{Remark}By \eqref{eq-incidence_relation-spinor_odd} and \eqref{eq-pi-pure}, for a holomorphic function $f$ on $\F$ to descend to $\PT$, it must be annihilated by the dif\/ferential operator $\pi^{[\mbf{A}} \pi \ind{^a^{\mbf{B}]}} \nabla_a$.
\end{Remark}

\subsubsection{Even dimensions}
The double f\/ibration picture in dimension $n=2m$ is very similar to the odd-dimensional case, and we only summarise the discussion here.

We realise $\F$ as a homogeneous space $G/Q$. Here, the Lie algebra $\mfq$ of $Q$ induces a $|2|$-grading $\g = \mfq_{-2} \oplus \mfq_{-1} \oplus \mfq_0 \oplus \mfq_1 \oplus \mfq_2$ on $\g$, where $\mfq = \mfq_0 \oplus \mfq_1 \oplus \mfq_2$. We split $\mfq_{\pm1}$ further as $\mfq_{\pm1} = \mfq_{\pm1}^E \oplus \mfq_{\pm1}^F$, and we have $\mfq_0 \cong \glie(m,\C) \oplus \C$, $\mfq_{-1}^E \cong \C^m$, $\mfq_{-1}^F \cong \wedge^2 \C^m$ and $\mfq_{-2} \cong (\C^m)^*$ with $(\mfq_i)^* \cong \mfq_{-i}$. The action of $\mfq_1$ on these $\mfq_0$-modules is recorded below together with the matrix form of the splitting:
%\raisebox{0.3cm}{
%\begin{minipage}[b]{0.45\linewidth}\centering
\begin{gather*}
\includegraphics{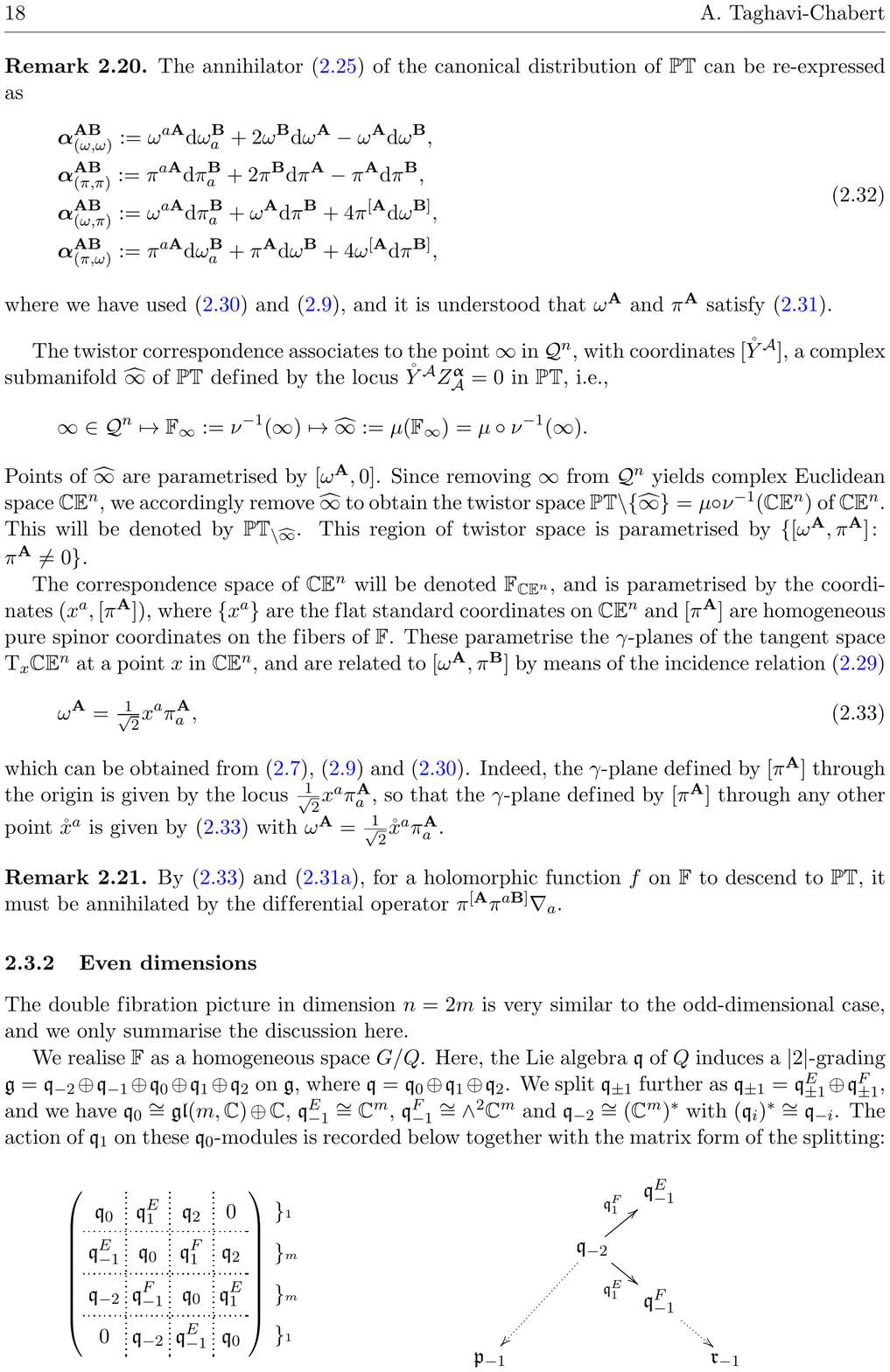}
%\left(
% \begin{BMAT}[2pt,3cm,3cm]{c.c.c.c}{c.c.c.c}
%\mfq_0 & \mfq_1^E & \mfq_2 & 0 \\
%\mfq_{-1}^E & \mfq_0 & \mfq_1^F & \mfq_2 \\
%\mfq_{-2} & \mfq_{-1}^F & \mfq_0 & \mfq_1^E \\
%0 & \mfq_{-2} & \mfq_{-1}^E & \mfq_0
%\end{BMAT} \right)
%\begin{BMAT}[2pt,0pt,3cm]{l}{cccc}
%\} \scriptscriptstyle{1} \\ \} \scriptscriptstyle{m} \\ \} \scriptscriptstyle{m} \\ \} \scriptscriptstyle{1} \\
%\end{BMAT}
%\end{gather*}
%\end{minipage}}
%\begin{minipage}[b]{0.45\linewidth}\centering
%\begin{gather*}
%\xymatrix @C=1pc @R=1pc{
%& & & \mfq_{-1}^E & \\
%& & \mfq_{-2} \ar[dr]_{\mfq_1^E} \ar[ur]^{\mfq_1^F} & & \\
%& & & \mfq_{-1}^F & \\
%\mfp_{-1} \ar@{<.}[uurr] & & & & \mfr_{-1} \ar@{<.}[ul]
%}
\end{gather*}
%\end{minipage}

The modules $\mfq_{-1}^F$ and $\mfq_{-1}^E$ give rise to two integrable $Q$-invariant distributions $\Tgt^{-1}_F \F$ and $\Tgt^{-1}_E \F$ on $\F$ of rank $\frac{1}{2}m(m-1)$ and $m$ respectively, and tangent to the f\/ibers of $G/Q \rightarrow G/P$ and $G/Q \rightarrow G/R$ respectively.

{\bf The twistor space and correspondence space of $\C \E^{2m}$.}
The even-dimensional analogue of Lemma~\ref{lem-pure-ompi} is recorded below.
\begin{Lemma}\label{lem-pure-ompi-even}
Let $Z^{\bm{\upalpha}} = ( \omega^\mbf{A} , \pi^{\mbf{A'}})$ be a spinor in $\Ss \cong \Ss_{-\frac{1}{2}} \oplus \Ss_{\frac{1}{2}}'$. Then $Z^{\bm{\upalpha}}$ is pure if and only if $\omega^{\mbf{A}}$ and $\pi^{\mbf{A'}}$ satisfy
\begin{subequations}\label{eq-Z-pure-even}
\begin{alignat}{3}
&\gamma \ind*{^{(k)}_{\mbf{A'B'}}} \pi^{\mbf{A'}} \pi^{\mbf{B'}} = 0, \qquad &&\text{for all}\quad k < m , \quad k \equiv m \pmod{4}, & \label{eq-pi-pure-even}\\
&\gamma \ind*{^{(k)}_{\mbf{AB}}} \omega^{\mbf{A}} \omega^{\mbf{B}} = 0, \qquad &&\text{for all}\quad k < m, \quad k \equiv m \pmod{4}, & \label{eq-om-pure-even}\\
&\gamma \ind*{^{(k)}_{\mbf{AB'}}}\omega^{\mbf{A}} \pi^{\mbf{B'}} = 0, \qquad &&\text{for all}\quad k < m-1, \quad k \equiv m-1 \pmod{2}. &\label{eq-T-1PT-even}
\end{alignat}
\end{subequations}
\end{Lemma}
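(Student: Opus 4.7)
The plan is to mimic the proof of the odd-dimensional Lemma~\ref{lem-pure-ompi} with primed/unprimed indices inserted in the appropriate places. The starting point is the succinct even-dimensional purity condition $Z^{\mc{A}\bm{\upalpha'}} Z_{\mc{A}}^{\bm{\upbeta'}} = 0$, where $Z_{\mc{A}}^{\bm{\upalpha'}} := \Gamma \ind{_{\mc{A}}_{\bm{\upbeta}}^{\bm{\upalpha'}}} Z^{\bm{\upbeta}}$ (from Section~\ref{sec-tw-sp}, even dimensions). Using the even-dimensional counterpart of \eqref{eq-Gam2gam-odd}, namely
\begin{gather*}
\Gamma \ind{_{\mc{A}}_{\bm{\upalpha}}^{\bm{\upbeta'}}} = \mr{Z}_\mc{A}^a \big( \mr{O}_{\bm{\upalpha}}^{\mbf{A}} \mr{I}^{\bm{\upbeta'}}_{\mbf{B'}} \gamma \ind{_a_{\mbf{A}}^{\mbf{B'}}} - \mr{I}_{\bm{\upalpha}}^{\mbf{A'}} \mr{O}^{\bm{\upbeta'}}_{\mbf{B}} \gamma \ind{_a_{\mbf{A'}}^{\mbf{B}}} \big) + \sqrt{2} \mr{Y}_\mc{A} \mr{O}_{\bm{\upalpha}}^{\mbf{A}} \mr{O}^{\bm{\upbeta'}}_{\mbf{A}} - \sqrt{2} \mr{X}_\mc{A} \mr{I}_{\bm{\upalpha}}^{\mbf{A'}} \mr{I}^{\bm{\upbeta'}}_{\mbf{A'}} ,
\end{gather*}
together with the splitting $Z^{\bm{\upalpha}} = \mr{I}^{\bm{\upalpha}}_{\mbf{A}} \omega^{\mbf{A}} + \mr{O}^{\bm{\upalpha}}_{\mbf{A'}} \pi^{\mbf{A'}}$, I expand $Z_{\mc{A}}^{\bm{\upalpha'}}$ and substitute into the purity condition.

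The tensor $Z^{\mc{A}\bm{\upalpha'}} Z_{\mc{A}}^{\bm{\upbeta'}}$ lives in $\wedge^0 \V \otimes \Ss' \otimes \Ss'$ after contraction on $\mc{A}$, but before that the expansion in the tractor basis $(\mr{X}^{\mc{A}}, \mr{Y}^{\mc{A}}, \mr{Z}^{\mc{A}}_a)$ produces terms with distinct dependence on $\omega$ and $\pi$. Using the normalisation relations between the injectors/projectors $\mr{I}^{\bm{\upalpha}}_{\mbf{A}}$, $\mr{O}^{\bm{\upalpha}}_{\mbf{A'}}$ and the Clifford identity, one sees that the three independent algebraic pieces are those quadratic in $\pi$, quadratic in $\omega$, and mixed in $(\omega,\pi)$. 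Setting $\pi_a^{\mbf{B}} := \pi^{\mbf{B'}} \gamma \ind{_a_{\mbf{B'}}^{\mbf{B}}}$ and $\omega_a^{\mbf{B'}} := \omega^{\mbf{B}} \gamma \ind{_a_{\mbf{B}}^{\mbf{B'}}}$, the purity relation $Z^{\mc{A}\bm{\upalpha'}} Z_{\mc{A}}^{\bm{\upbeta'}} = 0$ should be equivalent to the three identities
\begin{gather*}
\pi^{a\mbf{A'}} \pi_a^{\mbf{B'}} + \pi^{\mbf{A'}} \pi^{\mbf{B'}} = 0,\qquad
\omega^{a\mbf{A}} \omega_a^{\mbf{B}} + \omega^{\mbf{A}} \omega^{\mbf{B}} = 0,\\
\pi^{a\mbf{A'}} \omega_a^{\mbf{B'}} - \pi^{\mbf{A'}} \omega^{\mbf{B'}} + 2 \omega^{\mbf{A}} \pi^{\mbf{B'}}\cdots = 0,
\end{gather*}
and similar, where the precise index placement in the last relation depends on whether the pairing lands in $\Ss \otimes \Ss'$, $\Ss \otimes \Ss$, or $\Ss' \otimes \Ss'$ (dictated by the parity of $m$). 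By Cartan's theorem applied separately to $\Ss_{-\frac{1}{2}}$ and $\Ss'_{\frac{1}{2}}$, viewed as chiral spinor spaces for $(\V_0, g_{ab})$, these three relations unpack precisely into the stated conditions \eqref{eq-pi-pure-even}, \eqref{eq-om-pure-even}, \eqref{eq-T-1PT-even}.

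The main obstacle will be bookkeeping, since in even dimensions the spin-invariant pairings alternate between $\Ss \otimes \Ss$, $\Ss' \otimes \Ss'$ and $\Ss \otimes \Ss'$ according to $m \bmod 2$; consequently the parity constraints $k \equiv m \pmod{4}$ in \eqref{eq-pi-pure-even}--\eqref{eq-om-pure-even} and $k \equiv m-1 \pmod{2}$ in \eqref{eq-T-1PT-even} must be read off carefully from the symmetry/skew-symmetry of $\Gamma \ind*{^{(k)}_{\bm{\upalpha}\bm{\upbeta}}}$ and of the descended $\gamma \ind*{^{(k)}}$ forms on $\Ss_{-\frac{1}{2}}$, $\Ss'_{\frac{1}{2}}$. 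Once the correct chirality of each component of $Z^{\mc{A}\bm{\upalpha'}} Z_{\mc{A}}^{\bm{\upbeta'}}$ is identified, the conversion to $\gamma^{(k)}$-quadratic relations is entirely parallel to \cite{Taghavi-Chabert2013}, and the proof is then concluded by invoking Cartan's purity characterisation~\eqref{eq-pure-spinor-even} on each chiral summand.
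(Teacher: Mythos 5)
Your overall strategy is the right one and matches the paper's implicit argument: start from the succinct even-dimensional purity condition, expand in the tractor basis using the even-dimensional formula for $\Gamma\ind{_{\mc{A}\bm{\upalpha}}^{\bm{\upbeta'}}}$, and read off the three pieces associated with $\pi\pi$, $\omega\omega$, and $\omega\pi$. However, the three intermediate identities you write down are not correct, and the hedging (``should be'', ``$\cdots$'', ``depends on whether'') suggests you did not actually finish the computation.

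The key point you have missed is that the odd-dimensional succinct purity condition \eqref{eq-twistor-pure-cond_odd} reads $Z^{\mc{A}\bm{\upalpha}}Z_{\mc{A}}^{\bm{\upbeta}} + Z^{\bm{\upalpha}}Z^{\bm{\upbeta}} = 0$, whereas the even-dimensional one is simply $Z^{\mc{A}\bm{\upalpha'}}Z_{\mc{A}}^{\bm{\upbeta'}} = 0$, with \emph{no} extra $Z^{\bm{\upalpha'}}Z^{\bm{\upbeta'}}$ term. It is precisely this extra term in the odd case that, after splitting, generates the scalar corrections $+\pi^{\mbf{A}}\pi^{\mbf{B}}$, $+\omega^{\mbf{A}}\omega^{\mbf{B}}$ and $-\pi^{\mbf{A}}\omega^{\mbf{B}}$ appearing in the proof of Lemma~\ref{lem-pure-ompi}. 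In even dimensions those corrections simply do not arise, and the correct intermediate relations are
\begin{gather*}
\pi\ind{^a^{\mbf{A}}}\pi\ind*{_a^{\mbf{B}}} = 0,\qquad
\omega\ind{^a^{\mbf{A'}}}\omega\ind*{_a^{\mbf{B'}}} = 0,\qquad
\pi\ind{^a^{\mbf{A}}}\omega\ind*{_a^{\mbf{B'}}} + 2\omega^{\mbf{A}}\pi^{\mbf{B'}} = 0,
\end{gather*}
which differ from yours both by the absence of the $+\pi\pi$, $+\omega\omega$, $-\pi\omega$ terms and by the index chirality. On the latter point: in even dimensions the generators $\gamma\ind{_a_{\mbf{A'}}^{\mbf{B}}}$, $\gamma\ind{_a_{\mbf{A}}^{\mbf{B'}}}$ flip chirality, so by your own definition $\pi_a^{\mbf{B}} := \pi^{\mbf{B'}}\gamma\ind{_a_{\mbf{B'}}^{\mbf{B}}}$ carries an \emph{unprimed} index, making your first relation $\pi^{a\mbf{A'}}\pi_a^{\mbf{B'}} + \pi^{\mbf{A'}}\pi^{\mbf{B'}} = 0$ chirally inconsistent (and similarly for the third). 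This is not a cosmetic slip: your candidate mixed relation contains a term $\pi^{\mbf{A'}}\omega^{\mbf{B'}}$ whose index types cannot even match those of $\pi^{a\mbf{A}}\omega_a^{\mbf{B'}}$ or $\omega^{\mbf{A}}\pi^{\mbf{B'}}$, which should have told you it does not belong. Once the correct three identities are in hand, the rest of your argument (applying Cartan's criterion on each chiral summand $\Ss_{-\frac{1}{2}}$ and $\Ss_{\frac{1}{2}}'$ to recover \eqref{eq-pi-pure-even}--\eqref{eq-T-1PT-even}) goes through as you describe, so the gap is localized to this step.
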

Conditions \eqref{eq-pi-pure-even}, \eqref{eq-om-pure-even} and \eqref{eq-T-1PT-even} can equivalently be expressed as
\begin{gather*}
\pi \ind{^a^{\mbf{A}}} \pi \ind*{_a^{\mbf{B}}} = 0 ,\qquad \omega \ind{^a^{\mbf{A'}}} \omega \ind*{_a^{\mbf{B'}}} = 0 , \qquad \pi \ind{^a^{\mbf{A}}} \omega \ind*{_a^{\mbf{B'}}} + 2 \omega^{\mbf{A}} \pi^{\mbf{B'}} = 0 ,
\end{gather*}
respectively. By Cartan's theory of spinors, condition \eqref{eq-pi-pure-even} is equivalent to $\pi^{\mbf{A'}}$ being pure provided it is non-zero, and similarly for conditions \eqref{eq-om-pure-even} for $\omega^{\mbf{A}}$. Condition~\eqref{eq-T-1PT-even} is equivalent to the $\alpha$-plane of $\pi^{\mbf{A'}}$ and the $\beta$-plane of $\omega^{\mbf{A}}$ intersecting in an $(m-1)$-plane in~$\V_0$ provided these are non-zero.

Just as in the odd-dimensional case, the twistor space of $\C \E^{2m}$ is obtained by removing the $\frac{1}{2}m(m-1)$-dimensional complex submanifold $\widehat{\infty}$ corresponding to $\infty$ on $\mc{Q}^{2m}$ from $\PT$. We can use $[\pi^{\mbf{A'}}]$ as homogeneous coordinates on the f\/ibers of $\F_{\C\E^n}$, and the incidence relation \eqref{eq-incidence_relation-twistor} can be expressed as $\omega^{\bm{A}} = \frac{1}{\sqrt{2}} x^a \pi_a^{\mbf{A}}$.

\subsection[Co-gamma-planes and mini-twistor space]{Co-$\boldsymbol{\gamma}$-planes and mini-twistor space}\label{sec-co-gam}
In odd dimensions, there is an additional geometric object of interest.
\begin{Definition}\label{defn-co-null-plane}
A \emph{co-$\gamma$-plane} is an $(m+1)$-dimensional af\/f\/ine subspace of $\C\E^{2m+1}$ with the property that the orthogonal complement of its tangent space at any of its point is totally null with respect to the metric.

The space of all co-$\gamma$-planes in $\C\E^{2m+1}$ is called the \emph{mini-twistor space} of $\C\E^{2m+1}$, and is denoted~$\M\T$.
\end{Definition}

Viewed as a vector subspace of $\Tgt_x \C \E^n \cong \C \E^n$, a co-$\gamma$-plane through a point $x$ in $\C \E^n$ is the orthogonal complement of a $\gamma$-plane through $x$. Consider a co-$\gamma$-plane through the origin, and let $[\pi^{\mbf{A}}]$ be a projective pure spinor associated to the $\gamma$-plane orthogonal to it. Then, it is easy to check that this co-$\gamma$-plane consists of the set of points $x^a$ satisfying $t \pi^{\mbf{A}} = \frac{1}{\sqrt{2}} x^a \pi_a^{\mbf{A}}$ where $t \in \C$ with $x^a x_a = -2 t^2$. Shifting the origin to $\mr{x}^a$ say, a point in a co-$\gamma$-plane containing $\mr{x}^a$ now satisf\/ies $\omega^{\mbf{A}} + \pi^{\mbf{A}} t = \frac{1}{\sqrt{2}} x^a \pi_a^{\mbf{A}}$ for some $t \in \C$, and where $\omega^{\mbf{A}} := \frac{1}{\sqrt{2}} \mr{x}^a \pi_a^{\mbf{A}}$. Thus, a co-$\gamma$-plane through $\mr{x}^a$ consists of the set of points satisfying the incidence relation
\begin{gather}\label{eq-incidence-co-null}
\omega^{[\mbf{A}} \pi^{\mbf{B}]} = \tfrac{1}{\sqrt{2}} x^a \pi_a^{[\mbf{A}} \pi^{\mbf{B}]} ,
\end{gather}
where $[\pi^{\mbf{C}}]$ is a projective pure spinor and $\omega^{\mbf{A}} := \frac{1}{\sqrt{2}} \mr{x}^a \pi_a^{\mbf{A}}$. In particular, a co-$\gamma$-plane consists of a $1$-parameter family of $\gamma$-planes, and thus corresponds to the curve
\begin{gather}\label{eq-MT-PT-curve}
\C \ni t \mapsto \big[\omega^{\mbf{A}} + \pi^{\mbf{A}} t , \pi^{\mbf{A}} \big] \in \PT_{\setminus \widehat{\infty}} .
\end{gather}

The relation between $\M\T$ and $\PT_{\setminus \widehat{\infty}}$ can be made precise by involving our choice of `inf\/ini\-ty'~$[\mr{Y}^\mc{A}]$ to def\/ine $\C \E^n$. Let us write $(\mr{Y} \cdot Z)^{\bm{\upalpha}} := \mr{Y}^\mc{A} \Gamma \ind{_{\mc{A}}_{\bm{\upbeta}}^{\bm{\upalpha}}} Z^{\bm{\upbeta}}$. We can then def\/ine the vector f\/ield
\begin{gather}\label{eq-Infinity-Vec-field}
\bm{Y} := - \frac{\ii}{2} \big(\mr{Y} \cdot Z\big)^{\bm{\upalpha}} \parderv{}{Z^{\bm{\upalpha}}} = \frac{\ii}{\sqrt{2}} \pi^{\mbf{A}} \parderv{}{\omega^{\mbf{A}}} ,
\end{gather}
on $\PT_{\setminus \widehat{\infty}}$, the factors having been added for later convenience. It is now pretty clear that the curve~\eqref{eq-MT-PT-curve} is an integral curve of the vector f\/ield \eqref{eq-Infinity-Vec-field} passing through the point $[\omega^{\mbf{A}},\pi^{\mbf{A}}]$. We therefore conclude
\begin{Lemma}\label{lem-MT-flow}
Mini-twistor space $\M\T$ is the quotient of $\PT_{\setminus \widehat{\infty}}$ by the flow of $\bm{Y}$ defined by~\eqref{eq-Infinity-Vec-field}.
\end{Lemma}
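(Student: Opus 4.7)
The plan is to verify the coordinate formula for $\bm{Y}$ given in \eqref{eq-Infinity-Vec-field}, to integrate its flow on $\PT_{\setminus\widehat{\infty}}$ explicitly, and to identify the resulting integral curves with the curves \eqref{eq-MT-PT-curve} parametrising co-$\gamma$-planes in $\C\E^n$.

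First I would compute $(\mr{Y}\cdot Z)^{\bm{\upalpha}}$ in the splitting $Z^{\bm{\upalpha}} = \mr{I}^{\bm{\upalpha}}_{\mbf{A}}\omega^{\mbf{A}} + \mr{O}^{\bm{\upalpha}}_{\mbf{A}}\pi^{\mbf{A}}$ of~\eqref{eq-Zompi}. Using the explicit expression \eqref{eq-Gam2gam-odd} for $\Gamma\ind{_{\mc{A}}_{\bm{\upbeta}}^{\bm{\upalpha}}}$ together with $\mr{Y}^{\mc{A}}\mr{Z}^{a}_{\mc{A}}=0$, $\mr{Y}^{\mc{A}}\mr{X}_{\mc{A}}=1$ and $\mr{Y}^{\mc{A}}\mr{Y}_{\mc{A}}=0$, only the $\mr{X}_{\mc{A}}\mr{I}_{\bm{\upalpha}}^{\mbf{A}}\mr{I}^{\bm{\upbeta}}_{\mbf{A}}$ term survives, giving $(\mr{Y}\cdot Z)^{\bm{\upalpha}} = -\sqrt{2}\,\mr{O}^{\bm{\upalpha}}_{\mbf{A}}\pi^{\mbf{A}}$, and hence $\bm{Y} = -\tfrac{\ii}{2}(\mr{Y}\cdot Z)^{\bm{\upalpha}}\partial/\partial Z^{\bm{\upalpha}} = \tfrac{\ii}{\sqrt{2}}\pi^{\mbf{A}}\partial/\partial\omega^{\mbf{A}}$, as claimed. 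Linearity in $Z^{\bm{\upalpha}}$ guarantees that $\bm{Y}$ descends from $\Ss\setminus\{0\}$ to the projective variety $\PT_{\setminus\widehat{\infty}}$.

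Next I would integrate the flow. Because $\mr{Y}^{\mc{A}}$ is null and the Clifford action satisfies $(\mr{Y}\cdot)^2 = -h_{\mc{A}\mc{B}}\mr{Y}^{\mc{A}}\mr{Y}^{\mc{B}}\,\mathrm{id} = 0$ on $\Ss$ by~\eqref{eq-Clifford_twistor_odd}, the exponential of $\bm{Y}$ truncates after one term. Concretely, the ODE on the affine space $\Ss$ integrates to $Z^{\bm{\upalpha}}(t) = Z^{\bm{\upalpha}}(0) - \tfrac{\ii t}{2}(\mr{Y}\cdot Z(0))^{\bm{\upalpha}}$, which in coordinates reads $\omega^{\mbf{A}}(t) = \omega^{\mbf{A}}(0) + \tfrac{\ii t}{\sqrt{2}}\pi^{\mbf{A}}(0)$, $\pi^{\mbf{A}}(t) = \pi^{\mbf{A}}(0)$. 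Since the transformation $Z\mapsto Z - \tfrac{\ii t}{2}\mr{Y}\cdot Z$ is the Clifford action of an element of the parabolic subgroup stabilising $[\mr{Y}^{\mc{A}}]$ (it exponentiates a translation in $\mfp_{-1}$), it preserves the pure spinor locus, so the flow really does live in $\PT_{\setminus\widehat{\infty}}$. After the harmless reparametrisation $t \mapsto \tfrac{\ii t}{\sqrt{2}}$, the integral curve through $[\omega^{\mbf{A}},\pi^{\mbf{A}}]$ coincides exactly with the curve \eqref{eq-MT-PT-curve}.

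Finally I would match the quotient with $\M\T$. By the discussion preceding the lemma, a co-$\gamma$-plane through $\mr{x}^{a}$ is determined by the projective pure spinor $[\pi^{\mbf{A}}]$ orthogonal to it together with $\omega^{\mbf{A}}=\tfrac{1}{\sqrt{2}}\mr{x}^{a}\pi_{a}^{\mbf{A}}$, and the associated family of $\gamma$-planes is precisely the curve \eqref{eq-MT-PT-curve}; conversely every such curve arises in this way from a unique co-$\gamma$-plane. Therefore the assignment ``co-$\gamma$-plane $\leftrightarrow$ integral curve of $\bm{Y}$'' is a bijection, which exhibits $\M\T$ as the leaf space of the $1$-dimensional foliation of $\PT_{\setminus\widehat{\infty}}$ induced by $\bm{Y}$. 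The only mildly delicate point is checking that the flow of $\bm{Y}$ is complete on $\PT_{\setminus\widehat{\infty}}$ and that its leaves do not re-enter $\widehat{\infty}$; this is immediate from the explicit formula, since $\pi^{\mbf{A}}$ is preserved and remains non-zero along each orbit.
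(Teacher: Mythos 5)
Your overall strategy mirrors the paper's own (implicit) argument: verify the coordinate formula for $\bm{Y}$, integrate the flow, identify the orbits with the curves \eqref{eq-MT-PT-curve} parametrising co-$\gamma$-planes, and conclude that the quotient is $\M\T$. The paper establishes the lemma in the text immediately preceding it, by deriving the incidence relation \eqref{eq-incidence-co-null} for co-$\gamma$-planes and noting that \eqref{eq-MT-PT-curve} is an integral curve of \eqref{eq-Infinity-Vec-field}; your sketch expands this into an explicit computation, which is fine.

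There are, however, two flaws worth pointing out. The minor one: in your first step you write $(\mr{Y}\cdot Z)^{\bm{\upalpha}} = -\sqrt{2}\,\mr{O}^{\bm{\upalpha}}_{\mbf{A}}\pi^{\mbf{A}}$, but the surviving term $-\sqrt{2}\,\mr{X}_{\mc{A}}\mr{I}_{\bm{\upbeta}}^{\mbf{A}}\mr{I}^{\bm{\upalpha}}_{\mbf{A}}$ contracted with $\mr{Y}^{\mc{A}}$ and $Z^{\bm{\upbeta}}$ gives $(\mr{Y}\cdot Z)^{\bm{\upalpha}} = -\sqrt{2}\,\mr{I}^{\bm{\upalpha}}_{\mbf{A}}\pi^{\mbf{A}}$: the result is a spinor pointing in the $\omega$-direction (image of the injector $\mr{I}^{\bm{\upalpha}}_{\mbf{A}}\colon \Ss_{-\frac12}\to\Ss$), which is what makes $\bm{Y}$ a $\partial/\partial\omega^{\mbf{A}}$-vector. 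With $\mr{O}^{\bm{\upalpha}}_{\mbf{A}}$ you would land in the $\pi$-direction and your own conclusion would not follow; the rest of your computation is consistent with $\mr{I}^{\bm{\upalpha}}_{\mbf{A}}$, so this is likely a slip.

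The more substantive issue is your justification that the flow preserves the pure spinor locus. You claim that $Z\mapsto Z - \tfrac{\ii t}{2}\,\mr{Y}\cdot Z$ ``is the Clifford action of an element of the parabolic subgroup stabilising $[\mr{Y}^{\mc{A}}]$ (it exponentiates a translation in $\mfp_{-1}$).'' This is not correct: $\mr{Y}\cdot$ acts as a Clifford \emph{vector} (degree one, odd), whereas the spinor representation of $\mfp_{-1}\subset\so(n+2,\C)$ consists of \emph{bivector} elements $\mr{Y}_{[\mc{A}}\mr{Z}^{a}_{\mc{B}]}V_a$, acting by degree-two Clifford elements. The element $1 + t\mr{Y}\cdot$ does not lie in $\Spin(n+2,\C)$, and one cannot conclude invariance of the pure spinor variety from group-theoretic considerations in the way you suggest. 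The invariance is nonetheless true. Two clean ways to see it: (i) directly from Lemma~\ref{lem-pure-ompi}: if $(\omega^{\mbf{A}},\pi^{\mbf{A}})$ satisfies \eqref{eq-Z-pure}, then so does $(\omega^{\mbf{A}} + t\pi^{\mbf{A}},\pi^{\mbf{A}})$, because the cross-terms $\gamma^{(k)}_{\mbf{AB}}\omega^{\mbf{A}}\pi^{\mbf{B}}$ vanish for $k<m-1$ and $\gamma^{(k)}_{\mbf{AB}}\pi^{\mbf{A}}\pi^{\mbf{B}}$ vanishes for all $k<m$; or (ii) geometrically, as in the paper: by construction, \eqref{eq-MT-PT-curve} records the $1$-parameter family of $\gamma$-planes through the points of a co-$\gamma$-plane, so every point on it is tautologically in $\PT_{\setminus\widehat{\infty}}$. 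Either replacement repairs the gap; as written, the argument is incomplete.
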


An alternative geometric interpretation can be obtained by introducing weighted homogeneous coordinates on $\M\T$ as follows. Since $\pi^\mbf{A}$ is pure, we can view $[\pi^{\mbf{A}}]$ as homogeneous coordinates on $\PT_{(2m-1)}$. Let $\underline{\omega}_{a_1 \ldots a_{m-1}}$ be an $(m-1)$-form satisfying
\begin{gather}\label{eq-deg-om-MT}
\pi^{a_1 \mbf{A}} \underline{\omega}_{a_1 a_2\ldots a_{m-1}} = 0 , \qquad m>1 .
\end{gather}
Write $[ \underline{\omega}_{a_1 \ldots a_{m-1}} , \pi^{\mbf{A}}]_{2,1}$ for the equivalence class of pairs $( \underline{\omega}_{a_1 \ldots a_{m-1}} , \pi^{\mbf{A}} )$ def\/ined by the relation
\begin{gather*}%\label{eq-MT-coord-rel}
\big( \underline{\omega}_{a_1 \ldots a_{m-1}} , \pi^{\mbf{A}}\big ) \sim \big( \lambda^2 \underline{\omega}_{a_1 \ldots a_{m-1}} , \lambda \pi^{\mbf{A}} \big) \qquad \text{for some}\quad \lambda \in \C^*.
\end{gather*}
Then $[ \underline{\omega}_{a_1 \ldots a_{m-1}} , \pi^{\mbf{A}} ]_{2,1}$ constitute weighted homogeneous coordinates on $\M\T$. To see this, we note that for any choice of representative, the condition \eqref{eq-deg-om-MT} is equivalent to
\begin{gather}\label{eq-coord-MT-PT}
\underline{\omega}_{a_1 \ldots a_{m-1}} = \gamma \ind*{^{(m-1)}_{a_1 \ldots a_{m-1}}_{\mbf{A} \mbf{B}}} \pi^{\mbf{A}} \omega^{\mbf{B}}
\end{gather}
for some pure spinor $\omega^{\mbf{A}}$ satisfying~\eqref{eq-om-pure}. Then, the projection of any $[ \omega^{\mbf{A}} , \pi^\mbf{A}]$ in $\PT_{\setminus \widehat{\infty}}$ to $[ \underline{\omega}_{a_1 \ldots a_{m-1}} , \pi^{\mbf{A}} ]_{2,1}$ is independent of the choice of representative of $[ \omega^{\mbf{A}} , \pi^{\mbf{A}} ]$, and further, since~$\pi^{\mbf{A}}$ is pure, i.e., satisf\/ies~\eqref{eq-pi-pure}, sending $\omega^{\mbf{A}}$ to $\omega^{\mbf{A}} + t \pi^{\mbf{A}}$ for any $t \in \C$ leaves~\eqref{eq-coord-MT-PT} unchanged.

With these coordinates, we can rewrite the incidence relation \eqref{eq-incidence-co-null} as
\begin{gather}\label{eq-incidence-co-null-2}
\underline{\omega}_{a_1 \ldots a_{m-1}} = \tfrac{1}{\sqrt{2}} x^a \gamma \ind*{^{(m)}_{a a_1 \ldots a_{m-1}}_{\mbf{A} \mbf{B}}} \pi^{\mbf{A}} \pi^{\mbf{B}} .
\end{gather}

Now, turning to the geometrical interpretation, we f\/ix a point $\pi$ in $\PT_{(2m-1)}$ with a choice of pure spinor $\pi^{\mbf{A}}$. Since $\Tgt^{-1}_\pi \PT_{(2m-1)}$ is a dense open subset of an $m$-dimensional linear subspace of $\PT_{(2m-1)}$ containing $\pi$, we can identify a vector in $\Tgt^{-1}_\pi \PT_{(2m-1)}$ with a point in this subspace, which can be represented by a pure spinor $\omega^{\mbf{A}}$ satisfying \eqref{eq-om-pure}. At this stage, this identif\/ication is valid provided the scale of $\pi^{\mbf{A}}$ is \emph{fixed}. Clearly the origin in $\Tgt^{-1}_\pi \PT_{(2m-1)}$ is $\pi^{\mbf{A}}$ itself, so that $(\omega^{\mbf{A}}, \pi^{\mbf{A}})$ maps injectively to $(\underline{\omega}_{a_1 a_2 \ldots a_{m-1}}, \pi^{\mbf{A}})$. That this map is also surjective follows immediately from \eqref{eq-coord-MT-PT}. Hence, we can conclude

\begin{Proposition}\label{prop-MT-char}
The mini-twistor space $\M\T$ of $\C \E^{2m+1}$ is a $\frac{1}{2}m(m+3)$-dimensional complex manifold isomorphic to the total space of the canonical rank-$m$ distribution $\Tgt^{-1} \PT_{(2m-1)}$ of the twistor space $\PT_{(2m-1)}$ of $\mc{Q}^{2m-1}$.
\end{Proposition}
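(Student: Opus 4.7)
The plan is to verify the statement in two stages: first identify $\M\T$ with the quotient of $\PT_{\setminus\widehat{\infty}}$ by the flow of $\bm{Y}$ using Lemma \ref{lem-MT-flow} (already established), and then construct an explicit biholomorphism between this quotient and the total space of $\Tgt^{-1}\PT_{(2m-1)}$. The weighted coordinates $[\underline{\omega}_{a_1\ldots a_{m-1}},\pi^{\mbf{A}}]_{2,1}$ on $\M\T$ introduced just before the proposition, together with the relation \eqref{eq-coord-MT-PT}, are designed precisely to make this identification natural. The dimension count then drops out immediately: by Proposition \ref{prop-canonical-distribution} applied with $m\mapsto m-1$, $\dim\PT_{(2m-1)}=\tfrac{1}{2}m(m+1)$, and $\Tgt^{-1}\PT_{(2m-1)}$ has rank $m$, giving $\tfrac{1}{2}m(m+1)+m=\tfrac{1}{2}m(m+3)$.

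First I would fix a projective pure spinor $[\pi^{\mbf{A}}]\in\PT_{(2m-1)}$ with a chosen representative $\pi^{\mbf{A}}$ and describe the fibre of $\Tgt^{-1}\PT_{(2m-1)}$ over it. By Proposition \ref{prop-canonical-distribution} (and the analogue of Lemma \ref{lem-Z-orbit-trc} for $\Ss_{\frac{1}{2}}$ in place of $\Ss$), points of a dense open subset of $\mbf{D}_\pi\subset\Pp\Ss_{\frac{1}{2}}$ are represented by pure spinors of the form $Z_{(0)}\pi^{\mbf{A}}+\tfrac{\ii}{2}(Z_{(-1)}\cdot\pi)^{\mbf{A}}$, with $Z_{(-1)}\in\mfr_{-1}\cong\C^m$ providing an $m$-dimensional affine chart on the tangent fibre at $[\pi^{\mbf{A}}]$ once $\pi^{\mbf{A}}$ is fixed. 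Writing such a representative as $\omega^{\mbf{A}}$, we see that $\omega^{\mbf{A}}$ is a pure spinor satisfying \eqref{eq-om-pure} and that it is determined modulo the shift $\omega^{\mbf{A}}\mapsto\omega^{\mbf{A}}+t\pi^{\mbf{A}}$, $t\in\C$, which corresponds precisely to moving the basepoint of the tangent vector along the fibre.

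Next I would define a holomorphic map
\[
\Phi\colon\PT_{\setminus\widehat{\infty}}\to\Tgt^{-1}\PT_{(2m-1)},\qquad[\omega^{\mbf{A}},\pi^{\mbf{A}}]\mapsto\big([\pi^{\mbf{A}}],\,[\omega^{\mbf{A}}]\bmod\langle\pi^{\mbf{A}}\rangle\big),
\]
and check that it is well defined in the weighted sense: a rescaling $(\omega,\pi)\mapsto(\lambda\omega,\lambda\pi)$ on the left corresponds on the right to rescaling the representative of $[\pi^{\mbf{A}}]$ by $\lambda$ and the representative of the tangent vector by $\lambda$, which is exactly the $(2,1)$-weight equivalence once one expresses the fibre coordinate in terms of $\underline{\omega}_{a_1\ldots a_{m-1}}=\gamma^{(m-1)}_{a_1\ldots a_{m-1}\mbf{AB}}\pi^{\mbf{A}}\omega^{\mbf{B}}$, which is homogeneous of degree $1$ in $\omega$ and $1$ in $\pi$. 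The fibres of $\Phi$ are exactly the level sets where $\pi^{\mbf{A}}$ is fixed and $\omega^{\mbf{A}}$ varies by multiples of $\pi^{\mbf{A}}$, which by \eqref{eq-Infinity-Vec-field} are precisely the integral curves of $\bm{Y}$. Hence $\Phi$ factors through $\M\T$ by Lemma \ref{lem-MT-flow}, giving a holomorphic map $\bar\Phi\colon\M\T\to\Tgt^{-1}\PT_{(2m-1)}$.

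The main obstacle is surjectivity and biholomorphicity of $\bar\Phi$, which amounts to inverting \eqref{eq-coord-MT-PT}: given $[\pi^{\mbf{A}}]\in\PT_{(2m-1)}$ and an $(m-1)$-form $\underline{\omega}_{a_1\ldots a_{m-1}}$ satisfying the null condition \eqref{eq-deg-om-MT}, I must produce a pure spinor $\omega^{\mbf{B}}$ satisfying \eqref{eq-om-pure}, \eqref{eq-T-1PT}, and \eqref{eq-coord-MT-PT}, unique modulo $\omega\sim\omega+t\pi$. The key ingredient is Cartan's characterisation of purity: the contraction map $\omega^{\mbf{B}}\mapsto\gamma^{(m-1)}_{a_1\ldots a_{m-1}\mbf{AB}}\pi^{\mbf{A}}\omega^{\mbf{B}}$ sends the variety of pure spinors $\omega$ intersecting the $\gamma$-plane of $\pi$ in an $m$- or $(m-1)$-plane surjectively onto the space of $(m-1)$-forms annihilated by $\pi^{a_1\mbf{A}}$, with fibre exactly $\langle\pi^{\mbf{B}}\rangle$. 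Once this algebraic fact is established, the remaining verifications — that $\bar\Phi$ is a bijection of holomorphic manifolds of the same dimension and that its derivative is everywhere invertible — are routine and complete the proof.
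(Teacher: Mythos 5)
Your proposal follows essentially the same route as the paper: both identify $\M\T$ with the $\bm{Y}$-quotient of $\PT_{\setminus\widehat\infty}$ via Lemma~\ref{lem-MT-flow}, both identify $\Tgt^{-1}_\pi\PT_{(2m-1)}$ with a dense open subset of $\mbf{D}_\pi$ once a scale of $\pi^{\mbf{A}}$ is fixed (so that pure spinors $\omega^{\mbf{A}}$ serve as fibre coordinates), and both invoke \eqref{eq-coord-MT-PT} for surjectivity of the weighted $(2,1)$-homogeneous correspondence $(\omega^{\mbf{A}},\pi^{\mbf{A}})\mapsto(\underline\omega_{a_1\ldots a_{m-1}},\pi^{\mbf{A}})$. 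Your only presentational change is packaging the argument as an explicit map $\Phi$ factoring through the quotient; the technical content is identical.
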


For clarity, we represent $\M\T$ by means of an extended double f\/ibration
\begin{align*}
\xymatrix{& \F_{\C\E^n} \ar[dr]^{\mu} \ar@{.>}[ddr]_{\eta} \ar[dl]_{\nu} & \\
\C \E^n & & \PT_{\setminus \widehat{\infty}} \ar[d]^{\tau} \\
& & \M\T }
\end{align*}
where $\mu$, $\nu$, $\tau$ and $\eta$ are the usual projections. We shall introduce the following notation for submanifolds of $\M\T$ corresponding to points in $\C\E^n$:
\begin{gather*}
x \in \C \E^n \mapsto \F_x := \nu^{-1}(x) \mapsto \underline{\hat{x}} := \tau (\hat{x}) = \eta (\F_x) , \\
\mc{U} \subset \C \E^n \mapsto \F_\mc{U} := \bigcup_{x \in \mc{U}} \nu^{-1}(x) \mapsto \underline{\widehat{\mc{U}}} := \tau \big(\widehat{\mc{U}}\big) = \eta ( \F_\mc{U} ) .
\end{gather*}

\begin{Remark}
For a holomorphic function on $\F$ to descend to $\M\T$, it must be annihilated by the dif\/ferential operator $\pi \ind{^a^{\mbf{A}}} \nabla_a$.
\end{Remark}

\subsection{Normal bundles}\label{sec-normal}
It will also be convenient to think of the correspondence space as an analytic family $\{ \hat{x} \}$ of compact complex submanifolds of twistor space parametrised by the points $x$ of $\mc{Q}^n$. The way each $\hat{x}$ is embedded in $\PT$ is described by its (holomorphic) \emph{normal bundle} $\Nrm \hat{x}$ in
$\PT$, which is the rank-$(m+1)$ vector bundle def\/ined by the short exact sequence
\begin{gather*}
0 \rightarrow \Tgt \hat{x} \rightarrow \Tgt \left. \PT \right|_{\hat{x}} \rightarrow \Nrm \hat{x} \rightarrow 0 .
\end{gather*}
As we shall see there are some crucial dif\/ference between the odd- and even-dimensional cases.

\subsubsection{Odd dimensions}\label{sec-important}
Assume $n=2m+1$. We f\/irst note that the canonical distribution $\mathrm{D}$ on $\PT$ def\/ines a subbundle $\mathrm{D} |_{\hat{x}} + \Tgt \hat{x}$ of $\Tgt \PT |_{\hat{x}}$ containing $\Tgt \hat{x}$. How much of this subbundle descends to $\Nrm \hat{x}$ is answered by the following lemma.
\begin{Lemma}\label{lem-D-Nx}
Let $x$ be a point in $\mc{Q}^{2m+1}$. Then, for any $Z \in \hat{x} \subset \PT$, the intersection of $\mathrm{D}_Z$ and $\Tgt_Z \hat{x}$ has dimension $m$. In particular, $\hat{x}$ is equipped with a maximally non-integrable rank-$m$ distribution $\Tgt^{-1} \hat{x} := \mathrm{D} |_{\hat{x}} \cap \Tgt \hat{x}$. Further, there is a distinguished line subbundle of the normal bundle $\mathrm{N} \hat{x}$ of $\hat{x}$ given by $\Nrm^{-1} \hat{x} := ( \mathrm{D} |_{\hat{x}} + \Tgt \hat{x})/\Tgt \hat{x}$.
\end{Lemma}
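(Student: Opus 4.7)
The plan is to identify both $\mathrm{D}_Z$ and $\Tgt_Z\hat{x}$ as subspaces of $\Tgt_Z\PT$ via the Kleinian realisation $\F = G/Q$ with $\mfq = \mfp \cap \mfr$, and then intersect. By $G$-equivariance I only need to compute at a single base point, where the $\mfq$-graded pieces $\mfq_{\pm i}^E$, $\mfq_{\pm i}^F$, $\mfq_{\pm 3}$ are each homogeneous for both the $\mfp$- and $\mfr$-gradings, so it is transparent which $\mfp_j$ and $\mfr_k$ each piece sits in, and hence how it projects under $\mu_*$ and $\nu_*$.

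Concretely, at $(x,Z)$ lying over $Z \in \hat{x}$, I have $\ker\mu_* \cong \mfr/\mfq = \mfq_{-1}^E$ (tangent to the fibre of $\mu$) and $\ker\nu_* \cong \mfp/\mfq = \mfq_{-2}^F \oplus \mfq_{-1}^F = \Tgt^{-2}_F\F$. The $\mu_*$-preimage of $\mathrm{D}_Z \cong \mfr_{-1}$ is $\Tgt^{-2}_E\F \cong \mfq_{-2}^E \oplus \mfq_{-1}^F \oplus \mfq_{-1}^E$, as the text records. Intersecting on $\F$ yields $\Tgt^{-2}_E\F \cap \Tgt^{-2}_F\F = \mfq_{-1}^F$, and since $\mu_*$ is injective off $\mfq_{-1}^E$, this gives
\[
\mathrm{D}_Z \cap \Tgt_Z\hat{x} \;=\; \mu_*\bigl(\mfq_{-1}^F\bigr),
\]
a subspace of dimension $m$. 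By $G$-equivariance this defines a smooth rank-$m$ subbundle $\Tgt^{-1}\hat{x}$ of $\Tgt\hat{x}$.

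That $\Tgt^{-1}\hat{x}$ is maximally non-integrable follows from the bigrading: $\mfq_{-1}^F$ sits in bigrading $(0,-1)$, so $[\mfq_{-1}^F,\mfq_{-1}^F]$ sits in $(0,-2)$ and thus lies in $\mfq_{-2}^F$; a short surjectivity check (from the matrix form of $\g$, or from irreducibility of $\mfq_{-2}^F$ as a $\mfq_0$-module) gives $[\Tgt^{-1}\hat{x},\Tgt^{-1}\hat{x}] + \Tgt^{-1}\hat{x} = \mu_*(\mfq_{-2}^F \oplus \mfq_{-1}^F) = \Tgt_Z\hat{x}$. In fact $\Tgt^{-1}\hat{x}$ is exactly the canonical distribution on $\hat{x} \cong \PT_{(2m-1)}$ of Proposition~\ref{prop-canonical-distribution}, as one can also see geometrically via Corollary~\ref{cor-D-geom}: for $W$ near $Z$ in $\hat{x}$, both $\check{W}$ and $\check{Z}$ pass through $x$, so quotienting by the line through $x$ in $\V$ translates the condition $\dim(\check{Z}\cap\check{W}) \geq m-1$ (placing $W$ in $\mathrm{D}_Z$) into the analogous condition on the associated $\gamma$-planes in $\mc{Q}^{2m-1}$ (placing $W$ in the canonical distribution of $\hat{x}$).

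The line subbundle of $\Nrm\hat{x}$ then falls out by a dimension count: with $\dim\mathrm{D}_Z = m+1$, $\dim\Tgt_Z\hat{x} = \frac{1}{2}m(m+1)$ and intersection dimension $m$, one gets $\dim(\mathrm{D}_Z + \Tgt_Z\hat{x}) = 1 + \frac{1}{2}m(m+1)$, so the quotient $(\mathrm{D}_Z + \Tgt_Z\hat{x})/\Tgt_Z\hat{x}$ is one-dimensional, giving $\Nrm^{-1}\hat{x}$. The main obstacle is purely the careful bookkeeping of where each $\mfq$-graded piece lands inside $\mfp_j$ and $\mfr_k$ and hence inside $\ker\mu_*$, $\ker\nu_*$ and the preimage of $\mathrm{D}_Z$; once that dictionary is in place (as is already essentially done in the excerpt), intersections, maximal non-integrability and the normal-bundle filtration are immediate linear algebra.
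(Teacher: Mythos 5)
Your proof is correct but takes a genuinely different route from the paper's. The paper argues spinorially: a tangent vector of $\mathrm{D}_\Xi$ is identified with a point $Z^{\bm{\upalpha}} = \Xi^{\bm{\upalpha}} + \tfrac{\ii}{2}(Z_{(-1)}\cdot\Xi)^{\bm{\upalpha}}$ in the Fock representation, the tangency condition $X^{\mc{A}}Z_{\mc{A}}^{\bm{\upalpha}}=0$ reduces via the Clifford identity~\eqref{eq-Clifford_twistor_odd} to the single linear equation $X_{\mc{A}}Z_{(-1)}^{\mc{A}}=0$ on $Z_{(-1)}\in\mfr_{-1}\cong\C^{m+1}$, which cuts out an $m$-dimensional subspace, and the remaining claims are dispatched quickly (maximal non-integrability is inherited from $\mathrm{D}$; the line bundle from the second isomorphism theorem). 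You instead work entirely in the Kleinian picture, tracking where each $\mfq$-graded piece of $\g$ lands under $\mu_*$ and $\nu_*$ and intersecting in the graded Lie algebra; the only subtlety is that intersecting $\mu_*$-images requires $\ker\mu_*=\mfq_{-1}^E$ to be absorbed, which it is because $\mfq_{-1}^E\subset\Tgt^{-2}_E\F$, so $\mu_*(A)\cap\mu_*(B)=\mu_*(A\cap B)$ applies and your $\mfq_{-1}^F$ answer is right. What your route buys: the maximal non-integrability step is actually more rigorous than the paper's terse ``since $\mathrm{D}$ is maximally non-integrable, so must be $\Tgt^{-1}\hat{x}$'' (which as stated only gives a containment), since you obtain $[\mfq_{-1}^F,\mfq_{-1}^F]\subset\mfq_{-2}^F$ surjectively from the bigrading; and the identification of $\Tgt^{-1}\hat{x}$ with the canonical distribution of $\hat{x}\cong\PT_{(2m-1)}$ drops out immediately rather than being noted as an afterthought. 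What the paper's route buys: it stays uniform with the spinor calculus used throughout and exhibits the defining linear constraint explicitly, which is useful for the coordinate description in Appendix~\ref{sec-affine-odd} that the proof points to.
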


\begin{proof}
Denote by $[X^\mc{A}]$ the homogeneous coordinates of $x \in \mc{Q}^{2m+1}$, and let $\Xi \in \hat{x} \subset \PT$ so that $X^\mc{A} \Xi_\mc{A}^{\bm{\upalpha}} =0$. Then, by Lemma~\ref{lem-Z-orbit-trc} and Proposition~\ref{prop-canonical-distribution}, a vector tangent to $\Drm_\Xi$ can be identif\/ied with a point $Z^{\bm{\upalpha}} = \Xi^{\bm{\upalpha}} + \frac{\ii}{2} ( Z_{(-1)} \cdot \Xi )^{\bm{\upalpha}}$ of a dense open subset of $\mbf{D}_\Xi \subset \PT$. Here, $Z_{(-1)} \in \mfr_{-1} \cong \C^{m+1}$ lies in a complement of the stabiliser $\mfr$ of $\Xi$ as explained in Section~\ref{sec-tw-sp}. The condition that this vector is also tangent to $\hat{x}$ is equivalent to $0 = X^\mc{A} Z \ind*{_{\mc{A}}^{\bm{\upalpha}}} = - 2 X_\mc{A} Z^\mc{A}_{(-1)} \Xi^{\bm{\upalpha}}$, by~\eqref{eq-Clifford_twistor_odd}, i.e., $X_\mc{A} Z^\mc{A}_{(-1)} =0$. This gives a single additional algebraic condition on $Z_{(-1)}^\mc{A}$, and thus the intersection of $\mathrm{D}_\Xi$ and $\Tgt_\Xi \hat{x}$ is $m$-dimensional (for a description in af\/f\/ine coordinates, see the end of Appendix~\ref{sec-affine-odd}). This def\/ines a rank-$m$ distribution $\Tgt^{-1} \hat{x} := \mathrm{D} |_{\hat{x}} \cap \Tgt \hat{x}$ on $\hat{x}$. Since~$\mathrm{D}$ is maximally non-integrable, so must be $\Tgt^{-1} \hat{x}$. That the subbundle $\Nrm^{-1} \hat{x} := ( \mathrm{D} |_{\hat{x}} + \Tgt \hat{x})/\Tgt \hat{x}$ of~$\mathrm{N} \hat{x}$ is of rank~$1$ follows from the isomorphism $ \mathrm{D}|_{\hat{x}} /( \mathrm{D} |_{\hat{x}} \cap \Tgt \hat{x} ) \cong ( \mathrm{D} |_{\hat{x}} + \Tgt \hat{x} )/\Tgt \hat{x}$.
\end{proof}

That $\hat{x}$ is endowed with a canonical rank-$m$ distribution comes as no surprise since each $\hat{x}$ is isomorphic to the generalised f\/lag manifold $P/Q \cong \PT_{(2m-1)}$.

As explained in \cite{Kodaira1962}, the tangent space at a point $x$ of $\mc{Q}^{2m+1}$ injects into $H^0(\hat{x}, \mc{O}(\Nrm \hat{x}))$, the space of global holomorphic sections of $\Nrm \hat{x}$. If $V^a$ is a vector in $\Tgt_x \mc{Q}^{2m+1}$ and $y$ the point inf\/initesimally separated from $x$ by $V^a$, then the corresponding section of $H^0(\hat{x}, \mc{O}(\Nrm \hat{x}))$ can be identif\/ied with $\hat{y}$. Let us f\/ix $x$ to be the origin in $\C\E^{2m+1} \subset \mc{Q}^{2m+1}$. Then $V^a$ can be identif\/ied with $y^a$. We view $\pi^{\mbf{A}}$ as coordinates on $\hat{x}$ given by the locus $\omega^{\mbf{A}}=0$. The inf\/initesimal displacement of $\hat{x}$ along $V^a$ at the origin is $V^\mbf{A} := V^a \nabla_a \omega^{\mbf{A}}$, i.e., $V^\mbf{A} = \frac{1}{\sqrt{2}} V^a \pi_a^\mbf{A}$. This represents a global holomorphic section $\widehat{V}_{\hat{x}}$ of $\Nrm \hat{x}$, and can be identif\/ied with the complex submanifold $\hat{y}$ given by $\omega^{\mbf{A}} = \frac{1}{\sqrt{2}} y^a \pi_a^{\mbf{A}}$.

Before describing such sections, we shall need the following two lemmata.

\begin{Lemma}\label{lem-eigenspinors}
Let $V^a$ be a non-zero vector in $\C \E^{2m+1}$, and let $V_{\mbf{A}}^{\mbf{B}} := V^a \gamma \ind{_a_{\mbf{A}}^{\mbf{B}}}$ be the corresponding spin endomorphism. Then $V^a$ is null if and only if $V_{\mbf{A}}^{\mbf{B}}$ has a zero eigenvalue. Further,
\begin{itemize}\itemsep=0pt
\item if $V^a$ is null, $V_{\mbf{A}}^{\mbf{B}}$ has a single zero eigenvalue of algebraic multiplicity $2^m$, and its eigenspace is isomorphic to the $2^{m-1}$-dimensional spinor space of $\C\E^{2m-1}$,
\item if $V^a$ is non-null, $V_{\mbf{A}}^{\mbf{B}}$ has a pair of eigenvalues $\pm \ii \sqrt{V^a V_a}$, each of algebraic multipli\-ci\-ty~$2^{m-1}$, and their respective eigenspaces are isomorphic to the $2^{m-1}$-dimensional chiral spinor spaces of~$\C\E^{2m}$.
\end{itemize}
\end{Lemma}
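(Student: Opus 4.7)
The starting point is the Clifford relation $\gamma \ind{_{(a}_{\mbf{A}}^{\mbf{C}}} \gamma \ind{_{b)}_{\mbf{C}}^{\mbf{B}}} = - g_{ab} \delta \ind*{_{\mbf{A}}^{\mbf{B}}}$. Contracting twice with $V^a$ immediately gives
\begin{gather*}
V \ind{_{\mbf{A}}^{\mbf{C}}} V \ind{_{\mbf{C}}^{\mbf{B}}} = - \left( V^a V_a \right) \delta \ind*{_{\mbf{A}}^{\mbf{B}}} ,
\end{gather*}
so the minimal polynomial of $V \ind{_{\mbf{A}}^{\mbf{B}}}$ divides $t^2 + V^a V_a$. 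Hence the eigenvalues of $V \ind{_{\mbf{A}}^{\mbf{B}}}$ are exactly $\pm \ii \sqrt{V^a V_a}$, and zero is an eigenvalue if and only if $V^a$ is null. This takes care of the first assertion.

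For the non-null case, since $V^a V_a \neq 0$ the roots $\pm \ii\sqrt{V^a V_a}$ are distinct, so $V \ind{_{\mbf{A}}^{\mbf{B}}}$ is diagonalisable. The two eigenspaces have multiplicities summing to $\dim \Ss_{\pm \frac{1}{2}} = 2^m$, and since $\mathrm{tr} \, \gamma \ind{_a_{\mbf{A}}^{\mbf{A}}} = 0$ we have $\mathrm{tr} \, V \ind{_{\mbf{A}}^{\mbf{A}}} = 0$, which forces both multiplicities to equal $2^{m-1}$. To identify the eigenspaces, normalise so that $V^a V_a = 1$ and split $\V_0 = \langle V^a \rangle \oplus V^\perp$, with $V^\perp$ of dimension $2m$. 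The Clifford generators $\gamma \ind{_b_{\mbf{A}}^{\mbf{B}}}$ for $b \in V^\perp$ all anticommute with $V \ind{_{\mbf{A}}^{\mbf{B}}}$, so their products generate a copy of $\Cl(V^\perp, g |_{V^\perp})$ inside the commutant of $V \ind{_{\mbf{A}}^{\mbf{B}}}^{\,2} = -\delta \ind*{_{\mbf{A}}^{\mbf{B}}}$ acting irreducibly on $\Ss_{\pm \frac{1}{2}}$; since $V \ind{_{\mbf{A}}^{\mbf{B}}}$ plays the role of the volume element of $\Cl(V^\perp, g|_{V^\perp})$ up to a factor of $\ii$, its $\pm \ii$-eigenspaces are precisely the two irreducible chiral representations of $\Spin(2m,\C)$.

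For the null case, use Witt's theorem to choose a null vector $V'^a$ with $V^a V'_a = 1$, and let $W \subset \V_0$ be the $(2m-1)$-dimensional orthogonal complement of the hyperbolic pair $\langle V^a, V'^a \rangle$. The Clifford relations give $V \ind{_{\mbf{A}}^{\mbf{C}}} V \ind{_{\mbf{C}}^{\mbf{B}}} = 0$ and $V \ind{_{\mbf{A}}^{\mbf{C}}} V' \ind{_{\mbf{C}}^{\mbf{B}}} + V' \ind{_{\mbf{A}}^{\mbf{C}}} V \ind{_{\mbf{C}}^{\mbf{B}}} = -2 \delta \ind*{_{\mbf{A}}^{\mbf{B}}}$, so $V \ind{_{\mbf{A}}^{\mbf{B}}}$ is a square-zero nilpotent endomorphism of the $2^m$-dimensional space $\Ss_{\pm\frac{1}{2}}$; hence its only eigenvalue is $0$ with algebraic multiplicity $2^m$. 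Moreover $V^2 = 0$ forces $\mathrm{im}\, V \subseteq \ker V$, so $\ker V$ has dimension at least $2^{m-1}$, while $V'^a$ provides a complementary Clifford element exchanging $\ker V$ and a complement, forcing equality. To identify $\ker V \ind{_{\mbf{A}}^{\mbf{B}}}$ as the spinor space of $\C\E^{2m-1}$, observe that the Clifford generators $\gamma \ind{_b_{\mbf{A}}^{\mbf{B}}}$ for $b \in W$ anticommute with both $V \ind{_{\mbf{A}}^{\mbf{B}}}$ and $V' \ind{_{\mbf{A}}^{\mbf{B}}}$, hence preserve $\ker V$ and act there faithfully; this realises $\ker V$ as an irreducible module for $\Cl(W, g|_W)$, which is precisely the $2^{m-1}$-dimensional spinor representation of $\C \E^{2m-1}$.

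The routine parts are the minimal polynomial and trace arguments; the main obstacle is making the identifications of eigenspaces with the \emph{specific} chiral or reduced spinor representations rigorous, which is cleanest via the Clifford subalgebra generated by a Witt-adapted basis, as sketched above.
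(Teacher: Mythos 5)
Your proof is correct and follows essentially the same route as the paper's: both start from the Clifford identity to get $V \ind{_{\mbf{A}}^{\mbf{C}}} V \ind{_{\mbf{C}}^{\mbf{B}}} = -\left(V^a V_a\right) \delta \ind*{_{\mbf{A}}^{\mbf{B}}}$, both deduce nilpotency in the null case and diagonalisability in the non-null case, and both identify the eigenspaces via the Clifford algebra of the orthogonal complement of $V^a$ (quotiented by $\langle V^a \rangle$ when $V^a$ is null). You have simply supplied details the paper leaves implicit or to a reference, notably the traceless-ness of $\gamma_a$ forcing the two multiplicities to equal $2^{m-1}$, and the Witt-pair argument pinning down $\dim \ker V = 2^{m-1}$; be slightly careful that $\Cl(W, g|_W)$ does not act \emph{faithfully} on a single $2^{m-1}$-dimensional space (one of its two simple factors must annihilate it), though the module is still irreducible and the identification stands.
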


\begin{proof}By the Clif\/ford property, we have $V_{\mbf{A}}^{\mbf{C}} V_{\mbf{C}}^{\mbf{B}} = - V^a V_a \delta_{\mbf{A}}^{\mbf{B}}$, and it follows that any eigenvalue of $V_{\mbf{A}}^{\mbf{B}}$ must be equal to $\pm \ii \sqrt{V^a V_a}$. Hence $V^a$ is null if and only if it has a zero eigenvalue. This zero eigenvalue must be of algebraic multiplicity $2^m$ since in this case $V_{\mbf{A}}^{\mbf{B}}$ is nilpotent. One can check that the kernel of $V_{\mbf{A}}^{\mbf{B}}$ can be identif\/ied with the $2^{m-1}$-dimensional spinor space of $\C\E^{2m-1}$ as the orthogonal complement of $V^a$ in $\C\E^{2m+1}$ quotiented by $\langle V^a \rangle$.

If $V^a$ is non-null, the square of $V_{\mbf{A}}^{\mbf{B}}$ is proportional to the identity, and thus, each of the eigenvalues $\pm \ii \sqrt{V^a V_a}$ must have algebraic multiplicity $2^{m-1}$. Each of the eigenspaces can be identif\/ied with each of the chiral spinor spaces of $\C\E^{2m}$ as the orthogonal complement of $\langle V^a \rangle$ in $\C\E^{2m+1}$ -- see, e.g.,~\cite{Penrose1986}.
\end{proof}

\begin{Lemma}\label{lem-non-null-twistor}Let $x$ and $y$ be two points in $\mc{Q}^{2m+1}$ infinitesimally separated by a non-null vector~$V^a$. Then, for every $Z \in \hat{x} \subset \PT$ such that $V^a$ is tangent to the co-$\gamma$-plane $\check{Z}^\perp \subset \Tgt_x \mc{Q}^{2m+1}$, $\mathrm{D}_Z$ intersects $\hat{y}$ in a unique point~$W$, say, such that the corresponding $\gamma$-planes~$\check{Z}$ and~$\check{W}$ intersect maximally.
\end{Lemma}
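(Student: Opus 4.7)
The plan is to work in the affine chart of $\PT_{\setminus\widehat{\infty}}$ obtained by placing $x$ at the origin of $\C\E^{2m+1}$ in flat coordinates. Then $Z\in\hat{x}$ has twistor coordinates $(\omega,\pi)=(0,\pi)$ with $\pi^{\mbf{A}}$ pure, and the $\gamma$-plane $\check{Z}\subset\Tgt_x\mc{Q}^{2m+1}$ is the kernel $\check{\pi}$ of $\pi_a^{\mbf{A}}:=\pi^{\mbf{B}}\gamma_{a\mbf{B}}^{\mbf{A}}$.

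The first step is to translate the hypothesis $V^a\in\check{Z}^\perp$ with $V^aV_a\neq 0$ into a Clifford-algebraic statement: for $V\in\check{Z}^\perp$ one has $\{\gamma_V,\gamma_U\}=-2g(V,U)=0$ for every $U\in\check{Z}$, so $\gamma_V\pi$ is itself annihilated by every $\gamma_U$ with $U\in\check{Z}$. In odd dimensions, however, the space of spinors annihilated by all $\gamma_U$ ($U\in\check{Z}$) is one-dimensional and generated by $\pi$, so $V^a\pi_a^{\mbf{A}}=\lambda\pi^{\mbf{A}}$ for some $\lambda\in\C$, and non-nullness together with $\gamma_V^2=-V^aV_a$ forces $\lambda^2=-V^aV_a\neq 0$.

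Next I parametrise a dense open subset of $\mathrm{D}_Z$ using Lemma~\ref{lem-Z-orbit-trc}, writing $W^{\bm{\upalpha}}=Z^{\bm{\upalpha}}+\tfrac{\ii}{2}(Z_{(-1)}\cdot Z)^{\bm{\upalpha}}$ with $Z_{(-1)}$ in the totally null complement $\mfr_{-1}:=\langle\mr{Y}^{\mc{A}},\mr{Z}^{\mc{A}}_{a_j}\rangle$, where $\{a_j\}_{j=1}^m$ is a null basis of a complement $\check{\pi}^*$ to $\check{\pi}$ in $\V_0$. Setting $Z_{(-1)}=s\mr{Y}+\phi^j\mr{Z}_{a_j}$ and applying the explicit Clifford formula~\eqref{eq-Gam2gam-odd} to $Z=\mr{O}^{\bm{\upalpha}}_{\mbf{A}}\pi^{\mbf{A}}$, one reads off
\begin{gather*}
\omega_W^{\mbf{A}}=-\tfrac{\ii}{\sqrt{2}}\,s\pi^{\mbf{A}},\qquad \pi_W^{\mbf{A}}=\pi^{\mbf{A}}-\tfrac{\ii}{2}\phi^j\pi_{a_j}^{\mbf{A}}.
\end{gather*}

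I then impose the condition $W\in\hat{y}$, i.e.\ the incidence relation $\omega_W^{\mbf{A}}=\tfrac{1}{\sqrt{2}}V^a\pi_{W,a}^{\mbf{A}}$. Expanding the right-hand side with $\gamma_a\gamma_{a_j}=-\gamma_{a_j}\gamma_a-2g_{aa_j}\mathrm{Id}$ and using $V^a\pi_a=\lambda\pi$, one obtains $\lambda\pi+\ii(\phi\cdot\alpha)\pi+\tfrac{\ii\lambda}{2}\phi^j\pi_{a_j}$, where $V^a=\alpha^ie_i+\beta e_0+\gamma^kf_k$ in an adapted basis with $\lambda=\lambda_0\beta$. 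Since the $(m+1)$ spinors $\pi,\pi_{a_1},\ldots,\pi_{a_m}$ are linearly independent (they span the image of the Clifford map $\V_0\to\Ss$ applied to $\pi$), the equation splits into the single scalar constraint $-\ii s=\lambda+\ii(\phi\cdot\alpha)$ and the $m$ constraints $\lambda\phi^j=0$. Non-nullness gives $\lambda\neq 0$, forcing $\phi^j=0$ for every $j$ and then $s=\ii\lambda$, so there is exactly one $W\in\mathrm{D}_Z\cap\hat{y}$, namely $W=(\tfrac{\lambda}{\sqrt{2}}\pi,\pi)$. The maximal intersection $\dim(\check{Z}\cap\check{W})=m-1$ then follows at once from $W\in\mathrm{D}_Z$ via Proposition~\ref{prop-XiZ}(3). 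The main obstacle will be the Clifford bookkeeping needed to expand $V^a\pi_{W,a}$ cleanly in the basis $\{\pi,\pi_{a_j}\}$; the key simplifications come from $\gamma_{e_i}\pi=0$ for $e_i\in\check{\pi}$ and the anticommutativity $\{\gamma_{e_0},\gamma_{f_j}\}=0$, which together ensure that the $\phi$-deformation of $\pi_W$ contributes to the $\pi_{a_j}$-components in exactly the combination $\lambda\phi^j\pi_{a_j}$ required for the system to decouple.
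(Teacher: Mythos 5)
Your proof is correct and arrives at the same twistor $W$ as the paper, but by a genuinely different route. The paper's proof goes geometrically: it identifies the co-$\gamma$-plane through the origin as the $1$-parameter family \eqref{eq-MT-PT-curve}, reads off that $y=V$ lies on the $\gamma$-plane $[t\pi^{\mbf{A}},\pi^{\mbf{A}}]$ with $t$ the corresponding eigenvalue, and then checks this twistor has the form required by Lemma~\ref{lem-Z-orbit-trc} to lie in $\mathrm{D}_Z$. It never explicitly argues \emph{uniqueness} of $W$ in $\mathrm{D}_Z\cap\hat{y}$; this is taken to be clear from the construction. Your version instead parametrises the open cell of $\mathrm{D}_Z$ by $(s,\phi^j)$ via the $\mfr_{-1}$-exponential of Lemma~\ref{lem-Z-orbit-trc}, writes out $W$ through the explicit Clifford formula~\eqref{eq-Gam2gam-odd}, imposes the incidence relation for $\hat{y}$, and expands in the linearly independent spinors $\{\pi,\pi_{a_1},\dots,\pi_{a_m}\}$ to extract the constraints $\lambda\phi^j=0$ and $-\ii s=\lambda+\tfrac{\ii}{2}(\phi\cdot\alpha)$. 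Since non-nullness gives $\lambda\neq0$, this forces $\phi^j=0$ and $s=\ii\lambda$, yielding existence \emph{and} uniqueness in a single step; the maximality of the intersection then follows from Proposition~\ref{prop-XiZ}(3) exactly as in the paper. So the two proofs package the same underlying facts (eigenspinor characterisation of $V\in\check{Z}^\perp$, the Fock-type parametrisation of $\mathrm{D}_Z$, Proposition~\ref{prop-XiZ}) differently: the paper's version is shorter and more geometric, while yours is more computational but pins down uniqueness concretely, which is welcome since the paper's proof leaves that point implicit.

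One small bookkeeping remark: be careful to state clearly that the $a_j$ are a basis of a totally null $m$-plane $\check{\pi}^*$ transverse to $\check{\pi}$ with $\check{\pi}\oplus\check{\pi}^*\oplus\langle u\rangle=\V_0$ (so it is a complement to $\check{\pi}^\perp$, not to $\check{\pi}$), and that the $f_k$-components of $V$ vanish by the hypothesis $V\in\check{\pi}^\perp$ before you write the decoupled system. You should also note that the purity of $W$ comes for free from Proposition~\ref{prop-canonical-distribution}, since $\mathbf{D}_Z\subset\PT$, so there is nothing extra to check once the linear system is solved. The normalisation conventions ($g(e_i,f_j)$ and the factor of $\sqrt{2}$ in the incidence relation) should be made consistent throughout so that the coefficients of $\pi$ and $\pi_{a_j}$ in your final equation are unambiguous, but the structure of the argument does not depend on these choices.
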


\begin{proof}With no loss of generality, we may assume that $x$ is the origin in $\C\E^{2m+1} \subset \mc{Q}^{2m+1}$. We then have $V^a=y^a$. Since $V^a$ is non-null, it must lie on some co-$\gamma$-plane of some twistor~$Z$. Following the discussion of Section~\ref{sec-co-gam}, it can be represented by a $1$-parameter family of $\gamma$-planes. In particular, $y$ must lie on one such $\gamma$-plane. If $Z$ is a point on $\hat{x}$, then $[Z^{\bm{\upalpha}}] = [0,\pi^{\mbf{A}}]$ for some $\pi^{\mbf{A}}$. The condition that $y$ lies on the co-$\gamma$-plane $Z^\perp$ is that $\pi^{\mbf{A}}$ is an eigenspinor of $V^a$ with eigenvalue $t$ or $-t$ where $t:=\ii \sqrt{V^a V_a}$. For def\/initeness, let us assume that the eigenvalue is~$t$. With reference to~\eqref{eq-MT-PT-curve}, the point $y^a$ lies in the $\gamma$-plane $W$ given by $[W^{\bm{\upalpha}}] = [t \pi^{\mbf{A}},\pi^{\mbf{A}}]$. Re-expressing this twistor as $W^{\bm{\upalpha}} = \Xi^{\bm{\upalpha}} - \frac{t}{2} \mr{Y}^\mc{A} \Gamma \ind{_{\mc{A}}_{\bm{\upbeta}}^{\bm{\upalpha}}} \Xi^{\bm{\upalpha}}$, we see, by Lemma~\ref{lem-Z-orbit-trc}, that $W$ lies in the intersection of $\mathrm{D}_Z$ and $\hat{y}$. In fact, one can see that the connecting vector from $Z$ to $W$ is given by $\sqrt{V^a V_a} \bm{Y}$, where $\bm{Y}$ is given by~\eqref{eq-Infinity-Vec-field}. Finally, by Proposition~\ref{prop-XiZ}, $Z$ and~$W$ must intersect maximally.
\end{proof}

\begin{Proposition}\label{prop-zero-normal}Let $x$ be a point in $\mc{Q}^{2m+1}$ with corresponding submanifold $\hat{x}$ in $\PT$. Let~$V$ be a~tangent vector at~$x$, and $\widehat{V}_{\hat{x}}$ its corresponding global holomorphic section of~$\mathrm{N} \hat{x}$.
\begin{itemize}\itemsep=0pt
\item Suppose $V$ is null. When $m=1$, $\widehat{V}_{\hat{x}}$ vanishes at a single point on $\hat{x}$, which corresponds to the unique $\gamma$-plane $($i.e., null line$)$ to which $V$ is tangent. When $m>1$, there is a $\frac{1}{2}m(m-1)$-dimensional algebraic subset of $\hat{x}$ biholomorphic to $\PT_{(2m-3)}$ on which $\widehat{V}_{\hat{x}}$ vanishes. Each point of this subset corresponds to a $\gamma$-plane to which $V$ is tangent.
\item Suppose $V$ is non-null. When $m=1$, there are precisely two points, $Z_\pm$ say, on $\hat{x}$, at which $\widehat{V}_{\hat{x}}(Z_\pm) \in \mathrm{N}^{-1}_{Z_\pm} \hat{x}$. Further, $V$ is tangent to the two co-$\gamma$-planes determined by $Z_\pm$. When $m>1$, there are two disjoint $\frac{1}{2}m(m-1)$-dimensional algebraic subsets of $\hat{x}$, biholomorphic to $\PT_{(2m-2)}$ and $\PT'_{(2m-2)}$, over which $\widehat{V}_{\hat{x}}$ is a section of $\mathrm{N}^{-1} \hat{x}$. Each point of these subsets corresponds to a co-$\gamma$-plane to which $V$ is tangent.
\end{itemize}
Conversely, if $\widehat{V}_{\hat{x}}$ vanishes at a point, then $V$ must be null, and if $\widehat{V}_{\hat{x}} (Z) \in \Nrm^{-1}_Z \hat{x}$ for some $Z \in \hat{x}$, then $V$ must be non-null.
\end{Proposition}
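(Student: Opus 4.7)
The plan is to work in the affine chart around the origin of $\C\E^{2m+1} \subset \mc{Q}^{2m+1}$, taking $x$ to be the origin, and to identify $\hat{x}$ with the projective pure-spinor variety in $\Pp\Ss_{\frac{1}{2}}$ cut out by the locus $\omega^{\mbf{A}} = 0$. An infinitesimally displaced point $y$ along $V^a$ gives, via the incidence relation~\eqref{eq-incidence_relation-spinor_odd}, the submanifold $\hat{y}$ parametrised by $\omega^{\mbf{A}} = \tfrac{1}{\sqrt{2}} V^a \pi_a^{\mbf{A}}$. Hence at $Z = [0,\pi^{\mbf{A}}] \in \hat{x}$ the section $\widehat{V}_{\hat{x}}$ is represented in the $\omega$-direction of $\Nrm_Z \hat{x}$ by $\tfrac{1}{\sqrt{2}} V^a \pi_a^{\mbf{A}}$, i.e., by the spin action of $V^a$ on $\pi^{\mbf{A}}$ via the endomorphism $V_{\mbf{A}}^{\mbf{B}}$ of Lemma~\ref{lem-eigenspinors}. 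First I would translate the two conditions of interest into algebraic conditions on $\pi^{\mbf{A}}$: vanishing of $\widehat{V}_{\hat{x}}$ at $Z$ is equivalent to $\pi^{\mbf{A}}$ lying in $\ker V_{\mbf{A}}^{\mbf{B}}$, while, describing the line $\Nrm^{-1}_Z \hat{x} \cong \Drm_Z/(\Drm_Z \cap \Tgt_Z\hat{x})$ of Lemma~\ref{lem-D-Nx} via Lemma~\ref{lem-non-null-twistor}, the condition $\widehat{V}_{\hat{x}}(Z) \in \Nrm^{-1}_Z \hat{x}$ becomes the eigenspinor condition $V_{\mbf{A}}^{\mbf{B}} \pi^{\mbf{A}} = t\pi^{\mbf{B}}$ for some $t \in \C$ -- the connecting vector to the unique $W \in \Drm_Z \cap \hat{y}$ produced by Lemma~\ref{lem-non-null-twistor} providing the representative in $\Drm_Z$.

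With these reformulations, each assertion then reduces to Lemma~\ref{lem-eigenspinors} combined with Cartan's theory of pure spinors on a lower-dimensional Euclidean space. For $V$ null, the zero eigenspace of $V_{\mbf{A}}^{\mbf{B}}$ is identified with the spinor space of $V^\perp/\langle V\rangle \cong \C\E^{2m-1}$; the pure spinors of $\Ss_{\frac{1}{2}}$ lying in this eigenspace are exactly the pure spinors of $\Ss(\C\E^{2m-1})$, so their projectivisation is biholomorphic to the twistor space $\PT_{(2m-3)}$ of $\mc{Q}^{2m-3}$ -- a single point when $m=1$ -- and each such point corresponds geometrically to a $\gamma$-plane through $x$ containing $V$. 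For $V$ non-null, the two eigenspaces of $V_{\mbf{A}}^{\mbf{B}}$ with eigenvalues $\pm \ii \sqrt{V^a V_a}$ are identified with the two chiral spinor spaces of $V^\perp \cong \C\E^{2m}$; the pure spinors they contain are the pure chiral spinors of $\C\E^{2m}$, whose projectivisations yield the two disjoint varieties biholomorphic to $\PT_{(2m-2)}$ and $\PT'_{(2m-2)}$ -- two points when $m=1$. The matching geometric statement that each such point corresponds to a co-$\gamma$-plane through $x$ containing $V$ is precisely the content of Lemma~\ref{lem-non-null-twistor}. The converses follow from the same eigenvalue dichotomy: a zero of $\widehat{V}_{\hat{x}}$ forces $V_{\mbf{A}}^{\mbf{B}}$ to admit a zero eigenvalue, hence by Lemma~\ref{lem-eigenspinors} $V$ must be null; and a non-vanishing element of $\Nrm^{-1}_Z\hat{x}$ forces a non-zero eigenvalue, hence $V$ must be non-null.

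The main obstacle I anticipate is the identification of the pure spinors of $\Ss_{\frac{1}{2}}$ lying in a given eigenspace of $V_{\mbf{A}}^{\mbf{B}}$ with the pure spinors of the corresponding lower-dimensional spin module. This amounts to checking that the restriction of the purity conditions~\eqref{eq-Z-pure} on $\Ss_{\frac{1}{2}}$ to the eigenspace coincides with the intrinsic purity conditions on $\Ss(\C\E^{2m-1})$ or on the chiral spinor spaces of $\C\E^{2m}$, and can be carried out either by direct manipulation of the bilinear forms $\gamma^{(k)}_{\mbf{AB}}$ or, more conceptually, by observing that the $\gamma$-plane of a pure $\pi^{\mbf{A}}$ either contains $V$ (null case) or is contained in $V^\perp$ (non-null case), so that it descends intrinsically to the lower-dimensional totally null subspace characterising the lower-dimensional pure spinor. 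The resulting $\tfrac{1}{2}m(m-1)$-dimensional count of these loci then matches the expected dimensions of $\PT_{(2m-3)}$, $\PT_{(2m-2)}$ and $\PT'_{(2m-2)}$, completing the argument.
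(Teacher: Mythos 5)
Your proof follows the same route as the paper's: take $x$ to be the origin, express $\widehat{V}_{\hat{x}}$ at $Z=[0,\pi^{\mbf{A}}]$ via the Clif\/ford endomorphism $V_{\mbf{A}}^{\mbf{B}}$, and invoke Lemmata~\ref{lem-eigenspinors} and~\ref{lem-non-null-twistor} to characterise the zero/eigenspinor loci and their geometric meaning. You also make explicit two points that the paper's proof treats briskly, namely the reformulation of $\widehat{V}_{\hat{x}}(Z)\in\Nrm^{-1}_Z\hat{x}$ as the eigenspinor equation $V_{\mbf{A}}^{\mbf{B}}\pi^{\mbf{A}}=t\pi^{\mbf{B}}$, and the identif\/ication of the pure eigenspinors of $V_{\mbf{A}}^{\mbf{B}}$ with the pure spinors of the lower-dimensional spin module.
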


\begin{proof}Again, let us assume that $x$ is the origin in $\C\E^{2m+1} \subset \mc{Q}^{2m+1}$, and set $V_{\mbf{A}}^{\mbf{B}} := V^a \gamma \ind{_a_{\mbf{A}}^{\mbf{B}}}$.

If $V^a$ is null, the vanishing of $\widehat{V}_{\hat{x}}$ of a point $\pi^{\mbf{A}}$ of $\hat{x}$ is simply equivalent to $V_{\mbf{A}}^{\mbf{B}} \pi^{\mbf{A}}=0$, i.e., $\pi^{\mbf{A}}$ is a pure eigenspinor of $V_{\mbf{A}}^{\mbf{B}}$. By Lemma~\ref{lem-eigenspinors}, we can immediately conclude that $\widehat{V}_{\hat{x}}$ vanishes at a point when $m=1$, and on a subset of $\hat{x}$ biholomorphic to $\PT_{(2m-3)}$ when $m>1$. Clearly, each point of this subset corresponds to a $\gamma$-plane to which~$V^a$ is tangent.

If $V^a$ is non-null, we know by Lemma~\ref{lem-eigenspinors} that $V_{\mbf{A}}^{\mbf{B}}$ has eigenvalues $\pm \ii \sqrt{V^a V_a}$. In particular, the pure eigenspinors up to scale determine two distinct points on $\hat{x}$ when $m=1$, and two disjoint subsets of $\hat{x}$ biholomorphic to the twistor spaces $\PT_{(2m-2)}$ and $\PT'_{(2m-2)}$ when $m>1$. A point $Z$ on any of these sets corresponds to a co-$\gamma$-plane $\check{Z}^\perp$ to which $V^a$ is tangent. By Lemma \ref{lem-non-null-twistor}, the corresponding submanifold $\hat{y}$ intersects $\mathrm{D}_Z$ at a point $W$. The connecting vector from $Z$ to $W$ clearly lies in $\mathrm{D}_Z$, but is not tangent to $\hat{x}$. In particular, it descends to an element of $\mathrm{N}_Z^{-1} \hat{x}$. Thus, the restriction of $\widehat{V}_{\hat{x}}$ to these subsets is a section of $\mathrm{N}^{-1} \hat{x}$.

Finally, if $\widehat{V}_{\hat{x}}$ vanishes at a point $Z$ say, then $V$ is tangent to the $\gamma$-plane $\check{Z}$, and so must be null. The non-null case is similar.
\end{proof}

\subsubsection{Mini-twistor space}
For any point $x$ of $\mc{Q}^n$, the normal bundle $\Nrm \underline{\hat{x}}$ of $\underline{\hat{x}}$ in $\M\T$ is given by \mbox{$0 {\rightarrow} \Tgt \underline{\hat{x}} {\rightarrow} \Tgt \left. \M\T \right|_{\underline{\hat{x}}} {\rightarrow} \Nrm \underline{\hat{x}} {\rightarrow} 0$}. In this case, $\Nrm \underline{\hat{x}}$ can be identif\/ied with $\Tgt^{-1} \hat{x}$, i.e., mini-twistor space itself, as follows from the description of Section~\ref{sec-co-gam}: taking $x$ in $\C \E^n$ to be the origin, then the complex submanifold $\underline{\hat{x}}$ in~$\M\T$ is def\/ined by $\underline{\omega}_{a_1 \ldots a_{m-1}}=0$, $\pi^{\mbf{A}}$ will be coordinates on~$\underline{\hat{x}}$, and we shall view $\underline{\omega}_{a_1 \ldots a_{m-1}}$ as coordinates of\/f $\underline{\hat{x}}$.

Again, for any $x \in \C \E^n$, $\Tgt_x \C \E^n$ injects into $H^0(\underline{\hat{x}}, \mc{O}(\Nrm \underline{\hat{x}}))$. If $x$ is the origin and $V \in \Tgt_x \C \E^n$ be the vector connecting~$x$ to a point~$y$, we can identify the global holomorphic section $\widehat{V}_{\underline{\hat{x}}}$ of~$\Nrm \underline{\hat{x}}$ as in the previous section. If $\pi^{\mbf{A}}$ are coordinates on $\hat{x}$ given by the locus $\omega^{\mbf{A}}=0$, $\widehat{V}_{\underline{\hat{x}}}$ can be identif\/ied with the complex submanifold~$\underline{\hat{y}}$ given by~$\underline{\omega}_{a_1 \ldots a_{m-1}} = \frac{1}{\sqrt{2}} y^a \gamma \ind*{^{(m)}_{a a_1 \ldots a_{m-1}}_{\mbf{A} \mbf{B}}} \pi^{\mbf{A}} \pi^{\mbf{B}}$, where $y^a = V^a$.

\begin{Proposition}\label{prop-zero-normal-MT}
Let $x$ be a point in $\C \E^{2m+1}$ with corresponding submanifold $\underline{\hat{x}}$ in $\M\T$. Let~$V$ be a tangent vector at~$x$, and $\widehat{V}_{\underline{\hat{x}}}$ its corresponding global holomorphic section of~$\mathrm{N} \underline{\hat{x}}$.
\begin{itemize}\itemsep=0pt
\item Suppose $V$ is null. When $m=1$, $\widehat{V}_{\underline{\hat{x}}}$ has a double zero, which corresponds to the $\gamma$-plane to which $V$ is tangent. When $m>1$, $\widehat{V}_{\underline{\hat{x}}}$ vanishes on a $\frac{1}{2}m(m-1)$-dimensional algebraic subset of $\underline{\hat{x}}$ biholomorphic to $\PT_{(2m-1)}$ of multiplicity $2^m$. Each point of this subset corresponds to a $\gamma$-plane to which $V$ is tangent.
\item Suppose $V$ is non-null. When $m=1$, $\widehat{V}_{\underline{\hat{x}}}$ has two simple zeros, each of which determines a~co-$\gamma$-plane to which $V$ is tangent. When $m>1$, $\widehat{V}_{\underline{\hat{x}}}$ vanishes on two disjoint $\frac{1}{2}m(m-1)$-dimensional algebraic subsets of $\underline{\hat{x}}$ bihomolomorphic to $\PT_{(2m-2)}$ and $\PT'_{(2m-2)}$, each of multiplicity $2^{m-1}$. Each point of these subsets corresponds to a co-$\gamma$-plane to which $V$ is tangent.
\end{itemize}
\end{Proposition}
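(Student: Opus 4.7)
The plan is to deduce the statement from Proposition~\ref{prop-zero-normal} via the projection $\tau\colon \PT_{\setminus\widehat\infty} \to \M\T$, which by Lemma~\ref{lem-MT-flow} identifies $\M\T$ with the quotient by the flow of the vector field $\bm Y$ defined in \eqref{eq-Infinity-Vec-field}. Since $\bm Y$ at a point $[0,\pi^\mbf A]$ of $\hat x$ is $\tfrac{\ii}{\sqrt 2}\pi^\mbf A \partial/\partial\omega^\mbf A$, its integral curve $[s\pi^\mbf A,\pi^\mbf A]$ is a distinguished curve in $\mathrm D_{\Xi}$ by Lemma~\ref{lem-fol-D}, but escapes $\hat x$; thus $\bm Y$ is a nowhere-vanishing section of the distinguished line bundle $\Nrm^{-1}\hat x$ of Lemma~\ref{lem-D-Nx}. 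Consequently the differential of $\tau$ restricts along $\hat x$ to the quotient map $\Nrm \hat x \to \Nrm \hat x/\Nrm^{-1}\hat x \cong \Nrm \underline{\hat x}$ and sends $\widehat V_{\hat x}$ to $\widehat V_{\underline{\hat x}}$, so that $\widehat V_{\underline{\hat x}}(Z)=0$ if and only if $\widehat V_{\hat x}(Z) \in \Nrm^{-1}_Z \hat x$.

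Spelling this out, the two loci follow at once from Proposition~\ref{prop-zero-normal}. For null $V$ the proposition gives a $\tfrac{1}{2}m(m-1)$-dimensional subset of $\hat x$ biholomorphic to $\PT_{(2m-3)}$ on which $\widehat V_{\hat x}$ already vanishes, and forbids any other point at which it could lie in $\Nrm^{-1}\hat x$; this pushes down to the first claim. For non-null $V$ the two disjoint components biholomorphic to $\PT_{(2m-2)}$ and $\PT'_{(2m-2)}$ on which $\widehat V_{\hat x}$ is a non-zero section of $\Nrm^{-1}\hat x$ descend to the two zero components of $\widehat V_{\underline{\hat x}}$. Both cases can equivalently be read at the spinor level: the identification of the co-$\gamma$-plane $\check Z^\perp$ with the Clifford stabiliser of $\pi^\mbf A$ makes tangency of $V$ to $\check Z^\perp$ equivalent to $V_\mbf A^\mbf B \pi^\mbf A = \lambda \pi^\mbf B$ for some $\lambda \in \C$, and Lemma~\ref{lem-eigenspinors} then forces $\lambda=0$ when $V$ is null and $\lambda=\pm\ii\sqrt{V^a V_a}$ when $V$ is non-null.

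To pin down the multiplicities, I would use the explicit form of $\widehat V_{\underline{\hat x}}$ supplied by the incidence relation \eqref{eq-incidence-co-null-2}, namely the spinor-valued $(m-1)$-form
\begin{gather*}
\widehat V_{\underline{\hat x}} \;\propto\; V^a\,\gamma\ind*{^{(m)}_{a a_1 \ldots a_{m-1}}_{\mbf A \mbf B}}\pi^\mbf A\pi^\mbf B,
\end{gather*}
and expand a nearby pure spinor $\pi^\mbf A$ according to the eigenspace decomposition of $V_\mbf A^\mbf B$ given by Lemma~\ref{lem-eigenspinors}. The order of vanishing of $\widehat V_{\underline{\hat x}}$ along each component then matches the algebraic multiplicity of the corresponding eigenvalue: $2^m$ for the unique zero eigenvalue in the null case, and $2^{m-1}$ for each of $\pm\ii\sqrt{V^a V_a}$ in the non-null case. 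The base case $m=1$ is transparent and sets the pattern: $\widehat V_{\underline{\hat x}}$ reduces to the quadratic form $V_{\mbf A \mbf B}\pi^\mbf A\pi^\mbf B$ on $\underline{\hat x} \cong \CP^1$, which has rank one (hence is a perfect square) precisely when $V$ is null, and rank two (hence factors into distinct linear factors) when $V$ is non-null.

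The main obstacle is the rigorous verification of these multiplicities for $m>1$, as the local order of vanishing of a pure-spinor-valued polynomial along a positive-dimensional subvariety of $\underline{\hat x}$ is not automatic. I would either carry out the analysis directly by choosing local trivialisations of $\Nrm\underline{\hat x}$ adapted to a Jordan decomposition of $V_\mbf A^\mbf B$, or, more invariantly, exploit the $\bm Y$-equivariance: the local multiplicity at $Z$ equals the least $k$ for which the $k$-fold iterated Clifford action of $V$ on a nearby eigenspinor escapes the relevant eigenspace, thereby reducing the count to the algebraic multiplicities of Lemma~\ref{lem-eigenspinors}.
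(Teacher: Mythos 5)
Your proof is correct and, at its core, relies on the same mechanism as the paper's: the explicit formula $\widehat V_{\underline{\hat x}} \propto V^a\,\gamma\ind*{^{(m)}_{a a_1 \ldots a_{m-1}}_{\mbf A \mbf B}}\pi^\mbf A\pi^\mbf B$, the observation that its vanishing is the eigenspinor equation $\pi^{\mbf C}V\ind*{_{\mbf C}^{[\mbf A}}\pi^{\mbf B]}=0$, the null/non-null dichotomy via Lemma~\ref{lem-eigenspinors}, and multiplicities read off from the algebraic multiplicities of the eigenvalues of $V\ind*{_{\mbf C}^{\mbf A}}$. The difference is one of framing: the paper proceeds directly by computation and refers back to the proof of Proposition~\ref{prop-zero-normal}, whereas you make the reduction explicit as a bundle-theoretic statement, namely that $\bm Y$ restricts along $\hat x$ to a nowhere-vanishing section of $\Nrm^{-1}\hat x$ and hence $\dd\tau$ induces the identification $\Nrm\hat x/\Nrm^{-1}\hat x \cong \Nrm\underline{\hat x}$ carrying $\widehat V_{\hat x}$ to $\widehat V_{\underline{\hat x}}$. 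This is a genuine and welcome improvement in clarity: it turns the paper's terse ``proceed as in Proposition~\ref{prop-zero-normal}'' into a precise equivalence ``$\widehat V_{\underline{\hat x}}(Z)=0$ iff $\widehat V_{\hat x}(Z)\in\Nrm^{-1}_Z\hat x$'', and it makes transparent why the converse statements in Proposition~\ref{prop-zero-normal} are exactly what is needed to rule out spurious zeros. You also correctly worked with $\PT_{(2m-3)}$ for the null component coming from Proposition~\ref{prop-zero-normal}, which is consistent with the stated dimension $\tfrac{1}{2}m(m-1)$; the printed ``$\PT_{(2m-1)}$'' in the statement appears to be a slip, since it would have dimension $\tfrac{1}{2}m(m+1)$. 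Finally, your honesty about the multiplicity claim being the delicate point is well placed: the paper simply asserts that the multiplicities are the algebraic multiplicities of the eigenvalues, and both your proposed route through a Jordan decomposition of $V\ind*{_{\mbf A}^{\mbf B}}$ and the $m=1$ base case (a binary quadric with a double or simple roots) are faithful to how the paper handles it.
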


\begin{proof}With no loss, we assume that $x$ is the origin in $\C\E^{2m+1} \subset \mc{Q}^{2m+1}$, and set $V_{\mbf{A}}^{\mbf{B}} := V^a \gamma \ind{_a_{\mbf{A}}^{\mbf{B}}}$. To determine the zero set of $\underline{\hat{V}}_{\underline{\hat{x}}}$, we simply remark that $V^a \gamma \ind*{^{(m)}_{a a_1 \ldots a_{m-1}}_{\mbf{A} \mbf{B}}} \pi^{\mbf{A}} \pi^{\mbf{B}} =0$ is equivalent to the eigenspinor equation $\pi^{\mbf{C}} V\ind*{_{\mbf{C}}^{[\mbf{A}}} \pi^{\mbf{B}]} =0$. We can then proceed as in the proof of Proposition~\ref{prop-zero-normal} according to whether $V^a$ is null or non-null, and obtain the required zero sets of the section $\widehat{V}_{\underline{\hat{x}}}$ in each case, the multiplicities being given by the algebraic multiplicities of the eigenvalues of $V \ind*{_{\mbf{C}}^{\mbf{A}}}$. In particular, when $m=1$, the solution set is def\/ined by the vanishing of a single homogeneous polynomial of degree $2$, which has two distinct roots generically, but a~single root of multiplicity two when~$V^a$ is null~-- see, e.g.,~\cite{Jones1985}.
\end{proof}

\subsubsection{Even dimensions}
The analysis when $n=2m$ is very similar to the odd-dimensional case without the added complication of the canonical distribution. Again, for any $x$ of $\mc{Q}^{2m}$, $\Tgt_x \C \E^{2m}$ injects into $H^0 (\hat{x}, \mc{O}(\Nrm \hat{x}))$. A null vector in $V^a$ is $\Tgt_x \C \E^{2m}$ def\/ines a global section $\widehat{V}_{\hat{x}}$ of $\Nrm \hat{x}$, which vanishes at a single point when $m=2$, and on a $\frac{1}{2}(m-1)(m-2)$-dimensional algebraic subset of $\hat{x}$, isomorphic to $\PT_{(2m-2)}$, when $m>2$. Each point of this subset corresponds to an $\alpha$-plane to which $V^a$ is tangent.

\subsubsection{Kodaira's theorem and completeness}
Let us now turn to the question of whether $\Tgt_x \mc{Q}^n$ maps to $H^0(\hat{x}, \mc{O}(\Nrm \hat{x}))$ bijectively, and not merely injectively, for any $x \in \mc{Q}^n$. By Kodaira's theorem \cite{Kodaira1962}, $\Tgt_x \mc{Q}^n \cong H^0(\hat{x}, \mc{O}(\Nrm \hat{x})) \cong \C^n$ if and only if the family $\{ \hat{x} \}$ in $\PT$ is \emph{complete}, i.e., \emph{any} inf\/initesimal deformation of $\hat{x}$ arises from an element of $\Tgt_x \mc{Q}^n$. As we have seen in Section~\ref{sec-even-odd}, the twistor space $\PT$ of $\mc{Q}^{2m+1}$ and the twistor space $\widetilde{\PT}$ of $\mc{Q}^{2m+2}$ are both $\frac{1}{2}(m+1)(m+2)$-dimensional complex projective varieties in~$\CP^{2^{m+1}-1}$, and it is the embedding $\mc{Q}^{2m+1} \subset \mc{Q}^{2m+2}$ that induces the canonical distribution~$\Drm$ on~$\PT$. The issue here is that Kodaira's theorem is only concerned with the holomorphic structure of the underlying manifolds, and does not depend on the additional distribution on~$\PT$.

Now, by the twistor correspondences, any point $x$ in $\mc{Q}^{2m+1}$ and $\mc{Q}^{2m+2}$ gives rise to a~$\frac{1}{2}m(m+1)$-dimensional complex submanifold $\hat{x}$ of $\PT$ and $\widetilde{\PT}$ respectively. This means that the analytic family $\{ \hat{x} \}$ parametrised by the points $\{x\}$ of $\mc{Q}^{2m+1}$ can be completed to a~larger fa\-mi\-ly parametrised by the points $\{x\}$ of $\mc{Q}^{2m+2}$ via the embedding $\mc{Q}^{2m+1} \subset \mc{Q}^{2m+2}$. Further, a~complex submanifold $\hat{x}$ corresponds to a point $x$ in $\mc{Q}^{2m+1}$ if and only if $\hat{x}$ is tangent to an $m$-dimensional subspace of $\mathrm{D}_Z$ at every point $Z \in \hat{x}$.

We also need to check whether the family of $\hat{x}$ is complete when $x \in \mc{Q}^{2m+2}$. If it were not, one would be able to f\/ind a group of biholomorphic automorphisms of $\PT$ larger than $\Spin(2m+4,\C)$ and a parabolic subgroup such that the quotient models $\PT$. But the work of \cite{Doubrov2010, Onivsvcik1960} tells us that there is no such group. The same applies to each $\hat{x}$, and since these are biholomorphic to f\/lag varieties, the normal bundle $\Nrm \hat{x}$ can be identif\/ied with a rank-$(m+1)$ holomorphic homogeneous vector bundle over~$\hat{x}$. In the notation of \cite{Baston1989}, we f\/ind that for a~point~$x$ in~$\mc{Q}^{2m+1}$ or~$\mc{Q}^{2m+2}$, the normal bundle $\Nrm \hat{x}$ in $\PT \cong \widetilde{\PT}$ is given by
\begin{gather*}
\renewcommand{\arraystretch}{1.5}
\begin{array}{cc}
m=1 \qquad & m>1 \\
\xy
@={(-2.5,0),(2.5,0)} @@{s0*=0{\times}, s1*=0{\times}};
 {s0+(0,2)}*=0{\sk{1}}; {s1+(0,2)}*=0{\sk{1}};
\endxy \qquad
&
\underbrace{\xy
@={(3.5,-3.5),(3.5,3.5),(0,0),(-5,0),(-10,0)} @@{s0*=0{\bullet}, s1*=0{\bullet}, s2*=0{\bullet}, s3*=0{\times}, s4*=0{\bullet}};
s0; s1 **\dir{-}; s1; s2 **\dir{.}; s2; s3 **\dir{-}; s2; s4 **\dir{-};
 {s0+(0,2)}*=0{\sk{1}}; {s1+(0,2)}*=0{\sk{0}}; {s2+(0,2)}*=0{\sk{0}}; {s3+(0,2)}*=0{\sk{0}}; {s4+(0,2)}*=0{\sk{0}};
 \endxy}_{\mbox{\tiny{$m+1$ nodes}}}
\end{array}
\end{gather*}
Here, the mutilated Dynkin diagram corresponds to the parabolic subalgebra underlying the f\/lag variety $\hat{x}$, and the coef\/f\/icients over the nodes to the irreducible representation that determines the vector bundle. When $m=1$, i.e., for $\mc{Q}^3$ and $\mc{Q}^4$, we recover the well-known result $\Nrm _{\hat{x}} \cong \mc{O}_{\hat{x}}(1) \oplus \mc{O}_{\hat{x}}(1)$, where $\mc{O}_{\hat{x}}(1)$ is the hyperplane bundle over $\hat{x} \cong \CP^1$. We can compute the cohomology using the Bott--Borel--Weil theorem, and verify that indeed $H^0 (\hat{x},\mc{O}(\Nrm \hat{x})) \cong \C^{2m+2}$ and $H^1 (\hat{x},\mc{O}(\Nrm \hat{x})) = 0$ -- this latter condition tells us that there is no obstruction for the existence of our family.

We can play the same game with the family of compact complex submanifolds $\{ \underline{\hat{x}} \}$ in $\M\T$ parametrised by the points $x$ of $\C \E^{2m+1}$. But in this case, for any $x$ of $\C \E^{2m+1}$, the normal bundle $\Nrm \underline{\hat{x}}$ is essentially the total space of $\Tgt^{-1} \hat{x} \rightarrow \hat{x}$, and is described, in the notation of \cite{Baston1989}, as the rank-$m$ holomorphic homogeneous vector bundle
\begin{align*}
\renewcommand{\arraystretch}{1.5}
\begin{array}{cc}
m=1 \qquad & m>1 \\
\xy
@={(0,0)} @@{s0*=0{\times}};
 {s0+(0,2)}*=0{\sk{2}};
\endxy \qquad
&
\underbrace{\xy
@={(5,0),(0,0),(-5,0),(-10,0)} @@{s0*=0{\bullet}, s1*=0{\bullet}, s2*=0{\bullet}, s3*=0{\times}};
s0; s1 **\dir{-}; s1; s2 **\dir{.}; s2; s3 **\dir2{-};
 {s2+(2.5,0)}*=0{>};
{s0+(0,2)}*=0{\sk{1}}; {s1+(0,2)}*=0{\sk{0}}; {s2+(0,2)}*=0{\sk{0}}; {s3+(0,2)}*=0{\sk{0}};
\endxy}_{\mbox{\tiny{$m$ nodes}}}
\end{array}
\end{align*}
When $m=1$, i.e., $\mc{Q}^3$, $\underline{\hat{x}} \cong \CP^1$, and we recover the well-known result $\mc{O}(\Nrm {\underline{\hat{x}}}) \cong \mc{O}_{\underline{\hat{x}}} (2):=\otimes^2 \mc{O}_{\underline{\hat{x}}} (1)$. Again, the Bott--Borel--Weil theorem conf\/irms that $H^0 (\underline{\hat{x}},\mc{O}(\Nrm \underline{\hat{x}})) \cong \C^{2m+1}$ and $H^1 (\underline{\hat{x}},\mc{O}(\underline{\Nrm \hat{x}})) = 0$.

\begin{Remark}
When $n=3$, this analysis was already exploited in~\cite{LeBrun1982} in the curved setting, where the twistor space of a three-dimensional holomorphic conformal structure is identif\/ied with the space of null geodesics. See also~\cite{Hitchin1982a}.
\end{Remark}

\section{Null foliations}\label{sec-null-foliations}
As before, we work in the holomorphic category throughout, i.e., vector f\/ields and distributions will be assumed to be holomorphic.
\begin{Definition}
An \emph{almost null structure} is a holomorphic totally null $m$-plane distribution on~$\mc{Q}^n$, where $n=2m$ or $2m+1$.
\end{Definition}
In other words, an almost null structure is a $\gamma$-plane, $\alpha$-plane or $\beta$-plane distribution. From the discussion of Section~\ref{sec-correspondence-space}, an almost null structure, self-dual when $n=2m$, can be viewed as a holomorphic section of $\F \rightarrow \mc{Q}^n$, or equivalently as a projective pure spinor f\/ield on $\mc{Q}^n$, that is a spinor f\/ield def\/ined up to scale, and which is pure at every point. The geometric properties of an almost null structure on a general spin complex Riemannian manifold can be expressed in terms of the dif\/ferential properties of its corresponding projective pure spinor f\/ield as described in \cite{Taghavi-Chabert2016,Taghavi-Chabert2013}.

The question we now wish to address is the following one: given an almost null structure, how can we encode its geometric properties in twistor space $\PT$?

\subsection{Odd dimensions}
When $n=2m+1$, an almost null structure is more adequately expressed as an inclusion of distributions $N \subset N^\perp$ where $N$ is a holomorphic totally null $m$-plane distribution and $N^\perp$ is its orthogonal complement. One can then investigate the geometric properties of $N$ and $N^\perp$ independently. In the following, $\Gamma (\mc{U}, \mc{O}(N))$ denotes the space of holomorphic sections of $N$ over an open subset $\mc{U}$ of $\mc{Q}^n$, and similarly for $N^\perp$.

\begin{Definition}\label{defn-int-co-geo}
Let $N \subset N^\perp$ be an almost null structure on some open subset $\mc{U}$ of $\mc{Q}^n$. We say that $N$ is
\begin{itemize}\itemsep=0pt
\item \emph{integrable} if $[\bm{X},\bm{Y}] \in \Gamma (\mc{U}, \mc{O}(N))$ for all $\bm{X}, \bm{Y} \in \Gamma (\mc{U}, \mc{O}(N))$,
\item \emph{totally geodetic} if $\nabla_{\bm{Y}} \bm{X} \in \Gamma (\mc{U}, \mc{O}(N))$ for all $\bm{X}, \bm{Y} \in \Gamma (\mc{U}, \mc{O}(N))$,
\item \emph{co-integrable} if $[\bm{X},\bm{Y}] \in \Gamma (\mc{U}, \mc{O}(N^\perp))$ for all $\bm{X}, \bm{Y} \in \Gamma (\mc{U}, \mc{O}(N^\perp))$,
\item \emph{totally co-geodetic} if $\nabla_{\bm{Y}} \bm{X} \in \Gamma (\mc{U}, \mc{O}(N^\perp))$ for all $\bm{X}, \bm{Y} \in \Gamma (\mc{U}, \mc{O}(N^\perp))$.
\end{itemize}
An integrable almost null structure will be referred to as a \emph{null structure}.
\end{Definition}

There is however some dependency regarding the geometric properties of $N$ and $N^\perp$.
\begin{Lemma}[\cite{Taghavi-Chabert2013}]\label{lem-co-geod-int-rel}
Let $N$ be an almost null structure. Then
\begin{itemize}\itemsep=0pt
\item if $N$ is totally co-geodetic, it is also integrable and co-integrable,
\item if $N$ is integrable and co-integrable, it is also geodetic,
\item if $N$ is totally geodetic, it is also integrable.
\end{itemize}
\end{Lemma}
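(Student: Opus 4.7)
The three statements all rest on two standard facts about the Levi-Civita connection~$\nabla$: it is torsion-free, so $[\bm{X},\bm{Y}] = \nabla_{\bm{X}} \bm{Y} - \nabla_{\bm{Y}} \bm{X}$, and it preserves the conformal metric, so $\bm{Z}(g(\bm{X},\bm{Y})) = g(\nabla_{\bm{Z}} \bm{X}, \bm{Y}) + g(\bm{X}, \nabla_{\bm{Z}} \bm{Y})$. The linear-algebraic fact I shall exploit repeatedly is that in dimension $n=2m+1$, the rank-$m$ totally null distribution $N$ is precisely the radical of the conformal metric restricted to $N^\perp$, i.e.,
\begin{gather*}
N \;=\; \{\, \bm{V} \in N^\perp \;:\; g(\bm{V},\bm{W}) = 0 \text{ for all } \bm{W} \in N^\perp \,\}.
\end{gather*}
Thus to prove that a vector already known to lie in $N^\perp$ in fact lies in $N$, it suffices to check that it is orthogonal to every section of $N^\perp$.

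The third assertion is immediate: if $N$ is totally geodetic and $\bm{X},\bm{Y} \in \Gamma(\mc{U},\mc{O}(N))$, then both $\nabla_{\bm{X}} \bm{Y}$ and $\nabla_{\bm{Y}} \bm{X}$ lie in $\Gamma(\mc{U},\mc{O}(N))$, hence so does their difference $[\bm{X},\bm{Y}]$. For the first assertion, co-integrability of $N$ is just as quick: if $N$ is totally co-geodetic and $\bm{X},\bm{Y} \in \Gamma(\mc{U},\mc{O}(N^\perp))$, then $[\bm{X},\bm{Y}] = \nabla_{\bm{X}} \bm{Y} - \nabla_{\bm{Y}} \bm{X}$ lies in $\Gamma(\mc{U},\mc{O}(N^\perp))$. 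For the integrability of $N$ itself, let $\bm{X},\bm{Y} \in \Gamma(\mc{U},\mc{O}(N))$ and $\bm{Z} \in \Gamma(\mc{U},\mc{O}(N^\perp))$. Since $g(\bm{X},\bm{Z})=0 = g(\bm{Y},\bm{Z})$, the metric-compatibility identity gives
\begin{gather*}
g([\bm{X},\bm{Y}],\bm{Z}) \;=\; -g(\bm{Y}, \nabla_{\bm{X}} \bm{Z}) + g(\bm{X}, \nabla_{\bm{Y}} \bm{Z}).
\end{gather*}
Both $\nabla_{\bm{X}} \bm{Z}$ and $\nabla_{\bm{Y}} \bm{Z}$ lie in $N^\perp$ by total co-geodeticity (note that $\bm{X},\bm{Y}\in N\subset N^\perp$), while $\bm{X},\bm{Y} \in N$, so each pairing vanishes. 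Hence $[\bm{X},\bm{Y}] \in N$ by the radical characterisation, proving integrability.

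The second assertion is the genuine content of the lemma and the step I expect to be the main obstacle, because it requires combining both integrability hypotheses via a chain of four metric-compatibility identities. Given $\bm{X},\bm{Y} \in \Gamma(\mc{U},\mc{O}(N))$ and $\bm{Z} \in \Gamma(\mc{U},\mc{O}(N^\perp))$, I will show $g(\nabla_{\bm{Y}} \bm{X}, \bm{Z}) = 0$. The plan is to rewrite this quantity in four successive steps: first, expand it using $\bm{Y}(g(\bm{X},\bm{Z})) = 0$ to replace $\nabla_{\bm{Y}} \bm{X}$ by $-g(\bm{X},\nabla_{\bm{Y}}\bm{Z})$; next, use torsion-freeness together with co-integrability (so that $[\bm{Y},\bm{Z}] \in N^\perp$ and hence pairs trivially with $\bm{X} \in N$) to swap $\nabla_{\bm{Y}} \bm{Z}$ for $\nabla_{\bm{Z}} \bm{Y}$; then apply $\bm{Z}(g(\bm{X},\bm{Y}))=0$ to transfer the derivative from $\bm{Y}$ to $\bm{X}$; finally, use torsion-freeness together with co-integrability once more to swap $\nabla_{\bm{Z}} \bm{X}$ for $\nabla_{\bm{X}} \bm{Z}$, and $\bm{X}(g(\bm{Y},\bm{Z}))=0$ to transfer onto $\bm{Y}$. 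The last substitution uses integrability of $N$ itself, namely that $[\bm{X},\bm{Y}] \in N$ so $g([\bm{X},\bm{Y}],\bm{Z}) = 0$, yielding $g(\nabla_{\bm{X}}\bm{Y},\bm{Z}) = g(\nabla_{\bm{Y}}\bm{X},\bm{Z})$. Tracking signs through the four swaps gives $g(\nabla_{\bm{Y}}\bm{X},\bm{Z}) = -g(\nabla_{\bm{Y}}\bm{X},\bm{Z})$, so $g(\nabla_{\bm{Y}}\bm{X},\bm{Z}) = 0$. Since $\bm{Z}\in\Gamma(\mc{U},\mc{O}(N^\perp))$ is arbitrary and we already know $\nabla_{\bm{Y}} \bm{X} \in \Gamma(\mc{U},\mc{O}(N^\perp))$ (from integrability of $N$ combined with torsion-freeness, or directly from co-integrability applied to $\bm{X},\bm{Y} \in N^\perp$), the radical characterisation forces $\nabla_{\bm{Y}} \bm{X} \in \Gamma(\mc{U},\mc{O}(N))$, proving total geodeticity. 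The delicate point is the careful bookkeeping of the four sign changes and ensuring that each bracket term genuinely lands in~$N^\perp$ so that it drops out; otherwise the argument is entirely formal.
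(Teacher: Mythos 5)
Your argument is correct in its essential structure, and since the paper only cites~\cite{Taghavi-Chabert2013} for this lemma without reproducing a proof, there is no in-text proof to compare against. The three-step torsion-free/metric-compatible chain for item~(ii), with the integrability hypothesis supplying the final sign flip $g(\nabla_{\bm X}\bm Y,\bm Z)=g(\nabla_{\bm Y}\bm X,\bm Z)$, is exactly the right argument and correctly tracks through to $g(\nabla_{\bm Y}\bm X,\bm Z)=-g(\nabla_{\bm Y}\bm X,\bm Z)$.

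Two small points in the final paragraph should be cleaned up. First, your parenthetical claim that $\nabla_{\bm Y}\bm X\in\Gamma(\mc U,\mc O(N^\perp))$ follows ``directly from co-integrability applied to $\bm X,\bm Y\in N^\perp$'' is not right: co-integrability only controls the bracket $[\bm X,\bm Y]=\nabla_{\bm X}\bm Y-\nabla_{\bm Y}\bm X$, not either covariant derivative individually, and likewise integrability plus torsion-freeness controls only the difference. Second, the concluding sentence says the radical characterisation forces $\nabla_{\bm Y}\bm X\in\Gamma(\mc U,\mc O(N^\perp))$, which must be a typo for $\Gamma(\mc U,\mc O(N))$. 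Fortunately neither matters, because you do not actually need the intermediate membership in $N^\perp$: non-degeneracy of $\mathbf g$ gives $(N^\perp)^\perp=N$ outright, so the computed vanishing $g(\nabla_{\bm Y}\bm X,\bm Z)=0$ for all $\bm Z\in\Gamma(\mc U,\mc O(N^\perp))$ already places $\nabla_{\bm Y}\bm X$ in $N$. Stating the key linear-algebra fact as $N=(N^\perp)^\perp$ rather than as the radical of $\mathbf g|_{N^\perp}$ (which does presuppose membership in $N^\perp$) would make the closing step cleaner and also covers the even-dimensional case $N=N^\perp$ without any change.
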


Another important point is the conformal invariance of the above properties. All with the exception of the totally co-geodetic property are conformal invariant -- see \cite{Taghavi-Chabert2013}.

\subsubsection{Local description}\label{sec-null-strc-desc}
The next theorems will be local in nature. That means that we shall work on $\C \E^n$ viewed as a dense open subset of $\mc{Q}^n$. For their proofs, we shall make use of the local coordinates on $\C \E^n$, $\F_{\C\E^n}$ and $\PT_{\setminus \widehat{\infty}}$ given in Appendix \ref{sec-affine-odd}. Let $N$ be an almost null structure on some open subset $\mc{U}$ of $\C \E^n = \{ z^A,z_A,u\}$, and view $N$ as a local holomorphic section of $\F \rightarrow \C \E^n$, i.e., a~holomorphic projective pure spinor f\/ield $[\xi^\mbf{A}]$. We may assume that locally, $[\xi^\mbf{A}]$ def\/ines a~complex submanifold of $\mc{U} \times \mc{U}_0$, where $(\mc{U}_0 , ( \pi^A , \pi^{AB} ))$ is a coordinate chart on the f\/ibers of~$\F_\mc{U}$, given by the graph
\begin{gather}\label{eq-graph-xi}
\Gamma_\xi := \big\{ (x,\pi) \in \mc{U} \times \mc{U}_0 \colon \pi^{AB} = \xi^{AB} ( x ) ,\, \pi^{A} = \xi^{A} ( x ) \big\} ,
\end{gather}
for some $\frac{1}{2}m(m-1)$ and $m$ holomorphic functions $\xi^{AB}=\xi^{[AB]}$ and $\xi^{A}$ respectively on $\mc{U}$. In this case, the distribution $N$ is spanned by the $m$ holomorphic vector f\/ields
\begin{gather}\label{eq-N-span-odd}
\bm{Z}^{A} = \partial ^{A} + \left( \xi^{AD} - \tfrac{1}{2} \xi^{A} \xi^{D} \right) \partial _{D} + \xi^{A} \partial ,
\end{gather}
while its orthogonal complement $N^\perp$ by the $m+1$ holomorphic vector f\/ields
\begin{gather}\label{eq-Nperp-span-odd}
\bm{Z}^{A} = \partial ^{A} + \left( \xi^{AD} - \tfrac{1}{2} \xi^{A} \xi^{D} \right) \partial _{D} + \xi^{A} \partial , \qquad \bm{U} = \partial - \xi^{D} \partial _{D} ,
\end{gather}
where $\partial^A := \parderv{}{z_A}$, $\partial_A := \parderv{}{z^A}$ and $\partial := \parderv{}{u}$. Here, we shall make a slight abuse of notation by denoting the vector f\/ields spanning $N$ and $N^\perp$, and their lifts to $\F_\mc{U}$, both by~\eqref{eq-N-span-odd} and~\eqref{eq-Nperp-span-odd}.

\begin{Remark}It will be understood that when $m=1$ there are no coordinates $\pi^{AB}$. This does not af\/fect the veracity of the following results in this case~-- see however Remark~\ref{rem-fol-1}.
\end{Remark}

\subsubsection{Totally geodetic null structures}
Let $\mc{W}$ be an $(m+1)$-dimensional complex submanifold of $\PT$ and let $\mc{U}$ be an open subset of $\mc{Q}^{2m+1}$. Suppose that for every point $x$ of $\mc{U}$, $\hat{x} \in \widehat{\mc{U}}$ intersects $\mc{W}$ transversely in a point. Then each point of $\mc{W} \cap \hat{x}$ determines a point in the f\/iber $\F_x$, and thus a $\gamma$-plane through $x$. Smooth variations of the point $x$ in $\mc{U}$ thus def\/ine a holomorphic section of $\F_\mc{U} \rightarrow \mc{U}$ and an $(m+1)$-dimensional analytic family of $\gamma$-planes, each of which being the totally geodetic leaf of an integrable almost null structure. Conversely, consider a local foliation by totally null and totally geodetic $m$-dimensional leaves. Then, each leaf must be some af\/f\/ine subset of a $\gamma$-plane. The $(m+1)$-dimensional leaf space of the foliation constitutes an $(m+1)$-dimensional analytic family of $\gamma$-planes, and thus def\/ines an $(m+1)$-dimensional complex submanifold of~$\PT$.

\begin{Theorem}\label{thm-odd-Kerr-theorem-I}
A totally geodetic null structure on some open subset $\mc{U}$ of $\mc{Q}^{2m+1}$ gives rise to an $(m+1)$-dimensional complex submanifold of $\widehat{\mc{U}} \subset \PT$ intersecting $\hat{x} \subset \widehat{\mc{U}}$ transversely for each $x \in \mc{U}$. Conversely, any totally geodetic null structure locally arises in this way.
\end{Theorem}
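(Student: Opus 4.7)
The plan is to combine a short geometric argument with an explicit coordinate calculation in the affine charts of Section~\ref{sec-null-strc-desc}.

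For the forward direction, let $N$ be a totally geodetic null structure on $\mc{U} \subset \C\E^{2m+1}$. By Lemma~\ref{lem-co-geod-int-rel}, $N$ is integrable, so Frobenius gives a holomorphic foliation of $\mc{U}$ by $m$-dimensional totally null totally geodetic leaves. Because the Levi-Civita connection is flat, each leaf is an open subset of an affine $m$-plane, and being totally null it is the affine trace of a unique $\gamma$-plane of $\mc{Q}^{2m+1}$. Sending $x \in \mc{U}$ to this $\gamma$-plane defines a holomorphic map $\varphi : \mc{U} \to \PT$ whose fibers are the leaves of $N$, so its image $\mc{W} := \varphi(\mc{U})$ is a complex submanifold of $\widehat{\mc{U}}$ of dimension $(2m+1)-m = m+1$. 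For transversality, $\mbf{T}_{\varphi(x)} \mc{W} = d\varphi_x(\mbf{T}_x \mc{U})$ meets $\mbf{T}_{\varphi(x)} \hat{x}$ only at the origin, since a vector $d\varphi_x(v)$ infinitesimally preserves $x \in \varphi(y)$ only when $v$ is tangent to the leaf through $x$, in which case $d\varphi_x(v) = 0$; combined with $\dim \mc{W} + \dim \hat{x} = \dim \PT$ this gives transversality and $\mc{W}\cap\hat{x} = \{\varphi(x)\}$.

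To support this with the computational proof the paper calls for, represent $N$ by the graph $\Gamma_\xi$ of \eqref{eq-graph-xi} through holomorphic functions $\xi^A,\xi^{AB}$ on $\mc{U}$, so that $N$ is spanned by the vector fields $\bm{Z}^A$ of \eqref{eq-N-span-odd}. In the affine chart on $\PT_{\setminus\widehat{\infty}}$ of Appendix~\ref{sec-affine-odd}, the map $\varphi$ reads $x \mapsto (\xi^A(x),\xi^{AB}(x))$, so its image is an $(m+1)$-dimensional submanifold precisely when $\xi^A$ and $\xi^{AB}$ are constant along the leaves, i.e.\ when $\bm{Z}^A \xi^B = 0$ and $\bm{Z}^A \xi^{BC} = 0$. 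The crux is that these equations are equivalent to the totally geodetic condition $\nabla_{\bm{Z}^A}\bm{Z}^B \in \Gamma(\mc{U},\mc{O}(N))$. Using that $\nabla$ is $\partial/\partial x^a$ in these coordinates, the components of $\nabla_{\bm{Z}^A}\bm{Z}^B$ in the basis $\{\partial^C,\partial_D,\partial\}$ read off directly from \eqref{eq-N-span-odd}: the $\partial$-component produces $\bm{Z}^A \xi^B$, while the $\partial_D$-component produces $\bm{Z}^A(\xi^{BD}-\tfrac{1}{2}\xi^B\xi^D)$, up to terms absorbable into linear combinations of the $\bm{Z}^C$. Imposing membership in $N$ then yields precisely the two vanishing conditions.

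For the converse, given $\mc{W} \subset \widehat{\mc{U}}$ of dimension $m+1$ meeting each $\hat{x}$ transversely, the intersection $\mc{W}\cap\hat{x}$ is the single point $\varphi(x)$, so $\varphi$ is a holomorphic projective pure spinor field on $\mc{U}$ defining an almost null structure $N$. In the affine chart, $\mc{W}$ is locally the graph of some $(\xi^A,\xi^{AB})$, whose level sets have dimension $m$ and are tangent to $N$ pointwise, so $\bm{Z}^A\xi^B=0$ and $\bm{Z}^A\xi^{BC}=0$ hold by construction, and the calculation above shows $N$ is totally geodetic. The main obstacle will be the explicit identification of $\nabla_{\bm{Z}^A}\bm{Z}^B$ modulo $N$ with the pair $(\bm{Z}^A\xi^B,\bm{Z}^A\xi^{BC})$, which requires careful bookkeeping across the three blocks of coordinates $(z^A,z_A,u)$ and exploitation of the precise form of $\bm{Z}^A$ in \eqref{eq-N-span-odd}.
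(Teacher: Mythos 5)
Your proof is correct, and it takes a genuinely more geometric route than the paper's. Where the paper recasts the totally geodetic condition as the jet-space locus~\eqref{eq-jetspace-mfd-odd} inside $\mc{J}^1(\C\E^n,\mc{U}_0)$, introduces the contact $1$-forms~\eqref{eq-contact-odd}, and argues that their vanishing forces $\Gamma_\xi$ to be constant along the fibres of $\F\to\PT$, you observe directly that in flat space a totally geodetic, integrable, totally null leaf is an open subset of an affine $\gamma$-plane, so the leaf space injects into $\PT$ by construction; your transversality argument, reading $d\varphi_x(v)\in\mbf{T}_{\varphi(x)}\hat{x}$ off the incidence relation to conclude $v\in N_x$, is likewise cleaner and more self-contained than the paper's, which leaves transversality largely implicit. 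Conversely, where the paper represents $\mc{W}$ by $(F^{AB},F^A)$, pulls back $(\dd F^{AB},\dd F^A)$, and inverts the matrix~\eqref{eq-mat}, you note that the fibres of the resulting section $\varphi\colon\mc{U}\to\mc{W}$ are integral manifolds of $N$, so $(\xi^A,\xi^{AB})$ is constant along them and the key equations $\bm{Z}^A\xi^B=\bm{Z}^A\xi^{BC}=0$ follow at once. The algebraic heart — the equivalence of total geodesy with~\eqref{eq-int-spinor2twistor-odd} — is common to both proofs. Your version is more transparent; the paper's heavier machinery pays off in the incremental proofs of Theorems~\ref{thm-odd-Kerr-theorem-II} and~\ref{thm-odd-Kerr-theorem-NC}, where additional $1$-forms such as $\bm{\psi}^{AB}$ are spliced onto the same apparatus.

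Two small imprecisions worth tightening, neither of which affects correctness. First, the phrase ``up to terms absorbable into linear combinations of the $\bm{Z}^C$'' is misleading: $\nabla_{\bm{Z}^A}\bm{Z}^B = \bm{Z}^A\big(\xi^{BD}-\tfrac12\xi^B\xi^D\big)\partial_D + (\bm{Z}^A\xi^B)\partial$ has no $\partial^C$-component, whereas each $\bm{Z}^C$ has a unit $\partial^C$-component, so membership in $N$ forces $\nabla_{\bm{Z}^A}\bm{Z}^B$ to vanish outright — there is nothing to absorb, and the two stated equations drop out immediately (the second reducing to $\bm{Z}^A\xi^{BD}=0$ once the first is used). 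Second, in the converse the level sets of $(\xi^A,\xi^{AB})$ on $\mc{U}$ need not be exactly $m$-dimensional: if $N$ is a constant distribution they are all of $\mc{U}$. What your argument actually requires is that the fibres of $\varphi$ — not the level sets — are the $m$-dimensional affine $\gamma$-plane traces tangent to $N$; since $\xi^A$ and $\xi^{AB}$ factor through $\varphi$, they are constant along those fibres, and $\bm{Z}^A\xi^B=\bm{Z}^A\xi^{BC}=0$ follows. Rephrasing in terms of the fibres of $\varphi$ rather than ``level sets'' removes the ambiguity.
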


\begin{proof}Let $N$ be an almost null structure as described in Section~\ref{sec-null-strc-desc}. The condition that $N$ be totally geodetic is $\bm{g} ( \nabla_{\bm{Z}^{A}} \bm{Z}^{B} , \bm{Z}^{C} ) = \bm{g} ( \nabla_{\bm{Z}^{A}} \bm{Z}^{B} , \bm{U} ) = 0$, i.e.,
\begin{gather}\label{eq-int-spinor2twistor-odd}\begin{split}
& \left( \partial ^{A} + \left( \xi^{AD} - \tfrac{1}{2} \xi^{A} \xi^{D} \right) \partial _{D} + \xi^{A} \partial \right) \xi^{BC} = 0 , \\
& \left( \partial ^{A} + \left( \xi^{AD} - \tfrac{1}{2} \xi^{A} \xi^{D} \right) \partial _{D} + \xi^{A} \partial \right) \xi^{B} = 0 .
\end{split}
\end{gather}
We re-express the system \eqref{eq-int-spinor2twistor-odd} of holomorphic partial dif\/ferential equations as
\begin{gather}\label{eq-jetspace-mfd-odd}
\begin{split}
& \rho^{ABC} + \left( \pi^{AD} - \tfrac{1}{2} \pi^{A} \pi^{D} \right) \rho_{D}^{BC} + \pi^{A} \rho^{BC} = 0 , \\
& \sigma^{AB} + \left( \pi^{AD} - \tfrac{1}{2} \pi^{A} \pi^{D} \right) \sigma_{D}^{B} + \pi^{A} \sigma^{B} = 0 ,
\end{split}
\end{gather}
where $\rho^{ABC}:=\partial^{A} \pi^{BC} $, $\rho_{A}^{BC} :=\partial_{A} \pi^{BC}$, $\rho^{AB}:=\partial \pi^{AB}$, $\sigma^{AB}:=\partial^{A} \pi^{B}$, $\sigma_{A}^{B} :=\partial_{A} \pi^{B}$, $\sigma^{A}:=\partial \pi^{A}$. In the language of jets, the locus~\eqref{eq-jetspace-mfd-odd} def\/ines a complex submanifold of the f\/irst jet space $\mc{J}^1 (\C \E^n,\mc{U}_0)$, of which the prolongation of the section~$\Gamma_\xi$ is a submanifold. Now, the distribution $\Tgt^{-1}_E \F = \langle \bm{Z}^A \rangle$ tangent to the f\/ibers of $\F \rightarrow \PT$ is annihilated by the $1$-forms $\dd \pi^A$, $\dd \pi^{AB}$, $\bm{\theta}^A$ and~$\bm{\theta}^0$ as def\/ined in Appendix~\ref{sec-affine-odd}, which can be pulled back to $\mc{J}^1 (\C \E^n,\mc{U}_0)$. The $1$-forms def\/ined~by
\begin{gather}
\begin{split}
& \bm{\phi}^A := \dd \pi^{A} - \sigma_{C}^{A} \bm{\theta}^C - \left( \sigma^{A} - \sigma_{C}^{A} \pi^{C} \right) \bm{\theta}^0 , \\
& \bm{\phi}^{AB} := \dd \pi^{AB} - \rho_{C}^{AB} \bm{\theta}^C - \left( \rho^{AB} - \rho_{C}^{AB} \pi^C \right) \bm{\theta}^0 ,
\end{split}\label{eq-contact-odd}
\end{gather}
vanish on the locus \eqref{eq-jetspace-mfd-odd}, and this implies in particular that, for generic $\rho_{C}^{AB}$, $\rho^{BC}$, $\rho_{C}^{A}$, $\rho^{C}$, the section $\Gamma_\xi$ must be constant along the f\/ibers of $\Tgt^{-1}_E \F$, i.e., the functions $(\xi^A,\xi^{AB})$ depend only on the coordinates $(\omega^0, \omega^A,\pi^A,\pi^{AB})$ of the chart $\mc{V}_0$ of $\PT$. Thus, quotienting $\Gamma_\xi$ along the f\/ibers of $\F \rightarrow \PT$ yields an $(m+1)$-dimensional complex submanifold of~$\PT$ intersecting each~$\hat{x}$ transversely in a point.

The converse is also true: we start with an $(m+1)$-dimensional complex submanifold $\mc{W}$, say, of~$\PT$, which can be locally represented by the vanishing of $\frac{1}{2}m(m+1)$ holomorphic functions $(F^{AB} , F^{A})$ on the chart $(\mc{V}_0, (\omega^0, \omega^A,\pi^A,\pi^{AB}))$. Then $(\dd F^{AB} , \dd F^{A})$ are a set of $1$-forms vanishing on $\mc{W}$. We shall assume that for each $x \in \mc{U}$, the submanifold $\hat{x} \subset \widehat{\mc{U}}$ intersects $\mc{W}$ transversely in a point. This singles out a local holomorphic section $[\xi^{\mbf{A}}]$ of $\mc{U} \times \mc{U}_0 \subset \F \rightarrow \mc{U}$. By the implicit function theorem, we may assume with no loss of generality that this is the graph~$\Gamma_\xi$ given by~\eqref{eq-graph-xi}. The pullbacks of $(\dd F^{AB} , \dd F^{A})$ to $\F$ vanish on $\Gamma_\xi$ and give the restriction
\begin{gather}\label{eq-pullbackF}
\begin{pmatrix}
Q_C^A & Q_{CD}^A \vspace{1mm}\\
Q_C^{AB} & Q_{CD}^{AB}
\end{pmatrix}
\begin{pmatrix}
\dd \pi^{C} \\
\dd \pi^{CD}
\end{pmatrix}
+
\begin{pmatrix}
\bm{Y} F^{A} & \bm{X}_C F^{A} \\
\bm{Y} F^{AB} & \bm{X}_C F^{AB}
\end{pmatrix}
\begin{pmatrix}
\bm{\theta}^0 \\
\bm{\theta}^C
\end{pmatrix} =
\begin{pmatrix}
0 \\
0
\end{pmatrix} ,
\end{gather}
where
\begin{gather}\label{eq-mat}
\begin{pmatrix}
Q_C^A & Q_{CD}^A \vspace{1mm}\\
Q_C^{AB} & Q_{CD}^{AB}
\end{pmatrix} :=
\begin{pmatrix}
\displaystyle \left( \parderv{}{\pi^{C}} + \frac{1}{2} u \parderv{}{\omega^{C}} - z_{C} \parderv{}{\omega^0} \right) F^{A}\!\! & \displaystyle\left( \parderv{}{\pi^{CD}} + z_{[C} \parderv{}{\omega^{D]}} \right) F^{A} \vspace{1mm}\\
\displaystyle\left( \parderv{}{\pi^{C}} + \frac{1}{2} u \parderv{}{\omega^{C}} - z_{C} \parderv{}{\omega^0} \right) F^{AB}\!\! & \displaystyle\left( \parderv{}{\pi^{CD}} + z_{[C} \parderv{}{\omega^{D]}} \right) F^{AB}
\end{pmatrix}\! .\!\!
\end{gather}
At generic points, the matrix \eqref{eq-mat} is invertible, and equations \eqref{eq-pullbackF} can immediately be seen to be equivalent to the vanishing of the forms \eqref{eq-contact-odd}. In particular, $\pi^{AB} = \xi^{AB}(x)$ and \mbox{$\pi^{A} = \xi^A(x)$} satisfy \eqref{eq-int-spinor2twistor-odd}, i.e., the distribution associated to the graph~$\Gamma_{\xi}$ is integrable and totally geo\-detic.
\end{proof}

\subsubsection{Co-integrable null structures}
Let us now suppose that our almost null structure $N$ is integrable and co-integrable on $\mc{U}$. We then have two foliations of $\mc{U}$, one for $N$ and the other for $N^\perp$. By Lemma \ref{lem-co-geod-int-rel}, we know that each leaf of $N$ is totally geodetic and therefore a $\gamma$-plane. Since $N \subset N^\perp$, each $(m+1)$-dimensional leaf of $N^\perp$ contains a one-parameter holomorphic family $\{ \check{Z}_t \}$ of $\gamma$-planes, i.e., of leaves of $N$. Thus each leaf of $N^\perp$ descends to a holomorphic curve on the leaf space of $N$. In particular, by Theorem~\ref{thm-odd-Kerr-theorem-I}, we can identify the leaf space of $N$ with an $(m+1)$-dimensional complex submanifold $\mc{W}$ of $\PT$ foliated by curves, each of which being a one-parameter of twistors $\{ Z_t \}$ and, as we shall show, tangent to the canonical distribution $\mathrm{D}$ of $\PT$.

We start by the remark that at any point $Z$ of $\mc{W}$, any submanifold $\hat{x}$ intersects $\mc{W}$ transversely, i.e., $\Tgt_Z \PT = \Tgt_Z \hat{x} \oplus \Tgt_Z \mc{W}$. Hence, by Lemma~\ref{lem-D-Nx} the intersection of $\Drm_Z$ with $\Tgt_Z \mc{W}$ can only be at most one-dimensional. Now, let~$Z_0$ and~$Z_t$ be two points on $\mc{W}$ corresponding to two inf\/initesimally separated $\gamma$-planes, $\check{Z}_0$ and $\check{Z}_t$ in $\{ \check{Z}_t \}$, contained in the co-$\gamma$-plane $\check{Z}^\perp_0$. Let~$x$ and~$y$ be points on~$\check{Z}_0$ and~$\check{Z}_t$ respectively, so that their corresponding complex subma\-ni\-folds~$\hat{x}$ and~$\hat{y}$ of $\widehat{\mc{U}}$ intersect $\mc{W}$ in $Z_0$ and~$Z_t$ respectively. The vector $V^a$ in $\Tgt_x \mc{U}$ tangent to~$\check{Z}^\perp_0$ connecting~$x$ to~$y$ is non-null, and we know by Lemma~\ref{lem-non-null-twistor} that the vector connecting~$Z_0$ to~$Z_t$ must lie in~$\mathrm{D}_{Z_0}$. This is clearly independent of the choice of points~$x$ and~$y$ on~$\check{Z}_0$ and~$\check{Z}_t$. Assigning a vector tangent to $\Drm_{Z_t}$ at every point of $\{ Z_t \}$ yields a curve corresponding to a~leaf of~$N^\perp$. Proceeding in this way for each leaf of $N^\perp$ gives rise to a foliation by holomorphic curves tangent to~$\Drm$ on~$\mc{W}$. Conversely, any such foliation by curves on a given $(m+1)$-submanifold of~$\PT$ gives rise to an integrable and co-integrable almost null structure.

\begin{Theorem}\label{thm-odd-Kerr-theorem-II}
An integrable and co-integrable almost null structure on some open subset $\mc{U}$ of $\mc{Q}^{2m+1}$ gives rise to an $(m+1)$-dimensional complex submanifold of $\widehat{\mc{U}} \subset \PT$ foliated by holomorphic curves tangent to $\mathrm{D}$ and intersecting $\hat{x} \subset \widehat{\mc{U}}$ transversely for each $x \in \mc{U}$. Conversely, any integrable and co-integrable almost null structure locally arises in this way.
\end{Theorem}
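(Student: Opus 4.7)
The plan is to bootstrap Theorem~\ref{thm-odd-Kerr-theorem-I}. By Lemma~\ref{lem-co-geod-int-rel}, the joint hypothesis that $N$ is both integrable and co-integrable forces $N$ to be totally geodetic, so Theorem~\ref{thm-odd-Kerr-theorem-I} already supplies the $(m+1)$-dimensional complex submanifold $\mc{W} \subset \widehat{\mc{U}}$ intersecting every fibre $\hat{x}$ transversely. The task is therefore to extract the $\Drm$-foliation of $\mc{W}$ from the extra information carried by co-integrability.

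Working in the coordinates of Section~\ref{sec-null-strc-desc} with $N$ and $N^\perp$ spanned by~\eqref{eq-N-span-odd}--\eqref{eq-Nperp-span-odd}, the one bracket relation not already encoded by integrability of $N$ is $[\bm{U},\bm{Z}^A] \in \Gamma(\mc{U},\mc{O}(N^\perp))$. I would expand this bracket directly to obtain a system of holomorphic PDEs of the schematic form $\bm{U}\xi^{AB} = F^{AB}(\xi)$ and $\bm{U}\xi^A = F^A(\xi)$, whose right-hand sides are polynomial in $\xi^A$, $\xi^{AB}$ and the first-order `horizontal' jets already controlled by~\eqref{eq-int-spinor2twistor-odd}. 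Prolonging to $\mc{J}^1(\C\E^n,\mc{U}_0)$ as in the proof of Theorem~\ref{thm-odd-Kerr-theorem-I} then adjoins to the annihilating forms~\eqref{eq-contact-odd} an additional family of $1$-forms that vanish on $\Gamma_\xi$ in the $\bm{U}$-direction as well. Since Theorem~\ref{thm-odd-Kerr-theorem-I} identifies the quotient of $\Gamma_\xi$ by the fibres of $\Tgt^{-1}_E\F$ with $\mc{W}$, the new vanishing shows that $\mu_\ast\bm{U}$ descends to a well-defined nowhere-zero holomorphic vector field on $\mc{W}$, modulo scale.

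The crux is then to check that $\mu_\ast\bm{U}$ takes values in $\Drm|_{\mc{W}}$. Geometrically this is immediate from Lemma~\ref{lem-non-null-twistor}: for $x \in \mc{U}$ and $Z_0 \in \mc{W}\cap\hat{x}$, the integral curve of $\bm{U}$ through $x$ is tangent to a non-null direction of the co-$\gamma$-plane $\check{Z}_0^\perp$, so for any nearby $y$ on this curve the twistor $Z_t := \mc{W}\cap\hat{y}$ satisfies $\dim(\check{Z}_0\cap\check{Z}_t)\geq m-1$; by Proposition~\ref{prop-XiZ}(3) this places $Z_t$ in $\mbf{D}_{Z_0}$, and by Corollary~\ref{cor-D-geom} the infinitesimal displacement lies in $\Drm_{Z_0}$. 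Integrating this rank-one subbundle of $\Drm|_\mc{W}$ gives the required foliation. For the converse, given $\mc{W}$ equipped with such a $\Drm$-foliation and transverse to each $\hat{x}$, Theorem~\ref{thm-odd-Kerr-theorem-I} already supplies an integrable totally geodetic $N$; the foliation of $\mc{W}$ pulls back under $\mu$ and pushes forward under $\nu$ to a codimension-$m$ foliation of $\mc{U}$ whose leaves, by the reverse use of Corollary~\ref{cor-D-geom}, are tangent to $N^\perp$, which is exactly co-integrability of $N$.

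The main obstacle will be the identification of $\mu_\ast\bm{U}$ with a section of $\Drm$. While the geometric argument via Lemma~\ref{lem-non-null-twistor} and Corollary~\ref{cor-D-geom} is conceptually clean, a direct algebraic verification would require pushing $\bm{U}$ into the affine $\PT$-chart of Appendix~\ref{app-cover} and checking, modulo the purity conditions~\eqref{eq-Z-pure}, that the annihilator $1$-forms~\eqref{eq-hi-contact-ompi} of $\Drm$ all vanish on $\mu_\ast\bm{U}$. This is routine bookkeeping involving the Fock decomposition~\eqref{eq-fock-spinor} and the parametrisation~\eqref{eq-Z-orbit-trc-1} of $\Drm$, but it is the only step where the explicit form of the canonical distribution really intervenes in the argument.
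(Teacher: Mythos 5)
Your plan follows the paper's own route: use Lemma~\ref{lem-co-geod-int-rel} to reduce to Theorem~\ref{thm-odd-Kerr-theorem-I}, then argue that the $\bm{U}$-direction pushes forward into $\mathrm{D}|_{\mc{W}}$. Both steps you sketch, however, conceal the one place where co-integrability actually does work.

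\textbf{The geometric argument misapplies Lemma~\ref{lem-non-null-twistor}.} That lemma produces a twistor $W\in\mathrm{D}_{Z_0}\cap\hat{y}$ whose $\gamma$-plane is the parallel translate of $\check{Z}_0$ to $y$: its proof exhibits $[W^{\bm{\upalpha}}]=[t\pi^{\mbf{A}},\pi^{\mbf{A}}]$. This $W$ equals $Z_t:=\mc{W}\cap\hat{y}$ precisely when the projective pure spinor $[\xi^{\mbf{A}}]$ is preserved along $\bm{U}$, that is, when \eqref{eq-int-spinor2twistor+2} holds, which is total co-geodesy. Under mere co-integrability \eqref{eq-int-spinor2twistor+1}, $W\neq Z_t$ in general, so you cannot conclude $\dim(\check{Z}_0\cap\check{Z}_t)\geq m-1$, and the chain through Proposition~\ref{prop-XiZ}(3) proves a statement that is false under the theorem's actual hypothesis. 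What is true, and what the theorem asserts, is only the first-order statement that the velocity of $t\mapsto Z_t$ at $t=0$ lies in $\mathrm{D}_{Z_0}$. (The paper has the same heuristic paragraph but labels it a geometrical \emph{explanation} preceding the computational proof, not a proof.)

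\textbf{The deferred coordinate check is the proof, and the vector you push forward is subtler than ``$\bm{U}$ in the $\PT$-chart.''} The horizontal lift $\bm{U}=\partial-\xi^D\partial_D$ has $\mu$-pushforward equal to $\bm{Y}$, which lies in $\mathrm{D}$ automatically but is not tangent to $\mc{W}$. The vector you actually need is the one tangent to $\Gamma_\xi$ projecting to $\bm{U}$, which carries the vertical correction $(\bm{U}\xi^A)\parderv{}{\pi^A}+(\bm{U}\xi^{AB})\parderv{}{\pi^{AB}}$. The crux of the paper's proof is that on the jet-space locus \eqref{eq-jetspace-mfd-odd2} this tangent vector coincides with $\bm{U}+(\sigma^A-\sigma^A_B\pi^B)\bm{W}_A$, hence lies in the subbundle $\Tgt^{-2}_E\F$ (spanned by $\bm{U}$, $\bm{W}_A$, $\bm{Z}^A$) whose quotient by the $\mu$-fibre directions descends to $\mathrm{D}$. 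That identity is \emph{equivalent} to \eqref{eq-jetspace-mfd-odd2}: without co-integrability the $\parderv{}{\pi^{AB}}$-component of the vertical part cannot be absorbed into $\bm{W}_A=\parderv{}{\pi^{A}}-\pi^B\parderv{}{\pi^{AB}}$, and the pushed-forward vector picks up a component along $\bm{X}_{AB}$, missing $\mathrm{D}$. The $1$-form contraction you propose can only succeed after this absorption is in hand; it is the step, not bookkeeping around it. The converse requires the symmetric argument (producing a vector $\bm{V}=\bm{U}+V^A\bm{W}_A$ annihilating \eqref{eq-contact-odd}, and showing this forces \eqref{eq-jetspace-mfd-odd2}), which your ``reverse use of Corollary~\ref{cor-D-geom}'' does not supply.
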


\begin{proof}We recycle the setting and notation of the proof of Theorem \ref{thm-odd-Kerr-theorem-I}. In particular, we take $N$ and $N^\perp$ to be spanned by the vector f\/ields \eqref{eq-N-span-odd} and \eqref{eq-Nperp-span-odd}. The assumption that $N$ be integrable and co-integrable, i.e., $\bm{g} ( \nabla_{\bm{Z}^{A}} \bm{Z}^{B} , \bm{Z}^{C} ) = \bm{g} ( \nabla_{\bm{Z}^{A}} \bm{Z}^{B} , \bm{U} ) = \bm{g} ( \nabla_{\bm{U}} \bm{Z}^{B} , \bm{Z}^{C} ) =0$, gives~\eqref{eq-int-spinor2twistor-odd} and in addition,
\begin{gather}\label{eq-int-spinor2twistor+1}
\big( \partial - \xi^{D} \partial _{D} \big) \xi^{BC} + \big( \big( \partial - \xi^{D} \partial _{D} \big) \xi^{[B} \big) \xi^{C]} = 0 .
\end{gather}
Thus, the system \{\eqref{eq-int-spinor2twistor-odd}, \eqref{eq-int-spinor2twistor+1}\} can be encoded as the complex submanifold of~$\mc{J}^1(\C \E^n, \mc{U}_0)$ arising from the intersection of the locus~\eqref{eq-jetspace-mfd-odd} and the locus
\begin{gather}\label{eq-jetspace-mfd-odd2}
\rho^{BC} - \pi^{D} \rho_{D}^{BC} + \sigma^{[B} \pi^{C]} - \pi^{D} \sigma_{D}^{[B} \pi^{C]} = 0 ,
\end{gather}
and the prolongation of $\Gamma_\xi$ must lie in this intersection. Now, let us def\/ine $\bm{\psi}^{AB} := \bm{\phi}^{AB} - \pi^{[A} \bm{\phi}^{B]}$, where $\bm{\phi}^A$ and $\bm{\phi}^{AB}$ are the $1$-forms~\eqref{eq-contact-odd}. From the proof of Theorem~\ref{thm-odd-Kerr-theorem-I}, the $1$-forms $\bm{\psi}^{AB}$ and~$\bm{\phi}^A$ vanish on the locus~\eqref{eq-jetspace-mfd-odd}. On the other hand, on restriction to the locus \eqref{eq-jetspace-mfd-odd2}, we have $\bm{\psi}^{AB} = \bm{\alpha}^{AB} - \left( \rho_{C}^{AB} - \pi^{[A} \sigma_{C}^{B]} \right) \bm{\theta}^C$, where $\langle \bm{\alpha}^{AB}, \bm{\theta}^A \rangle$ annihilate the rank-$(2m+1)$ distribution $\Tgt^{-2}_E \F = \left\langle \bm{U} , \bm{W}_A , \bm{Z}^A \right\rangle$. One can further check that $\langle \bm{\psi}^{AB} , \bm{\phi}^A \rangle$ annihilate the $m+1$ vector f\/ields $\bm{U} + \left( \sigma^A - \sigma_B^A \pi^B \right) \bm{W}_A$ and $\bm{Z}^A$. These span a rank-$(m+1)$ subdistribution $\mathrm{L}$, say, of $\Tgt^{-2}_E \F$ tangent to $\Gamma_\xi$. By Theorem \ref{thm-odd-Kerr-theorem-I}, $\Gamma_\xi$ descends to an $(m+1)$-dimensional complex submanifold~$\mc{W}$ of~$\PT$. The quotient $\mathrm{L}/\Tgt^{-1}_E \F$ is a~rank-$1$ subbundle of $\Tgt^{-2}_E \F/\Tgt^{-1}_E \F$, which also descends to a~rank-$1$ subdistribution of $\Drm = \Tgt^{-1} \PT$ tangent to~$\mc{W}$. This proves the f\/irst part of the theorem.

Conversely, consider a complex submanifold $\mc{W}$ of $\PT$, transverse to every $\hat{x}$ in $\widehat{\mc{U}}$, given by the vanishing of holomorphic functions $(F^{AB}, F^A)$ on the chart $(\mc{V}_0,(\omega^0, \omega^A, \pi^A, \pi^{AB}))$. By Theorem~\ref{thm-odd-Kerr-theorem-I}, we can associate to $\mc{W}$ a local section $[\xi^\mbf{A}]$ of $\mc{U} \times \mc{U}_0 \subset \F$ with graph $\Gamma_\xi$, so that equations~\eqref{eq-jetspace-mfd-odd} hold. Assume further that the intersection of $\Tgt \mc{W}$ and $\left. \mathrm{D} \right|_\mc{W}$ is one-dimensional at every point. Then the pullbacks of $( \dd F^{AB} , \dd F^{A} )$ to $\mc{U} \times \mc{U}_0 \subset \F$ must vanish on $\Gamma_\xi$ and annihilate both $\Tgt^{-1}_E \F$ and a rank-$(m+1)$ subbundle of $\Tgt^{-2}_E \F \supset \Tgt^{-1}_E \F$. Thus, there exists a vector f\/ield $\bm{V} = \bm{U} + V^A \bm{W}_A $, for some holomorphic functions $V^A$ on $\Gamma_\xi$, annihilating the $1$-forms \eqref{eq-contact-odd}. It is then straightforward to check that this gives us precisely the additional restrictions \eqref{eq-jetspace-mfd-odd2}. In particular, $\pi^{AB} = \xi^{AB}(x)$ and $\pi^{A} = \xi^A(x)$ satisfy \eqref{eq-int-spinor2twistor-odd} and \eqref{eq-int-spinor2twistor+1}, i.e., the distribution associated to the graph $\Gamma_{\xi}$ is integrable and co-integrable.
\end{proof}

\begin{Remark}\label{rem-fol-1}
When $n=3$, Theorems \ref{thm-odd-Kerr-theorem-I} and \ref{thm-odd-Kerr-theorem-II} are equivalent: since $\PT$ is $3$-dimensional and $\mathrm{D}$ has rank $2$, any $2$-dimensional complex submanifold of $\PT$ satisfying the transversality property of the theorems must have non-trivial intersection with $\mathrm{D}$.
\end{Remark}

\subsubsection{Totally co-geodetic null structures}
Finally, we consider a totally co-geodetic null structure $N$. The key point here is that this stronger requirement is not conformally invariant, and for this reason, the appropriate arena is the mini-twistor space $\M\T$ of $\C \E^{2m+1}$. In this case, each leaf of the foliation of $N^\perp$ is totally geodetic, and must therefore be a co-$\gamma$-plane. The $m$-dimensional leaf space can then be identif\/ied as an $m$-dimensional complex submanifold~$\underline{\mc{W}}$ of~$\M\T$.

\looseness=-1 Alternatively, we can recycle the setting of Theorems \ref{thm-odd-Kerr-theorem-I} and \ref{thm-odd-Kerr-theorem-II}: since $N$ is in particular integrable and co-integrable, its leaf space is an $(m+1)$-dimensional complex submanifold $\mc{W}$ of $\PT_{\setminus \widehat{\infty}}$ foliated by curves. However, these curves are very particular since they correspond to totally geodetic leaves of $N^\perp$. Breaking of the conformal invariance can be translated into these curves being the integral curves of the vector f\/ield $\bm{Y}$ induced by the point $\infty$ on $\mc{Q}^n$. Quotienting the submanifold $\mc{W}$ by the f\/low of $\bm{Y}$ thus yields an $m$-dimensional complex submanifold~$\underline{\mc{W}}$ of~$\M\T$.

\begin{Theorem}\label{thm-odd-Kerr-theorem-NC}
A totally co-geodetic null structure on some open subset $\mc{U}$ of $\C \E^{2m+1}$ gives rise to an $m$-dimensional complex submanifold of $\underline{\widehat{\mc{U}}} \subset \M\T$ intersecting each $\hat{x} \subset \underline{\widehat{\mc{U}}}$ transversely for each~$x \in \mc{U}$. Conversely, any totally co-geodetic null structure locally arises in this way.
\end{Theorem}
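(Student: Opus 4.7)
The plan is to reduce the statement to Theorem~\ref{thm-odd-Kerr-theorem-II} via Lemma~\ref{lem-MT-flow}, which exhibits $\M\T$ as the quotient of $\PT_{\setminus \widehat{\infty}}$ by the flow of the vector field~$\bm{Y}$ of~\eqref{eq-Infinity-Vec-field}. Let $N \subset N^\perp$ be a totally co-geodetic null structure on $\mc{U} \subset \C\E^{2m+1}$. By Lemma~\ref{lem-co-geod-int-rel}, $N$ is in particular integrable and co-integrable, so Theorem~\ref{thm-odd-Kerr-theorem-II} produces an $(m+1)$-dimensional complex submanifold $\mc{W} \subset \PT_{\setminus \widehat{\infty}}$, transverse to every $\hat{x}$ and foliated by holomorphic curves tangent to~$\mathrm{D}$. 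The central claim I would establish is that the upgrade from ``co-integrable'' to ``totally co-geodetic'' is precisely the condition that these distinguished curves be integral curves of~$\bm{Y}$, equivalently, that $\mc{W}$ be saturated by $\bm{Y}$-orbits.

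The cleanest route to this identification is geometric. If $N$ is totally co-geodetic, then every leaf of $N^\perp$ is an affine totally geodetic submanifold of $\C\E^{2m+1}$, hence a genuine co-$\gamma$-plane in the sense of Definition~\ref{defn-co-null-plane}. By the incidence relation~\eqref{eq-incidence-co-null} and the explicit parametrisation~\eqref{eq-MT-PT-curve}, each such co-$\gamma$-plane lifts to an integral curve of $\bm{Y}$ in $\PT_{\setminus \widehat{\infty}}$, and these are exactly the curves foliating $\mc{W}$. Granted this, Lemma~\ref{lem-MT-flow} lets me set $\underline{\mc{W}} := \tau(\mc{W}) \subset \underline{\widehat{\mc{U}}} \subset \M\T$, which is $m$-dimensional by a dimension count. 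Transversality of $\underline{\mc{W}}$ with each $\underline{\hat{x}}$ is inherited from that of $\mc{W}$ with $\hat{x}$: at any $Z \in \hat{x}$, the $\bm{Y}$-orbit through $Z$ parametrises the $\gamma$-planes lying in the co-$\gamma$-plane $\check{Z}^\perp$, of which all but $\check{Z}$ itself fail to contain the point~$x$, so $\bm{Y}$ is not tangent to $\hat{x}$ at~$Z$ and the transverse intersection descends faithfully to $\M\T$.

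For the converse, I start with an $m$-dimensional complex submanifold $\underline{\mc{W}} \subset \underline{\widehat{\mc{U}}}$ transverse to every $\underline{\hat{x}}$, set $\mc{W} := \tau^{-1}(\underline{\mc{W}}) \subset \PT_{\setminus \widehat{\infty}}$, and observe that $\mc{W}$ is $(m+1)$-dimensional and automatically foliated by integral curves of $\bm{Y}$, which are tangent to $\mathrm{D}$ by the computation in the proof of Lemma~\ref{lem-non-null-twistor}. Theorem~\ref{thm-odd-Kerr-theorem-II} then realises $\mc{W}$ as coming from an integrable and co-integrable null structure $N \subset N^\perp$ on $\mc{U}$. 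Since the leaves of $N^\perp$ correspond to the foliating curves of~$\mc{W}$, which are $\bm{Y}$-orbits, each such leaf is a co-$\gamma$-plane; hence $N^\perp$ is totally geodetic and $N$ is totally co-geodetic, as required.

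The main obstacle is the equivalence asserted in the middle paragraph. Theorem~\ref{thm-odd-Kerr-theorem-II} already uniquely determines the foliating curves on $\mc{W}$ as integral curves of the $1$-dimensional subdistribution $\left.\mathrm{D}\right|_\mc{W} \cap \Tgt \mc{W}$; what the extra hypothesis does is force these to coincide with $\bm{Y}$-orbits. The co-$\gamma$-plane argument gives this painlessly, but as a back-up one can augment the jet-space system \{\eqref{eq-jetspace-mfd-odd},~\eqref{eq-jetspace-mfd-odd2}\} with the single additional scalar equation encoding $\nabla_{\bm{U}} \bm{U} \in \Gamma(N^\perp)$ and verify in the coordinates of Appendix~\ref{sec-affine-odd} that the resulting graph $\Gamma_\xi$ descends to $\M\T$ along the flow of $\bm{Y}$.
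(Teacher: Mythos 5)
Your argument is correct, and it essentially formalizes the geometric sketch the paper gives immediately before the theorem statement; the paper's written proof, however, takes a computational route. There, the totally co-geodetic hypothesis is translated into the additional equations~\eqref{eq-int-spinor2twistor+2}, the combined system \eqref{eq-int-spinor2twistor-odd}, \eqref{eq-int-spinor2twistor+2} is recast as the jet-space locus \eqref{eq-jetspace-mfd-odd}, \eqref{eq-jetspace-mfd-odd-mt}, and one checks that the forms $\dd\pi^{AB}-\rho^{AB}_C\bm{\theta}^C$ and $\dd\pi^A-\sigma^A_C\bm{\theta}^C$ vanish on this locus, so the graph $\Gamma_\xi$ is constant along the fibres of $\F\to\M\T$; the converse is the same computation run backwards. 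Your reduction to Theorem~\ref{thm-odd-Kerr-theorem-II} plus Lemma~\ref{lem-MT-flow} is cleaner and avoids re-deriving the jet-space system, at the modest cost of having to argue that $\mc{W}$ is saturated by $\bm{Y}$-orbits, which you do correctly via the co-$\gamma$-plane interpretation of totally geodetic leaves of $N^\perp$ and the identification of co-$\gamma$-planes with $\bm{Y}$-orbits through~\eqref{eq-MT-PT-curve}; the uniqueness of the $\Drm$-tangent foliation of $\mc{W}$, needed to say the distinguished curves ``coincide with'' $\bm{Y}$-orbits, follows from Lemma~\ref{lem-D-Nx} together with transversality, as you indicate. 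One step deserves tightening: to show $\bm{Y}$ is nowhere tangent to $\hat{x}$ you argue that the $\bm{Y}$-orbit through $Z$ meets $\hat{x}$ in a single point, but that set-theoretic fact does not by itself preclude tangency; it is cleaner to invoke the tangency criterion from the proof of Lemma~\ref{lem-D-Nx}, namely $X_\mc{A} Z^\mc{A}_{(-1)}=0$, which fails for $Z_{(-1)}$ proportional to $\mr{Y}$ because $X^\mc{A}\mr{Y}_\mc{A}=1$ for any lift $X^\mc{A}$ of a point of $\C\E^n$. Finally, a small inaccuracy in the fall-back plan: $\nabla_{\bm{U}}\bm{U}\in\Gamma(\mc{O}(N^\perp))$ is not a single scalar equation but the $m$ equations $\bm{g}(\nabla_{\bm{U}}\bm{Z}^A,\bm{U})=0$, which together with \eqref{eq-jetspace-mfd-odd2} reproduce both halves of~\eqref{eq-jetspace-mfd-odd-mt}.
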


\begin{proof}Suppose $N$ and $N^\perp$ are both integrable as in the previous section. As already pointed out the integral manifolds of $N$ are totally geodetic. We now impose the further assumption that the integral manifolds of $N^\perp$ are also totally geodetic on $\mc{U}$, i.e., $\bm{g} ( \nabla_{\bm{Z}^{A}} \bm{Z}^{B} , \bm{Z}^{C} ) = \bm{g} ( \nabla_{\bm{Z}^{A}} \bm{Z}^{B} , \bm{U} ) = \bm{g} ( \nabla_{\bm{U}} \bm{Z}^{B} , \bm{Z}^{C} ) = \bm{g} ( \nabla_{\bm{U}} \bm{Z}^{A} , \bm{U} ) = 0$. Then, in addition to \eqref{eq-int-spinor2twistor-odd}, we have
\begin{gather}\label{eq-int-spinor2twistor+2}
\big( \partial - \xi^{D} \partial _{D} \big) \xi^{AB} = 0 , \qquad \big( \partial - \xi^{D} \partial _{D} \big) \xi^{A} = 0 ,
\end{gather}
which can be seen to imply \eqref{eq-int-spinor2twistor+1}. As before, using the same notation as in the proof of Theorem~\ref{thm-odd-Kerr-theorem-I}, we express the system \eqref{eq-int-spinor2twistor-odd}, \eqref{eq-int-spinor2twistor+2} as a complex submanifold of~$\mc{J}^1(\C\E^n,\mc{U}_0)$ def\/ined by~\eqref{eq-jetspace-mfd-odd} and
\begin{gather}\label{eq-jetspace-mfd-odd-mt}
\rho^{AB} - \pi^{D} \rho_D^{AB} = 0 , \qquad \sigma^{A} - \pi^{D} \sigma _{D}^{A} = 0 .
\end{gather}
In particular, the $1$-forms $\dd \pi^{AB} - \rho_{C}^{AB} \bm{\theta}^C$ and $\dd \pi^{A} - \sigma_{C}^{A} \bm{\theta}^C$ vanish on the locus \eqref{eq-jetspace-mfd-odd} and~\eqref{eq-jetspace-mfd-odd-mt}, and this implies in particular that, for generic $\rho_{C}^{AB}$, $\rho^{BC}$, $\rho_{C}^{A}$, $\rho^{C}$, the section~$\Gamma_\xi$ must be constant along the f\/ibers of $\F \rightarrow \M\T$, i.e., the functions $(\xi^A,\xi^{AB})$ depend only on the coordinates $(\underline{\omega}^A,\pi^A,\pi^{AB})$ on the chart $\underline{\mc{V}}_0$ of $\M\T$. Thus, quotienting $\Gamma_\xi$ along the f\/ibers of $\F \rightarrow \M\T$ yields an $m$-dimensional complex submanifold of~$\M\T$.

For the converse, we simply run the argument backwards as in the proof of Theorem \ref{thm-odd-Kerr-theorem-I}.
\end{proof}

\subsection{Even dimensions}
The even-dimensional case is somewhat more tractable than the odd-dimensional case. For one, the orthogonal complement of an $\alpha$-plane or $\beta$-plane distribution $N$ \emph{is} $N$ itself, i.e., $N^\perp = N$. Def\/inition \ref{defn-int-co-geo} still applies albeit with much redundancy. In particular, $N$ is integrable if and only if it is co-integrable. The question now reduces to whether $N$ is integrable or not, and if so, whether it is totally geodetic. But it turns out that these two questions are equivalent.

\begin{Lemma}\label{lem-int2geod}
An almost null structure is integrable if and only if it is totally geodetic.
\end{Lemma}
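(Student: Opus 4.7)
The proof plan is the following. One direction is essentially free: if $N$ is totally geodetic, then $[\bm{X},\bm{Y}] = \nabla_{\bm{X}} \bm{Y} - \nabla_{\bm{Y}} \bm{X} \in \Gamma(\mc{O}(N))$ for all $\bm{X},\bm{Y} \in \Gamma(\mc{O}(N))$, by torsion-freeness of the Levi-Civita connection, so $N$ is integrable. This matches the third bullet of Lemma~\ref{lem-co-geod-int-rel}.

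The nontrivial direction is integrable $\Rightarrow$ totally geodetic, and the essential feature that makes this work in even dimensions (whereas it fails in odd dimensions) is that an $\alpha$-plane or $\beta$-plane distribution $N$ on $\mc{Q}^{2m}$ is maximally isotropic of rank exactly $m$, so $N^\perp = N$. To exploit this, I would introduce the trilinear form
\begin{gather*}
A(\bm{X},\bm{Y},\bm{Z}) := \mathbf{g}(\nabla_{\bm{X}} \bm{Y}, \bm{Z}), \qquad \bm{X},\bm{Y},\bm{Z} \in \Gamma(\mc{O}(N)),
\end{gather*}
and show it vanishes identically. The proof reduces to two symmetry observations. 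First, metric compatibility together with $\mathbf{g}(\bm{Y},\bm{Z}) = 0$ (since $N$ is totally null) gives $0 = \bm{X} \mathbf{g}(\bm{Y},\bm{Z}) = A(\bm{X},\bm{Y},\bm{Z}) + A(\bm{X},\bm{Z},\bm{Y})$, so $A$ is skew in the last two entries. Second, torsion-freeness combined with integrability of $N$, which forces $\mathbf{g}([\bm{X},\bm{Y}],\bm{Z}) = 0$, yields $A(\bm{X},\bm{Y},\bm{Z}) - A(\bm{Y},\bm{X},\bm{Z}) = \mathbf{g}([\bm{X},\bm{Y}],\bm{Z}) = 0$, so $A$ is symmetric in the first two entries.

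A short permutation argument of the standard ``Bianchi-like'' type now finishes it: combining symmetry in $(1,2)$ with antisymmetry in $(2,3)$ one obtains $A(\bm{X},\bm{Y},\bm{Z}) = -A(\bm{X},\bm{Y},\bm{Z})$, hence $A \equiv 0$. This says $\nabla_{\bm{X}} \bm{Y}$ is orthogonal to all sections of $N$, i.e.\ $\nabla_{\bm{X}} \bm{Y} \in \Gamma(\mc{O}(N^\perp)) = \Gamma(\mc{O}(N))$, which is exactly the totally geodetic condition.

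There is no real obstacle here; the only point where one must be careful is to invoke the equality $N^\perp = N$, which is specific to the even-dimensional maximally-isotropic setting and is precisely why the analogous statement fails for $\gamma$-plane distributions on $\mc{Q}^{2m+1}$, where $N \subsetneq N^\perp$ and one needs the additional co-integrability hypothesis (cf.\ the second bullet of Lemma~\ref{lem-co-geod-int-rel}). The result can therefore be viewed as the even-dimensional specialisation of that lemma, in which ``integrable'' and ``co-integrable'' collapse to a single condition.
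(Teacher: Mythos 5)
Your proof is correct. The paper itself does not supply a proof of this lemma --- it simply cites \cite{Taghavi-Chabert2012,Taghavi-Chabert2016} --- so there is no in-text argument to compare against, but the route you take is the standard one. The easy direction is the third bullet of Lemma~\ref{lem-co-geod-int-rel}. For the converse, your two symmetry observations are exactly right: metric compatibility together with $\mathbf{g}(\bm{Y},\bm{Z})\equiv 0$ makes $A$ skew in its last two slots, and torsion-freeness together with $[\bm{X},\bm{Y}]\in\Gamma(\mc{O}(N))\subset\Gamma(\mc{O}(N^\perp))$ makes it symmetric in its first two; the six-step permutation chain then forces $A\equiv 0$, and the conclusion $\nabla_{\bm{X}}\bm{Y}\in\Gamma(\mc{O}(N^\perp))=\Gamma(\mc{O}(N))$ hinges, as you emphasise, on the maximal-isotropy identity $N^\perp=N$ which is special to the even-dimensional case. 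One minor remark on hygiene: $A$ is not $C^\infty$-linear in the second slot, so it is not a tensor, but the symmetry relations you use hold as identities on sections and the final pointwise conclusion follows since for each fixed $\bm{X},\bm{Y}$ the assignment $\bm{Z}\mapsto A(\bm{X},\bm{Y},\bm{Z})$ \emph{is} tensorial --- so the argument is sound as written. Your closing comparison with the odd-dimensional situation, where $N\subsetneq N^\perp$ and one must add co-integrability (second bullet of Lemma~\ref{lem-co-geod-int-rel}) to recover the geodetic property, is also the correct way to see why the statement is genuinely an even-dimensional phenomenon.
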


For a proof, see for instance \cite{Taghavi-Chabert2012,Taghavi-Chabert2016}. The argument leading up to Theorem \ref{thm-odd-Kerr-theorem-I} equally applies to the even-dimensional case -- simply substitute $\gamma$-plane for $\alpha$-plane. For the sake of completeness, we restate the theorem, which was f\/irst used in four dimensions in \cite{Kerr2009}, reformulated in twistor language in \cite{Penrose1967}, and generalised to higher even dimensions in \cite{Hughston1988}. The proof of Theorem \ref{thm-odd-Kerr-theorem-I} can be recycled entirely by `switching of\/f' the coordinates $u$, $\omega^0$, $\pi^{A}$, and so on.

\begin{Theorem}[\cite{Hughston1988}]\label{thm-even-Kerr-theorem}
A self-dual null structure on some open subset $\mc{U}$ of $\mc{Q}^{2m}$ gives rise to an $m$-dimensional complex submanifold of $\widehat{\mc{U}} \subset \PT$ intersecting $\hat{x}$ in $\widehat{\mc{U}}$ transversely for each $x \in \mc{U}$. Conversely, any self-dual null structure locally arises in this way.
\end{Theorem}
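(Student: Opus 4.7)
The plan is to adapt the proof of Theorem~\ref{thm-odd-Kerr-theorem-I} to the even-dimensional setting, exploiting the simplification provided by Lemma~\ref{lem-int2geod}: a self-dual almost null structure is integrable if and only if it is totally geodetic, so the two separate integrability conditions appearing in the odd case collapse into a single one. First I would set up flat coordinates $\{x^a\}$ on $\C\E^{2m} \subset \mc{Q}^{2m}$ together with an affine chart $\mc{U}_0$ on the fibers of $\F_{\C\E^n} \to \C\E^n$ parametrised by the components of a projective pure spinor $[\pi^{\mbf{A}}]$ (subject to the quadratic constraints of Lemma~\ref{lem-pure-ompi-even}), using the even-dimensional incidence relation $\omega^{\mbf{A}} = \tfrac{1}{\sqrt{2}} x^a \pi_a^{\mbf{A}}$. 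A self-dual null structure $N$ on an open subset $\mc{U}$ is then realised as a holomorphic section $\xi \colon \mc{U} \to \F$, represented by the graph $\Gamma_\xi = \{(x,\pi) \in \mc{U} \times \mc{U}_0 : \pi^{\mbf{A}} = \xi^{\mbf{A}}(x)\}$. The distribution $N$ is spanned by $m$ holomorphic vector fields $\bm{Z}^A$ obtained from~\eqref{eq-N-span-odd} by discarding the variables and terms ($u$, $\xi^A$, $\partial$) that do not occur in the even-dimensional correspondence space, as the author prescribes.

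For the direct implication I would translate the totally geodetic condition $\bm{g}(\nabla_{\bm{Z}^A}\bm{Z}^B, \bm{Z}^C)=0$ into a first-order quasi-linear PDE system for the $\xi^{\mbf{A}}$, parallel to the first equation of~\eqref{eq-int-spinor2twistor-odd}. Encoding this locus as a complex submanifold of the jet space $\mc{J}^1(\C\E^{2m}, \mc{U}_0)$ in the pattern of~\eqref{eq-jetspace-mfd-odd}, I would introduce the even-dimensional analogues of the contact 1-forms~\eqref{eq-contact-odd}. These 1-forms vanish on the prolongation of $\Gamma_\xi$ and simultaneously annihilate the integrable rank-$m$ distribution $\Tgt^{-1}_E \F$ tangent to the fibers of $\mu \colon \F \to \PT$ described in Section~\ref{sec-correspondence-space}. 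For generic values of the jet variables it will follow that the $\xi^{\mbf{A}}$ depend only on the twistor coordinates on $\PT$, so that $\Gamma_\xi$ descends through $\mu$ to an $m$-dimensional complex submanifold $\mc{W} \subset \widehat{\mc{U}}$; transversality of $\mc{W}$ with each $\hat{x}$ is then immediate from the graph structure of $\Gamma_\xi$ over $\mc{U}$.

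For the converse, starting from an $m$-dimensional complex submanifold $\mc{W} \subset \widehat{\mc{U}}$ meeting every $\hat{x}$ transversely, I would describe $\mc{W}$ locally as the zero set of $m$ holomorphic functions $F^A$ on an affine chart of $\PT$ and use the implicit function theorem together with the transversality assumption to produce a unique local section $[\xi^{\mbf{A}}]$ of $\F \to \mc{U}$ whose graph $\Gamma_\xi$ is carried onto $\mc{W}$ by $\mu$. Pulling the $\dd F^A$ back to $\F$ and restricting to $\Gamma_\xi$ then yields a linear system in the $\dd \pi^{\mbf{A}}$ and a coframe $\bm{\theta}^a$ on $\mc{U}$, in the style of~\eqref{eq-pullbackF}; generic invertibility of the coefficient matrix (the analogue of~\eqref{eq-mat}) reduces this system to the vanishing of the contact 1-forms of the direct implication, hence to the integrability PDE for $\xi^{\mbf{A}}$. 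Lemma~\ref{lem-int2geod} then delivers the totally geodetic property.

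I expect the main obstacle to be essentially a book-keeping one: the pure-spinor coordinates on the fibers of $\F$ in even dimensions are governed by the constraints of Lemma~\ref{lem-pure-ompi-even}, which differ in detail from those used in odd dimensions, so I would have to verify that the matrix analogous to~\eqref{eq-mat} is indeed invertible at generic points of $\mc{U} \times \mc{U}_0$ in the presence of these constraints. Once this generic non-degeneracy is established, the transcription from Theorem~\ref{thm-odd-Kerr-theorem-I} becomes mechanical, because the geometric heart of the argument — the integrability of the distribution tangent to the fibers of $\mu \colon \F \to \PT$, together with the vanishing of the associated system of contact 1-forms on $\Gamma_\xi$ — carries over unchanged to the even-dimensional fibration.
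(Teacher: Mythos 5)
Your proposal matches the paper's own treatment: the author simply remarks that the proof of Theorem~\ref{thm-odd-Kerr-theorem-I} carries over by ``switching off'' the coordinates $u$, $\omega^0$, $\pi^A$ (and their jet partners), with Lemma~\ref{lem-int2geod} supplying the integrable $\Leftrightarrow$ totally geodetic collapse, exactly as you describe. Your closing caveat about re-verifying generic invertibility of the analogue of~\eqref{eq-mat} under the even-dimensional purity constraints of Lemma~\ref{lem-pure-ompi-even} is the only point the paper glosses over, and is a reasonable piece of diligence rather than a divergence of method.
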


\section{Examples}\label{sec-exa}
We now give two examples of co-integrable null structures that will illustrate the mechanism of Theorems \ref{thm-odd-Kerr-theorem-II} and \ref{thm-even-Kerr-theorem}. These arise in connections with conformal Killing spinors and conformal Killing--Yano $2$-forms, and are more transparently constructed in the language of tractor bundles reviewed in Section~\ref{sec-tractor}. As before, we work in the holomorphic category.

\subsection{Conformal Killing spinors}\label{sec-CKspinor}
For def\/initeness, let us stick to odd dimensions, i.e., $n=2m+1$. The even-dimensional case is similar. A (holomorphic) \emph{conformal Killing spinor} on $\mc{Q}^n$ is a section $\xi^{\mbf{A}}$ of $\mc{O}^{\mbf{A}}$ that satisf\/ies
\begin{gather}\label{eq-conformal-Killing-spinor_odd}
\nabla_a \xi^{\mbf{A}} + \tfrac{1}{\sqrt{2}} \bm{\upgamma} \ind{_a_{\mbf{B}}^{\mbf{A}}} \zeta^{\mbf{B}} = 0 ,
\end{gather}
where $\zeta^{\mbf{A}} = \frac{\sqrt{2}}{n} \bm{\upgamma} \ind{^a_{\mbf{B}}^{\mbf{A}}} \nabla_a \xi^{\mbf{B}}$ is a section of $\mc{O}^{\mbf{A}}[-1]$.

The prolongation of equation \eqref{eq-conformal-Killing-spinor_odd} is given by (see for instance \cite{Baum2010} and references therein)
\begin{gather}\label{eq-prolongation-confKillspin_odd}
\nabla_a \xi^{\mbf{A}} + \tfrac{1}{\sqrt{2}} \bm{\upgamma} \ind{_a_{\mbf{B}}^{\mbf{A}}} \zeta^{\mbf{B}} = 0 ,\qquad
\nabla_a \zeta^{\mbf{A}} + \tfrac{1}{\sqrt{2}} \Rho_{ab} \bm{\upgamma} \ind{^b_{\mbf{B}}^{\mbf{A}}} \xi^{\mbf{B}} = 0 .
\end{gather}
These equations are equivalent to the tractor spinor $\Xi^{\bm{\upalpha}} = ( \xi^{\mbf{A}} , \zeta^{\mbf{A}} )$ being parallel with respect to the tractor spinor connection, i.e., $\nabla_a \Xi^{\bm{\upalpha}} = 0$. In a conformal scale for which the metric is f\/lat, integration of~\eqref{eq-prolongation-confKillspin_odd} yields
\begin{gather}\label{eq-exp-twisor-field}
\xi^{\mbf{A}} = \mr{\xi}^{\mbf{A}} - \tfrac{1}{\sqrt{2}} x^a \gamma \ind{_a_{\mbf{B}}^{\mbf{A}}} \mr{\zeta}^{\mbf{B}} ,\qquad
\zeta^{\mbf{A}} = \mr{\zeta}^{\mbf{A}} ,
\end{gather}
where $\mr{\xi}^{\mbf{A}}$ and $\mr{\zeta}^{\mbf{A}}$ denote the constants of integrations at the origin.

A \emph{pure} conformal Killing spinor $\xi^{\mbf{A}}$ def\/ines an almost null structure. The following proposition combines results from \cite{Taghavi-Chabert2016,Taghavi-Chabert2013} recast in the language of tractors using Lemmata~\ref{lem-pure-ompi} and~\ref{lem-pure-ompi-even}. It is valid on any conformal manifold of any dimension.

\begin{Proposition}[\cite{Taghavi-Chabert2016,Taghavi-Chabert2013}]\label{prop-foliating-twistor-spinor}
The almost null structure of a pure conformal Killing spinor is locally integrable and co-integrable if and only if its associated tractor spinor is pure.
\end{Proposition}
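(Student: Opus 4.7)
The plan is to reduce both directions of the equivalence to pointwise algebra on the pair $(\xi^{\mbf{A}},\zeta^{\mbf{A}})$ and then match the resulting constraints against the tractor-purity conditions of Lemma~\ref{lem-pure-ompi} in odd dimensions and Lemma~\ref{lem-pure-ompi-even} in even dimensions. The geometric content is already encoded in the two inputs: purity of $\xi^{\mbf{A}}$ singles out an almost null structure, while the conformal Killing spinor equation \eqref{eq-conformal-Killing-spinor_odd} converts every derivative of $\xi^{\mbf{A}}$ into an algebraic expression in $\zeta^{\mbf{A}}$ via $\nabla_a \xi^{\mbf{A}} = -\frac{1}{\sqrt{2}}\bm{\upgamma}\ind{_a_{\mbf{B}}^{\mbf{A}}} \zeta^{\mbf{B}}$.

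First I would recall the differential criteria of \cite{Taghavi-Chabert2013, Taghavi-Chabert2016}: integrability of $N$ and co-integrability of $N^\perp$, as listed in Definition~\ref{defn-int-co-geo}, translate into the vanishing of specific irreducible $\SO(n,\C)$-components of the spinor-valued $1$-form $\xi^{(\mbf{A}} \nabla_a \xi^{\mbf{B})}$ and $\xi^{[\mbf{A}} \nabla_a \xi^{\mbf{B}]}$ under the Clifford-algebraic decomposition of $\Ss \otimes \Ss$. Substituting the conformal Killing spinor equation into these expressions and simplifying with the Clifford identity and the assumed purity of $\xi^{\mbf{A}}$, the differential conditions collapse to purely algebraic bilinear identities of the schematic form
\begin{gather*}
\gamma\ind*{^{(k)}_{\mbf{A}\mbf{B}}} \xi^{\mbf{A}} \zeta^{\mbf{B}} = 0 , \qquad
\gamma\ind*{^{(\ell)}_{\mbf{A}\mbf{B}}} \zeta^{\mbf{A}} \zeta^{\mbf{B}} = 0 ,
\end{gather*}
for appropriate ranges of $k$ and $\ell$ dictated by the parity patterns $k\equiv m+1,m\pmod{4}$ and $k<m-1$ (and their even-dimensional analogues).

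Next I would identify these surviving constraints with the residual purity conditions on the tractor spinor $\Xi^{\bm{\upalpha}} = I^{\bm{\upalpha}}_{\mbf{A}} \xi^{\mbf{A}} + O^{\bm{\upalpha}}_{\mbf{A}} \zeta^{\mbf{A}}$. Since $\xi^{\mbf{A}}$ is pure by hypothesis, condition~\eqref{eq-om-pure} of Lemma~\ref{lem-pure-ompi} (which plays the role of the $\omega$-purity in the splitting \eqref{eq-Zompi}) is automatic, and what remains to be enforced for tractor purity of $\Xi^{\bm{\upalpha}}$ is precisely~\eqref{eq-pi-pure} (purity of $\zeta^{\mbf{A}}$) and the mixed compatibility~\eqref{eq-T-1PT}; the even-dimensional version uses \eqref{eq-pi-pure-even} and~\eqref{eq-T-1PT-even} instead. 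The same matching then runs in reverse: assuming $\Xi^{\bm{\upalpha}}$ is pure, the identities \eqref{eq-Z-pure} (resp.~\eqref{eq-Z-pure-even}) together with $\nabla_a \xi^{\mbf{A}} = -\frac{1}{\sqrt{2}}\bm{\upgamma}\ind{_a_{\mbf{B}}^{\mbf{A}}} \zeta^{\mbf{B}}$ imply the vanishing of all obstructions listed in \cite{Taghavi-Chabert2013,Taghavi-Chabert2016}, so that $N$ and $N^\perp$ are both integrable.

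The main technical obstacle is the careful book-keeping of parities and of the irreducible $\SO(n,\C)$-pieces appearing in the Clifford decomposition of $\xi^{\mbf{A}}\zeta^{\mbf{B}}$: one must check that the parity ranges appearing in the integrability/co-integrability conditions of \cite{Taghavi-Chabert2013,Taghavi-Chabert2016} exhaust, and are exhausted by, the ranges occurring in \eqref{eq-T-1PT} and \eqref{eq-om-pure}, with no spurious surviving constraints. The prolongation~\eqref{eq-prolongation-confKillspin_odd} plays a conceptually clean role here: it ensures that the pair $(\xi^{\mbf{A}},\zeta^{\mbf{A}})$ genuinely assembles into a parallel section of $\mc{S}$, so that purity of the tractor spinor is a pointwise statement and need only be verified at a single point.
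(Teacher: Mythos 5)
Your proposal follows essentially the same route as the paper, which explicitly states that the proposition ``combines results from \cite{Taghavi-Chabert2016,Taghavi-Chabert2013} recast in the language of tractors using Lemmata~\ref{lem-pure-ompi} and~\ref{lem-pure-ompi-even}''; this is precisely the reduction you describe, whereby the conformal Killing spinor equation turns the differential integrability/co-integrability criteria of those references into algebraic bilinear conditions on $(\xi^{\mbf{A}},\zeta^{\mbf{A}})$, which are then identified with the residual purity conditions \eqref{eq-pi-pure} and \eqref{eq-T-1PT} on $\Xi^{\bm{\upalpha}}$. One small slip in your last paragraph: you write \eqref{eq-om-pure} where you should write \eqref{eq-pi-pure} --- purity of $\zeta^{\mbf{A}}$ rather than of $\xi^{\mbf{A}}$ --- since, as you correctly noted earlier, \eqref{eq-om-pure} is already automatic from the hypothesis that $\xi^{\mbf{A}}$ is pure.
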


By Theorems \ref{thm-odd-Kerr-theorem-II} and \ref{thm-even-Kerr-theorem} one can associate to any such conformal Killing spinor on~$\mc{Q}^n$ a~complex submanifold in $\PT$. These are described in the next two propositions.

\subsubsection{Odd dimensions}

\begin{Proposition}\label{prop-Robinson-congruence-odd}
 Let $\Xi^{\bm{\upalpha}}=(\xi^{\mbf{A}} , \zeta^{\mbf{A}})$ be a constant pure tractor spinor on $\mc{Q}^{2m+1}$, $\Xi$ its associated twistor in $\PT$, $\check{\Xi}$ its corresponding $\gamma$-plane in $\mc{Q}^{2m+1}$, and $\mc{U}:= \mc{Q}^{2m+1} \setminus \check{\Xi}$. Then $\xi^{\mbf{A}}$ is a pure conformal Killing spinor on $\mc{Q}^{2m+1}$ with zero set $\check{\Xi}$, and its associated integrable and co-integrable almost null structure $N_\xi$ on $\mc{U}$ arises from the submanifold $\mathbf{D}_\Xi \setminus \{ \Xi \}$ in $\widehat{\mc{U}} \subset \PT$, where $\mathbf{D}_\Xi$ is given by~\eqref{eq-lin-subsp}. In particular, each leaf of $N_\xi$ consists of a $\gamma$-plane intersecting $\check{\Xi}$ in an $(m-1)$-plane. Each leaf of $N_\xi^\perp$ consists of a $1$-parameter family of $\gamma$-planes intersecting in an $(m-1)$-plane. Any two $\gamma$-planes contained in two distinct leaves of $N_\xi^\perp$ intersect in an $(m-2)$-plane.
\end{Proposition}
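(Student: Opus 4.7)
The plan is to establish the five assertions of the proposition in turn, leveraging the tractor calculus of Section \ref{sec-tractor} and the twistor-geometric identifications of Section \ref{sec-tw-sp}.

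\emph{Conformal Killing spinor and zero set.} Since $\Xi^{\bm{\upalpha}}$ is parallel for the tractor spinor connection, decomposing in the splitting \eqref{eq-tractor-spin-bundle-odd} reproduces exactly the prolongation system \eqref{eq-prolongation-confKillspin_odd}, so $\xi^{\mbf{A}}$ satisfies \eqref{eq-conformal-Killing-spinor_odd} with auxiliary field $\zeta^{\mbf{A}}$. Purity of $\xi^{\mbf{A}}$ where it is non-zero follows at once from Lemma \ref{lem-pure-ompi}: the constraints \eqref{eq-om-pure} on the $\omega$-component of the pure tractor spinor $\Xi^{\bm{\upalpha}}=(\xi,\zeta)$ are precisely Cartan's purity conditions for $\xi^{\mbf{A}}$. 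In a flat conformal scale, integrating the prolongation yields \eqref{eq-exp-twisor-field}; comparing with the twistor incidence relation \eqref{eq-incidence_relation-spinor_odd} and noting that by \eqref{eq-Zompi} the twistor $\Xi$ has coordinates $[\omega,\pi]=[\mr{\xi},\mr{\zeta}]$, the zero locus $\{\xi(x)=0\}\cap \C\E^{2m+1}$ is recognised as $\check{\Xi}\cap \C\E^{2m+1}$; analyticity extends this identification to all of $\mc{Q}^{2m+1}$.

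\emph{Twistor submanifold.} Purity of $\Xi^{\bm{\upalpha}}$ combined with Proposition \ref{prop-foliating-twistor-spinor} makes $N_\xi$ integrable and co-integrable on $\mc{U}$, and Theorem \ref{thm-odd-Kerr-theorem-II} then associates to $N_\xi$ an $(m+1)$-dimensional complex submanifold $\mc{W}\subset\widehat{\mc{U}}$ foliated by $\Drm$-tangent curves. The crux is to identify $\mc{W}$ with $\mathbf{D}_\Xi\setminus\{\Xi\}$. For $x\in\mc{U}$ the leaf $N_\xi(x)$ is the $\gamma$-plane through $x$ with direction $[\xi(x)]$, whose twistor coordinates follow from \eqref{eq-incidence_relation-spinor_odd} with $\pi^{\mbf{A}}=\xi^{\mbf{A}}(x)$; one then verifies that the quadratic condition \eqref{eq-lin-subsp-v2} defining $\mathbf{D}_\Xi$ holds. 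A more geometric route uses Proposition \ref{prop-XiZ}(3) and reduces matters to solving the linear system $y\in\check{\Xi}$, $y\in\check{N_\xi(x)}$ (via \eqref{eq-exp-twisor-field} and the null plane structure of $\xi(x)$), producing an $(m-1)$-plane of solutions. Since $\mc{W}$ and $\mathbf{D}_\Xi\setminus\{\Xi\}$ are both connected of dimension $m+1$, agreement on any open subset forces equality.

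\emph{Leaf geometry.} With $\mc{W}=\mathbf{D}_\Xi\setminus\{\Xi\}$ established, the remaining assertions are geometric consequences of results from Section \ref{sec-tw-sp}. Each leaf of $N_\xi$ is totally geodetic by Lemma \ref{lem-co-geod-int-rel}, hence a $\gamma$-plane $\check{N_\xi(x)}$, and Proposition \ref{prop-XiZ}(3) together with $N_\xi(x)\neq\Xi$ gives $\dim(\check{N_\xi(x)}\cap\check{\Xi})=m-1$. Under Theorem \ref{thm-odd-Kerr-theorem-II}, leaves of $N_\xi^\perp$ correspond to the $\Drm$-tangent curves on $\mathbf{D}_\Xi\setminus\{\Xi\}$; by Lemma \ref{lem-fol-D} these are the distinguished curves through $\Xi$, and by the second part of Corollary \ref{cor-D-geom} each such curve consists of a one-parameter family of $\gamma$-planes sharing a common $(m-1)$-plane. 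Two $\gamma$-planes from distinct leaves of $N_\xi^\perp$ lie on distinct distinguished curves in $\mathbf{D}_\Xi$, so the first part of Corollary \ref{cor-D-geom} forces their intersection to be an $(m-2)$-plane.

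The main obstacle is the identification $\mc{W}=\mathbf{D}_\Xi\setminus\{\Xi\}$ in the second paragraph; both the direct algebraic approach via \eqref{eq-lin-subsp-v2} and the geometric dimension count on $\check{N_\xi(x)}\cap\check{\Xi}$ are conceptually clear but require some care with Clifford identities or affine null-plane linear algebra.
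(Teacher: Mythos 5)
Your proposal is correct and matches the paper's strategy at every step: parallel tractor spinor $\Rightarrow$ conformal Killing spinor, zero set $\check{\Xi}$ via \eqref{eq-exp-twisor-field}, integrability/co-integrability via Proposition~\ref{prop-foliating-twistor-spinor}, identification of the resulting twistor submanifold with $\mathbf{D}_\Xi\setminus\{\Xi\}$, and leaf geometry via the intersection results of Section~\ref{sec-tw-sp}. The one thing the paper does more economically is the step you flag as the main obstacle: rather than invoking Theorem~\ref{thm-odd-Kerr-theorem-II} to produce an abstract $\mc{W}$ and then comparing with $\mathbf{D}_\Xi$ by dimension and connectedness, the paper writes the graph $\Gamma_\xi$ tractorially as $Z^{\bm{\upalpha}} = X^{\mc{A}}\Xi_{\mc{A}}^{\bm{\upalpha}}$, from which membership in $\mathbf{D}_\Xi$ (via \eqref{eq-lin-subsp-v2} and the Clifford identity) is read off directly; you may find it worth adopting that formulation, since it collapses the Clifford/affine-linear-algebra bookkeeping you anticipate into a single identity.
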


\begin{proof}
The line spanned by $\Xi^{\bm{\upalpha}}$ descends to a point $\Xi$ (i.e., $[\Xi^{\bm{\upalpha}}]$) in $\PT$, and thus singles out a $\gamma$-plane $\check{\Xi}$ in $\mc{Q}^n$, which by \eqref{eq-exp-twisor-field} can be immediately identif\/ied with the zero set of $\xi^\mbf{A}$. Of\/f that set, Proposition \ref{prop-foliating-twistor-spinor} tells us that $N_\xi$ is integrable and co-integrable. Correspondingly, the conformal Killing spinor $\xi^{\mbf{A}}$ gives rise to a section $[\xi^{\mbf{A}}]$ of $\F$, which we can re-express as
\begin{gather*}
 \Gamma_{\xi} = \big\{ ([X^\mc{A}] , [Z^{\bm{\upalpha}}]) \in \mc{U} \times \PT \colon Z \ind{^{\bm{\upalpha}}} = X^\mc{A} \Xi _\mc{A} ^{\bm{\upalpha}} \big\} \subset \F .
\end{gather*}
Clearly, a point on $\Gamma_{\xi}$ descends to a twistor $Z$ on $\mathbf{D}_\xi \setminus \{ \Xi \}$ with $\gamma$-plane $\check{Z}$ tangent to~$N_\xi$. Thus, for each~$Z$ on~$\mathbf{D}_\Xi \setminus \{ \Xi \}$ in $\widehat{\mc{U}} \subset \PT$, $\check{Z}$ is precisely a leaf of~$N_\xi$. The point $\Xi$ itself must be excluded from~$\mathbf{D}_\Xi$ since the foliation becomes singular there in the sense the leaves intersect in~$\check{\Xi}$. The geometric interpretation of the leaves of~$N_\xi$ and $N_\xi^\perp$ follows directly from Theorem~\ref{thm-gam-inters} and Corollary~\ref{cor-D-geom}. In particular, each distinguished curve on $\mathbf{D}_\Xi$ can be identif\/ied with a leaf of~$N_\xi^\perp$.
\end{proof}

{\bf Local form.} Let us re-express the $(m+1)$-plane $\mathbf{D}_\Xi$ as \eqref{eq-lin-subsp-v2}. We work in a conformal scale for which~$g_{ab}$ is the f\/lat metric. Since $\Xi^{\bm{\upalpha}}$ is constant, we can substitute the f\/ields for their constants of integration at the origin. Using~\eqref{eq-Zompi} and $\Xi^{\bm{\upalpha}} = I^{\bm{\upalpha}}_{\mbf{A}} \mr{\xi}^{\mbf{A}} + O^{\bm{\upalpha}}_{\mbf{A}} \mr{\zeta}^{\mbf{A}}$, we obtain, in the obvious notation,
\begin{gather}\label{eq-alt-twistor-variety}\begin{split}
&\omega \ind{^a^{\mbf{A}}} \mr{\xi} \ind*{_a^{\mbf{B}}} + 2 \mr{\xi}^{\mbf{A}} \omega^{\mbf{B}} - \omega^{\mbf{A}} \mr{\xi}^{\mbf{B}} = 0 , \\
&\pi \ind{^a^{\mbf{A}}} \mr{\zeta} \ind{_a^{\mbf{B}}} + 2 \mr{\zeta}^{\mbf{A}} \pi^{\mbf{B}} - \pi^A \mr{\zeta}^{\mbf{B}} = 0 , \\
&\omega \ind{^a^{\mbf{A}}} \mr{\zeta} \ind*{_a^{\mbf{B}}} + \omega^{\mbf{A}} \mr{\zeta}^{\mbf{B}} + 4 \pi^{[{\mbf{A}}} \mr{\xi}^{{\mbf{B}}]} = 0 , \\
&\pi \ind{^a^{\mbf{A}}} \mr{\xi} \ind*{_a^{\mbf{B}}} + \pi^{\mbf{A}} \mr{\xi}^{\mbf{B}} + 4 \omega^{[{\mbf{A}}} \mr{\zeta}^{{\mbf{B}}]} = 0 .
\end{split}
\end{gather}
Evaluating at $\omega^{\mbf{A}} = \frac{1}{\sqrt{2}} x^a \gamma \ind{_a_{\mbf{B}}^{\mbf{A}}} \pi^{\mbf{B}}$, using the second and third of \eqref{eq-alt-twistor-variety} together with the purity of~$\Xi^{\bm{\upalpha}}$, we f\/ind that $\pi^{\mbf{A}}$ must be proportional to $\xi^{\mbf{A}} = \mr{\xi}^{\mbf{A}} - \frac{1}{\sqrt{2}} x^a \gamma \ind{_a_{\mbf{B}}^{\mbf{A}}} \mr{\zeta}^{\mbf{B}}$ as expected. This solution then satisf\/ies the f\/irst and fourth equations.

Let us now work in the coordinate chart $(\mc{V}_0 , ( \omega^0, \omega^A , \pi^A , \pi^{AB} )) $ as def\/ined in Section~\ref{sec-affine-odd}, and write
\begin{gather}\label{eq-hom2aff-Xi_odd}\begin{split}
&\mr{\xi}^{\mbf{A}} = \mr{\xi}^0 o^{\mbf{A}} + \ii \frac{1}{2} \mr{\xi}^A \delta_A^{\mbf{A}} - \frac{1}{4} \mr{\xi}^{AB} \delta_{AB}^{\mbf{A}} + \cdots , \\
&\mr{\zeta}^{\mbf{A}} = \frac{1}{\sqrt{2}} \left( \ii \mr{\zeta}^0 o^{\mbf{A}} + \mr{\zeta}^A \delta_A^{\mbf{A}} - \frac{\ii}{4 \mr{\xi}^0} \left( \mr{\xi}^{AB} \mr{\zeta}^0 - 2 \mr{\xi}^A \mr{\zeta}^B \right) \delta_{AB}^{\mbf{A}} + \cdots \right) ,
\end{split}
\end{gather}
where the remaining components of $\mr{\zeta}^{\mbf{A}}$ and $\mr{\xi}^{\mbf{A}}$ depend only on $\mr{\zeta}^0$, $\mr{\zeta}^A$, $\mr{\xi}^A$ and $\mr{\xi}^{AB}$ by the purity of $\Xi^{{\bm{\upalpha}}}$, and where we have assumed $\mr{\xi}^0 \neq 0$.
Substituting \eqref{eq-hom2aff_odd} and \eqref{eq-hom2aff-Xi_odd} into the last of equations \eqref{eq-alt-twistor-variety} yields
\begin{gather*}
\mr{\xi}^0 \pi^{A} - \mr{\xi}^{A} + \mr{\zeta}^0 \omega^{A} - \omega^0 \mr{\zeta}^{A} = 0 , \qquad
\mr{\xi}^0 \pi^{AB} - \mr{\xi}^{AB} + 2 \omega^{[A} \mr{\zeta}^{B]} = 0 ,
\end{gather*}
while the remaining equations do not yield any new information. Now, at every point $Z$ of $\mbf{D}_\Xi$, the $1$-forms
\begin{gather*}
\bm{\beta}^{A} := \mr{\xi}^0 \dd \pi^A + \mr{\zeta}^0 \dd \omega^A - \mr{\zeta}^A \dd \omega^0 , \qquad
\bm{\beta}^{AB} := \mr{\xi}^0 \dd \pi^{AB} + 2 \dd \omega^{[A} \mr{\zeta}^{B]} ,
\end{gather*}
annihilate the vectors tangent to $\mbf{D}_\Xi$ at $Z$ and the line in $\Drm_Z$ spanned by
\begin{gather}\label{eq-tgt2dist}
\bm{V} := V^0 \bm{Y} + V^A \bm{Y}_A ,
\end{gather}
where $V^0 := \mr{\xi}^0 + \frac{1}{2} \mr{\zeta}^0 \omega^0$ and $V^A := \mr{\zeta}^A + \frac{1}{2} \mr{\zeta}^0 \pi^A$. This corroborates the claims of Theorem~\ref{thm-odd-Kerr-theorem-II} and Proposition~\ref{prop-Robinson-congruence-odd}. Note that the vector f\/ield $\bm{V}$ vanishes at the point $[\Xi^{\mbf{\upalpha}}]$ of $\mbf{D}_\Xi$. With no loss, we can set $\mr{\zeta}^0 = -2$. The integral curve, with complex parameter~$t$, of~\eqref{eq-tgt2dist} passing through the point
\begin{gather*}
\big( \omega^0 , \omega^A , \pi^A , \pi^{AB} \big)\\
\qquad{} = \left( \mr{\xi}^0 + \alpha ,
- \frac{1}{2} \big( \mr{\xi}^A + \alpha \mr{\zeta}^A - \mr{\xi}^0 \alpha^A \big) ,
\mr{\zeta}^A + \alpha^A , \frac{1}{\mr{\xi}^0} \big( \mr{\xi}^{AB} + \mr{\xi}^{[A} \mr{\zeta}^{B]} \big) - \alpha^{[A} \mr{\zeta}^{B]} \right) ,
\end{gather*}
for some $\alpha$, $\alpha^A$, is given by
\begin{gather*}
\big( \omega^0 (t), \omega^A (t), \pi^A (t), \pi^{AB} (t)\big) = \left( \mr{\xi}^0 ,- \frac{1}{2} \mr{\xi}^A ,
\mr{\zeta}^A , \frac{1}{\mr{\xi}^0} \big( \mr{\xi}^{AB} + \mr{\xi}^{[A} \mr{\zeta}^{B]} \big) \right) \\
\hphantom{\big( \omega^0 (t), \omega^A (t), \pi^A (t), \pi^{AB} (t)\big) =}{}
+ \left( \alpha ,- \frac{1}{2} \big( \alpha \mr{\zeta}^A - \mr{\xi}^0 \alpha^A \big) ,
 \alpha^A , - \alpha^{[A} \mr{\zeta}^{B]} \right) \ee^{-t} ,
\end{gather*}
Writing $A^\mc{A} = a Y^\mc{A} + A^a Z_a^\mc{A} + b X^\mc{A}$ and $A^a = A^A \delta_A^a + A_A \delta^{aA} + A^0 u^a$ with
\begin{gather*}
\alpha = -a - \frac{1}{2}A^0 \mr{\xi}^0 + \frac{1}{2} A_C \mr{\xi}^C , \\
\alpha^A = \frac{1}{2 \mr{\xi}^0} \big( A_C \mr{\xi}^C \mr{\zeta}^A - A^0 \mr{\xi}^0 \mr{\zeta}^A - 2 \mr{\xi}^{AB} A_B - A^0 \mr{\xi}^A - 2 \mr{\xi}^0 A^A \big) , \\
b = \frac{1}{\mr{\xi}^0} \big(A^0 - \mr{\zeta}^C A_C \big) ,
\end{gather*}
one can recast this integral curve tractorially as $Z^{\bm{\upalpha}} (t) = \frac{\ii}{\sqrt{2}} \big( \mr{\Xi}^{\bm{\upalpha}} + \frac{\ii}{2} \ee^{-t} \mr{A}^{\mc{A}} \mr{\Xi}^{\bm{\upalpha}}_{\mc{A}} \big)$, which is one of the distinguished curves of Lemma~\ref{lem-fol-D} as expected.

\subsubsection{Even dimensions}
In even dimensions, the story is entirely analogous except for the choice of chirality of the tractor spinor. We leave the details to the reader.

\begin{Proposition}\label{prop-Robinson-congruence-even}
Let $\Xi^{\bm{\upalpha'}}=(\xi^{\mbf{A'}} , \zeta^{\mbf{A}})$ be a constant pure tractor spinor on $\mc{Q}^{2m}$, and let $\mc{U}:= \mc{Q}^{2m} \setminus \check{\Xi}$ where $\check{\Xi}$ is the $\beta$-plane defined by $\Xi^{\bm{\alpha'}}$. Then $\xi^{\mbf{A'}}$ is a pure conformal Killing spinor on $\mc{Q}^{2m}$, and its associated null structure $N_\xi$ on $\mc{U}$ arises from the submanifold in $\widehat{\mc{U}} \subset \PT$ defined by
\begin{gather}\label{eq-Kerr_in_Q_even}
\Gamma \ind*{^{(k)}_{\bm{\upalpha}}_{{\bm{\upbeta'}}}} Z \ind{^{\bm{\upalpha}}} \Xi \ind{^{{\bm{\upbeta'}}}} = 0 , \qquad \text{for} \quad k<m, \quad k \equiv m \pmod{2}.
\end{gather}
Each leaf of $N_\xi$ consists of an $\alpha$-plane intersecting $\check{\Xi}$ in an $(m-1)$-plane.
\end{Proposition}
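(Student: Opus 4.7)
The plan is to mirror the proof of Proposition \ref{prop-Robinson-congruence-odd} step by step, substituting the primed/unprimed Clifford bookkeeping appropriate to even dimensions. First, the parallelism condition $\nabla_a \Xi^{\bm{\upalpha'}} = 0$ is equivalent, via the even-dimensional counterpart of \eqref{eq-prolongation-confKillspin_odd},
\begin{gather*}
\nabla_a \xi^{\mbf{A'}} + \tfrac{1}{\sqrt{2}} \bm{\upgamma} \ind{_a_{\mbf{B}}^{\mbf{A'}}} \zeta^{\mbf{B}} = 0 , \qquad \nabla_a \zeta^{\mbf{A}} + \tfrac{1}{\sqrt{2}} \Rho_{ab} \bm{\upgamma} \ind{^b_{\mbf{B'}}^{\mbf{A}}} \xi^{\mbf{B'}} = 0 ,
\end{gather*}
to the prolongation of the (primed) conformal Killing spinor equation for $\xi^{\mbf{A'}}$, so in particular $\xi^{\mbf{A'}}$ is a conformal Killing spinor on $\mc{Q}^{2m}$. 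Purity of $\Xi^{\bm{\upalpha'}}$ descends to purity of $\xi^{\mbf{A'}}$ wherever it is non-zero, by the primed version of Lemma \ref{lem-pure-ompi-even}, and Proposition \ref{prop-foliating-twistor-spinor} then yields that $N_\xi$ is integrable; by Lemma \ref{lem-int2geod} it is also totally geodetic.

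In a flat scale, integrating the prolongation yields the even analog of \eqref{eq-exp-twisor-field},
\begin{gather*}
\xi^{\mbf{A'}} = \mr{\xi}^{\mbf{A'}} - \tfrac{1}{\sqrt{2}} x^a \gamma \ind{_a_{\mbf{B}}^{\mbf{A'}}} \mr{\zeta}^{\mbf{B}} , \qquad \zeta^{\mbf{A}} = \mr{\zeta}^{\mbf{A}} ,
\end{gather*}
and a short calculation using \eqref{eq-Minkowski-emb} together with the even-dimensional formula for $\Gamma \ind{_{\mc{A}}_{\bm{\upalpha'}}^{\bm{\upbeta}}}$ in terms of $\gamma \ind{_a_{\mbf{A'}}^{\mbf{B}}}$ identifies the locus $\{\xi^{\mbf{A'}}(x) = 0\}$ with the locus $\{X^\mc{A} \Xi_\mc{A}^{\bm{\upbeta}} = 0\}$, where $\Xi_\mc{A}^{\bm{\upbeta}} := \Gamma \ind{_{\mc{A}}_{\bm{\upalpha'}}^{\bm{\upbeta}}} \Xi^{\bm{\upalpha'}}$. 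Hence the zero set of $\xi^{\mbf{A'}}$ is precisely the $\beta$-plane $\check{\Xi}$, and $\mc{U}$ is the maximal open set on which $N_\xi$ is a regular null structure. By Theorem \ref{thm-even-Kerr-theorem}, $N_\xi$ therefore arises from the $m$-dimensional complex submanifold $\widehat{N_\xi}$ of $\widehat{\mc{U}} \subset \PT$ given as the image under $\mu$ of the section
\begin{gather*}
\Gamma_\xi = \big\{ \big([X^\mc{A}],[Z^{\bm{\upalpha}}]\big) \in \mc{U} \times \PT : Z^{\bm{\upalpha}} = X^\mc{A} \Xi_\mc{A}^{\bm{\upalpha}} \big\} .
\end{gather*}

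It remains to identify $\widehat{N_\xi}$ with the algebraic variety \eqref{eq-Kerr_in_Q_even}. Any $Z^{\bm{\upalpha}} = X^\mc{A} \Xi_\mc{A}^{\bm{\upalpha}}$ satisfies $\Gamma \ind*{^{(k)}_{\bm{\upalpha}\bm{\upbeta'}}} Z^{\bm{\upalpha}} \Xi^{\bm{\upbeta'}} = 0$ for all $k<m$, $k \equiv m \pmod 2$, as a direct Clifford computation using the even-dimensional version of \eqref{eq-Clifford_twistor_odd}, the purity relations \eqref{eq-pure-spinor-even} for $\Xi^{\bm{\upalpha'}}$, and $X^\mc{A} X_\mc{A} = 0$; this is the even-dimensional analog of \eqref{eq-lin-subsp-v2}. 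The reverse inclusion is a Kodaira-style dimension argument: both $\widehat{N_\xi}$ and the variety \eqref{eq-Kerr_in_Q_even} are $m$-dimensional complex submanifolds of $\widehat{\mc{U}}$ intersecting each $\hat{x}$ transversely in a single point, so they coincide. Finally, the geometric statement that each $\alpha$-plane $\check{Z}$ of $N_\xi$ meets $\check{\Xi}$ in an $(m-1)$-plane follows from the primed/unprimed analog of Theorem \ref{thm-gam-inters}: the vanishing of $\Gamma \ind*{^{(k)}_{\bm{\upalpha}\bm{\upbeta'}}} Z^{\bm{\upalpha}} \Xi^{\bm{\upbeta'}}$ for all $k<m$ forces the maximum possible intersection of an $\alpha$-plane with a $\beta$-plane, which is $m-1$. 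The main obstacle is the clean Clifford-algebraic derivation of \eqref{eq-Kerr_in_Q_even} from $Z^{\bm{\upalpha}} = X^\mc{A} \Xi_\mc{A}^{\bm{\upalpha}}$, carefully tracking the parity constraint $k \equiv m \pmod 2$ that is forced by the symmetry of the primed/unprimed bilinear forms; once this identification is in place, the rest of the proof is a routine translation of the odd-dimensional argument.
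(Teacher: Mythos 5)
Your argument is correct and is exactly the proof the paper has in mind but leaves to the reader, stating only that ``the story is entirely analogous except for the choice of chirality of the tractor spinor''; each of your steps mirrors the corresponding step in the proof of Proposition~\ref{prop-Robinson-congruence-odd} with the appropriate primed/unprimed substitutions. The only place the phrasing could be tightened is the final ``Kodaira-style dimension argument,'' which really rests on the already-established containment $\widehat{N_\xi}\subset\{\eqref{eq-Kerr_in_Q_even}\}$ together with the fact that both are single-valued holomorphic sections of $\F\rightarrow\mc{U}$ (an $\alpha$-plane through $x\notin\check{\Xi}$ meeting $\check{\Xi}$ in an $(m-1)$-plane is unique), rather than on Kodaira's theorem as such.
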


\begin{Remark}In four dimensions, tractor-spinors are always pure, and so almost null structures associated to conformal Killing spinors are always integrable. In this case, the submani\-fold~\eqref{eq-Kerr_in_Q_even} is a complex projective hyperplane in $\PT \cong \CP^3$ given by $\Xi_{\bm{\upalpha}} Z^{\bm{\upalpha}}=0$ where we have used the canonical isomorphism $\PT^* \cong \PT'$. This example was highly instrumental in the genesis of twistor theory~\cite{Penrose1967}. The null structure arising from the intersection of this submanifold with \emph{real} twistor space generates a shearfree congruence of null geodesics in Minkowski space known as the \emph{Robinson congruence}.
\end{Remark}

\subsection{Conformal Killing--Yano 2-forms}
A (holomorphic) \emph{conformal Killing--Yano $($CKY$)$ $2$-form} on $\mc{Q}^n$ is a section $\sigma_{ab}$ of $\mc{O}_{[ab]} [3]$ that satisf\/ies
\begin{gather}\label{eq-CKY2}
 \nabla \ind{_a} \sigma \ind{_{bc}} - \mu \ind{_{abc}} - 2 \mbf{g} \ind{_{a [b}} \varphi \ind{_{c]}} = 0 ,
\end{gather}
where $\mu_{abc} = \nabla_{[a} \sigma_{bc]}$ and $\varphi_a = \frac{1}{n-2} \nabla^b \sigma_{ba}$.
The CKY $2$-form equation \eqref{eq-CKY2} is prolonged to the following system
\begin{gather}\label{eq-prolongation-CKY2}
\begin{split}
& \nabla \ind{_a} \sigma \ind{_{bc}} - \mu \ind{_{abc}} - 2 \mbf{g} \ind{_{a [b}} \varphi \ind{_{c]}} = 0 , \\
& \nabla \ind{_a} \mu \ind{_{bcd}} + 3 \mbf{g} \ind{_{a [b}} \rho \ind{_{cd]}} + 3 \Rho_{a[b} \sigma_{cd]} = 0 , \\
& \nabla \ind{_a} \varphi_b - \rho_{ab} + \Rho \ind{_a^c} \sigma_{cb} = 0 , \\
& \nabla \ind{_a} \rho_{bc} - \Rho \ind{_a^d} \mu_{dbc} + 2 \Rho_{a[b} \varphi_{c]} = 0 .
 \end{split}
\end{gather}
This system can be seen to be equivalent to the existence of a parallel tractor $3$-form, i.e.,
\begin{gather}
\nabla_a \Sigma_{\mc{A} \mc{B} \mc{C}} = 0 , \label{eq-parallel_tractor3form}
\end{gather}
where $\Sigma_{\mc{A}\mc{B}\mc{C}} := ( \sigma_{ab} , \mu_{abc} , \varphi_a , \rho_{ab} ) \in \mc{O}_{[\mc{A}\mc{B}\mc{C}]} \cong \mc{O}_{[ab]}[3] + ( \mc{O}_{[abc]}[3] \oplus \mc{O}_{a}[1] ) + \mc{O}_{[ab]}[1]$. For an arbitrary conformal manifold, equation~\eqref{eq-parallel_tractor3form} no longer holds in general, and necessitates the addition of a `deformation' term as explained in~\cite{Gover2008}.

In f\/lat space, i.e., with $\Rho_{ab}=0$, we can integrate equations \eqref{eq-prolongation-CKY2} to obtain
\begin{gather}\label{eq-CKY-integrated}
\begin{split}
& \sigma \ind{_{a b}} = \mr{\sigma} \ind{_{a b}} + 2 x_{[a} \mr{\varphi}_{b]} + \mr{\mu}_{abc} x^c - 2 \big( x_{[a} \mr{\rho}_{b]c} x^c + \tfrac{1}{4} \big(x^c x_c\big) \mr{\rho}_{ab} \big) , \\
&\mu \ind{_{abc}} = \mr{\mu} \ind{_{abc}} - 3 x \ind{_{[a}} \mr{\rho} \ind{_{b c]}} , \\
& \varphi \ind{_a} = \mr{\varphi} \ind{_a} - \mr{\rho} \ind{_{a b}} x \ind{^b} , \\
& \rho \ind{_{a b}} = \mr{\rho} \ind{_{a b}} ,
 \end{split}
\end{gather}
for some constants $\mr{\sigma} \ind{_{a b}}$, $\mr{\mu} \ind{_{abc}}$, $\mr{\varphi} \ind{_a}$ and $\mr{\rho} \ind{_{a b}}$.

\begin{Remark}In three dimensions, conformal Killing--Yano $2$-forms are Hodge dual to conformal Killing vector f\/ields. These latter are in one-to-one correspondence with parallel sections of tractor $2$-forms.

In four dimensions, a $2$-form $\sigma_{ab}$ is a CKY $2$-form if and only if its self-dual part $\sigma^+_{ab}$ and its anti-self-dual part $\sigma^-_{ab}$ are CKY $2$-forms, with, in the obvious notation, $\mu \ind*{^\pm_{abc}} = (*\varphi^\pm) \ind{_{abc}}$. Self-duality obviously carries over to tractor $3$-forms.
\end{Remark}

\subsubsection{Eigenspinors of a 2-form}
Let us f\/irst assume $n=2m+1$. We recall that an \emph{eigenspinor} $\xi^{\mbf{A}}$ of a $2$-form $\sigma_{ab}$ is a spinor satisfying
\begin{gather}\label{eq-eigenspinor-eq}
\sigma_{ab} \gamma \ind{^{ab}_{{\mbf{C}}}^{[{\mbf{A}}}} \xi^{{\mbf{B}}]} \xi^{{\mbf{C}}} = 0 ,
\end{gather}
i.e., $\sigma_{ab} \gamma \ind{^{ab}_{\mbf{C}}^{\mbf{A}}} \xi^{\mbf{C}} = \lambda \xi^{\mbf{A}}$ for some function $\lambda$. Here, $\gamma \ind{^{ab}_{\mbf{C}}^{\mbf{A}}} := \gamma \ind{^{[a}_{\mbf{C}}^{\mbf{B}}} \gamma \ind{^{b]}_{\mbf{B}}^{\mbf{A}}}$. When $\xi^{\mbf{A}}$ is pure, another convenient way to express the eigenspinor equation~\eqref{eq-eigenspinor-eq} is given by
\begin{gather*}
\sigma^{ab} \gamma \ind*{^{(m+1)}_{abc_3 \ldots c_{m+1}}_{{\mbf{AB}}}} \xi^{\mbf{A}} \xi^{\mbf{B}} = 0 .
\end{gather*}
Therefore, to any $2$-form $\sigma_{ab}$, we can associate a complex submanifold of $\F$ given by the graph
\begin{gather}\label{eq-CKY2-F}
\Gamma_{\sigma} := \big\{ \big(x^a , \big[\pi^{\mbf{A}}\big]\big) \in \C \E^n \times \PT_{(2m-1)}\colon \sigma^{ab} \gamma \ind*{^{(m+1)}_{abc_3 \ldots c_{m+1}}_{{\mbf{AB}}}} \pi^{\mbf{A}} \pi^{\mbf{B}} = 0 \big\} .
\end{gather}
For $\sigma_{ab}$ generic, this submanifold will have many connected components, each of which corresponding to a local section of $\F \rightarrow \mc{Q}^{2m+1}$, i.e., a projective pure spinor f\/ield that is an eigenspinor of~$\sigma_{ab}$. To be precise, in $2m+1$ dimensions, a generic $2$-form $\sigma_{ab}$ viewed as an endomor\-phism~$\sigma \ind{_a^b}$ of the tangent bundle, always has~$m$ distinct pairs of non-zero eigen\-values opposite to each other, i.e., $(\lambda,-\lambda)$, and a zero eigen\-value. In this case, a generic $2$-form viewed as an element of the Clif\/ford algebra has~$2^m$ distinct eigenvalues, and thus $2^m$ distinct eigenspaces, all of whose elements are pure~\cite{Mason2010}.

When $n=2m$, the analysis is very similar: the pure eigenspinor equation is now
\begin{gather*}
\sigma^{ab} \gamma \ind*{^{(m)}_{abc_3 \ldots c_m}_{\mbf{A'}\mbf{B'}}} \xi^{\mbf{A'}} \xi^{\mbf{B'}} = 0 ,
\end{gather*}
and similarity for spinors of the opposite chirality. Such a $2$-form generically has $m$ distinct pairs of non-zero eigenvalues opposite to each other, and as an element of the Clif\/ford algebra, has~$2^m$ eigenspaces that split into two sets of $2^{m-1}$ eigenspaces according to the chirality of the eigenspinors. The eigenspinor equation lifts to a submanifold $\Gamma_{\sigma} := \{ (x^a , [\pi^{{\mbf{A'}}}]) \in \C \E^n \times \PT_{(2m-2)} \colon \sigma^{ab} \gamma \ind*{^{(m)}_{abc_3 \ldots c_m}_{\mbf{A'}\mbf{B'}}} \pi^{{\mbf{A'}}} \pi^{{\mbf{B'}}} = 0 \}$ of $\F$, whose connected components correspond to the distinct primed spinor eigenspaces of $\sigma_{ab}$.

\subsubsection{The null structures of a conformal Killing--Yano 2-forms}
The next question to address is when the almost null structure of an eigenspinor of a $2$-form is integrable and co-integrable.
\begin{Proposition}[\cite{Mason2010}]\label{prop-foliating-CKY2}
Let $\sigma_{ab}$ be a generic conformal Killing $2$-form on~$\mc{Q}^n$ $($or any complex Riemannian manifold$)$. Let $\mu_{abc} := \nabla_{[a} \sigma_{bc]}$. Let $N$ be the almost null structure of some eigenspinor of~$\sigma_{ab}$, and suppose that $\mu_{abc} X^a Y^b Z^c = 0$ for any sections $X^a$, $Y^a$, $Z^a$ of $N^\perp$. Then $N$ is integrable and, when~$n$ is odd, co-integrable too.
\end{Proposition}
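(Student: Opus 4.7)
The plan is to translate the geometric conditions of integrability of $N$ (and, in odd dimensions, co-integrability of $N^\perp$) into algebraic conditions on the covariant derivative of the defining pure eigenspinor $\xi^{\mbf{A}}$, and then to verify these using the CKY equation \eqref{eq-CKY2} together with the stated vanishing of $\mu_{abc}$ on $N^\perp$.

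First I would recall from \cite{Taghavi-Chabert2013} the spinorial reformulation of integrability: under the stabiliser of $[\xi^{\mbf{A}}]$, the bundle $\mc{O}_a\otimes\mc{O}^{\mbf{A}}$ decomposes into irreducible summands, and both the integrability of $N$ and the co-integrability of $N^\perp$ correspond to the vanishing of specific \emph{obstruction summands} of $\nabla_a\xi^{\mbf{A}}$. Next I would differentiate the pure-eigenspinor equation
\begin{equation*}
\sigma^{ab}\,\gamma^{(m+1)}_{a b c_3\cdots c_{m+1}\,\mbf{AB}}\,\xi^{\mbf{A}}\xi^{\mbf{B}} = 0
\end{equation*}
in an arbitrary direction $d$ and substitute $\nabla_d\sigma_{ab} = \mu_{dab}+2\mbf{g}_{d[a}\varphi_{b]}$, yielding an identity that expresses the algebraic action of $\sigma^{ab}\gamma_{ab}$ on $\nabla_d\xi^{\mbf{A}}$ in terms of $\mu_{dab}$, $\varphi_b$, and $\xi^{\mbf{A}}$. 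Decomposing this identity according to the pointwise splitting $\Tgt\mc{Q}^n = N \oplus L \oplus N^*$ determined by $\xi^{\mbf{A}}$ (with $N^\perp = N\oplus L$ in the odd case, $L=0$ in the even case), I would check that the obstruction summands of $\nabla_d\xi^{\mbf{A}}$ are driven precisely by the components of $\mu_{abc}$ lying in $N^\perp\otimes N^\perp\otimes N^\perp$. These components vanish by hypothesis, and since $\sigma^{ab}\gamma_{ab}$ acts invertibly on the relevant spinor subspaces---thanks to the genericity of the non-zero eigenvalues $\pm\lambda_1,\dots,\pm\lambda_m$ of $\sigma_{ab}$ as an endomorphism of $\Tgt\mc{Q}^n$---the obstruction summands of $\nabla_a\xi^{\mbf{A}}$ must themselves vanish. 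In even dimensions, Lemma \ref{lem-int2geod} then upgrades the resulting integrability to total geodesy, while in odd dimensions the same hypothesis simultaneously delivers integrability of $N$ and co-integrability of $N^\perp$.

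The main obstacle is the bookkeeping of the spinorial irreducible decomposition: identifying exactly which components of $\nabla_a\xi^{\mbf{A}}$ represent integrability versus co-integrability, and checking that both are sourced only by the purely $N^\perp$-valued component of $\mu_{abc}$, while the $\varphi$-terms and the remaining pieces of $\mu$ contract with the relevant $\gamma^{(k)}$ into unobstructed summands. The genericity hypothesis plays a dual role---it makes the eigenvalue $\lambda$ associated to $\xi^{\mbf{A}}$ simple, so $[\xi^{\mbf{A}}]$ is well-defined as a local projective pure spinor field, and it ensures invertibility of $\sigma^{ab}\gamma_{ab}$ on the obstruction subspaces, which is what converts the identity obtained from differentiating the eigenspinor equation into a genuine algebraic formula for the obstruction components of $\nabla_a\xi^{\mbf{A}}$ rather than a mere constraint.
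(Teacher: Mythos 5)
The paper imports this proposition from~\cite{Mason2010} without proof; the text immediately following only reinterprets the hypothesis on $\mu_{abc}$ as the descent condition $\mu_{abc}\pi^{a\mbf{A}}\pi^{b\mbf{B}}\pi^{c\mbf{C}}=0$ for the graph $\Gamma_\sigma$ to project to $\PT$. There is therefore no in-paper argument to compare against, but your outline reconstructs the strategy of the cited reference and is sound. The intrinsic-torsion decomposition of~\cite{Taghavi-Chabert2013} does convert integrability of~$N$ and co-integrability of~$N^\perp$ into the vanishing of specific irreducible summands of $\nabla_a\xi^{\mbf{A}}$; differentiating the projective eigenspinor identity $\sigma^{ab}\gamma^{(m+1)}_{abc_3\cdots c_{m+1}\mbf{AB}}\xi^{\mbf{A}}\xi^{\mbf{B}}=0$ and substituting the CKY equation $\nabla_d\sigma_{ab}=\mu_{dab}+2\mbf{g}_{d[a}\varphi_{b]}$ relates those summands to $\mu_{abc}$ and $\varphi_a$; and genericity of $\sigma_{ab}$ both keeps the eigenline smooth and makes $\sigma^{ab}\gamma_{ab}-\lambda$ invertible on its complement, so the resulting identity actually determines the obstruction summands rather than merely constraining them. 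You have correctly flagged the bookkeeping as the hurdle; the two things that must be verified explicitly are that the obstruction slots only see $\mu$ restricted to $\wedge^3 N^\perp$ (the derivative index $d$ of $\nabla_d\xi^{\mbf{A}}$ lying in $N$ for integrability and in $N^\perp$ for co-integrability, with the remaining two arguments of $\mu$ absorbed by the pure-spinor form $\gamma^{(m+1)}\xi\xi$ determined by~$N$), and that the $\varphi$-term from the CKY equation falls only into unobstructed summands. That verification is the substance of the proof, but your plan for carrying it out is correct and, as far as I can tell, coincides with the argument of~\cite{Mason2010}.
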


In the light of Theorems~\ref{thm-odd-Kerr-theorem-II} and~\ref{thm-even-Kerr-theorem}, the foliations arising from the eigenspinors of a CKY $2$-form $\sigma_{ab}$ can be encoded as complex submanifolds of the twistor space~$\PT$ of~$\mc{Q}^n$. As we shall see in a moment, these submanifolds can be constructed from the corresponding tractor~$\Sigma_{\mc{A}\mc{B}\mc{C}}$.

The additional condition on $\mu_{abc}$ in Proposition \ref{prop-foliating-CKY2} can also be understood in terms of the graph of a connected component of $\Gamma_\sigma$ def\/ined by \eqref{eq-CKY2-F}. For such a graph to descend to a complex submanifold of $\PT$, its def\/ining equations should be annihilated by the vectors tangent to $\F \rightarrow \PT$. Such a condition, in odd dimensions, can be expressed as $0 = \pi^{[\mbf{C}} \pi \ind{^c^{\mbf{D}]}} \nabla_c ( \sigma_{ab} \pi^{a\mbf{A}} \pi^{b\mbf{B}} )$, and using \eqref{eq-CKY2} gives $\mu_{abc} \pi^{a\mbf{A}} \pi^{b\mbf{B}} \pi^{b\mbf{C}} =0$. Thus, we shall be interested in the local sections of $\F \rightarrow \mc{Q}^n$ def\/ined by
\begin{gather}
\Gamma_{\sigma,\mu} := \big\{ \big(x^a , \big[\pi^{\mbf{A}}\big]\big) \in \C \E^n \times \PT_{(2m-1)} \colon\nonumber \\
\hphantom{\Gamma_{\sigma,\mu} := \big\{}{}
\sigma^{ab} \gamma \ind*{^{(m+1)}_{abc_3 \ldots c_{m+1}}_{{\mbf{AB}}}} \pi^{\mbf{A}} \pi^{\mbf{B}} = 0 ,\, \mu^{abc} \gamma \ind*{^{(m+1)}_{abcd_4 \ldots d_{m+1}}_{\mbf{AB}}} \pi^{\mbf{A}} \pi^{\mbf{B}} = 0 \big\} . \label{eq-sectionF-CKY2}
\end{gather}
In even dimensions, this is entirely analogous except that \eqref{eq-sectionF-CKY2} is now
\begin{gather*}
\Gamma_{\sigma,\mu} := \big\{ \big(x^a , \big[\pi^{{\mbf{A'}}}\big]\big) \in \C \E^n \times \PT_{(2m-2)} \colon \\
\hphantom{\Gamma_{\sigma,\mu} := \big\{}{}
\sigma^{ab} \gamma \ind*{^{(m)}_{abc_3 \ldots c_m}_{{\mbf{A'}}{\mbf{B'}}}} \pi^{{\mbf{A'}}} \pi^{{\mbf{B'}}} = 0 , \, \mu \ind{^{abc}} \gamma \ind*{^{(m)}_{abcd_4 \ldots d_{m}}_{\mbf{A'}\mbf{B'}}} \pi^{\mbf{A'}} \pi^{\mbf{B'}} = 0 \big\} .
\end{gather*}

\begin{Proposition}\label{prop-Kerr-variety-O}
Set $n=2m+ \epsilon$, where $\epsilon \in \{0,1\}$. Let $\sigma_{ab}$ be a generic conformal Killing--Yano $2$-form on some open subset $\mc{U}$ of $\mc{Q}^n$, with associated tractor $3$-form $\Sigma_{\mc{A} \mc{B} \mc{C}}$. Then if the almost null structure associated to some eigenspinor of $\sigma_{ab}$ is integrable and co-integrable, it must arise from the submanifold in $\widehat{\mc{U}} \subset \PT$ defined by
\begin{gather}\label{eq-Kerr-variety-O}
 \Sigma^{\mc{A} \mc{B} \mc{C}} \Gamma \ind*{^{(m+1+\epsilon)}_{\mc{A} \mc{B} \mc{C} \mc{D}_4 \ldots \mc{D}_{m+1+\epsilon}} _{{\bm{\upalpha}} {\bm{\upbeta}}}} Z^{\bm{\upalpha}} Z^{\bm{\upbeta}} = 0 .
\end{gather}
\end{Proposition}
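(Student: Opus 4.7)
The strategy is to verify that the tractor identity \eqref{eq-Kerr-variety-O}, when pulled back to the correspondence space $\F$ via the incidence relation, cuts out precisely the section $\Gamma_{\sigma,\mu}$ of $\F \to \mc{Q}^n$ encoding the almost null structures of eigenspinors of $\sigma_{ab}$ satisfying the condition of Proposition~\ref{prop-foliating-CKY2}. Once this is established, the conclusion follows immediately from Theorems~\ref{thm-odd-Kerr-theorem-II} (for $\epsilon=1$) or \ref{thm-even-Kerr-theorem} (for $\epsilon=0$), since $\Gamma_{\sigma,\mu}$ projects to an $(m+1)$-dimensional (resp.~$m$-dimensional) complex submanifold of $\PT$ containing the requisite foliation by curves tangent to $\Drm$.

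The first step is to expand $\Sigma_{\mc{A}\mc{B}\mc{C}}$ in terms of its splitting components. Using the injectors $X^\mc{A}, Y^\mc{A}, Z_a^\mc{A}$ of Section~\ref{sec-tractor}, the parallel tractor $3$-form decomposes as
\begin{gather*}
\Sigma_{\mc{A}\mc{B}\mc{C}} = 3 X_{[\mc{A}} Z_\mc{B}^a Z_{\mc{C}]}^b \sigma_{ab} + Z_\mc{A}^a Z_\mc{B}^b Z_\mc{C}^c \mu_{abc} + 6 X_{[\mc{A}} Y_\mc{B} Z_{\mc{C}]}^a \varphi_a + 3 Y_{[\mc{A}} Z_\mc{B}^a Z_{\mc{C}]}^b \rho_{ab},
\end{gather*}
where $(\sigma, \mu, \varphi, \rho)$ satisfy \eqref{eq-prolongation-CKY2}. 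Inserting this into \eqref{eq-Kerr-variety-O} and applying the bundle analogue of \eqref{eq-Gam2gam-odd} (and its even-dimensional counterpart) reduces every occurrence of $\Gamma_{\mc{A}\bm{\upalpha}}{}^{\bm{\upbeta}}$ to contractions against $Y^\mc{A}, X^\mc{A}, Z_a^\mc{A}$ and their $\gamma$-counterparts. Simultaneously, expanding $Z^{\bm{\upalpha}} = I^{\bm{\upalpha}}_\mbf{A} \omega^\mbf{A} + O^{\bm{\upalpha}}_\mbf{A} \pi^\mbf{A}$ via \eqref{eq-Zompi} and using the projector identities for $O, I$ collapses the whole expression to a polynomial in $(\omega^\mbf{A}, \pi^\mbf{A})$ with coefficients built from $\sigma, \mu, \varphi, \rho$ and the $\gamma^{(k)}$ bilinears.

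The second step is to impose the incidence relation \eqref{eq-incidence_relation-spinor_odd}, i.e.\ $\omega^\mbf{A} = \tfrac{1}{\sqrt 2} x^a \pi_a^\mbf{A}$, and invoke the integrated form \eqref{eq-CKY-integrated} of the CKY equations. Under this substitution, the $\omega$-dependent pieces of the contraction combine with the $X^\mc{A}$- and $Y^\mc{A}$-pieces of $\Sigma$ precisely so as to reconstruct $\sigma_{ab}(x)$ from $\mr\sigma_{ab}, \mr\varphi_a, \mr\mu_{abc}, \mr\rho_{ab}$, and similarly reconstruct $\mu_{abc}(x)$ from $\mr\mu_{abc}$ and $\mr\rho_{ab}$. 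The remaining $\varphi$- and $\rho$-terms drop out upon use of the purity condition \eqref{eq-pi-pure} (resp.~\eqref{eq-pi-pure-even}) on $\pi^\mbf{A}$, which ensures that $\pi^{a\mbf{A}} \pi_a^\mbf{B} + \pi^\mbf{A}\pi^\mbf{B} = 0$. The net result, after organising by skew-symmetrisation, is the pair of scalar conditions
\begin{gather*}
\sigma^{ab} \gamma \ind*{^{(m+1+\epsilon)}_{ab c_3 \ldots c_{m+1+\epsilon}}_{\mbf{A B}^{(\prime)}}} \pi^{\mbf{A}^{(\prime)}} \pi^{\mbf{B}^{(\prime)}} = 0, \qquad
\mu^{abc} \gamma \ind*{^{(m+1+\epsilon)}_{abc d_4 \ldots d_{m+1+\epsilon}}_{\mbf{A B}^{(\prime)}}} \pi^{\mbf{A}^{(\prime)}} \pi^{\mbf{B}^{(\prime)}} = 0,
\end{gather*}
which are exactly the defining equations of $\Gamma_{\sigma,\mu}$.

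The main obstacle is keeping the Clifford bookkeeping under control: the expansion of $\Sigma^{\mc{A}\mc{B}\mc{C}}\Gamma^{(m+1+\epsilon)}_{\mc{A}\mc{B}\mc{C}\mc{D}_4\ldots}$ produces many terms of mixed tractor-type ($X$, $Y$, $Z$ combinations), and one must verify that the algebraic identities \eqref{eq-Clifford_twistor_odd}, combined with the purity relations on $\pi^\mbf{A}$ and the precise form of the integrated CKY data, conspire to leave only the two desired contractions. This is a mechanical but lengthy computation that is most efficiently organised by evaluating both sides at the origin of $\C\E^n$ (where the injectors reduce to their constant counterparts $\mr X, \mr Y, \mr Z$ via \eqref{eq-Minkowski-emb}) and then extending by the parallel transport property of $\Sigma$ and the affine translation covariance of the incidence relation. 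With this established, $\Gamma_{\sigma,\mu}$ descends to the desired submanifold of $\PT$ by Theorem~\ref{thm-odd-Kerr-theorem-II} (or Theorem~\ref{thm-even-Kerr-theorem}), and conversely, since \eqref{eq-Kerr-variety-O} is visibly an algebraic subvariety of $\PT$, uniqueness of the twistorial description of integrable and co-integrable almost null structures guarantees that any such foliation associated to an eigenspinor of $\sigma_{ab}$ must arise this way.
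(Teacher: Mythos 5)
Your overall strategy is the same as the paper's: decompose the parallel tractor $3$-form $\Sigma_{\mc{A}\mc{B}\mc{C}}$ via the injectors $X_\mc{A}$, $Y_\mc{A}$, $Z_\mc{A}^a$, express the pairing \eqref{eq-Kerr-variety-O} in terms of $(\omega^{\mbf{A}}, \pi^{\mbf{A}})$ using \eqref{eq-Zompi} and the bundle version of \eqref{eq-Gam2gam-odd}, impose the incidence relation \eqref{eq-incidence_relation-spinor_odd} together with the integrated CKY data \eqref{eq-CKY-integrated}, and verify that what survives is exactly the defining system \eqref{eq-sectionF-CKY2} of $\Gamma_{\sigma,\mu}$; the conclusion is then delegated to Theorems~\ref{thm-odd-Kerr-theorem-II} and~\ref{thm-even-Kerr-theorem}. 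This is the route the paper takes.

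However, your expansion of $\Sigma_{\mc{A}\mc{B}\mc{C}}$ has the roles of $X_\mc{A}$ and $Y_\mc{A}$ interchanged in the $\sigma$- and $\rho$-slots: you write $3 X_{[\mc{A}} Z_\mc{B}^a Z_{\mc{C}]}^b \sigma_{ab} + \cdots + 3 Y_{[\mc{A}} Z_\mc{B}^a Z_{\mc{C}]}^b \rho_{ab}$, whereas the correct decomposition is $3 Y_{[\mc{A}} Z_\mc{B}^b Z_{\mc{C}]}^c \sigma_{bc} + \cdots + 3 X_{[\mc{A}} Z_\mc{B}^b Z_{\mc{C}]}^c \rho_{bc}$. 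This is not an innocuous relabelling: in \eqref{eq-Gam2gam-odd}, $X_\mc{A}$ is attached to the injectors $I_{\bm{\upalpha}}^{\mbf{A}} I^{\bm{\upbeta}}_{\mbf{A}}$ and $Y_\mc{A}$ to $O_{\bm{\upalpha}}^{\mbf{A}} O^{\bm{\upbeta}}_{\mbf{A}}$; since $X^\mc{A} Y_\mc{A}=1$ while $X^\mc{A} X_\mc{A}=Y^\mc{A} Y_\mc{A}=0$, it is the $Y^{[\mc{A}}\cdots\sigma$-slot of $\Sigma^{\mc{A}\mc{B}\mc{C}}$ that contracts against the $X_\mc{A} I\cdots I$ pieces of $\Gamma^{(m+1+\epsilon)}$, and hence ends up paired with $\pi^{\mbf{A}}\pi^{\mbf{B}}$. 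With your ordering, the leading term of the resulting system would be $\rho^{ab}\gamma^{(m+1+\epsilon)}_{ab\cdots\mbf{AB}}\pi^{\mbf{A}}\pi^{\mbf{B}}=0$ rather than $\sigma^{ab}\gamma^{(m+1+\epsilon)}_{ab\cdots\mbf{AB}}\pi^{\mbf{A}}\pi^{\mbf{B}}=0$, which is not the eigenspinor condition for $\sigma_{ab}$ and so does not cut out $\Gamma_{\sigma,\mu}$. You can cross-check the correct ordering by verifying that $\nabla_a\Sigma_{\mc{A}\mc{B}\mc{C}}=0$, computed via \eqref{eq-progXYZ}, reproduces the prolongation system \eqref{eq-prolongation-CKY2}: with your assignment it does not.

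A smaller imprecision: you assert that the residual $\varphi$- and $\rho$-terms ``drop out upon use of the purity condition.'' They do not drop out; they are absorbed. After substituting $\omega^{\mbf{A}}=\tfrac{1}{\sqrt{2}}x^a\pi_a^{\mbf{A}}$, those terms combine with the $\mr{\sigma}$- and $\mr{\mu}$-terms, via \eqref{eq-CKY-integrated} and the Clifford identity equating $\tfrac{1}{4}(x^c\gamma_c)(\mr{\rho}_{ab}\gamma^{ab})(x^d\gamma_d)$ with $(x_a\mr{\rho}_{bc}x^c+\tfrac{1}{4}(x^cx_c)\mr{\rho}_{ab})\gamma^{ab}$, so as to reconstruct the running field values $\sigma_{ab}(x)$ and $\mu_{abc}(x)$. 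The resulting system of four equations reduces directly to the two defining equations of $\Gamma_{\sigma,\mu}$ without any further appeal to the purity of $\pi^{\mbf{A}}$.
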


\begin{proof}We focus on the odd-dimensional case only, and leave the even-dimensional case to the reader. Let us write
\begin{gather*}
\Sigma _{\mc{A} \mc{B} \mc{C}} = 3 Y _{[\mc{A}} Z _\mc{B}^b Z _{\mc{C}]}^c \sigma_{bc} + \big( Z _\mc{A}^a Z _\mc{B}^b Z _\mc{C}^c \mu_{abc} + 6 X _{[\mc{A}} Y _\mc{B} Z _{\mc{C}]}^c \varphi_c \big) + 3 X _{[\mc{A}} Z _\mc{B}^b Z _{\mc{C}]}^c \rho_{bc} .
\end{gather*}
Since $\Sigma _{\mc{A} \mc{B} \mc{C}}$ is constant, we can substitute the f\/ields for their constants of integration at the origin, $\mr{\sigma} \ind{_{a b}}$, $\mr{\mu} \ind{_{abc}}$, $\mr{\varphi} \ind{_a}$ and $\mr{\rho} \ind{_{a b}}$, so that using~\eqref{eq-Zompi} we can re-express~\eqref{eq-Kerr-variety-O} as
\begin{gather*}
0 = - 3 \sqrt{2} \mr{\sigma}^{ab} \gamma \ind*{^{(m+1)}_{abd_4 \ldots d_{m+2}}_{\mbf{AB}}} \pi^\mbf{A} \pi^\mbf{B} + 2 \mr{\mu}^{abc} \gamma \ind*{^{(m+2)}_{abcd_4 \ldots d_{m+2}\mbf{AB}}} \omega^\mbf{A} \pi^\mbf{B} - 12 \mr{\varphi}^a \gamma \ind*{^{(m)}_{ad_4 \ldots d_{m+2}\mbf{AB}}} \omega^\mbf{A} \pi^\mbf{B} \\
 \hphantom{0=}{}+ 3 \sqrt{2} \mr{\rho}^{ab} \gamma \ind*{^{(m+1)}_{abd_4 \ldots d_{m+2}\mbf{AB}}} \omega^\mbf{A} \omega^\mbf{B} , \\
0 = \sqrt{2} \mr{\mu}^{abc} \gamma \ind*{^{(m+1)}_{abcd_4 \ldots d_{m+1}}_{\mbf{AB}}} \pi^\mbf{A} \pi^\mbf{B} - 6 \mr{\rho}^{ab} \gamma \ind*{^{(m)}_{abd_4 \ldots d_{m+1} \mbf{AB}}} \omega^\mbf{A} \pi^\mbf{B} , \\
0 = - \sqrt{2} \mr{\mu}^{abc} \gamma \ind*{^{(m+1)}_{abcd_4 \ldots d_{m+1}}_{\mbf{AB}}} \omega^\mbf{A} \omega^\mbf{B} + 6 \mr{\sigma}^{ab} \gamma \ind*{^{(m)}_{abd_4 \ldots d_{m+1} \mbf{AB}}} \omega^\mbf{A} \pi^\mbf{B} , \\
0 = 2 \mr{\mu}^{abc} \gamma \ind*{^{(m)}_{abcd_4 \ldots d_m}_{\mbf{AB}}} \omega^\mbf{A} \pi^\mbf{B}.
\end{gather*}
Evaluating this system of equations on the intersection of \eqref{eq-Kerr-variety-O} and $\widehat{\mc{U}}$ amounts to setting $\omega^\mbf{A} = \frac{1}{\sqrt{2}} x^a \gamma \ind{_a_{\mbf{B}}^{\mbf{A}}} \pi^\mbf{B}$, and we f\/ind, after some algebraic manipulations,
\begin{gather*}
0 = - 3 \sqrt{2} \big( \sigma^{ab} \gamma \ind*{^{(m+1)}_{abd_4 \ldots d_{m+2}}_{\mbf{AB}}} \pi^\mbf{A} \pi^\mbf{B} \big)
+ \sqrt{2} (m-1) \big( x_{[d_4|} \mu^{abc} \gamma \ind*{^{(m+1)}_{abc|d_5 \ldots d_{m+2}]\mbf{AB}}} \pi^\mbf{A} \pi^\mbf{B} \big) , \\
0 = \sqrt{2} \mu^{abc} \gamma \ind*{^{(m+1)}_{abcd_4 \ldots d_{m+1}}_{\mbf{AB}}} \pi^\mbf{A} \pi^\mbf{B} , \\
0 = - \frac{(x^e x_e)}{\sqrt{2}} \mu^{abc} \gamma \ind*{^{(m+1)}_{abcd_4 \ldots d_{m+1}}_{\mbf{AB}}} \pi^\mbf{A} \pi^\mbf{B} + 3 \sqrt{2} \sigma^{ab} x^c \gamma \ind*{^{(m+1)}_{abcd_4 \ldots d_{m+1} \mbf{AB}}} \pi^\mbf{B} \pi^\mbf{B} \\
\hphantom{0=}{} + \sqrt{2} (m-2) x_{[d_4|} \mu^{abc} x^f \gamma \ind*{^{(m+1)}_{abcf|d_5 \ldots d_{m+1}]}_{\mbf{AB}}} \pi^\mbf{B} \pi^\mbf{B} , \\
0 = \sqrt{2} \mu^{abc} x^d \gamma \ind*{^{(m+1)}_{abcde_5 \ldots e_{m+1}}_{\mbf{AB}}} \pi^\mbf{A} \pi^\mbf{B} ,
\end{gather*}
where we have made use of \eqref{eq-CKY-integrated} and the identity
\begin{gather*}
\tfrac{1}{4} \big( x^c \gamma \ind{_c_{\mbf{C}}^{\mbf{A}}} \big) \big(\mr{\rho}_{ab} \gamma \ind{^{ab}_{\mbf{A}}^{\mbf{B}}} \big) \big( x^d \gamma \ind{_d_{\mbf{B}}^{\mbf{D}}} \big) = \big( x_{a} \mr{\rho}_{bc} x^c + \tfrac{1}{4} (x^c x_c) \mr{\rho}_{ab} \big) \gamma \ind{^{ab}_{\mbf{C}}^{\mbf{D}}} .
\end{gather*}
In particular, we immediately recover, that on the intersection of the twistor submanifold \eqref{eq-Kerr-variety-O} with $\widehat{\mc{U}}$,
\begin{gather*}
\sigma^{ab} \gamma \ind*{^{(m+1)}_{abc_3 \ldots c_{m+1}}_{\mbf{AB}}} \pi^{\mbf{A}} \pi^{\mbf{B}} = 0 , \qquad \mu \ind{^{abc}} \gamma \ind*{^{(m+1)}_{abcd_4 \ldots d_{m+1}}_{\mbf{AB}}} \pi^{\mbf{A}} \pi^{\mbf{B}} = 0 .
\end{gather*}
But these are precisely the zero set \eqref{eq-sectionF-CKY2} corresponding to the eigenspinors of $\sigma_{ab}$.
\end{proof}

\begin{Remark}In three dimensions, the twistor submanifold is simply a smooth quadric in $\PT \cong \CP^3$.

In four dimensions, the submanifold~\eqref{eq-Kerr-variety-O} restricts to an anti-self-dual tractor $3$-form $\Sigma^- _{\mc{A} \mc{B} \mc{C}}$ corresponding to a self-dual CKY $2$-form $\sigma_{ab}$. Setting $\Sigma^-_{{\bm{\upalpha}} {\bm{\upbeta}}} := \Sigma^- _{\mc{A} \mc{B} \mc{C}} \Gamma \ind{^{\mc{A} \mc{B} \mc{C}}_{{\bm{\upalpha}} {\bm{\upbeta}}}}$, we recover the quadratic polynomial $\Sigma^-_{{\bm{\upalpha}} {\bm{\upbeta}}} Z \ind{^{\bm{\upalpha}}} Z \ind{^{\bm{\upbeta}}} = 0$ given in \cite{Penrose1986}. Under appropriate reality conditions, this submanifold produces a~shearfree congruence of null geodesics in Minkowski space known as the \emph{Kerr congruence}. A~suitable perturbation of Minkowski space by the generator of such a~congruence leads to the solution of Einstein's equations known as the \emph{Kerr metric}~\cite{Kerr1963,Kerr2009}. A~Euclidean analogue is also given in~\cite{Salamon2009}.

In six dimensions, we have a splitting of $\mu_{abc} = \mu^+_{abc} + \mu^-_{abc}$ into a self-dual part and an anti-self-dual part. Since $\xi^{a\mbf{A}} \xi^{b\mbf{B}} \xi^{c\mbf{C}} \mr{\mu}^+_{abc} = 0$ for any $\xi^{\mbf{A}'}$, the obstruction to the integrability of a positive eigenspinor of a generic CKY $2$-form $\sigma_{ab}$ is the anti-self-dual part $\mu^-_{abc}$ of $\mu_{abc}$.
\end{Remark}

\section{Curved spaces}\label{sec-curved}
Let $\mc{M}$ be a complex manifold equipped with a holomorphic non-degenerate symmetric bi\-li\-near form $g_{ab}$. The pair $(\mc{M},g_{ab})$ will be referred to as a \emph{complex Riemannian manifold}. We assume that $\mc{M}$ is equipped with a holomorphic complex orientation and a holomorphic spin structure. We may also assume that one merely has a~holomorphic conformal structure rather than a~metric one. For def\/initeness, we set $n=2m+1$ as the dimension of~$\mc{M}$. The analogue of the correspondence space~$\F$ is the projective pure spinor bundle $\nu\colon \mc{F} \rightarrow \mc{M}$: for any $x \in \mc{M}$, a~point~$p$ in a f\/iber $\nu^{-1}(x)$ is a~totally null $m$-plane in $\Tgt_x \mc{M}$, and sections of $\mc{F}$ are almost null structures on $\mc{M}$. To def\/ine the twistor space of $(\mc{M},g_{ab})$, one must replace the notion of $\gamma$-plane by that of \emph{$\gamma$-surface}, i.e., an $m$-dimensional complex submanifold of~$\mc{M}$ such that at any point of such a surface, its tangent space is totally null with respect to the metric and totally geodetic with respect to the metric connection. The integrability condition for the existence of a~$\gamma$-surface~$\mc{N}$ through a~point~$x$ is~\cite{Taghavi-Chabert2013}
\begin{gather}\label{eq-int-cond-NS}
C_{abcd} X^a Y^b Z^c W^d = 0 , \qquad \text{for all} \quad X^a, Y^a, Z^c \in \Tgt_x \mc{N}, \quad W^a \in \Tgt_x \mc{N}.
\end{gather}
If we def\/ine the twistor space of $(\mc{M},g_{ab})$ to be the $\frac{1}{2}(m+1)(m+2)$-dimensional complex manifold parametrising the $\gamma$-surfaces of $(\mc{M},g_{ab})$, we must have a $\frac{1}{2}m(m+1)$-parameter family of $\gamma$-surfaces through each point of $\mc{M}$. From the integrability condition \eqref{eq-int-cond-NS}, we must conclude that for the twistor space of $(\mc{M},g_{ab})$ to exist, $(\mc{M},g_{ab})$ must be conformally f\/lat in odd dimensions greater than three. In even dimensions the story is similar: one replaces the notion of $\alpha$-plane by that of an $\alpha$-surface in the obvious way. We then f\/ind that for $(\mc{M},g_{ab})$ to admit a twistor space, it must be conformally f\/lat in even dimensions greater than four, and anti-self-dual in dimension four.

Curved twistor theory in dimensions three and four is pretty well-known. In dimension four, we have the \emph{Penrose correspondence}, whereby twistor space is a three-dimensional complex manifold containing a complete analytic family of rational curves with normal bundle $\mc{O}(1)\oplus \mc{O}(1)$ parameterised by the points of an anti-self-dual complex Riemannian manifold \cite{Penrose1976}. In dimension three, the \emph{LeBrun correspondence} can be seen as a special case of the Penrose correspondence: if we endow twistor space with a holomorphic `twisted' contact structure, then a three-dimensional conformal manifold arises as the umbilic conformal inf\/inity of an Einstein anti-self-dual four-dimensional complex Riemannian manifold \cite{LeBrun1982}. Finally, the mini-twistor space in the \emph{Hitchin correspondence} is a two-dimensional complex manifold containing a complete analytic family of rational curves with normal bundle $\mc{O}(2)$ parameterised by the points of an Einstein--Weyl space~\mbox{\cite{Hitchin1982a,Jones1985}}.

Theorems \ref{thm-odd-Kerr-theorem-I} (or \ref{thm-odd-Kerr-theorem-II}), \ref{thm-odd-Kerr-theorem-NC} and \ref{thm-even-Kerr-theorem} can be adapted to the curved setting by interpreting the leaf space of a totally geodetic null foliation as a complex submanifold of twistor space. See~\cite{Calderbank2000} for an application of a `curved' Theorem \ref{thm-odd-Kerr-theorem-NC} in the investigation of three-dimensional Einstein--Weyl spaces.

\appendix
\section{Coordinate charts on twistor space and correspondence space}\label{app-cover}
In this appendix, we construct atlases of coordinates charts covering $\PT$ and $\F$. We refer to the setup of Section~\ref{sec-geo-bkgd} throughout. In particular, we work with the splittings \eqref{eq-split-V}, \eqref{eq-S->S1/2-odd} and \eqref{eq-S->S1/2-even}.

\subsection{Odd dimensions}\label{sec-affine-odd}
Let us introduce a splitting of $\V_0$ as
\begin{gather}\label{eq-split-WW*U}
\V_0 \cong \mbb{W} \oplus \mbb{W}^* \oplus \mbb{U} ,
\end{gather}
where $\mbb{W} \cong \C^m$ is a totally null $m$-plane of $(\V_0,g_{ab})$, and $\mbb{U} \cong \C$ is the one-dimensional complement of $\mbb{W}\oplus \mbb{W}^*$ in $\V_0$. Elements of $\mbb{W}$ and $\mbb{W}^*$ will carry upstairs and downstairs upper-case Roman indices respectively, i.e., $V^A \in \mbb{W}$, and $W _A \in \mbb{W}^*$. The vector subspace $\mbb{U}$ will be spanned by a unit vector $u^a$. Denote by $\delta^{aA}$ the injector from $\mbb{W}^*$ to $\V_0$, and $\delta^a_A$ the injector from $\mbb{W}$ to $\V_0$ satisfying $\delta_a^A \delta^a_B = \delta_B^A$, where $\delta_B^A$ is the identity on $\mbb{W}$ and $\mbb{W}^*$. We shall think of $\{ \delta^{aA} \}$ as a basis for $\mbb{W}$ with dual basis $\{ \delta^a_A \}$ for $\mbb{W}^*$. The splitting~\eqref{eq-split-WW*U} allows us to identify the two copies $\Ss_{\pm\frac{1}{2}}$ of the spinor space of $(\V_0 ,g_{ab})$ with its Fock representation, i.e.,
\begin{gather*}
\Ss_{\pm\frac{1}{2}} \cong \wedge^m \mbb{W} \oplus \wedge^{m-1} \mbb{W} \oplus \cdots \oplus \mbb{W} \oplus \C .
\end{gather*}
This is essentially the strategy adopted in Section~\ref{sec-tw-sp} for the spinors of $\Spin(2m+3,\C)$. To realise it explicitly, we proceed as follows: let $o^{\mbf{A}}$ be a (pure) spinor annihilating $\mbb{W}$ so that $o^{\mbf{A}}$ is a spanning element of $\wedge^m \mbb{W}$. A (Fock) basis for $\Ss_{\pm\frac{1}{2}}$ can then be produced by acting on $o^{\mbf{A}}$ by basis elements of $\wedge^\bullet \mbb{W}^*$, i.e.,
\begin{gather}\label{eq-spin-basis-odd}
\Ss_{\pm\frac{1}{2}} = \big\langle o^{\mbf{A}} , \delta^{\mbf{A}}_{A_1} , \delta^{\mbf{A}}_{A_1A_2} , \ldots \big\rangle ,
\end{gather}
where
\begin{gather*} \delta^{\mbf{A}}_{A_1 \ldots A_k} := \delta_{[A_1}^{a_1} \cdots \delta_{A_k]}^{a_k} o^{\mbf{A_0}} \gamma \ind{_{a_1}_{\mbf{A_0}}^{\mbf{A_1}}} \cdots \gamma \ind{_{a_k}_{\mbf{A_{k-1}}}^{\mbf{A}}} ,
\end{gather*}
for each $k=1, \ldots , m$. With this notation, the Clif\/ford multiplication of $\V_0 \subset \Cl(\V_0 ,g_{ab})$ on~$\Ss_{-\frac{1}{2}}$ is given explicitly by
\begin{alignat}{3}
& \delta^{aA} \gamma \ind{_a_{\mbf{B}}^{\mbf{C}}} \delta \ind*{^{\mbf{B}}_{B_1 \ldots B_p}} = -2 p \delta \ind*{^{\mbf{C}}_{[B_1 \ldots B_{p-1}}} \delta \ind*{^{A}_{{B_p}]}} , \qquad && \delta^a_{A} \gamma \ind{_a_{\mbf{B}}^{\mbf{C}}} \delta \ind*{^{\mbf{B}}_{B_1 \ldots B_p}} = \delta \ind*{^{\mbf{C}}_{B_1 \ldots B_{p}A}} , &\nonumber \\
& u^a \gamma \ind{_a_{\mbf{B}}^{\mbf{C}}} o^{\mbf{B}} = \ii o^{\mbf{C}} , \qquad &&
u^a \gamma \ind{_a_{\mbf{B}}^{\mbf{C}}} \delta \ind*{^{\mbf{B}}_{B_1 \ldots B_p}} = (-1)^p \ii \delta \ind*{^{\mbf{C}}_{B_1 \ldots B_p}} .& \label{eq-Clifford-multiplication-odd}
\end{alignat}
An arbitrary spinor $\pi^{\mbf{A}}$ in $\Ss_{\frac{1}{2}}$ can then be expressed in the Fock basis~\eqref{eq-spin-basis-odd} as
\begin{gather}
\pi^{\mbf{A}} = \pi^0 o^{\mbf{A}}
+ \sum_{k=1}^{[m/2]}\left(- \frac{1}{4}\right)^k \frac{1}{k!} \pi^{A_1\ldots A_{2k}} \delta_{A_1\ldots A_{2k}}^{\mbf{A}}\nonumber\\
\hphantom{\pi^{\mbf{A}} =}{} + \frac{\ii}{2} \sum_{k=0}^{[m/2]}\left(- \frac{1}{4}\right)^k \frac{1}{k!} \pi^{A_1\ldots A_{2k+1}} \delta_{A_1\ldots A_{2k+1}}^{\mbf{A}} , \qquad m > 1 , \nonumber\\
\pi^{\mbf{A}} = \pi^0 o^{\mbf{A}} + \tfrac{\ii}{2} \pi^{A} \delta_{A}^{\mbf{A}} , \qquad m = 1 ,\label{eq-hom2gen-pi_odd}
\end{gather}
where $\left[\frac{m}{2}\right]$ is $\frac{m}{2}$ when $m$ is even, $\frac{m-1}{2}$ when $m$ is odd, and $\pi^0$ and $\pi^{A_1A_2\ldots A_{k}} = \pi^{[A_1A_2\ldots A_{k}]}$ are the components of $\pi^\mbf{A}$. Let us now assume that $\pi^{\mbf{A}}$ is pure, i.e., satisf\/ies~\eqref{eq-pi-pure}. When $m=1$ and $2$, there are no algebraic constraints, and the space of projective pure spinors is isomorphic to~$\CP^1$ and~$\CP^3$ respectively. When $m>2$, the pure spinor variety is then given by the complete intersections of the quadric hypersurfaces
\begin{alignat}{3}
& \pi^0 \pi^{A_1 A_2 \ldots A_{2k+1}} = \pi^{[A_1} \pi^{A_2 \ldots A_{2k+1}]} , \qquad & & k=1, \ldots , [m/2] ,&\nonumber \\
& \pi^0 \pi^{A_1 A_2 A_3 \ldots A_{2k}} = \pi^{[A_1 A_2} \pi^{A_3 \ldots A_{2k}]} , \qquad & & k=1, \ldots , [m/2] ,&\label{eq-purity-cond-coor-odd}
\end{alignat}
in $\C \Pp^{2^m-1}$. We can therefore cover a f\/ibre of $\F$ with $2^m$ open subsets $\mc{U}_0$, $\mc{U}_{A_1 \ldots A_{k}}$, where $\pi^0 \neq 0$ on $\mc{U}_0$ and $\pi^{A_1 \ldots A_{k}} \neq 0$ on $\mc{U}_{A_1 \ldots A_{k}}$, and thus obtain $2^m$ coordinate charts in the obvious way. This induces an atlas of charts on $\F_{\C\E^n}$ given by the open subsets $\C \E^n \times \mc{U}_0$, $\C \E^n \times \mc{U}_{A_1 \ldots A_{k}}$.

Let us now write the spinor $\omega^\mbf{A}$ in $\Ss_{-\frac{1}{2}}$ in the Fock basis as
\begin{gather}
\omega^{\mbf{A}} = \frac{\ii}{\sqrt{2}} \omega^0 o^{\mbf{A}} + \frac{1}{\sqrt{2}} \omega^A \delta_A^{\mbf{A}} , \qquad m = 1 ,\nonumber \\
\omega^{\mbf{A}} = \frac{\ii}{\sqrt{2}} \omega^0 o^{\mbf{A}}
+ \frac{\ii}{2\sqrt{2}} \sum_{k=1}^{[m/2]}\left(- \frac{1}{4}\right)^{k-1} \frac{1}{(k-1)!} \omega^{A_1\ldots A_{2k}} \delta_{A_1A_2\ldots A_{2k}}^{\mbf{A}}
\nonumber\\
\hphantom{\omega^{\mbf{A}} =}{} + \frac{1}{\sqrt{2}}\sum_{k=0}^{[m/2]}\left(- \frac{1}{4}\right)^k \frac{1}{k!} \omega^{A_1 \ldots A_{2k+1}} \delta_{A_1 \ldots A_{2k+1}}^{\mbf{A}} , \qquad m > 1 ,\label{eq-hom2gen-om_odd}
\end{gather}
where $\omega^0$ and $\omega^{A_1A_2\ldots A_{k}} = \omega^{[A_1A_2\ldots A_{k}]}$ are the components of~$\omega^\mbf{A}$. The condition for $Z^{\bm{\upalpha}}=(\omega^\mbf{A}, \pi^\mbf{A})$ to be pure, so that~\eqref{eq-Z-pure} hold, is that the relations
\begin{gather*}
\pi^0 \omega^{A_1 \ldots A_{2k-1} A_{2k}} = \pi^{[A_1 \ldots A_{2k-1}} \omega^{A_{2k}]} - \tfrac{1}{2k} \pi^{A_1 \ldots A_{2k}} \omega^0 , \\
\pi^0 \omega^{A_1 \ldots A_{2k} A_{2k+1}} = \pi ^{[A_1 \ldots A_{2k}} \omega^{A_{2k+1}]} ,
\end{gather*}
hold for $k\geq1$ when $m>1$, and that \eqref{eq-purity-cond-coor-odd} hold too when $m>2$. Hence, we can cover $\PT_{\setminus \widehat{\infty}}$ with $2^m$ open subsets $\mc{V}_0$, where $\pi^0 \neq 0$, and $\mc{V}_{A_1 \ldots A_{k}}$ where $\pi^{A_1 \ldots A_{k}} \neq 0$ in the obvious way. Coordinates on the complement $\widehat{\infty}$ parametrised by $[\omega^\mbf{A},0]$ satisfy the conditions
\begin{gather*}
\omega^0 \omega^{A_1 \ldots A_{2k} A_{2k+1}}= - 2k \omega^{[A_1 \ldots A_{2k}} \omega^{A_{2k+1}]},\qquad \omega^{[A_1 \ldots A_{2k-1}} \omega^{A_{2k}]} = 0 .
\end{gather*}

Let $( z^A,z_A, u )$ be null coordinates on $\C \E^n$ in the sense that $x^a = z^A \delta^a_A + z_A \delta^{aA} + u u^a$ so that the f\/lat metric on $\C \E^n$ takes the form $\bm{g} = 2 \dd z^{A} \odot \dd z_{A} + \dd u \otimes \dd u$. Then the incidence rela\-tion~\eqref{eq-incidence_relation-spinor_odd} reads
\begin{gather*}
\omega^0 = \pi^0 u - \pi^B z_B , \\
\omega^A = \pi^0 z^A + \pi^{AB} z_B + \tfrac{1}{2} \pi^A u , \\
\omega^{A_1 \ldots A_{2k-1} A_{2k}} = \pi^{[A_1 \ldots A_{2k-1}} z^{A_{2k}]} + \tfrac{4k+2}{4k} \pi^{A_1 \ldots A_{2k-1} A_{2k} A_{2k+1}} z_{A_{2k+1}} - \tfrac{1}{2k} \pi^{A_1 \ldots A_{2k}} u , \\
\omega^{A_1 \ldots A_{2k} A_{2k+1}} = \pi^{[A_1 \ldots A_{2k}} z^{A_{2k+1}]} + \pi^{A_1 \ldots A_{2k} A_{2k+1} A_{2k+2}} z_{A_{2k+2}} + \tfrac{1}{2} \pi^{A_1 \ldots A_{2k+1}} u .
\end{gather*}

We now work in the chart $\mc{U}_0$, and since $\pi^0 \neq 0$ there, we can set with no loss of ge\-ne\-rality \mbox{$\pi^0=1$}. Let $(x,\pi)$ be a point in $\F_{\C\E^n}$ and let $(\mc{U}_0,(\pi^A,\pi^{AB}))$ be a coordinate chart containing $\pi \in \F_x$. Let $(\omega,\pi)$ be the image of $(x,\pi)$ under the projection $\mu\colon \F \rightarrow \PT$ so that $(\mc{V}_0 , (\omega^0,\omega^A,\pi^A,\pi^{AB}))$ is a~coordinate chart containing~$(\omega,\pi)$. Then, in these charts, \eqref{eq-hom2gen-om_odd}~and~\eqref{eq-hom2gen-pi_odd} reduce to
\begin{subequations}\label{eq-hom2aff_odd}
\begin{gather}
\omega^\mbf{A} = \tfrac{1}{\sqrt{2}} \big( \ii \omega^0 o^\mbf{A}+ \omega^{A} \delta_{A}^\mbf{A} - \tfrac{\ii}{4} \big( \pi^{AB} \omega^0 - 2 \pi^{A} \omega^{B} \big) \delta_{AB}^\mbf{A} + \cdots \big) , \label{eq-hom2aff_odd-om}\\
\pi^\mbf{A} = o^\mbf{A} + \tfrac{\ii}{2} \pi^A \delta_A^\mbf{A} - \tfrac{1}{4} \pi^{AB} \delta_{AB}^\mbf{A} + \cdots .\label{eq-hom2aff_odd-pi}
\end{gather}
\end{subequations}
More succinctly, $\pi^\mbf{A} = \exp ( - \frac{1}{4} \pi^{ab} \gamma \ind{_{ab}_{\mbf{B}}^{\mbf{A}}} ) o \ind{^{\mbf{B}}}$, where $\pi^{ab} = \pi^{AB} \delta^a_A \delta^b_B + 2 \pi^A \delta^{[a}_A u^{b]}$ belongs to the complement of the stabiliser of $o^{\mbf{A}}$ in $\so(\V_0,g_{ab})$, i.e., $(\pi^A,\pi^{AB})$ are coordinates on a dense open subset of the homogeneous space $P/Q$. We can also rewrite $\omega^\mbf{A}$ more compactly in the two alternative forms{\samepage
\begin{gather*}
\omega^\mbf{A} = \tfrac{1}{\sqrt{2}} \big( \omega^A \delta^a_A + \tfrac{1}{2} \omega^0 u^a \big) \pi \ind*{_a^{\mbf{A}}} + \tfrac{\ii}{2\sqrt{2}} \omega^0 \pi^\mbf{A} , \\
\omega^\mbf{A} = \tfrac{1}{\sqrt{2}} \omega^a \pi \ind*{_a^{\mbf{A}}} , \qquad \text{where} \quad \omega^a := \big( \omega^A - \tfrac{1}{2} \omega^0 \pi^A \big) \delta \ind*{_A^a} + \omega^0 u^a ,
\end{gather*}
from which it is easy to check that $\pi^\mbf{A}$ and $\omega^\mbf{A}$ indeed satisfy the conditions given in Lem\-ma~\ref{lem-pure-ompi}.}

Finally, in the coordinate chart $(\C \E^n \times \mc{U}_0 , ( z^A,z_A, u ; \pi^A, \pi^{AB}))$, we have
\begin{gather*}
x^a \pi _a^\mbf{A} = \ii \big( u - \pi^B z _B \big) o^\mbf{A} + \big( z^B + \pi^{BC} z_C + \tfrac{1}{2} u \pi^B \big) \delta_B^\mbf{A} + \cdots ,
\end{gather*}
so that the incidence relation \eqref{eq-incidence_relation-spinor_odd} reduces to
\begin{gather}\label{eq-omega-coordinates-odd}
\omega^A = z^A + \pi^{AB} z_B + \tfrac{1}{2} \pi^A u , \qquad \omega^0 = u - \pi^B z_B .
\end{gather}

{\bf Tangent and cotangent spaces.} Let us introduce the short-hand notation
\begin{gather*}
\partial_{A} := \parderv{}{z^{A}} = \delta^a_A \nabla_a , \qquad \partial^{A} := \parderv{}{z_{A}} = \delta^{aA} \nabla_a , \qquad \partial := \parderv{}{u} = u^a \nabla_a ,
\end{gather*}
so that $\Tgt_{(x,\pi)} \mc{Q}^n \cong \mfp_{-1} = \langle \partial_{A} , \partial^{A} , \partial \rangle$, and def\/ine $1$-forms
\begin{gather}\label{eq-form-PT}
\bm{\alpha}^{A} := \dd \omega^{A} + \tfrac{1}{2} \pi^{A} \dd \omega^0 - \tfrac{1}{2} \omega^0 \dd \pi^{A} , \qquad \bm{\alpha}^{AB} := \dd \pi^{AB} - \pi^{[A} \dd \pi^{B]} ,
\end{gather}
and vectors
\begin{alignat}{3}
& \bm{X}_{A} := \parderv{}{\omega^{A}} , \qquad && \bm{X}_{AB} := \parderv{}{\pi^{AB}} ,& \nonumber\\
 & \bm{Y} := \parderv{}{\omega^0} - \frac{1}{2} \pi^{C} \parderv{}{\omega^C} , \qquad && \bm{Y}_{A} := \parderv{}{\pi^{A}} - \pi^{B} \parderv{}{\pi^{AB}} + \frac{1}{2} \omega^0 \parderv{}{\omega^A} .& \label{eq-vec-PT}
\end{alignat}
Then bases for the cotangent and tangent spaces of $\PT$ at $(\omega,\pi)$ are given by
\begin{gather*}
\Tgt^*_{(\omega,\pi)} \PT \cong \mfr^*_1 \oplus \mfr^*_2 = \big\langle \dd \omega^0, \dd \pi^A \big\rangle \oplus \big\langle \bm{\alpha}^{A} , \bm{\alpha}^{AB} \big\rangle , \\
 \Tgt_{(\omega,\pi)} \PT \cong \mfr_{-2} \oplus \mfr_{-1}
 = \big\langle \bm{X}_{A} \bm{X}_{AB} \big\rangle \oplus \big\langle \bm{Y} , \bm{Y}_{A} \big\rangle ,
\end{gather*}
respectively.

\begin{Remark}Using~\eqref{eq-hom2aff_odd}, one can check that the expressions for the set \eqref{eq-form-PT} of $\frac{1}{2}m(m+1)$ $1$-forms are none other than the $1$-forms \eqref{eq-hi-contact-ompi}, and thus \eqref{eq-hi-contact}. These forms annihilate the rank-$(m+1)$ canonical distribution $\mathrm{D}$ on $\PT$ spanned by $\bm{Y}$ and $\bm{Y}_{A}$. Further, the vector $\bm{Y}$ clearly coincides with~\eqref{eq-Infinity-Vec-field} to describe mini-twistor space -- this can be checked by using transformations \eqref{eq-hom2aff_odd}.
\end{Remark}

Now, def\/ine the $1$-forms and vectors
\begin{alignat*}{3}
& \bm{\theta}^A := \dd z^A + \big( \pi^{AD} - \tfrac{1}{2} \pi^{A} \pi^D \big) \dd z_D + \pi^A \dd u ,\qquad && \bm{\theta}^0 := \dd u - \pi^C \dd z_C , &\\
& \bm{Z}^{A} := \partial ^{A} + \big( \pi^{AD} - \tfrac{1}{2} \pi^{A} \pi^D \big) \partial _D + \pi^{A} \partial , \qquad &&
\bm{U} := \partial - \pi^D \partial _D ,& \\
 & \bm{W}_{A} := \parderv{}{\pi^{A}} - \pi^{B} \parderv{}{\pi^{AB}} .&&&
\end{alignat*}
Then bases for the cotangent and tangent spaces of $\F$ at $(x,\pi)$ are given by
\begin{subequations}
\begin{gather*}
\Tgt^*_{(x,\pi)} \F \cong {\mfq^*_1}^E \oplus {\mfq_1^*}^F \oplus {\mfq_2^*}^E \oplus {\mfq_2^*}^F \oplus \mfq_3^*
= \langle \dd z_A \rangle \oplus \big\langle \dd \pi^A \big\rangle \oplus \langle \bm{\theta}^0 \rangle \oplus \big\langle \bm{\alpha}^{AB} \big\rangle \oplus \big\langle \bm{\theta}^A \big\rangle , \\ %\label{eq-coT-F}
\Tgt_{(x,\pi)} \F \cong \mfq_{-3} \oplus \mfq_{-2}^F \oplus \mfq_{-2}^E \oplus \mfq_{-1}^F \oplus \mfq_{-1}^E
= \langle \partial_{A} \rangle \oplus \langle \bm{X}_{AB} \rangle \oplus \langle \bm{U} \rangle \oplus \langle \bm{W}_{A} \rangle \oplus \big\langle \bm{Z}^{A} \big\rangle , %\label{eq-T-F}
\end{gather*}
\end{subequations}
respectively.

We note that the coordinates $(\omega^0,\omega^A,\pi^A,\pi^{AB})$ on $\mc{V}_0$ are indeed annihilated by the vectors $\bm{Z}^A$ tangent to the f\/ibres of $\F \rightarrow \PT$. Further, the pullback of $\bm{\alpha}^A$ to $\F$ is given by $\mu^* (\bm{\alpha}^A ) = \bm{\alpha}^{AB} z_B + \bm{\theta}^A$, i.e., the annihilator of $\Drm = \Tgt^{-1} \PT$ pulls back to the annihilator of~$\Tgt^{-2}_E \F$ corresponding to $\mfq_{-2}^E \oplus \mfq_{-1}^F \oplus \mfq_{-1}^E$.

{\bf Mini-twistor space.} By Lemma~\ref{lem-MT-flow}, the mini-twistor space $\M\T$ of $\C \E^n$ is the leaf space of~the vector f\/ield $\bm{Y}$ def\/ined by~\eqref{eq-Infinity-Vec-field}, given in~\eqref{eq-vec-PT} in the coordinate chart $(\mc{V}_0, (\omega^0, \omega^A, \pi^A$, $\pi^{AB}))$. Accordingly, we have a local coordinate chart $(\underline{\mc{V}}_0, ( \underline{\omega}^{A} , \pi^{AB} , \pi^{A} ))$ on $\M\T$ where
\begin{gather*}
\underline{\omega}^{A} = \omega^{A}+ \tfrac{1}{2} \pi^{A} \omega^0 ,
\end{gather*}
which can be seen to be annihilated by $\bm{Y}$. The incidence relation~\eqref{eq-incidence-co-null} or~\eqref{eq-incidence-co-null-2} can then be expressed as
\begin{gather*}
\underline{\omega}^{A} = z^{A} + \big( \pi^{AB} - \tfrac{1}{2} \pi^{A} \pi^{B} \big) z_{B}+ \pi^{A} u ,
\end{gather*}
which are indeed annihilated by $\bm{Z}^A$ and $\bm{U}$. The tangent space of $\M\T$ at a point $(\underline{\omega},\pi)$ in $\underline{\mc{V}}_0$ is clearly
\begin{gather*}
\Tgt_{(\underline{\omega},\pi)} \M\T = \langle \underline{\bm{X}}_A , \bm{X}_{AB} , \bm{W}_A \rangle , \qquad \text{where} \quad \underline{\bm{X}}_A := \parderv{}{\underline{\omega}^A} .
\end{gather*}

{\bf Normal bundle of $\hat{x}$ in $\PT_{\setminus \widehat{\infty}}$.} Let $x$ be a point in $\C \E^n$. In the chart $(\mc{V}_0, (\omega^0, \omega^A, \pi^A, \pi^{AB}))$, the corresponding $\hat{x}$ is given by~\eqref{eq-omega-coordinates-odd}. In particular, the $1$-forms
\begin{gather*}
\bm{\beta}^A (x) := \dd \omega^A - \dd \pi^{AB} z_B - \tfrac{1}{2} \dd \pi^A u , \qquad \bm{\beta}^0 (x) := \dd \omega^0 + \dd \pi^B z_B ,
\end{gather*}
vanish on $\hat{x}$, and the tangent space of $\hat{x}$ at $(\omega,\pi)$ is spanned by the vectors $\bm{Y}_A - z_A \bm{Y}$ and $\bm{X}_{AB} - z_{[A} \bm{X}_{B]}$. This distinguishes the $m$-dimensional subspace $\langle \bm{Y}_A - z_A \bm{Y}\rangle$ tangent to both $\hat{x}$ and the canonical distribution $\Drm$ at $(\omega,\pi)$.

\subsection{Even dimensions}\label{sec-affine-even}
The local description of $\F$ and $\PT$ in even dimensions can be easily derived from the one above.
We split $\V_0$ as $\V_0 \cong \mbb{W} \oplus \mbb{W}^*$ where $\mbb{W} \cong \C^m$ is a totally null $m$-plane of $(\V_0,g_{ab})$, with adapted basis $\{ \delta^{aA} , \delta^a_A \}$. The Fock representations of the irreducible spinor spaces $\Ss_{-\frac{1}{2}}$ and $\Ss'_{-\frac{1}{2}}$ on $\V_0$ are given by
\begin{gather*}
\Ss_{\frac{1}{2}} \cong \Ss'_{-\frac{1}{2}} \cong \wedge^m \mbb{W} \oplus \wedge^{m-2} \mbb{W} \oplus \cdots , \qquad
\Ss'_{\frac{1}{2}} \cong \Ss_{-\frac{1}{2}} \cong \wedge^{m-1} \mbb{W} \oplus \wedge^{m-3} \mbb{W} \oplus \cdots .
\end{gather*}
Let $o^{\mbf{A}'}$ be a (pure) spinor annihilating $\mbb{W}$. Then bases for $\Ss_{\frac{1}{2}}$ and $\Ss_{-\frac{1}{2}}$ can then be produced by acting on $o^{\mbf{A}'}$ by basis elements of $\wedge^{2k} \mbb{W}^*$ and of $\wedge^{2k-1} \mbb{W}^*$. Explicitly,
\begin{gather*}
\Ss_{\frac{1}{2}} = \big\langle o^{\mbf{A'}} , \delta^{\mbf{A'}}_{A_1A_2} , \ldots \big\rangle , \qquad
\Ss_{-\frac{1}{2}} = \big\langle \delta^\mbf{A}_{A_1} , \delta^\mbf{A}_{A_1 A_2 A_3} , \ldots \big\rangle ,
\end{gather*}
where
\begin{gather*}
\delta^{\mbf{A'}}_{A_1 \ldots A_{2k}} := \delta_{[A_1}^{a_1} \cdots \delta_{A_{2k}]}^{a_{2k}} o^{\mbf{A_0'}} \gamma \ind{_{a_1}_{\mbf{A_0'}}^{\mbf{A_1}}} \cdots \gamma \ind{_{a_{2k}}_{\mbf{A_{2k-1}}}^{\mbf{A'}}} , \\
\delta^{\mbf{A}}_{A_1 \ldots A_{2k-1}} := \delta_{[A_1}^{a_1} \cdots \delta_{A_{2k-1}]}^{a_{2k-1}} o^{\mbf{A_0'}} \gamma \ind{_{a_1}_{\mbf{A_0'}}^{\mbf{A_1}}} \cdots \gamma \ind{_{a_{2k-1}}_{\mbf{A_{2k-2}'}}^{\mbf{A}}} .
\end{gather*}
The Clif\/ford action of $\V_0 \subset \Cl(\V_0,g_{ab})$ on $\Ss_{\pm\frac{1}{2}}$ follows the same lines as \eqref{eq-Clifford-multiplication-odd} with appropriate priming of spinor indices.

Coordinate charts in even dimensions can be obtained from the odd-dimensional case by switching of\/f $\pi^{A_1 \ldots A_{k}}$ for all odd $k$, and $\omega^{A_1 \ldots A_{k}}$ for all even $k$. We therefore have a covering of each f\/ibre of $\F$ by $2^{m-1}$ open subsets $\mc{U}_0$, $\mc{U}_{A_1 \ldots A_{2k}}$, and a covering of $\PT_{\setminus \widehat{\infty}}$ by $2^{m-1}$ open subsets $\mc{V}_0$, $\mc{V}_{A_1 \ldots A_{2k}}$ in the obvious way. In particular, in $(\mc{V}_0, (\omega^A , \pi^{AB} ))$, the homogeneous coordinates $[\omega^\mbf{A},\pi^{\mbf{A}'}]$ are given by
\begin{gather*}
\omega^\mbf{A} = \tfrac{1}{\sqrt{2}} \big( \omega^A \delta_A^\mbf{A} - \tfrac{1}{4} \omega^A \pi^{BC} \delta_{ABC}^\mbf{A} + \cdots \big) , \qquad
\pi^{\mbf{A}'} = o^{\mbf{A'}} - \tfrac{1}{4} \pi^{AB} \delta_{AB}^{\mbf{A'}} + \cdots .
\end{gather*}
where the former can also be rewritten as $\omega^\mbf{A} = \frac{1}{\sqrt{2}} \omega^a \pi_a^\mbf{A}$ with $\omega^a := \omega^A \delta_A^a$. Finally, the even-dimensional version of the incidence relation~\eqref{eq-incidence_relation-spinor_odd} can be rewritten as $\omega^A = z^A + \pi^{AB} z_B$.

As for the tangent spaces of $\mc{Q}^{2m}$, its twistor space and their correspondence space, we f\/ind, in the obvious notation, $\Tgt_{(x,\pi)} \mc{Q}^n \cong \mfp_{-1} = \langle \partial_{A} , \partial^{A} , \partial \rangle$, $\Tgt_{(x,\pi)} \F \cong \mfq_{-2} \oplus \mfq_{-1}^F \oplus \mfq_{-1}^E = \langle \partial_{A} \rangle \oplus \langle \bm{X}_{AB} \rangle \oplus \langle \bm{Z}^{A} \rangle$, and $\Tgt_{(\omega,\pi)} \PT \cong \mfr_{-1} = \langle \bm{X}_{A} , \bm{X}_{AB} \rangle$, where $\bm{Z} ^{A} := \partial ^{A} + \pi^{AB} \partial _B$, $\bm{X}_{AB} := \parderv{}{\pi^{AB}}$, $\bm{X}_{A} := \parderv{}{\omega^{A}}$, and so on.

\subsection*{Acknowledgements}
The author would like to thank Boris Doubrov, Lionel Mason and Jan Slov\'{a}k for helpful discussions and comments, and the anonymous referees for their reports. He is also grateful to Luk\'{a}\v{s} Vok\v{r}\'{i}nek and Andreas \v{C}ap for clarifying some aspects of Section~\ref{sec-normal}. This work was funded by a GA\v{C}R (Czech Science Foundation) post-doctoral grant GP14-27885P.

\pdfbookmark[1]{References}{ref}
\LastPageEnding


\begin{thebibliography}{99}
\footnotesize\itemsep=0pt

\bibitem{Bailey1994}
Bailey T.N., Eastwood M.G., Gover A.R., Thomas's structure bundle for
 conformal, projective and related structures, \href{http://dx.doi.org/10.1216/rmjm/1181072333}{\textit{Rocky Mountain~J.
 Math.}} \textbf{24} (1994), 1191--1217.

\bibitem{Baird2013}
Baird P., Eastwood M., C{R} geometry and conformal foliations, \href{http://dx.doi.org/10.1007/s10455-012-9356-7}{\textit{Ann.
 Global Anal. Geom.}} \textbf{44} (2013), 73--90, \href{http://arxiv.org/abs/1011.4717}{arXiv:1011.4717}.

\bibitem{Baird1988}
Baird P., Wood J.C., Bernstein theorems for harmonic morphisms from~{${\bf
 R}^3$} and {$S^3$}, \href{http://dx.doi.org/10.1007/BF01450078}{\textit{Math. Ann.}} \textbf{280} (1988), 579--603.

\bibitem{Baird2003}
Baird P., Wood J.C., Harmonic morphisms between {R}iemannian manifolds,
 \href{http://dx.doi.org/10.1093/acprof:oso/9780198503620.001.0001}{\textit{London Mathematical Society Monographs. New Series}}, Vol.~29, The
 Clarendon Press, Oxford University Press, Oxford, 2003.

\bibitem{Baston1989}
Baston R.J., Eastwood M.G., The {P}enrose transform. Its interaction with
 representation theory, \textit{Oxford Mathematical Monographs}, The Clarendon Press,
 Oxford University Press, New York, 1989.

\bibitem{Baum2010}
Baum H., Juhl A., Conformal dif\/ferential geometry. $Q$-curvature and conformal
 holonomy, \href{http://dx.doi.org/10.1007/978-3-7643-9909-2}{\textit{Oberwolfach Seminars}}, Vol.~40, Birkh\"auser Verlag, Basel,
 2010.

\bibitem{Budinich1989}
Budinich P., Trautman A., Fock space description of simple spinors,
 \href{http://dx.doi.org/10.1063/1.528214}{\textit{J.~Math. Phys.}} \textbf{30} (1989), 2125--2131.

\bibitem{Calderbank2000}
Calderbank D.M.J., Pedersen H., Selfdual spaces with complex structures,
 {E}instein--{W}eyl geometry and geodesics, \href{http://dx.doi.org/10.5802/aif.1779}{\textit{Ann. Inst. Fourier
 (Grenoble)}} \textbf{50} (2000), 921--963, \href{http://arxiv.org/abs/math.DG/9911117}{math.DG/9911117}.


\bibitem{vCap2009}
\v{C}ap A., Slov\'ak J., Parabolic geometries. {I}.~Background and general
 theory, \href{http://dx.doi.org/10.1090/surv/154}{\textit{Mathematical Surveys and Monographs}}, Vol.~154, Amer. Math.
 Soc., Providence, RI, 2009.

\bibitem{Cartan1981}
Cartan E., The theory of spinors, Dover Publications, Inc., New York, 1981.

\bibitem{Cox1976}
Cox D., Flaherty Jr. E.J., A conventional proof of {K}err's theorem,
 \href{http://dx.doi.org/10.1007/BF01609355}{\textit{Comm. Math. Phys.}} \textbf{47} (1976), 75--79.

\bibitem{Curry2014}
Curry S., Gover A.R., An introduction to conformal geometry and tractor
 calculus, with a view to applications in general relativity,
 \href{http://arxiv.org/abs/1412.7559}{arXiv:1412.7559}.

\bibitem{Doubrov2010}
Doubrov B., Slov\'ak J., Inclusions between parabolic geometries, \href{http://dx.doi.org/10.4310/PAMQ.2010.v6.n3.a7}{\textit{Pure
 Appl. Math.~Q.}} \textbf{6} (2010), 755--780, \href{http://arxiv.org/abs/0807.3360}{arXiv:0807.3360}.

\bibitem{Eells1985}
Eells J., Salamon S., Twistorial construction of harmonic maps of surfaces into
 four-manifolds, \textit{Ann. Scuola Norm. Sup. Pisa Cl. Sci.~(4)} \textbf{12}
 (1985), 589--640.

\bibitem{Gover2008}
Gover A.R., \v{S}ilhan J., The conformal {K}illing equation on forms~--
 prolongations and applications, \href{http://dx.doi.org/10.1016/j.difgeo.2007.11.014}{\textit{Differential Geom. Appl.}} \textbf{26}
 (2008), 244--266, \href{http://arxiv.org/abs/math.DG/0601751}{math.DG/0601751}.

\bibitem{Harnad1992}
Harnad J., Shnider S., Isotropic geometry and twistors in higher dimensions.
 {I}.~{T}he generalized {K}lein correspondence and spinor f\/lags in even
 dimensions, \href{http://dx.doi.org/10.1063/1.529538}{\textit{J.~Math. Phys.}} \textbf{33} (1992), 3197--3208.

\bibitem{Harnad1995}
Harnad J., Shnider S., Isotropic geometry and twistors in higher dimensions.
 {II}.~{O}dd dimensions, reality conditions, and twistor superspaces,
 \href{http://dx.doi.org/10.1063/1.531096}{\textit{J.~Math. Phys.}} \textbf{36} (1995), 1945--1970.

\bibitem{Hitchin1982a}
Hitchin N.J., Complex manifolds and {E}instein's equations, in Twistor Geometry
 and Nonlinear Systems ({P}rimorsko, 1980), \href{http://dx.doi.org/10.1007/BFb0066025}{\textit{Lecture Notes in Math.}},
 Vol.~970, Springer, Berlin~-- New York, 1982, 73--99.

\bibitem{Hughston1983}
Hughston L.P., Hurd T.R., A {${\bf C}{\rm P}^{5}$} calculus for space-time
 f\/ields, \href{http://dx.doi.org/10.1016/0370-1573(83)90003-0}{\textit{Phys. Rep.}} \textbf{100} (1983), 273--326.

\bibitem{Hughston1988}
Hughston L.P., Mason L.J., A generalised {K}err--{R}obinson theorem,
 \href{http://dx.doi.org/10.1088/0264-9381/5/2/007}{\textit{Classical Quantum Gravity}} \textbf{5} (1988), 275--285.

\bibitem{Jones1985}
Jones P.E., Tod K.P., Minitwistor spaces and {E}instein--{W}eyl spaces,
 \href{http://dx.doi.org/10.1088/0264-9381/2/4/021}{\textit{Classical Quantum Gravity}} \textbf{2} (1985), 565--577.

\bibitem{Kerr1963}
Kerr R.P., Gravitational f\/ield of a spinning mass as an example of
 algebraically special metrics, \href{http://dx.doi.org/10.1103/PhysRevLett.11.237}{\textit{Phys. Rev. Lett.}} \textbf{11} (1963),
 237--238.

\bibitem{Kerr2009}
Kerr R.P., Schild A., Republication of: {A} new class of vacuum solutions of
 the {E}instein f\/ield equations, \href{http://dx.doi.org/10.1007/s10714-009-0857-z}{\textit{Gen. Relativity Gravitation}}
 \textbf{41} (2009), 2485--2499.

\bibitem{Kodaira1962}
Kodaira K., A theorem of completeness of characteristic systems for analytic
 families of compact submanifolds of complex manifolds, \href{http://dx.doi.org/10.2307/1970424}{\textit{Ann. of Math.}}
 \textbf{75} (1962), 146--162.

\bibitem{LeBrun1982}
LeBrun C.R., {${\cal H}$}-space with a cosmological constant, \href{http://dx.doi.org/10.1098/rspa.1982.0035}{\textit{Proc.
 Roy. Soc. London Ser.~A}} \textbf{380} (1982), 171--185.

\bibitem{Mason2010}
Mason L., Taghavi-Chabert A., Killing--{Y}ano tensors and multi-{H}ermitian
 structures, \href{http://dx.doi.org/10.1016/j.geomphys.2010.02.008}{\textit{J.~Geom. Phys.}} \textbf{60} (2010), 907--923,
 \href{http://arxiv.org/abs/0805.3756}{arXiv:0805.3756}.

\bibitem{Nurowski2010}
Nurowski P., Construction of conjugate functions, \href{http://dx.doi.org/10.1007/s10455-009-9186-4}{\textit{Ann. Global Anal.
 Geom.}} \textbf{37} (2010), 321--326, \href{http://arxiv.org/abs/math.DG/0605745}{math.DG/0605745}.

\bibitem{Onivsvcik1960}
Onishchik A.L., On compact {L}ie groups transitive on certain manifolds,
 \textit{Soviet Math. Dokl.} \textbf{1} (1960), 1288--1291.

\bibitem{Penrose1967}
Penrose R., Twistor algebra, \href{http://dx.doi.org/10.1063/1.1705200}{\textit{J.~Math. Phys.}} \textbf{8} (1967),
 345--366.

\bibitem{Penrose1976}
Penrose R., Nonlinear gravitons and curved twistor theory, \href{http://dx.doi.org/10.1007/BF00762011}{\textit{Gen.
 Relativity Gravitation}} \textbf{7} (1976), 31--52.

\bibitem{Penrose1986}
Penrose R., Rindler W., Spinors and space-time. {V}ol.~2.~Spinor and twistor
 methods in space-time geometry, \href{http://dx.doi.org/10.1017/CBO9780511524486}{\textit{Cambridge Monographs on Mathematical Physics}},
 Cambridge University Press, Cambridge, 1986.

\bibitem{Salamon2009}
Salamon S., Viaclovsky J., Orthogonal complex structures on domains
 in~{${\mathbb R}^4$}, \href{http://dx.doi.org/10.1007/s00208-008-0293-5}{\textit{Math. Ann.}} \textbf{343} (2009), 853--899,
 \href{http://arxiv.org/abs/0704.3422}{arXiv:0704.3422}.



\bibitem{Taghavi-Chabert2012}
Taghavi-Chabert A., The complex {G}oldberg--{S}achs theorem in higher
 dimensions, \href{http://dx.doi.org/10.1016/j.geomphys.2012.01.012}{\textit{J.~Geom. Phys.}} \textbf{62} (2012), 981--1012,
 \href{http://arxiv.org/abs/1107.2283}{arXiv:1107.2283}.

\bibitem{Taghavi-Chabert2016}
Taghavi-Chabert A., Pure spinors, intrinsic torsion and curvature in even
 dimensions, \href{http://dx.doi.org/10.1016/j.difgeo.2016.02.006}{\textit{Differential Geom. Appl.}} \textbf{46} (2016), 164--203,
 \href{http://arxiv.org/abs/1212.3595}{arXiv:1212.3595}.

\bibitem{Taghavi-Chabert2013}
Taghavi-Chabert A., Pure spinors, intrinsic torsion and curvature in odd
 dimensions, \href{http://arxiv.org/abs/1304.1076}{arXiv:1304.1076}.

\bibitem{Tod1995}
Tod K.P., Harmonic morphisms and mini-twistor space, in Further Advances in
 Twistor Theory. {V}ol.~{II}. Integrable Systems, Conformal Geometry and
 Gravitation, \textit{Pitman Research Notes in Mathematics Series}, Vol.~232,
 Editors L.J.~Mason, L.P.~Hughston, P.Z.~Kobak, Longman Scientif\/ic \&
 Technical, Harlow, 1995, 45--46.

\bibitem{Tod1995a}
Tod K.P., More on harmonic morphisms, in Further Advances in Twistor Theory.
 {V}ol.~{II}. Integrable Systems, Conformal Geometry and Gravitation,
 \textit{Pitman Research Notes in Mathematics Series}, Vol.~232, Editors~L.J.
 Mason, L.P.~Hughston, P.Z.~Kobak, Longman Scientif\/ic \& Technical, Harlow,
 1995, 47--48.

\end{thebibliography}
\end{document}